\theoremstyle:=definition,remark,plain\do{%
 \expandafter\g@addto@macro\csname th@\theoremstyle\endcsname{%
 \addtolength\thm@preskip\parskip
 }%
 }
\newtheorem{thm}{Theorem}[section]
\newtheorem{prop}[thm]{Proposition}
\newtheorem{lem}[thm]{Lemma}
\newtheorem{cor}[thm]{Corollary}
\newenvironment{dfn}{\medskip\refstepcounter{thm}
\noindent{\bf Definition \thesection.\arabic{thm}.}}{\medskip}
\newenvironment{remark}{\medskip\refstepcounter{thm}
\noindent{\bf Remark \thesection.\arabic{thm}.}}{\medskip}
\newenvironment{remarks}{\medskip\refstepcounter{thm}
\noindent{\bf Remarks \thesection.\arabic{thm}.}}{\medskip}
\numberwithin{equation}{section}
\numberwithin{table}{section}
\numberwithin{figure}{section}
\def\eq#1{{\rm(\ref{#1})}}
\DeclareMathOperator\vol{vol}
\def\SS{\mathbb{S}}
\DeclareMathOperator\GG{G}
\DeclareMathOperator\GL{GL}
\DeclareMathOperator\SO{SO}
\DeclareMathOperator\U{U}
\DeclareMathOperator\SU{SU}
\DeclareMathOperator\Sp{Sp}
\DeclareMathOperator\Ric{Ric}
\DeclareMathOperator\Ree{Re}
\DeclareMathOperator\Imm{Im}
\def\d{\mathrm{d}}
\def\w{\wedge}
\def\C{\mathbb{C}}
\def\P{\mathbb{P}}
\def\R{\mathbb{R}}
\def\Z{\mathbb{Z}}
\def\H{\mathbb{H}}
\begin{document}

\title{Bryant--Salamon \texorpdfstring{$\GG_2$}{G2} manifolds and coassociative fibrations}

\author{Spiro Karigiannis\thanks{Department of Pure Mathematics, University of Waterloo, \tt{karigiannis@uwaterloo.ca}} \, and Jason D.\ Lotay\thanks{Mathematical Institute, University of Oxford, \tt{jason.lotay@maths.ox.ac.uk}}}

\date{}

\maketitle

\begin{abstract}
Bryant--Salamon constructed three 1-parameter families of complete manifolds with holonomy $\GG_2$ which are asymptotically conical to a holonomy $\GG_2$ cone. For each of these families, including their asymptotic cone, we construct a fibration by asymptotically conical and conically singular coassociative 4-folds. We show that these fibrations are natural generalizations of the following three well-known coassociative fibrations on $\R^7$: the trivial fibration by 4-planes, the product of the standard Lefschetz fibration of $\C^3$ with a line, and the Harvey--Lawson coassociative fibration. In particular, we describe coassociative fibrations of the bundle of anti-self-dual 2-forms over the 4-sphere $\mathcal{S}^4$, and the cone on $\C\P^3$, whose smooth fibres are $T^*\mathcal{S}^2$, and whose singular fibres are $\R^4/\{\pm 1\}$. We relate these fibrations to hypersymplectic geometry, Donaldson's work on Kovalev--Lefschetz fibrations, harmonic 1-forms and the Joyce--Karigiannis construction of holonomy $\GG_2$ manifolds, and we construct vanishing cycles and associative ``thimbles'' for these fibrations.

\end{abstract}

\tableofcontents

\section{Introduction} \label{introduction}

A key challenge in the study of manifolds with special holonomy is to construct, understand, and make use of calibrated fibrations. In the context of $\GG_2$ holonomy in dimension 7, the focus is on studying fibrations by coassociative 4-folds. Inspired by the SYZ conjecture~\cite{StromingerYauZaslow}, which relates certain special Lagrangian fibrations to mirror symmetry of Calabi--Yau 3-folds, one hopes to use coassociative fibrations of $\GG_2$ manifolds to understand analogous dualities. (See for example~\cite{GukovYauZaslow} and, more recently,~\cite{YangLi-MukaiDuality}.) In another direction, one aims to use coassociative fibrations as a means to understand and potentially construct new $\GG_2$ manifolds, cf.~\cite{Donaldson-KovalevLefschetz}, including the possibility of certain singular $\GG_2$ manifolds of interest in M-Theory (cf.~\cites{AcharyaWitten,AtiyahWitten}).

In this paper we exhibit explicit coassociative fibrations on the three complete non-compact $\GG_2$ manifolds discovered by Bryant--Salamon~\cite{BryantSalamon}. Each of these three examples comes in two versions.
\begin{itemize}
\item The ``smooth version'' is a 7-manifold $M$ equipped with a torsion-free $\GG_2$-structure $\varphi_c$ inducing an asymptotically conical holonomy $\GG_2$ metric $g_c$, depending on a parameter $c>0$.
\item The ``cone version'' is a Riemannian cone $(M_0, \varphi_0, g_0)$ with holonomy $\GG_2$ which is the asymptotic cone of the smooth version $(M, \varphi_c, g_c)$ and corresponds to the limit as $c\to0$ of $(M, \varphi_c, g_c)$.
\end{itemize}

The torsion-free $\GG_2$-structures on the Bryant--Salamon manifolds are \emph{cohomogeneity one}. In particular, their asymptotic cones are \emph{homogeneous} nearly K\"ahler 6-manifolds. (In fact, it was proved by the authors in~\cite[Corollary 6.1]{KarigiannisLotay} that an asymptotically conical $\GG_2$ manifold $(M, \varphi)$ whose asymptotic cone is the homogeneous nearly K\"ahler manifold $\mathcal{S}^3 \times \mathcal{S}^3$, $\C\P^3$, or $\SU(3)/T^2$, must necessarily be cohomogeneity one, from which it follows that $(M, \varphi)$ is a Bryant--Salamon manifold.)

For each of the three Bryant--Salamon manifolds, we consider the action of a Lie group $G$ that is embedded in a particular way in the cohomogeneity one symmetry group. We demonstrate that each is fibred by $G$-invariant coassociative submanifolds. Table~\ref{table:BS} gives the topology of the Bryant--Salamon manifolds (both the smooth version $M$ and the cone version $M_0$), their cohomogeneity one symmetry groups $H$, and the particular choices of embedded Lie subgroups $G$ that we choose to study in this paper.
\begin{table}[H]
\begin{center}
{\setlength{\extrarowheight}{4pt}
\begin{tabular}{|c|c|c|c|}
\hline
$M$ & $M_0 = M \setminus \{ \text{zero section} \}$ & $H$ & $G$ \\[2pt]
\hline
$\SS(\mathcal{S}^3)$ & $\R^+ \times (\mathcal{S}^3 \times \mathcal{S}^3)$ & $\Sp(1)^3$ & $\SU(2) \cong \Sp(1) \times \{ 1 \} \text{ acting on } \mathbb H \oplus \R^3$ \\[2pt]
\hline
$\Lambda^2_-(T^* \mathcal{S}^4)$ & $\R^+ \times \C\P^3$ & $\SO(5)$ & $\SO(3) \cong \SO(3) \times \{ 1 \} \text{ acting on } \R^3 \oplus \R^2$ \\[2pt]
\hline
$\Lambda^2_-(T^* \C\P^2)$ & $\R^+ \times (\SU(3) / T^2)$ & $\SU(3)$ & $\SU(2) \cong \SU(2) \times \{ 1 \} \text{ acting on } \C^2 \oplus \C$ \\[2pt]
\hline
\end{tabular}
}
\end{center}
\caption{For each Bryant--Salamon manifold $M$ and $M_0$, we list their cohomogeneity one symmetry group $H$, and the particular symmetry subgroup $G$ of the coassociative fibrations studied in this paper.} \label{table:BS}
\end{table}

\begin{remark} \label{which-groups-remark}
Each of the Bryant--Salamon manifolds admits several non-conjugate subgroups of their symmetry group which have generic orbits that are 3-dimensional, on which the $\GG_2$-structure vanishes, and hence could admit cohomogeneity one coassociative submanifolds. In particular, for $\Lambda^2_- (T^* \mathcal{S}^4)$ these subgroups are classified by Kawai~\cite{Kawai}. In the present paper, we have chosen to consider one particular subgroup of the symmetry group for each of the three Bryant--Salamon manifolds.
\begin{itemize}
\item The group $\Sp(1)^3$ acts on $\R^7 = \R^4 \oplus \R^3$ via the usual action $(p,q) \cdot \mathbf{a} = p \mathbf{a} \bar{q}$ of $\Sp(1)^2$ on $\R^4 = \mathbb H$ and the action $r \cdot \mathbf{x} = r \mathbf{x} \bar{r}$ of $\Sp(1)$ on $\R^3 = \mathrm{Im} \mathbb H$. In \S\ref{sec:SS3}, we study the action on $\SS(\mathcal{S}^3)$ of a particular $\SU(2) \cong \Sp(1)$ that is embedded in $\Sp(1)^3$ as $\Sp(1) \times \{ 1 \} \times \{ 1 \}$, thus acting by $p \cdot (\mathbf{a}, \mathbf{x}) = (p \mathbf{a}, \mathbf{x})$. The analysis for this particular $\SU(2)$ action on $\SS(\mathcal{S}^3)$ is essentially trivial, as explained in the next paragraph, but we include a detailed discussion for completeness, for comparison with the other two cases which are highly nontrivial, and because we can explicitly exhibit the hypersymplectic structure on the coassociative fibres.
\item In \S\ref{sec:ASDS4}, we study the action on $\Lambda^2_-(T^* \mathcal{S}^4)$ of a particular $\SO(3)$ that is embedded in $\SO(5)$ as $\SO(3) \times \{ 1 \} \subseteq \SO(\R^3 \oplus \R^2)$, for two reasons. First, because this particular $\SO(3)$ admits a commuting $\U(1)$ action, we are able to relate our coassociative fibration in a suitable limit to the classical Lefschetz fibration of $\C^3$, which we do in \S\ref{sec:flat.limit.SO3}. Second, the coassociative fibration that results from this particular $\SO(3)$ action on $\Lambda^2_-(T^* \mathcal{S}^4)$ can be directly related to Donaldson's theory~\cite{Donaldson-KovalevLefschetz} of Kovalev--Lefschetz fibrations, which we do in \S\ref{sec:VCTS4}. Nevertheless, it is certainly of interest to consider other symmetry groups of $\Lambda^2_-(T^* \mathcal{S}^4)$ which could admit cohomogeneity one coassociative fibrations, and the authors are currently studying this question.
\item In \S\ref{sec:ASDCP2}, we study the action on $\Lambda^2_-(T^* \C\P^2)$ of a particular $\SU(2)$ that is embedded in $\SU(3)$ as $\SU(2) \times \{ 1 \} \subseteq \SU(\C^2 \oplus \C)$, because it is the simplest nontrivial one. Already the results in the $\Lambda^2_-(T^* \C\P^2)$ case are much more complicated for this action.
\end{itemize}
\end{remark}

The Bryant--Salamon $\GG_2$ manifold $\SS(\mathcal{S}^3)$ is naturally a fibre bundle (in fact a vector bundle) over a 3-dimensional dimensional base with 4-dimensional fibres, and the fibres are coassociative submanifolds. Thus $\SS(\mathcal{S}^3)$ is exhibited as a coassociative fibration in a trivial way. However, the situation for the other two Bryant--Salamon $\GG_2$ manifolds $\Lambda^2_- (T^* \mathcal{S}^4)$ and $\Lambda^2_- (T^* \C\P^2)$ is very different. These manifolds are both naturally fibre bundles (in fact vector bundles) over a \emph{$4$-dimensional base with $3$-dimensional fibres}. Thus in order to exhibit these $\GG_2$ manifolds as coassociative fibrations, we need to ``flip them over'' and describe them as bundles over a $3$-dimensional base with $4$-dimensional (coassociative) fibres. Viewed in this way, the manifolds are not vector bundles. The topology and the geometry of the coassociative fibres is a very important aspect of our work in this paper.

Fibrations have also played an important role in complex and symplectic geometry, notably Lefschetz fibrations. We show that one of the family of fibrations we describe can be viewed as an analogue of a standard Lefschetz fibration in this $\GG_2$ setting. Moreover, we show that  natural topological objects of interest in Lefschetz fibrations, namely vanishing cycles and thimbles, have analogues here which moreover can be \emph{represented by calibrated submanifolds}. This discussion, in particular, fits well with Donaldson's theory~\cite{Donaldson-KovalevLefschetz} of Kovalev--Lefschetz fibrations.

\textbf{Summary of results and organization of the paper.} In \S\ref{sec:prelim} we first review some basic results on $\GG_2$ manifolds, Riemannian conifolds, and calibrated submanifolds. Then we discuss the \emph{multimoment maps} of Madsen--Swann~\cites{MadsenSwann1,MadsenSwann2}, some general facts about coassociative fibrations that we require, and introduce the notion of a \emph{hypersymplectic structure} due to Donaldson~\cite{Donaldson}.

In \S\ref{sec:BS-manifolds} we give an essentially self-contained, detailed account of the three Bryant--Salamon $\GG_2$ manifolds which first appeared in~\cite{BryantSalamon}. In particular, our treatment is presented using conventions and notation that are carefully chosen to be compatible with the construction of coassociative fibrations on these manifolds in the remainder of the paper. We also explain how, in the limit as the volume of the compact base goes to infinity, the Bryant--Salamon metrics all formally converge to the flat metric on $\R^7$, as we use this in later sections to show that our three coassociative fibrations limit to certain well-known calibrated fibrations of Euclidean space.

Section~\ref{sec:SS3} considers the case of $M = \SS(\mathcal{S}^3)$ and its cone $M_0 = M \setminus \mathcal{S}^3$. As mentioned above, this case is essentially trivial as it already naturally exhibited as a coassociative fibration with fibres that are topologically $\R^4$, respectively $\R^4 \setminus \{ 0 \}$. However, we also show that the induced Riemannian metric on the fibres is conformally flat and asymptotically conical (respectively, conical); that the induced hypersymplectic structure in both cases is the standard flat Euclidean hyperk\"ahler structure; and that the ``flat limit'' is the trivial coassociative $\R^4$ fibration of $\R^7 = \R^3 \oplus \R^4$ over $\R^3$.

The remaining two cases $\Lambda^2_- (T^* \mathcal{S}^4)$ and $\Lambda^2_- (T^* \C\P^2)$ (and their cone versions) are much more complicated and interesting, and their study takes up the bulk of the paper, in Sections~\ref{sec:ASDS4} and~\ref{sec:ASDCP2}, respectively. In both of these cases we find that there are both smooth and singular fibres, and we describe the induced Riemannian geometry on these coassociative fibres. Some of the singular fibres are exactly Riemannian cones, while other exhibit both conically singular and asymptotically conical behavour. The smooth fibres are asymptotically conical Riemannian manifolds that are total spaces of complex line bundles over $\C\P^1 = \mathcal{S}^2$. These smooth fibres are topologically $T^* \mathcal{S}^2 = \mathcal{O}_{\C\P^1}(-2)$ in the $\Lambda^2_- (T^* \mathcal{S}^4)$ case and are topologically $\mathcal{O}_{\C\P^1}(-1)$ in the $\Lambda^2_- (T^* \C\P^2)$ case.

The ``flat limit'' is obtained by letting the volume of the compact base go to infinity. In this limit, we show that our fibrations become, respectively, the product with $\R$ of the standard Lefschetz fibration of $\C^3$ by complex surfaces, and the Harvey--Lawson $\SU(2)$-invariant coassociative fibration of $\R^7$.

In the case of $\Lambda^2_- (T^* \mathcal{S}^4)$ (and its cone version) we are actually able to do much more. In \S\ref{sec:hypersymplectic.S4}, in addition to the induced Riemannian geometry, we also determine the induced hypersymplectic structure and verify that it is \emph{not} hyperk\"ahler. In \S\ref{sec:harmonicS4} we identify the Riemannian metric on the base of the fibration, which is a cone metric $k_0$ over a half-space in $\R^3$ for the cone version $M_0 = \R^+ \times \C\P^3$, and is a smooth metric $k_c$ which is asymptotically conical to $k_0$ with rate $-2$ for the smooth version $M = \Lambda^2_- (T^* \mathcal{S}^4)$. Inspired by analogy with the construction~\cite{JoyceKarigiannis}, in \S\ref{sec:harmonicS4} we also establish the existence of a \emph{harmonic 1-form} $\lambda$ on the base, which vanishes precisely at the points in the base corresponding to the singular coassociative fibres. In \S\ref{sec:circlequotientS4} we discuss the links between our work and a particular circle quotient construction studied by Acharya--Bryant--Salamon~\cite{AcharyaBryantSalamon} and Atiyah--Witten~\cite{AtiyahWitten}. Finally, in \S\ref{sec:VCTS4} we construct vanishing cycles and associative ``thimbles'' for this fibration, connecting with work of Donaldson~\cite{Donaldson-KovalevLefschetz}.

\textbf{Links to related work.}   As mentioned above, the coassociative submanifolds which appear in these fibrations are \emph{cones}, \emph{asymptotically conical}, or \emph{conically singular}, and coassociative 4-folds of this type have been studied in detail by the second author in~\cites{LotayCS,LotayDef,LotayStab}.  In particular, one can link the moduli space theory from~\cites{LotayCS,LotayDef,LotayStab} to the fibrations constructed in this paper. Most of the coassociative submanifolds considered in this paper are topologically either the total spaces of 2-plane bundles over a compact surface, or the same with the zero section removed. As such, they are  examples of \emph{$2$-ruled} coassociative submanifolds, which were extensively studied by the second author in~\cite{Lotay2Ruled} and~\cite{LotaySymm}, and 2-ruled coassociative cones were studied through an alternative perspective by Fox in~\cite{Fox}. Calibrated submanifolds in the Bryant--Salamon manifolds that are vector bundles over a surface in the base of the $\GG_2$ manifold were studied by the first author and Min-Oo in~\cite{KarigiannisMinOo}, and were later generalized by the first author with Leung in~\cite{KarigiannisNat}.  The examples constructed in~\cites{KarigiannisMinOo,KarigiannisNat} in the Bryant--Salamon $\GG_2$ manifolds arise as special fibres in the fibrations we construct here.  We also particularly note the study by Kawai~\cite{Kawai} of cohomogeneity one coassociative 4-folds in $\Lambda^2_-(T^*\mathcal{S}^4)$, which includes the examples appearing in the fibration we construct in this Bryant--Salamon $\GG_2$ manifold.

\textbf{Coassociative fibrations.} Here we clarify precisely what is meant by the term ``coassociative fibration'' in this paper.

\begin{dfn} \label{dfn:fibration}
Let $M$ be a $\GG_2$ manifold. We say that $M$ admits a \emph{coassociative fibration} if there is a $3$-dimensional space $\mathcal{B}$ parametrizing a family of (not necessarily disjoint, and possibly singular) coassociative submanifolds $N_b$ for $b \in \mathcal{B}$ of $M$, with the following two properties.
\begin{itemize}
\item The family $\{ N_b : b \in \mathcal{B} \}$ covers $M$ and there is a dense open subset $\mathcal{B}^\circ$ of $\mathcal{B}$ such that $N_b$ is smooth for all $b\in\mathcal{B}^{\circ}$. That is, every point $p \in M$ lies in \emph{at least one} coassociative submanifold $N_b$ in this family, and the generic member of the family is smooth.
\item On a dense open subset $M'$ of $M$, there is a genuine fibration of $M'$ onto a submanifold $\mathcal{B}'$ of $\mathcal{B}$, in the sense that there is a smooth map $\pi : M' \to \mathcal{B}'$ which is a locally trivial fibration, and such that $\pi^{-1} (b) = N_b \subset M$ for each $b \in \mathcal{B}$.
\end{itemize}
The set $\mathcal{B}\setminus\mathcal{B}^{\circ}$ parametrizes the singular fibres in the fibration, and the set $M \setminus M'$ consists of the points in $M$ where two coassociatives in the family $\{ N_b : b \in \mathcal{B} \}$ intersect.
\end{dfn}

In the particular case of the three Bryant--Salamon manifolds studied in this paper, the coassociative fibrations have the following qualitative features.
\begin{itemize}
\item For $M = \SS(\mathcal{S}^3)$ and the cone $M_0 = M \setminus \mathcal{S}^3$, the coassocative fibration is actually a fibre bundle, as all the fibres are diffeomorphic.
\item For $M = \Lambda^2_-(T^* \mathcal{S}^4)$ and the cone $M_0 = M \setminus \mathcal{S}^4$, the coassocative fibration has singular fibres, where $\mathcal{B}^{\circ}$ has codimension 2 and 3 in $\mathcal{B}$, respectively, but it is a ``genuine'' fibration in the sense that the fibres $\pi^{-1} (b)$ are all disjoint for distinct $b \in \mathcal{B}$.
\item For $M = \Lambda^2_-(T^* \C\P^2)$ and the cone $M_0 = M \setminus \C\P^2$, the coassociative fibration has singular fibres, where $\mathcal{B}^{\circ}$ has codimension 1 in $\mathcal{B}$, and there \emph{do exist} intersecting ``fibres''. This case is the reason that we introduce the weaker notion of ``fibration'' in Definition~\ref{dfn:fibration}. We find that $M \setminus M'$ and $M_0 \setminus M_0'$ are of \emph{codimension $4$} in $M$ and $M_0$, respectively. (See the discussion at the end of~\S\ref{subs:summary}.)
\end{itemize}

\textbf{Topology of the smooth fibres.} Here we clarify how we determine the  topology of the $\R^2$ bundles over $\mathcal{S}^2 \cong \C\P^1$ which are ``smooth fibres'' of our coassociative fibrations. Suppose that $N$ is the total space of an $\R^2$-bundle over $\C\P^1$ that arises as a coassociative submanifold of a $\GG_2$ manifold. Then $N$ is orientable, and since it is a bundle over an oriented base, it is an oriented bundle. Therefore, $N$ may be viewed as  a $\C $-bundle over $\C\P^1$, so $N$ is topologically isomorphic to a holomorphic line bundle $\mathcal{O}_{\C\P^1}(k)$ for some $k \in \Z$. (The cases $\pm k$ are of course isomorphic as topological vector bundles.)

It is well-known that $\mathcal{O}_{\C\P^1}(-1)$ is the \emph{tautological line bundle} and $\mathcal{O}_{\C\P^1}(-2) = T^* \C\P^1 = T^* \mathcal{S}^2$. For our purposes we can characterize them topologically as follows. For $k>0$, the space $\mathcal{O}_{\C\P^1}(-k)$ minus the zero section is   diffeomorphic to
\begin{equation*}
\mathcal{O}_{\C\P^1}(-k) \setminus \C\P^1 \cong \C^2 / \Z_k \cong \R^+ \times (\mathcal{S}^3 / \Z_k).
\end{equation*}
Recall also that $\mathcal{S}^3/\Z_2 \cong \SO(3) \cong \R\P^3$. Let $B$ denote the zero section of $N$, which is diffeomorphic to $\mathcal{S}^2 \cong \C\P^1$. Thus $N$ is topologically $T^* \mathcal{S}^2$ if and only if $N \setminus B \cong \R^+ \times \SO(3) \cong \R^+ \times \R\P^3$, and $N$ is topologically $\mathcal{O}_{\C\P^1}(-1)$ if and only if $N \setminus B \cong \R^+ \times \mathcal{S}^3$. In this paper we determine the topological type of $N$ in precisely this way by examining the topology of $N \setminus B$.

{\bf Acknowledgements.} The authors would like to thank Bobby Acharya, Robert Bryant, and Simon Salamon for useful discussions and for sharing with them some of their work in progress. Some of the writing of this paper was completed while the first author was a visiting scholar at the Center of Mathematical Sciences and Applications at Harvard University. The first author thanks the CMSA for their hospitality. The authors also thank the anonymous referee for useful suggestions that improved the clarity of our paper, and enabled us to compute the AC rate of convergence in Case 3 of the smooth case ($c>0$) in \S\ref{sec:hypersymplectic.CP2}.

The research of the first author was partially supported by NSERC Discovery Grant RGPIN-2019-03933. The research of the second author was partially supported by the Simons Collaboration on Special Holonomy in Geometry, Analysis, and Physics (\#724071 Jason Lotay).

\section{Preliminaries} \label{sec:prelim}

In this section we review various preliminary results on $\GG_2$ manifolds, Riemannian conifolds, calibrated submanifolds, multimoment maps, coassociative fibrations, and hypersymplectic structures.

\subsection{Overview and definitions}

First we introduce our principal objects of study, and we describe how these objects arise in the context of the Bryant--Salamon $\GG_2$ manifolds.

\subsubsection{\texorpdfstring{G\textsubscript{2}}{G2} manifolds, their calibrated submanifolds, and Riemannian conifolds}

We briefly recall the notion of a $\GG_2$ manifold in a manner which is convenient for our purposes. The local model is $\R^7$ where, if we decompose $\R^7=\R^3\oplus\R^4$ with coordinates $(x_1,x_2,x_3)$ on $\R^3$ and $(y_0,y_1,y_2,y_3)$ on $\R^4$, we define the 3-form
\begin{equation}\label{eq:varphi.R7}
\varphi_{\R^7}=\d x_1\w \d x_2\w \d x_3+\d x_1\wedge \omega_1+\d x_2\wedge \omega_2+\d x_3\wedge\omega_3,
\end{equation}
where
\begin{equation}\label{eq:omegas-R7}
\omega_1=\d y_0\w\d y_1-\d y_2\w\d y_3,\quad
\omega_2=\d y_0\w\d y_2-\d y_3\w\d y_1,\quad
\omega_3=\d y_0\w\d y_3-\d y_1\w\d y_2 
\end{equation}
form an orthogonal basis for the anti-self-dual 2-forms on $\R^4$. The key point is that the stabilizer of $\varphi_{\R^7}$ in $\GL(7,\R)$ is isomorphic to $\GG_2$.

\begin{dfn}\label{dfn:G2.structure}
A smooth 3-form $\varphi$ on a 7-manifold $M$ is a \emph{$\GG_2$-structure} if for all $x\in M$, there exists an isomorphism $\iota_x:\R^7\to T_xM$ so that $\iota_x^*\varphi=\varphi_{\R^7}$ where $\varphi_{\R^7}$ is given in~\eqref{eq:varphi.R7}. A $\GG_2$-structure is also sometimes called a \emph{definite} or a \emph{positive} $3$-form.
\end{dfn}

A $\GG_2$-structure $\varphi$ defines a metric $g_{\varphi}$ and orientation on $M$, given by a volume form $\vol_{\varphi}$, and thus a Hodge star operator $*_\varphi$ on $M$. In fact~\cite[Section 4.2]{K-intro} the metric $g_{\varphi}$ and the volume form $\vol_{\varphi}$ induced by the $\GG_2$-structure $\varphi$ can be extracted from the fundamental relation
\begin{equation} \label{eq:metric-from-form}
-6 g_{\varphi} (X, Y) \vol_{\varphi} = (X \lrcorner \varphi) \w (Y \lrcorner \varphi) \w \varphi.
\end{equation}
Moreover, we have that
$$\vol_{\varphi}=\textstyle\frac{1}{7}\varphi\w *_{\varphi}\varphi.$$
In $\R^7$, one sees explicitly that $\varphi_{\R^7}$ induces the flat metric on $\R^7$ and the standard volume form, and that the Hodge dual of $\varphi_{\R^7}$ is:
\begin{equation*}
*_{\varphi_{\R^7}}\varphi_{\R^7}=\d y_0\w\d y_1\w \d y_2\w \d y_3-\d x_2\w\d x_3\w\omega_1-\d x_3\w\d x_1\w\omega_2-\d x_1\w\d x_2\w\omega_3.
\end{equation*}

\begin{dfn}\label{dfn:G2.manifold}
We say that $(M^7,\varphi)$ is a \emph{$\GG_2$ manifold} if $\varphi$ is a $\GG_2$-structure on $M$ which is \emph{torsion-free}, which means that
\begin{equation*}
\d\varphi=0\quad\text{and}\quad \d*_{\varphi}\varphi=0.
\end{equation*}
It follows that the metric $g_{\varphi}$ has holonomy contained in $\GG_2$.
\end{dfn}

The Bryant--Salamon construction yields a 3-parameter family $\varphi_{c_0,c_1,\kappa}$ of torsion-free $\GG_2$-structures for $c_0,c_1,\kappa>0$ inducing complete holonomy $\GG_2$ metrics on certain total spaces of vector bundles $M^7$ over certain Riemannian manifolds $N$. However, Bryant--Salamon show that two of these parameters can be removed by rescaling and reparametrisation, which means that we can set $\kappa=1$, say, and obtain a true 1-parameter family $\varphi_c$ (for $c>0$) of torsion-free $\GG_2$-structures on $M$ inducing complete holonomy $\GG_2$ metrics $g_c$. We can say more about the properties of $g_c$, for which we need a definition.

\begin{dfn}\label{dfn:AC}
A Riemannian manifold $(M,g)$ is \emph{asymptotically conical} (with rate $\lambda<0$) if there exists a Riemannian cone $(M_0=\R^+\times \Sigma, g_0=\d r^2+r^2g_{\Sigma})$, where $r$ is the coordinate on $\R^+$ and $g_{\Sigma}$ is a Riemannian metric on $\Sigma$, and a diffeomorphism $\Psi:(R,\infty)\times\Sigma\to M\setminus K$, for some $R>0$ and compact subset $K\subseteq M$, such that
\[
|\nabla^j(\Psi^*g-g_0)|=O(r^{\lambda-j})\quad
\text{as $r\to\infty$ for all $j\in\mathbb{N}$.}
\]
We say that $(M_0,g_0)$ is the \emph{asymptotic cone} of $(M,g)$ at infinity (or asymptotic cones at infinity if $M_0$ has multiple components).
\end{dfn}

The Bryant--Salamon metrics on the vector bundle $M$ over the base $N$ are asymptotically conical with rate $\lambda=-\dim N$ and asymptotic cone $M_0=M\setminus N$, where we view $N$ as the zero section. Moreover, the conical metric $g_0$ on $M_0$ has holonomy $\GG_2$ and is induced by a conical torsion-free $\GG_2$-structure $\varphi_0$, which is simply the limit of the $\varphi_c$ on $M_0$ as $c\to 0$ (and the same is true for $g_0$ and $g_c$). 

In this paper we are interested in a distinguished class of 4-dimensional submanifolds of the Bryant--Salamon $\GG_2$ manifolds called \emph{coassociative 4-folds}, which we now define.

\begin{dfn}\label{dfn:coassociative}
An oriented 4-dimensional submanifold $N^4$ of a $\GG_2$ manifold $(M^7,\varphi)$ is \emph{coassociative} if $N$ is calibrated by $*\varphi$. This means that
$$*\varphi|_N=\vol_N.$$
Equivalently~\cite{HarveyLawson}, up to a choice of orientation $N^4$ is coassociative if and only if
$$\varphi|_N\equiv 0.$$
This latter definition is often more useful in practice.
\end{dfn}

When we describe our coassociative fibrations we will find that there are \emph{singular fibres}, where the singularities are of a special nature as follows.

\begin{dfn}\label{dfn:CS}
A Riemannian manifold $(N,g)$ is \emph{conically singular} (with rate $\mu>0$) if there exists a Riemannian cone $(N_0=\R^+\times \Sigma, g_0=\d r^2+r^2g_{\Sigma})$, where $r$ is the coordinate on $\R^+$ and $g_{\Sigma}$ is a Riemannian metric on $\Sigma$, and a diffeomorphism $\Psi:(0,\epsilon)\times\Sigma\to N\setminus K$, for some $\epsilon>0$ and compact subset $K\subseteq N$, such that
\[
|\nabla^j(\Psi^*g-g_0)|=O(r^{\mu-j})\quad
\text{as $r\to 0$ for all $j\in\mathbb{N}$.}
\]
We say that $(N_0,g_0)$ is the \emph{asymptotic cone} of $(N,g)$ at the singularity (or asymptotic cones at the singularities if $N_0$ has multiple components).
\end{dfn}

Note that a special case of a conically singular Riemannian manifold is a Riemannian cone itself, which is also a special case of an asymptotically conical Riemannian manifold. Given a conically singular Riemannian manifold $N$, if we add the singularity (the vertex of the asymptotic cone at the singularity) to $N$, then we obtain a topological space which is in general not a smooth manifold. Although $N$ itself, without the singular point, is a smooth manifold, we nevertheless denote such fibres as ``singular fibres'' because their Riemannian metrics typically degenerate as we approach the singular point.

Another important class of submanifolds that we encounter in this paper are the following.

\begin{dfn}
An oriented 3-dimensional submanifold $L$ of a $\GG_2$ manifold $(M,\varphi)$ is \emph{associative} if $L$ is calibrated by $\varphi$. This means that
$$\varphi|_L=\vol_L.$$
Note that the orthogonal complement of an associative 3-plane is a coassociative 4-plane, and vice versa.
\end{dfn}

\subsection{Multimoment maps} \label{sub:multimoment.intro}

Suppose we have a Lie group $G$ acting on a $\GG_2$ manifold $M$ preserving $\varphi$ and $*_{\varphi} \varphi$, which are both closed forms, since $\varphi$ is torsion-free. We can consider the \emph{multi-moment maps} developed by Madsen--Swann~\cites{MadsenSwann1,MadsenSwann2}, which are generalizations of the well-known theory of moment maps in symplectic geometry. In the case when $G$ is $3$-dimensional, the \emph{na\"ive definition} for the multimoment map would be as follows. Let $X_1,X_2,X_3$ denote generators for the action of $G$ on a $\GG_2$ manifold $(M,\varphi)$ preserving $\varphi$ (and thus also preserving $g_{\varphi}$ and $*_{\varphi} \varphi$). The multi-moment maps $\mu=(\mu_1,\mu_2,\mu_3):M\to\R^3$ and $\nu:M\to\R$ for the action on $\varphi$ and $*_{\varphi}\varphi$, respectively, should be defined (up to additive constants) by:
\begin{equation} \label{eq:mmm3}
\varphi(X_2,X_3,\cdot)=\d\mu_1,\quad \varphi(X_3,X_1,\cdot)=\d\mu_2,\quad \varphi(X_1,X_2,\cdot)=\d\mu_3,
\end{equation}
\begin{equation}
*_{\varphi}\varphi(X_1,X_2,X_3,\cdot)=\d\nu, \label{eq:mmm4}
\end{equation}
if they exist.

The above na\"ive definition works fine if $G$ is abelian. However, one can check using the Cartan formula for the Lie derivative, and the nondegeneracy of $\varphi$, that in the case where $G = \SO(3)$ or $\SU(2)$, the multimoment map $\mu = (\mu_1, \mu_2, \mu_3)$ for the $3$-form $\varphi$ in~\eqref{eq:mmm3} \emph{never exists}. It is precisely for this reason that Madsen--Swann introduce~\cite[Section 3.2]{MadsenSwann2} the \emph{Lie kernel} to give the correct definition of multimoment map in~\cite[Definition 3.5]{MadsenSwann2}. We do not state this correct definition here as we do not use it.

Unfortunately, however, when $G = \SO(3)$ or $\SU(2)$, when using the correct Madsen--Swann definition, the multimoment map $\mu = (\mu_1, \mu_2, \mu_3)$ for the $3$-form $\varphi$ is always \emph{necessarily trivial}. (See~\cite[Example 3.3]{MadsenSwann2}, which is for $\SU(2)$ actions, but the same reasoning works in the $\SO(3)$ case). However, in this case the multi-moment map $\nu$ for the 4-form $*_{\varphi} \varphi$ in the correct definition agrees with the na\"ive definition~\eqref{eq:mmm4}, and furthermore automatically exists by~\cite[Page 33]{MadsenSwann1} as our manifolds are simply-connected.

For each of the three situations we consider in this paper, we explicitly compute the multimoment map $\nu$ for the $4$-form $*_{\varphi} \varphi$ as in~\eqref{eq:mmm4}. 

\begin{remark} \label{rmk:mmm}
We also explicitly compute the left hand sides $\varphi(X_i, X_j, \cdot)$ of~\eqref{eq:mmm3} and verify that they are never closed, as must be true. We do this for two reasons. First, the forms $\varphi(X_i, X_j, \cdot)$ are needed to understand the coassociative fibration structure, via Lemma~\ref{lemma:adapted-frame-2} below. Second, the precise forms of $\varphi(X_i, X_j, \cdot)$ could offer clues on how one might be able to meaningfully extend the notion of multi-moment map in the Madsen--Swann theory to this particular non-abelian context. For example, in the $\SS(\mathcal{S}^3)$ case, we relate the left hand sides of~\eqref{eq:mmm3} to the hyperK\"ahler moment map in Remark~\ref{rmk:mmmS3}. See also Remark~\ref{rmk:mmmS4}.
\end{remark}

\subsection{Coassociative fibrations of \texorpdfstring{G\textsubscript{2}}{G2} manifolds}

In this section we describe how to construct a local coframe on a $\GG_2$ manifold with a coassociative fibration, which is adapted to the $\GG_2$-structure and the fibration, to facilitate calculations later.

Suppose that a $\GG_2$ manifold $(M, \varphi, g_{\varphi}, \ast_{\varphi} \varphi)$ can be described as a fibration by coassociative submanifolds. Then its tangent bundle $TM$ admits a \emph{vertical subbundle} $V$, which is the bundle of (coassocative) tangent subspaces of the coassociative fibres of $M$. Since coassociative subspaces come equipped with a preferred orientation, the bundle $V$ is oriented. A local \emph{vertical vector field} is a local section of $V$, and hence is everywhere tangent to the coassociative fibres.

Let $\alpha^{\sharp}$ be the metric dual vector field to the 1-form $\alpha$ with respect to the metric $g_{\varphi}$. Recall~\cite[\S3.4]{K-intro} that $\varphi(X, Y, Z) = g_{\varphi} (X \times Y, Z)$, where $\times$ is the \emph{cross product} induced by $\varphi$.

\begin{lem} \label{lemma:adapted-frame}
Let $\{ h_2, h_3, \varpi_0 \}$ be local \emph{orthogonal} 1-forms on $M$ satisfying the following conditions:
\begin{itemize}
\item The $h_k$ are \emph{horizontal}. That is, $h_2 (X) = h_3 (X) = 0$ for any vertical vector field $X$.
\item We have $\varphi( h_2^{\sharp}, h_3^{\sharp}, \varpi_0^{\sharp} ) = 0$. That is, $h_2^{\sharp} \times h_3^{\sharp}$ is orthogonal to $\varpi_0^{\sharp}$.
\end{itemize}
Using the metric $g_{\varphi}$ we have the orthonormal 1-forms $\{ \hat h_2, \hat h_3, \hat \varpi_0 \}$, where $\hat \alpha = \frac{1}{|\alpha|} \alpha$.

We can complete this to a local oriented orthonormal coframe
\begin{equation*}
\{ \hat h_1, \hat h_2, \hat h_3, \hat \varpi_0,  \hat \varpi_1,  \hat \varpi_2,  \hat \varpi_3 \}
\end{equation*}
which means that
\begin{equation} \label{eq:canonical-metric}
\begin{aligned}
g_{\varphi} & = (\hat h_1)^2 + (\hat h_2)^2 + (\hat h_3)^2 + (\hat \varpi_0)^2 + (\hat \varpi_1)^2 + (\hat \varpi_2)^2 + (\hat \varpi_3)^2, \\
\vol_{\varphi} & = \hat h_1 \w \hat h_2 \w \hat h_3 \w \hat \varpi_0 \w \hat \varpi_1 \w \hat \varpi_2 \w \hat \varpi_3,
\end{aligned}
\end{equation}
in such a way that, with respect to this coframe, the 3-form $\varphi$ and the 4-form $\ast_{\varphi} \varphi$ are given by
\begin{equation} \label{eq:canonical-3form}
\begin{aligned}
\varphi & = \hat h_1 \w \hat h_2 \w \hat h_3 + \hat h_1 \w (\hat \varpi_0 \w \hat \varpi_1 - \hat \varpi_2 \w \hat \varpi_3) \\
& \qquad {} + \hat h_2 \w (\hat \varpi_0 \w \hat \varpi_2 - \hat \varpi_3 \w \hat \varpi_1) + \hat h_3 \w (\hat \varpi_0 \w \hat \varpi_3 - \hat \varpi_1 \w \hat \varpi_2), 
\end{aligned}
\end{equation}
and
\begin{equation} \label{eq:canonical-4form}
\begin{aligned}
\ast_{\varphi} \varphi & = \hat \varpi_0 \w \hat \varpi_1 \w \hat \varpi_2 \w \hat \varpi_3 - \hat h_2 \w \hat h_3 \w (\hat \varpi_0 \w \hat \varpi_1 - \hat \varpi_2 \w \hat \varpi_3) \\
& \qquad {} - \hat h_3 \w \hat h_1 \w (\hat \varpi_0 \w \hat \varpi_2 - \hat \varpi_3 \w \hat \varpi_1) - \hat h_1 \w \hat h_2 \w (\hat \varpi_0 \w \hat \varpi_3 - \hat \varpi_1 \w \hat \varpi_2).
\end{aligned}
\end{equation}
\end{lem}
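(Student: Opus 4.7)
The plan is to work pointwise and exploit the fact that at each point $p \in M$ the $\GG_2$-structure is linearly equivalent to the standard $\varphi_{\R^7}$, so that the claim reduces to a direct computation in the flat model. After normalizing to obtain orthonormal $\hat h_2, \hat h_3, \hat\varpi_0$, I would begin by observing that the vertical subspace $V_p \subset T_pM$ is coassociative, so its orthogonal complement $H_p$ is an associative 3-plane, and the horizontality assumption gives $\hat h_2^\sharp, \hat h_3^\sharp \in H_p$. I would then define
\begin{equation*}
\hat h_1^\sharp := \hat h_2^\sharp \times \hat h_3^\sharp,
\end{equation*}
where $\times$ is the cross product determined by $\varphi$ via $\varphi(X,Y,Z) = g_\varphi(X \times Y, Z)$. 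Standard identities $g_\varphi(X \times Y, X) = g_\varphi(X \times Y, Y) = 0$ and $|X \times Y|^2 = |X|^2 |Y|^2 - g_\varphi(X,Y)^2$, both verified in the flat model, show that $\hat h_1^\sharp$ is a unit vector orthogonal to $\hat h_2^\sharp$ and $\hat h_3^\sharp$. Moreover, since associative subspaces are closed under $\times$ (checkable in the flat model via $\partial_{x_2} \times \partial_{x_3} = \partial_{x_1}$ and cyclic permutations), one concludes $\hat h_1^\sharp \in H_p$.

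Next, the hypothesis $\varphi(h_2^\sharp, h_3^\sharp, \varpi_0^\sharp) = 0$ rewrites as $g_\varphi(\hat h_1^\sharp, \hat\varpi_0^\sharp) = 0$, and combined with the orthogonality of $\hat\varpi_0$ to $\hat h_2$ and $\hat h_3$ this forces $\hat\varpi_0^\sharp \perp H_p$, hence $\hat\varpi_0^\sharp \in V_p$. I would then set
\begin{equation*}
\hat\varpi_i^\sharp := \hat h_i^\sharp \times \hat\varpi_0^\sharp, \qquad i = 1, 2, 3,
\end{equation*}
and verify, again by reduction to the flat model (where $\partial_{x_i} \times \partial_{y_0} = \partial_{y_i}$), that $\hat\varpi_1^\sharp, \hat\varpi_2^\sharp, \hat\varpi_3^\sharp$ are mutually orthonormal vectors lying in $V_p$ and each orthogonal to $\hat\varpi_0^\sharp$. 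Thus $\{\hat\varpi_j^\sharp\}_{j=0}^3$ is an orthonormal basis of $V_p$ completing $\hat\varpi_0$.

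Finally, since every step of the construction is $\GG_2$-equivariant and canonical, the identities~\eqref{eq:canonical-3form} and~\eqref{eq:canonical-4form} need only be checked at a single point in the standard model on $\R^7$. Under the identification $\hat h_i \leftrightarrow \d x_i$ and $\hat\varpi_j \leftrightarrow \d y_j$, the right hand side of~\eqref{eq:canonical-3form} reduces precisely to the defining expression~\eqref{eq:varphi.R7}--\eqref{eq:omegas-R7} for $\varphi_{\R^7}$, and~\eqref{eq:canonical-4form} follows by taking the Hodge star with respect to the volume form~\eqref{eq:canonical-metric}. The main subtlety I anticipate is sign and orientation bookkeeping: one must check that defining $\hat h_1^\sharp$ as $\hat h_2^\sharp \times \hat h_3^\sharp$ (rather than its negative) and using the specific cyclic ordering $\hat\varpi_i^\sharp = \hat h_i^\sharp \times \hat\varpi_0^\sharp$ produces a frame compatible with the canonical orientation of the coassociative subspace $V_p$ and the ambient orientation of $M$, so that the oriented volume form in~\eqref{eq:canonical-metric} comes out with the correct sign.
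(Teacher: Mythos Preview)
Your proposal is correct and follows essentially the same approach as the paper: the paper defines $\hat h_1 = \hat h_3^{\sharp} \lrcorner (\hat h_2^{\sharp} \lrcorner \varphi)$ and $\hat\varpi_k = \hat\varpi_0^{\sharp} \lrcorner (\hat h_k^{\sharp} \lrcorner \varphi)$, which are precisely the metric duals of your cross products $\hat h_2^\sharp \times \hat h_3^\sharp$ and $\hat h_k^\sharp \times \hat\varpi_0^\sharp$, and then appeals to the standard $\GG_2$ package on $\R^7$ for the rest. Your write-up is in fact more detailed than the paper's, which simply cites the flat model and a reference for the verification.
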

\begin{proof}
Define
\begin{equation*}
\hat h_1 = \hat h_3^{\sharp} \lrcorner ( \hat h_2^{\sharp} \lrcorner \varphi), \qquad \hat \varpi_k = \hat \varpi_0^{\sharp} \lrcorner ( \hat h_k^{\sharp} \lrcorner \varphi) \quad \text{for $k=1, 2, 3$.}
\end{equation*}
It then follows from the description of the standard $\GG_2$ package on $\R^7$ that $\varphi$ and $\ast_{\varphi} \varphi$ are given by~\eqref{eq:canonical-3form} and~\eqref{eq:canonical-4form}, respectively. (See~\cite{K-intro}, for example.)
\end{proof}

We say that such a coframe is $\GG_2$-adapted, because in this oriented orthonormal coframe the $3$-form and $4$-form agree with the standard versions in the Euclidean $\R^7$. Moreover, such a coframe is also adapted to the coassociative fibration structure, because the vector fields $\{ \varpi_0^{\sharp}, \varpi_1^{\sharp}, \varpi_2^{\sharp}, \varpi_3^{\sharp} \}$ are a local oriented orthonormal frame for the vertical subbundle $V$ of $TM$, at every point, and the vector fields $\{ \hat h_1^{\sharp}, \hat h_2^{\sharp}, \hat h_3^{\sharp} \}$ are an oriented orthonormal frame for the orthogonal complement $V^{\perp}$, which is a bundle of associative subspaces. In general the distribution corresponding to $V^{\perp}$ is \emph{not integrable}.

\begin{remark}
Note that in this entire discussion, we have \emph{not} chosen, nor did we need to choose, an (Ehresmann) connection on the fibre bundle $M$, which would be a choice of complement $H$ to the vertical subbundle $V$ of $TM$. A canonical choice would be to take $H = V^{\perp}$, but we do not need to consider a connection on the total space $M$ in this paper. This should be compared to the work of Donaldson~\cite{Donaldson-KovalevLefschetz} in which a choice of connection is an integral part of the data used to encode the $\GG_2$-structure admitting a coassociative fibration.
\end{remark}

\begin{lem} \label{lemma:adapted-frame-2}
Let $X_1, X_2, X_3$ be local linearly independent \emph{vertical vector fields} on $M$. Define local 1-forms $\tilde h_1, \tilde h_2, \tilde h_3, \tilde \varpi$ on $M$ by
\begin{gather*}
\tilde h_1 = \varphi(X_2,X_3,\cdot), \quad \tilde h_2 = \varphi(X_3,X_1,\cdot), \quad \tilde h_3 = \varphi(X_1,X_2,\cdot),\\
\tilde \varpi_0 = \ast_{\varphi} \varphi (X_1, X_2, X_3, \cdot).
\end{gather*}
Then $\{ \tilde h_1, \tilde h_2, \tilde h_3 \}$ are local linearly independent horizontal 1-forms, although not necessarily orthogonal. Moreover, $\tilde \varpi_0$ is non-vanishing and is orthogonal to $\tilde h_k$ for $k=1, \ldots, 3$.
\end{lem}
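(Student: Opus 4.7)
The plan is to treat all four claims pointwise in the $\GG_2$-adapted coframe of Lemma~\ref{lemma:adapted-frame}, in which the vertical subbundle $V$ is spanned by $\{\hat\varpi_k^{\sharp}\}_{k=0}^3$ and $V^\perp$ by $\{\hat h_j^{\sharp}\}_{j=1}^3$, and to exploit the cross product formulation $\varphi(X,Y,Z)=g_\varphi(X\times Y,Z)$, so that $\tilde h_1=(X_2\times X_3)^\flat$ and cyclically for $\tilde h_2,\tilde h_3$. Horizontality of the $\tilde h_k$ is then immediate from coassociativity: for any vertical vector field $Y$, the triple $(X_2,X_3,Y)$ is tangent to a coassociative fibre, so $\tilde h_1(Y)=\varphi(X_2,X_3,Y)=0$ by Definition~\ref{dfn:coassociative}, and analogously for $\tilde h_2$ and $\tilde h_3$.

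For the linear independence of $\{\tilde h_1,\tilde h_2,\tilde h_3\}$, I would set $W=\Span\{X_1,X_2,X_3\}\subset V$, which is $3$-dimensional by hypothesis, and invoke the standard $\GG_2$-linear algebra fact that the cross product map $\Lambda^2 V\to V^\perp$ has kernel equal to the self-dual $2$-forms $\Lambda^+ V$ and restricts to an isomorphism $\Lambda^- V\xrightarrow{\sim} V^\perp$ (this can be read off directly from~\eqref{eq:canonical-3form}, where the coefficients of $\hat h_k$ are precisely the anti-self-dual basis $2$-forms on $V$). It then suffices to show $\Lambda^2 W\cap\Lambda^+ V=0$: choosing an orthonormal basis $\{n,e_1,e_2,e_3\}$ of $V$ with $n\perp W$, every non-zero self-dual $2$-form on $V$ has a non-trivial $n\w e_i$ component, whereas $\Lambda^2 W$ is spanned by $\{e_i\w e_j\}_{1\le i<j\le 3}$, none of which involve $n$. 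Hence $\{X_i\times X_j\}$ spans $V^\perp$, giving the claimed linear independence.

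For $\tilde\varpi_0$, I would inspect the formula~\eqref{eq:canonical-4form} for $\ast_\varphi\varphi$: each of the three mixed terms contains two horizontal factors, so contracting with the three vertical vectors $X_1,X_2,X_3$ annihilates them. This leaves $\tilde\varpi_0=X_3\lrcorner X_2\lrcorner X_1\lrcorner(\hat\varpi_0\w\hat\varpi_1\w\hat\varpi_2\w\hat\varpi_3)$. Writing $X_i=\sum_k A_{ik}\hat\varpi_k^{\sharp}$ with the $3\times 4$ matrix $A=(A_{ik})$ of rank $3$ by assumption, the coefficient of $\hat\varpi_j$ in $\tilde\varpi_0$ is, up to sign, the $3\times 3$ minor of $A$ obtained by deleting the column indexed by $j$; at least one such minor is non-zero, so $\tilde\varpi_0$ is nowhere vanishing. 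This same formula shows $\tilde\varpi_0^{\sharp}\in V$, whereas $\tilde h_k^{\sharp}\in V^\perp$ by horizontality, so $g_\varphi(\tilde h_k^{\sharp},\tilde\varpi_0^{\sharp})=0$, which is the required orthogonality.

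The main subtle step I anticipate is the linear independence argument, since it is the one place where the proof genuinely exploits the self-dual/anti-self-dual decomposition of $\Lambda^2 V$ induced by the coassociative condition, rather than merely the vanishing $\varphi|_N\equiv 0$ used for horizontality; everything else is essentially bookkeeping in the adapted coframe.
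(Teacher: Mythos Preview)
Your proof is correct. The horizontality, non-vanishing of $\tilde\varpi_0$, and orthogonality arguments are essentially the paper's (with your horizontality argument via $\varphi|_N\equiv 0$ being slightly more direct than the paper's, which reads it off from~\eqref{eq:canonical-3form} in the adapted coframe).

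The genuine difference is in the linear independence step. The paper does not use the self-dual/anti-self-dual decomposition of $\Lambda^2 V$; instead it invokes the general $\GG_2$ identity~\eqref{eq:cross.prod.g}, namely $\langle X_i\times X_j, X_i\times X_k\rangle = |X_i|^2\langle X_j,X_k\rangle - \langle X_i,X_j\rangle\langle X_i,X_k\rangle$, from which the Gram matrix of $\{X_2\times X_3, X_3\times X_1, X_1\times X_2\}$ is seen to be the adjugate of the Gram matrix of $\{X_1,X_2,X_3\}$, hence non-singular. This argument is valid for \emph{any} three linearly independent vectors in a $\GG_2$ manifold and makes no use of verticality. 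Your route, by contrast, exploits the coassociative structure specifically: the cross product restricted to $\Lambda^2 V$ has kernel $\Lambda^+ V$, and a $3$-dimensional $W\subset V$ has $\Lambda^2 W$ transverse to $\Lambda^+ V$. Your approach is more geometric and ties the argument to the fibration at hand; the paper's is purely algebraic and more portable, but leaves the ``straightforward'' Gram-matrix computation to the reader.
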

\begin{proof}
We observe that $\tilde h_1 = \varphi(X_2, X_3, \cdot) = (X_2 \times X_3)^{\sharp}$ and similarly for cyclic permutations of $1,2,3$. Thus it is enough to show that the set $\{ X_2 \times X_3, X_3 \times X_1, X_1 \times X_2 \}$ is linearly independent. From the fact~\cite[equation (3.71)]{K-intro} that
\begin{equation} \label{eq:cross.prod.g}
\langle X_i \times X_j, X_i \times X_k \rangle = |X_i|^2 \langle X_j, X_k \rangle - \langle X_i, X_j \rangle \langle X_i, X_k \rangle,
\end{equation}
it is straightforward to show that the linear independence of $\{ X_1, X_2, X_3 \}$ implies the linear independence of $\{ X_2 \times X_3, X_3 \times X_1, X_1 \times X_2 \}$. By completing to a local oriented frame $X_0, X_1, X_2, X_3$ for $V$, and using the fact that $\ast_{\varphi} \varphi$ calibrates the fibres of $V$, we deduce that $\tilde \varpi_0$ is non-vanishing.

Let $\{ \hat h_1, \hat h_2, \hat h_3, \hat \varpi_0,  \hat \varpi_1,  \hat \varpi_2,  \hat \varpi_3 \}$ be a $\GG_2$ adapted coframe as in Lemma~\ref{lemma:adapted-frame}. From equations~\eqref{eq:canonical-3form} and~\eqref{eq:canonical-4form} and the fact that the $X_i$ are vertical vector fields, it follows that the $\tilde h_i$ are in the span of $\{ \hat h_1, \hat h_2, \hat h_3 \}$ and that $\tilde \varpi$ is in the span of $\{ \hat \varpi_0, \hat \varpi_1, \hat \varpi_2, \hat \varpi_3\}$. Thus $\tilde \varpi$ is orthogonal to the $\tilde h_k$.
\end{proof}

\begin{remark}
Note from~\eqref{eq:cross.prod.g} that if $\{ X_1, X_2, X_3 \}$ are orthogonal then $\{ \tilde h_1, \tilde h_2, \tilde h_3 \}$ are also orthogonal. Also note that $\{ \tilde h_1, \tilde h_2, \tilde h_3, \tilde \varpi_0 \}$ are the exterior derivatives of the left hand sides of~\eqref{eq:mmm3} and~\eqref{eq:mmm4}.
\end{remark}

The results of this section are used later in the paper as follows. The coassociative fibres of $M$ are given as 1-parameter families of orbits of a group action by a 3-dimensional group $G$ which is either $\SO(3)$ or $\SU(2)$, where the generic orbits are 3-dimensional. Let $\{ X_1, X_2, X_3 \}$ be local vector fields on $M$ generating the group action. Since they are everywhere tangent to the orbits, they are vertical vector fields. From Lemma~\ref{lemma:adapted-frame-2} we construct the local linearly independent horizontal 1-forms $\{ \tilde h_1, \tilde h_2, \tilde h_3 \}$. In fact we only need two of these horizontal 1-forms. Using the metric $g_{\varphi}$ we can thus obtain an orthonormal set $\{ \hat h_1, \hat h_2 \}$ of horizontal 1-forms. Lemma~\ref{lemma:adapted-frame-2} also gives us a nonvanishing 1-form $\tilde \varpi$ that is orthogonal to the $\hat h_k$, and by using the metric $g_{\varphi}$ we get a unit length 1-form $\hat \varpi_0$ that is orthogonal to the $\hat h_k$. Then applying Lemma~\ref{lemma:adapted-frame} allows us to construct $\{ \hat h_3, \hat \varpi_1, \hat \varpi_2, \hat \varpi_3 \}$ so that the metric $g_{\varphi}$ and volume form $\vol_{\varphi}$ are given by~\eqref{eq:canonical-metric} and the 3-form $\varphi$ and dual 4-form $\ast_{\varphi} \varphi$ are given by~\eqref{eq:canonical-3form} and~\eqref{eq:canonical-4form}, respectively.

\subsection{Hypersymplectic geometry} \label{sub:hypersymplectic.intro}

The induced geometric structure on each of the coassociative fibres for the three Bryant--Salamon manifolds is a \emph{hypersymplectic} structure. This structure was first introduced by Donaldson~\cite{Donaldson}. Another good reference is~\cite{Fine-Yao}.

\begin{dfn}\label{dfn:hypersymplectic}
A \emph{hypersymplectic triple} on an orientable 4-manifold $N$ is a triple $(\omega_1,\omega_2,\omega_3)$ of 2-forms  on $N$ such that
$$\d\omega_i=0\;\text{for $i=1,2,3$}\quad\text{and}\quad \omega_i\wedge\omega_j=2 Q_{ij} \nu,$$
where $\nu$ is an arbitrary volume form and $(Q_{ij})$ is a positive definite symmetric matrix of functions. Such a hypersymplectic triple is a \emph{hyperk\"ahler triple} if and only if we can choose $\nu$ such that $Q_{ij}=\delta_{ij}$, up to constant rotations and scalings of $(\omega_1,\omega_2,\omega_3)$.
\end{dfn}

A torsion-free $\GG_2$ structure on $M$ induces a hypersymplectic triple on a coassociative submanifold using the following elementary result.

\begin{lem}\label{lem:hypersymplectic}
Let $(M,\varphi)$ be a $\GG_2$ manifold, let $N$ be a coassociative 4-fold in $(M,\varphi)$ and suppose that $n_1,n_2,n_3$ are a linearly independent triple of normal vector fields on $N$ such that $(\mathcal{L}_{n_k}\varphi)|_N=0$ for $k=1,2,3$. (Here we extend $n_1, n_2, n_3$ to vector fields on an open neighbourhood of $N$ in $M$ in order to compute $\mathcal{L}_{n_k} \varphi$, but the restriction $(\mathcal{L}_{n_k}\varphi)|_N$ is independent of this extension.) Then the triple $(\omega_1,\omega_2,\omega_3)$ given by $\omega_k=(n_k\lrcorner\varphi)|_N$ is a hypersymplectic triple on $N$.
\end{lem}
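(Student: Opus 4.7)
The plan is to prove the two claims of the lemma in turn: closedness of each $\omega_k$, and the hypersymplectic property $\omega_i\wedge\omega_j = 2Q_{ij}\nu$ with $(Q_{ij})$ symmetric positive definite.

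For closedness, exterior differentiation commutes with pullback to $N$, so it suffices to show that $d(n_k\lrcorner\varphi)|_N = 0$ on an extension of $n_k$ to a neighbourhood of $N$. By Cartan's magic formula, $d(n_k\lrcorner\varphi) = \mathcal{L}_{n_k}\varphi - n_k\lrcorner d\varphi = \mathcal{L}_{n_k}\varphi$, using that $\varphi$ is torsion-free so $d\varphi = 0$. Restricting to $N$ and applying the hypothesis $(\mathcal L_{n_k}\varphi)|_N = 0$ yields $d\omega_k = 0$.

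For the hypersymplectic property, the plan is to establish, pointwise, the identity
\begin{equation*}
\omega_k\wedge\omega_l = -2\,\langle n_k, n_l\rangle_{g_\varphi}\,\vol_N,
\end{equation*}
where $\vol_N$ is the Riemannian volume form on $N$ induced by $g_\varphi$ and the coassociative orientation. Choosing $\nu = -\vol_N$ then yields $Q_{kl} = \langle n_k, n_l\rangle_{g_\varphi}$, which is the Gram matrix of $n_1, n_2, n_3$ and therefore symmetric positive definite by the linear independence assumption. This choice of $\nu$ is permitted because Definition~\ref{dfn:hypersymplectic} only requires $\nu$ to be an arbitrary volume form.

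To verify the identity, at any $p\in N$ I would choose a $\GG_2$-adapted orthonormal basis $\{e_1, e_2, e_3, f_0, f_1, f_2, f_3\}$ of $T_p M$ with dual coframe $\{e^1, e^2, e^3, f^0, f^1, f^2, f^3\}$ such that $T_p N = \Span\{f_0, \ldots, f_3\}$ and $\varphi_p$ takes the standard form~\eqref{eq:varphi.R7} in this coframe; this is possible since $N$ is coassociative. Since the $n_k$ are normal to $N$, we may write $n_k|_p = \sum_{i=1}^{3} A_{ki}\, e_i$ for some coefficients $A_{ki}$. A direct calculation using~\eqref{eq:varphi.R7} and~\eqref{eq:omegas-R7} gives $(e_i\lrcorner\varphi_p)|_{T_pN} = \omega_i$, hence $\omega_k|_p = \sum_i A_{ki}\, \omega_i$ in the model. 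The anti-self-duality relations $\omega_i\wedge\omega_j = -2\delta_{ij}\, f^0\wedge f^1\wedge f^2\wedge f^3$ then give $\omega_k\wedge\omega_l|_p = -2(AA^T)_{kl}\,\vol_N|_p$, and $(AA^T)_{kl} = \langle n_k, n_l\rangle_{g_\varphi}$ because $\{e_1, e_2, e_3\}$ is orthonormal.

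The argument is essentially a pointwise linear-algebra computation in the $\GG_2$ model, so I do not anticipate any serious obstacle; the main point requiring care is the sign, reflecting the fact that the $\omega_i$ are anti-self-dual with respect to the coassociative orientation of $T_p N$, which forces $\nu$ to be the opposite of the coassociative volume form in order for $(Q_{ij})$ to be positive definite.
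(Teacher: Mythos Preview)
Your proof is correct and follows essentially the same approach as the paper: closedness via Cartan's formula and $\d\varphi=0$, and positive definiteness by a pointwise computation in a $\GG_2$-adapted frame yielding $Q=AA^T$. You are in fact slightly more explicit than the paper in tracking the sign (correctly noting that the anti-self-dual model forms satisfy $\omega_i\wedge\omega_j=-2\delta_{ij}\vol_N$, so one must take $\nu=-\vol_N$), and in identifying $Q_{kl}=\langle n_k,n_l\rangle_{g_\varphi}$ as the Gram matrix.
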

\begin{proof}
Since $\d$ commutes with restriction to $N$ and $\d \varphi = 0$, the hypotheses imply that $\omega_1, \omega_2, \omega_3$ are closed 2-forms on $N$. Thus we need only establish that given any volume form $\nu$ on $N$, the $3 \times 3$ symmetric matrix $Q$ defined by $\omega_i \wedge \omega_j = 2 Q_{ij} \nu$ is positive definite.

We use the notation of Lemma~\ref{lemma:adapted-frame}, which remains valid on the single coassociative submanifold $N$ of $M$. Let $\tilde{n}_1, \tilde{n}_2, \tilde{n}_2$ be obtained from $n_1, n_2, n_3$ by orthonormalization, where we can assume without loss of generality that they are also oriented, since the normal bundle of a coassociative submanifold $N$ in $M$ comes equipped with a preferred orientation. In terms of the canonical expression~\eqref{eq:canonical-3form} for $\varphi$ this means that $\{ \tilde{n}_1, \tilde{n}_2, \tilde{n}_3 \}$ is dual to the horizontal coframe $\{ \hat h_1, \hat h_2, \hat h_3 \}$. Then if we define $\tilde \omega_k = \tilde{n}_k \lrcorner \varphi$, it is clear that $\tilde \omega_i \w \tilde \omega_j = 2 \delta_{ij} \nu$ where $\nu = \hat{\varpi}_0 \w \hat{\varpi}_1 \w \hat{\varpi}_2 \w \hat{\varpi}_3$.

Using summation convention, there exists an invertible $3 \times 3$ matrix $A$ such that $n_i = A_{ik} \tilde{n}_k$. Thus we have $\omega_i = A_{ik} \tilde \omega_k$ and hence
\begin{equation*}
\omega_i \w \omega_j = A_{ik} A_{jl} \tilde \omega_k \w \tilde \omega_l = 2 A_{ik} A_{jk} \nu.
\end{equation*}
Thus $Q_{ij} = (AA^T)_{ij}$ which is clearly positive definite as $A$ is invertible.
\end{proof}

\begin{remark}
For Lemma~\ref{lem:hypersymplectic} to hold, it suffices for $\varphi$ to be a closed $\GG_2$ structure.
\end{remark}

We emphasize that the hypersymplectic triple on $N$ induced by $\varphi$ \emph{depends on the choice} $n_1, n_2, n_3$ of linearly independent normal vector fields satisfying the conditions $(\mathcal{L}_{n_k}\varphi)|_N=0$.

For the Bryant--Salamon manifold $\SS(\mathcal{S}^3)$ we apply Lemma~\ref{lem:hypersymplectic} directly in~\S\ref{sec:hypersymplectic.S3} to determine the induced hypersymplectic structure on the fibres. However, for the Bryant--Salamon manifold $\Lambda^2_- (T^* \mathcal{S}^4)$, the way we use Lemma~\ref{lem:hypersymplectic} in~\S\ref{sec:hypersymplectic.S4} is as follows. Let $\{ \hat h_1, \hat h_2, \hat h_3, \hat \varpi_0, \hat \varpi_1, \hat \varpi_2, \hat \varpi_3 \}$ be as in Lemma~\ref{lemma:adapted-frame}. Suppose that each $\hat h_k$ is \emph{conformally closed} in the sense that $\hat h_k$ can be written as $\hat h_k = |h_k|^{-1} h_k$ where $\d h_k = 0$. Then we can write~\eqref{eq:canonical-3form} as
\begin{equation} \label{eq:canonical-3form-variant}
\varphi = F h_1 \w h_2 \w h_3 + h_1 \w \beta_1 + h_2 \w \beta_2 + h_3 \w \beta_3,
\end{equation} 
where $F = (|h_1| \, |h_2| \, |h_3|)^{-1}$ and $\beta_k = |h_k|^{-1} (\hat \varpi_0 \w \hat \varpi_k - \hat \varpi_i \w \hat \varpi_j)$ for $i,j,k$ a cyclic permutation of $1,2,3$. Let $n_1, n_2, n_3$ be the frame of normal vector fields that is dual to the frame $h_1, h_2, h_3$ of horizontal 1-forms. That is, $h_i(n_j) = \delta_{ij}$. Then we have $(n_k \lrcorner \varphi)|_N = \beta_k|_N$ and
\begin{equation*}
(\mathcal{L}_{n_k} \varphi)|_N = \d (n_k \lrcorner \varphi)|_N = \d ( \beta_k |_N).
\end{equation*}
Using the fact that $\d \varphi = 0$ and $\d h_k = 0$, we have
\begin{equation*}
\d F \w h_1 \w h_2 \w h_3 - h_1 \w \d \beta_1 - h_2 \w \d \beta_2 - h_3 \w \d \beta_3 = 0.
\end{equation*}
Taking the interior product of the above with $n_k$ and restricting to $N$ gives $(\d \beta_k)|_N = \d (\beta_k|_N) = 0$. We summarize the above discussion as follows.

\begin{cor} \label{cor:hypersymplectic}
Suppose $\varphi$ is a closed $\GG_2$ structure on $M$ and $N$ is a coassociative submanifold of $M$, such that on $N$ we can write $\varphi$ in the form~\eqref{eq:canonical-3form-variant} where the $1$-forms $h_1, h_2, h_3$ are closed on $M$ and satisfy $h_k|_N = 0$. Then the triple $(\omega_1, \omega_2, \omega_3)$ defined by $\omega_k = \beta_k|_N$ is a hypersymplectic triple on $N$.
\end{cor}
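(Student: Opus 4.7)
The plan is to verify directly the two defining conditions of a hypersymplectic triple in Definition~\ref{dfn:hypersymplectic}: closedness of each $\omega_k$, and the existence of a volume form $\nu$ for which the associated matrix $Q$ is positive definite. The argument is essentially a bookkeeping adaptation of Lemma~\ref{lem:hypersymplectic} to the present setting, exploiting the fact that the 1-forms $h_k$ are closed on all of $M$ and vanish identically on $N$.

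First I would establish closedness of $\omega_k = \beta_k|_N$. Applying $d$ to~\eqref{eq:canonical-3form-variant} and invoking both $d\varphi = 0$ and the hypothesis $dh_k = 0$, I obtain
\[
0 = dF \wedge h_1 \wedge h_2 \wedge h_3 - h_1 \wedge d\beta_1 - h_2 \wedge d\beta_2 - h_3 \wedge d\beta_3.
\]
Since the $h_k$ are linearly independent and vanish on $N$, I can choose local vector fields $n_k$ on a tubular neighbourhood of $N$ satisfying the duality condition $h_i(n_j) = \delta_{ij}$. Contracting the displayed identity with $n_k$ and restricting to $N$, every surviving term either carries a factor $h_i|_N = 0$ or is annihilated by the contraction; only $-d\beta_k|_N$ remains, forcing $d\beta_k|_N = 0$. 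Since pullback by the inclusion commutes with $d$, this gives $d\omega_k = d(\beta_k|_N) = 0$.

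Next I would verify the non-degeneracy condition via Lemma~\ref{lemma:adapted-frame}. On $N$, setting $\hat h_k = |h_k|^{-1} h_k$ and comparing~\eqref{eq:canonical-3form-variant} with~\eqref{eq:canonical-3form} identifies $F = (|h_1||h_2||h_3|)^{-1}$ together with
\[
\beta_k = |h_k|^{-1}(\hat\varpi_0 \wedge \hat\varpi_k - \hat\varpi_i \wedge \hat\varpi_j)
\]
for $(i,j,k)$ a cyclic permutation of $(1,2,3)$. In the expansion of $\omega_i \wedge \omega_j|_N$, all cross-terms for $i \ne j$ contain a repeated $\hat\varpi_l$ and thus vanish, while a short direct computation yields $\omega_k \wedge \omega_k = -2|h_k|^{-2}\,\hat\varpi_0 \wedge \hat\varpi_1 \wedge \hat\varpi_2 \wedge \hat\varpi_3|_N$. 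Choosing $\nu = -\hat\varpi_0 \wedge \hat\varpi_1 \wedge \hat\varpi_2 \wedge \hat\varpi_3|_N$ as the reference volume form (permitted since Definition~\ref{dfn:hypersymplectic} allows any volume form) gives $\omega_i \wedge \omega_j = 2Q_{ij}\nu$ with $Q = \mathrm{diag}(|h_1|^{-2}, |h_2|^{-2}, |h_3|^{-2})$, manifestly positive definite.

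There is no serious analytic obstacle; once~\eqref{eq:canonical-3form-variant} is in hand the calculation is purely algebraic. The only point requiring care is the sign of the volume form: the natural orientation $\hat\varpi_0 \wedge \hat\varpi_1 \wedge \hat\varpi_2 \wedge \hat\varpi_3|_N$ coming from the coassociative calibration $*_\varphi \varphi|_N$ makes $Q$ negative definite, so one must reverse the orientation on $N$ before reading off positivity. This is harmless for the hypersymplectic structure, since Definition~\ref{dfn:hypersymplectic} only requires existence of \emph{some} compatible volume form, but it is worth flagging to avoid a sign error.
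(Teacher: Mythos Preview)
Your proof is correct and follows essentially the same route as the paper. The closedness argument is identical: the paper differentiates~\eqref{eq:canonical-3form-variant}, contracts with the dual vector fields $n_k$ satisfying $h_i(n_j)=\delta_{ij}$, and restricts to $N$ to obtain $d(\beta_k|_N)=0$. For positive-definiteness, the paper observes that $(n_k\lrcorner\varphi)|_N=\beta_k|_N$ and $(\mathcal{L}_{n_k}\varphi)|_N=d(\beta_k|_N)=0$, so that Lemma~\ref{lem:hypersymplectic} applies directly; you instead compute $Q$ explicitly from the formula $\beta_k=|h_k|^{-1}(\hat\varpi_0\wedge\hat\varpi_k-\hat\varpi_i\wedge\hat\varpi_j)$ and read off the diagonal matrix. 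Both amount to the same algebra (your computation is essentially what underlies the proof of Lemma~\ref{lem:hypersymplectic} with the change-of-basis matrix $A$ diagonal), and your remark about the sign of the volume form is a useful clarification.
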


\begin{remark}
A hypersymplectic structure $( \omega_1, \omega_2, \omega_3)$ on an orientable 4-manifold $N$ naturally determines a Riemannian metric on $N$. (See~\cites{Donaldson, Fine-Yao} for details.) However, when the hypersymplectic structure is induced as in Corollary~\ref{cor:hypersymplectic} on a coassociative submanifold $N$ of a $\GG_2$ manifold $(M, \varphi, g_{\varphi})$, then the hypersymplectic metric is in general \emph{not} the same as the induced Riemannian metric $g_{\varphi}|_N$ on $N$ obtained by restriction from $g_{\varphi}$ on $M$. In fact this difference is observed in all the cases considered in this paper.
\end{remark}

\section{Bryant--Salamon \texorpdfstring{G\textsubscript{2}}{G2} manifolds} \label{sec:BS-manifolds}

In this section we recall the construction of the Bryant--Salamon $\GG_2$ manifolds from~\cite{BryantSalamon}, as these are the central objects of our study, and it allows us to introduce some key notation.

\subsection{The round 3-sphere}

Let $\mathcal{S}^3$ be the 3-sphere endowed with a Riemannian metric with constant curvature $\kappa>0$. Since $\mathcal{S}^3$ is spin (as every oriented 3-manifold is spin), we may consider the spinor bundle $M^7=\mathbb{S}(\mathcal{S}^3)$, which is topologically trivial (that is, $M$ is diffeomorphic to the product $\R^4 \times \mathcal{S}^3$). It is on this bundle $M$ that Bryant--Salamon construct torsion-free $\GG_2$-structures, as well as on the cone $M_0=\R^+\times \mathcal{S}^3\times\mathcal{S}^3$, as we briefly explain in this section.

\begin{remark}
This construction also works for 3-dimensional space forms, but if the curvature $\kappa\leq 0$ we do not obtain smooth complete holonomy $\GG_2$ metrics, and for $\kappa>0$ we can reduce to the case of $\mathcal{S}^3$ by taking a finite cover.
\end{remark}

\subsubsection{Coframe, spin connection, and vertical 1-forms}

Let $\sigma_1,\sigma_2,\sigma_3$ define the standard left-invariant coframe on $\mathcal{S}^3\cong\SU(2)$ with
\begin{equation}\label{eq:d-sigmas-S3}
\d\left(\begin{array}{c} \sigma_1\\ \sigma_2\\ \sigma_3\end{array}\right)=\left(\begin{array}{c} \sigma_2\w\sigma_3\\ \sigma_3\w\sigma_1\\ \sigma_1\w\sigma_2\end{array}\right).
\end{equation}
We can rewrite~\eqref{eq:d-sigmas-S3} as 
\begin{equation}\label{eq:spin-connection}
\d\left(\begin{array}{c} \sigma_1\\ \sigma_2\\ \sigma_3\end{array}\right)
=-\left(\begin{array}{ccc} 0 & -2\rho_3 & 2\rho_2\\ 2\rho_3 & 0 & -2\rho_1 \\
-2\rho_2 & 2\rho_1 & 0\end{array} \right)\w \left(\begin{array}{c} \sigma_1\\ \sigma_2\\ \sigma_3\end{array}\right)
\end{equation}
where 
\begin{equation}\label{eq:rhos-S3}
\rho_j=-\frac{1}{4}\sigma_j\quad\text{for $j=1,2,3$}
\end{equation}
describes the spin connection on $\mathbb{S}(\mathcal{S}^3)$, which explains the factor of 2 in~\eqref{eq:spin-connection}. We see that
\begin{equation}\label{eq:d.rho-S3.1}
\d\left(\begin{array}{c} \rho_1\\ \rho_2\\ \rho_3\end{array}\right)+2\left(\begin{array}{c} \rho_2\wedge\rho_3\\ \rho_3\w\rho_1\\ \rho_1\w\rho_2\end{array}\right)=-\frac{1}{8}\left(\begin{array}{c} \sigma_2\w\sigma_3\\ \sigma_3\w\sigma_1\\ \sigma_1\w\sigma_2\end{array}\right).
\end{equation} 
Consider the metric
$$g_{\mathcal{S}^3 }=\frac{1}{4\kappa}(\sigma_1^2+\sigma_2^2+\sigma_3^2)$$
on $\mathcal{S}^3$. Then~\eqref{eq:d.rho-S3.1} is equivalent to the statement that this metric
has constant sectional curvature $\kappa$. We therefore set
\begin{equation}\label{eq:b-S3}
b_j=\frac{1}{2\sqrt{\kappa}}\sigma_j=-\frac{2}{\sqrt{\kappa}}\rho_j\quad\text{for $j=1,2,3$}
\end{equation}
to obtain an orthonormal coframe on $\mathcal{S}^3$, so that using~\eqref{eq:d-sigmas-S3} and~\eqref{eq:rhos-S3} we have
\begin{equation}\label{eq:d-b-S3}
\d\left(\begin{array}{c} b_1\\ b_2\\ b_3\end{array}\right)=2\left(\begin{array}{c}
b_3\wedge\rho_2-b_2\wedge \rho_3\\ b_1\w\rho_3- b_3\wedge \rho_1\\ b_2\wedge\rho_1- b_1\wedge \rho_2 \end{array}\right)
\end{equation}
and
\begin{equation} \label{eq:g.vol.S3}
g_{\mathcal{S}^3}=b_1^2+b_2^2+b_3^2\quad\text{and}\quad \vol_{\mathcal{S}^3}=b_1\wedge b_2\wedge b_3.
\end{equation}
We can pullback $b_1,b_2,b_3$ to the bundle $\mathbb{S}(\mathcal{S}^3)$ to be horizontal 1-forms using the natural projection $\pi:\mathbb{S}(\mathcal{S}^3)\to\mathcal{S}^3$. (We omit the pullback notation here since the bundle is trivial.) Using~\eqref{eq:b-S3} we can rewrite~\eqref{eq:d.rho-S3.1} as
\begin{equation}\label{eq:d.rho-S3.2}
\d\left(\begin{array}{c} \rho_1\\ \rho_2\\ \rho_3\end{array}\right)+2\left(\begin{array}{c} \rho_2\wedge\rho_3\\ \rho_3\w\rho_1\\ \rho_1\w\rho_2\end{array}\right)=-\frac{\kappa}{2}\left(\begin{array}{c} b_2\w b_3\\ b_3\w b_1\\ b_1\w b_2\end{array}\right).
\end{equation} 

We now let $(a_0,a_1,a_2,a_3)$ be linear coordinates on the $\R^4$ fibres of $\mathbb{S}(\mathcal{S}^3)$ and define the vertical 1-forms on the spinor bundle by
\begin{equation}\label{eq:zetas-S3}
\begin{gathered}
\zeta_0=\d a_0+a_1\rho_1+a_2\rho_2+a_3\rho_3,\quad \zeta_1=\d a_1-a_0\rho_1+a_3\rho_2-a_2\rho_3,\\
\zeta_2=\d a_2-a_3\rho_1-a_0\rho_2+a_1\rho_3,\quad \zeta_3=\d a_3+a_2\rho_1-a_1\rho_2-a_0\rho_3.
\end{gathered}
\end{equation}
For use below, one may compute using~\eqref{eq:d.rho-S3.2} and~\eqref{eq:zetas-S3} that
\begin{equation}\label{eq:d-zetas-S3}
\begin{aligned}
\d\zeta_0&=\quad\!\!\zeta_1\wedge\rho_1+\zeta_2\wedge\rho_2+\zeta_3\wedge\rho_3- \frac{\kappa}{2}(a_1 b_2\wedge b_3+a_2 b_3\wedge b_1+a_3 b_1\wedge b_2),\\
\d\zeta_1&=-\zeta_0\wedge\rho_1+\zeta_3\wedge\rho_2-\zeta_2\wedge\rho_3+ \frac{\kappa}{2}(a_0 b_2\wedge b_3-a_3 b_3\wedge b_1+a_2 b_1\wedge b_2),\\
\d\zeta_2&=-\zeta_3\wedge\rho_1-\zeta_0\wedge\rho_2+\zeta_1\wedge\rho_3+ \frac{\kappa}{2}(a_3 b_2\wedge b_3+a_0 b_3\wedge b_1-a_1 b_1\wedge b_2),\\
\d\zeta_3&=\quad\!\zeta_2\wedge\rho_1-\zeta_1\wedge\rho_2-\zeta_0\wedge\rho_3+ \frac{\kappa}{2}(-a_2 b_2\wedge b_3+a_1 b_3\wedge b_1+a_0 b_1\wedge b_2).
\end{aligned}
\end{equation}

\subsubsection{Torsion-free \texorpdfstring{G\textsubscript{2}}{G2}-structures}

Let $r^2=a_0^2+a_1^2+a_2^2+a_3^2$ be the squared distance from the zero section in the fibres of $M$. We define vertical 2-forms in terms of the vertical 1-forms in~\eqref{eq:zetas-S3} by
\begin{equation}\label{eq:Omegas-S3}
\Omega_1=\zeta_0\wedge\zeta_1-\zeta_2\wedge\zeta_3,\quad 
\Omega_2=\zeta_0\wedge\zeta_2-\zeta_3\wedge\zeta_1,\quad 
\Omega_3=\zeta_0\wedge\zeta_3-\zeta_3\wedge\zeta_1.
\end{equation}
Notice that these 2-forms are anti-self-dual on the $\R^4$ fibres with respect to any metric conformal to $\zeta_0^2+\zeta_1^2+\zeta_2^2+\zeta_3^2$ and the orientation $\zeta_0\wedge\zeta_1\wedge\zeta_2\wedge\zeta_3$, which is the vertical volume form on the fibres.

Let $c_0>0$ and $c_1>0$. Bryant--Salamon defined the following $\GG_2$-structure on $M$:
\begin{equation}\label{eq:phi.S3.general}
\varphi_{c_0,c_1,\kappa}= 3\kappa(c_0+c_1 r^2) \vol_{\mathcal{S}^3}+4c_1 (b_1\wedge\Omega_1+b_2\wedge\Omega_2+b_3\wedge\Omega_3).
\end{equation}
The metric induced by $\varphi_{c_0,c_1,\kappa}$ is given by
\begin{equation}\label{eq:g.S3.general}
g_{c_0,c_1,\kappa}= (3\kappa)^{\frac{2}{3}}(c_0+c_1 r^2)^{\frac{2}{3}}g_{\mathcal{S}^3}+4\left(\frac{c_1^3}{3\kappa}\right)^{\frac{1}{3}}(c_0+c_1r^2)^{-\frac{1}{3}}(\zeta_0^2+\zeta_1^2+\zeta_2^2+\zeta_3^2)
\end{equation}
and the Hodge dual of $\varphi_{c_0,c_1,\kappa}$ is given by:
\begin{align}
*_{\varphi_{c_0,c_1,\kappa}}\varphi_{c_0,c_1,\kappa}&= 16\left(\frac{c_1^3}{3\kappa}\right)^{\frac{2}{3}}(c_0+c_1r^2)^{-\frac{2}{3}}\zeta_0\wedge\zeta_1\wedge\zeta_2\wedge\zeta_3\nonumber\\
&\quad-4(3\kappa c_1^3)^{\frac{1}{3}}(c_0+c_1 r^2)^{\frac{1}{3}}(b_2\wedge b_3\wedge\Omega_1+b_3\wedge b_1\wedge\Omega_2+b_1\wedge b_2\wedge\Omega_3).\label{eq:starphi.S3.general}
\end{align}
Finally, the volume form is given by
\begin{equation*}
\vol_{c_0,c_1,\kappa}=16(3\kappa c_1^6)^{\frac{1}{3}}(c_0+c_1r^2)^{\frac{1}{3}}\vol_{\mathcal{S}^3}\wedge\zeta_0\wedge\zeta_1\wedge\zeta_2\wedge\zeta_3.
\end{equation*}

\begin{lem}\label{lem:torsion-free.S3}
The $\GG_2$-structure $\varphi_{c_0,c_1,\kappa}$ in~\eqref{eq:phi.S3.general} is torsion-free.
\end{lem}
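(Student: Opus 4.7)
The plan is to verify the two closedness conditions $d\varphi_{c_0,c_1,\kappa}=0$ and $d*_{\varphi_{c_0,c_1,\kappa}}\varphi_{c_0,c_1,\kappa}=0$ by direct computation using the structure equations already recorded: \eqref{eq:d-b-S3}, \eqref{eq:d.rho-S3.2}, and \eqref{eq:d-zetas-S3}. A preliminary identity I would establish first is
\[
dr^2 = 2(a_0\zeta_0 + a_1\zeta_1 + a_2\zeta_2 + a_3\zeta_3),
\]
which follows from \eqref{eq:zetas-S3} once one observes that the $\rho_j$-contributions to $\sum_i a_i\zeta_i$ cancel pairwise by antisymmetry.

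For $d\varphi=0$, since $\vol_{\mathcal{S}^3}$ is the pullback of a top-degree form from the base and hence closed, differentiating \eqref{eq:phi.S3.general} yields
\[
d\varphi_{c_0,c_1,\kappa} = 6\kappa c_1\sum_i a_i\zeta_i\wedge\vol_{\mathcal{S}^3} + 4c_1\sum_j\bigl(db_j\wedge\Omega_j - b_j\wedge d\Omega_j\bigr).
\]
I would expand $db_j\wedge\Omega_j$ via \eqref{eq:d-b-S3} and compute $d\Omega_j$ from \eqref{eq:Omegas-S3} using \eqref{eq:d-zetas-S3}. Each $d\zeta_i$ splits into a \emph{connection part} linear in the $\rho_k$ and a \emph{curvature part} of the form $\tfrac{\kappa}{2}a_i\,b_j\wedge b_k$. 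A term-by-term comparison shows that the $\rho$-type contributions to $\sum_j b_j\wedge d\Omega_j$ exactly cancel $\sum_j db_j\wedge\Omega_j$; this cancellation is the algebraic expression of the fact that the anti-self-dual triple $(\Omega_1,\Omega_2,\Omega_3)$ is covariantly constant for the lifted spin connection. The remaining curvature contributions collapse, upon wedging with $b_j$, to $\tfrac{3\kappa}{2}\sum_i a_i\zeta_i\wedge\vol_{\mathcal{S}^3}$, and the coefficients $3\kappa$ and $4c_1$ in \eqref{eq:phi.S3.general} are calibrated precisely so that $6\kappa c_1$ and $-4c_1\cdot\tfrac{3\kappa}{2}$ cancel.

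For $d*_\varphi\varphi=0$ the same strategy applies to \eqref{eq:starphi.S3.general}. The derivative of the scalar prefactor on the first summand produces $dr^2\wedge\zeta_0\wedge\zeta_1\wedge\zeta_2\wedge\zeta_3$, which vanishes since there are only four independent vertical $\zeta_i$. When one differentiates $\zeta_0\wedge\zeta_1\wedge\zeta_2\wedge\zeta_3$ itself, the connection parts in each $d\zeta_i$ are linear in $\zeta_m$ with $m\neq i$ and so one wedge factor is always repeated, leaving only the curvature contributions. The differential of the second summand splits into a scalar-prefactor derivative, contributions from $d(b_i\wedge b_j)$ computed via \eqref{eq:d-b-S3}, and contributions from $d\Omega_k$ as above. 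The $\rho$-type terms cancel among themselves in the same pattern as for $d\varphi$, and the remaining curvature-type terms cancel precisely when the two coefficients in \eqref{eq:starphi.S3.general} bear the ratio dictated by the $\GG_2$ metric relation \eqref{eq:metric-from-form}, which is exactly the ratio recorded there.

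The main obstacle is the volume of signed bookkeeping, particularly for $d*_\varphi\varphi$, which produces on the order of thirty summands after all expansions. This is tamed by systematically applying dimension-induced vanishings ($b_i\wedge b_j\wedge b_k\wedge b_l = 0$ and any product of five $\zeta_i$ vanishes), $\rho_k\wedge\vol_{\mathcal{S}^3}=0$, and the orthogonality identity $\Omega_i\wedge\Omega_j = 2\delta_{ij}\,\zeta_0\wedge\zeta_1\wedge\zeta_2\wedge\zeta_3$, which collapse long strings of terms into a handful of independent monomials whose coefficients can then be directly compared.
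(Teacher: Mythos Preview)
Your proposal is correct and follows essentially the same approach as the paper: both proceed by direct verification of $d\varphi=0$ and $d*_{\varphi}\varphi=0$ using the structure equations~\eqref{eq:d-b-S3} and~\eqref{eq:d-zetas-S3}, the key identity $r\,dr=\sum_i a_i\zeta_i$, and the separation of each $d\Omega_j$ into connection-type ($\rho$) and curvature-type contributions. The paper carries out the intermediate computations slightly more explicitly---for instance, it records $d(b_1\wedge\Omega_1+b_2\wedge\Omega_2+b_3\wedge\Omega_3)=-\tfrac{3\kappa}{2}r\,dr\wedge\vol_{\mathcal{S}^3}$ and notes directly that each $b_i\wedge b_j$ is closed (rather than grouping this under ``$\rho$-type cancellations'')---but the logical structure and the identities invoked are the same.
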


\begin{proof}
We first observe from~\eqref{eq:zetas-S3} that
\begin{equation}\label{eq:dr-S3}
r\d r=a_0\d a_0+a_1\d a_1+a_2\d a_2+a_3\d a_3=a_0\zeta_0+a_1\zeta_1+a_2\zeta_2+a_3\zeta_3.
\end{equation}
Hence we have
\begin{equation}\label{eq:dr-z4}
r\d r \w \zeta_0 \w \zeta_1 \w \zeta_2 \w \zeta_3 = 0.
\end{equation}
We then compute from~\eqref{eq:d-zetas-S3},~\eqref{eq:Omegas-S3} and~\eqref{eq:dr-S3} that
\begin{align*}
b_1\w\d\Omega_1 &=b_1\w(2\rho_3\w\Omega_2-2\rho_2\w\Omega_3)+\frac{\kappa}{2}r\d r\w \vol_{\mathcal{S}^3},\\
b_2\w\d\Omega_2 &= b_2\w (2\rho_1\w\Omega_3-2\rho_3\w\Omega_1)+\frac{\kappa}{2}r\d r\w\vol_{\mathcal{S}^3},\\
b_3\w\d\Omega_3 &= b_3\w (2\rho_2\w\Omega_1-2\rho_1\w\Omega_2)+\frac{\kappa}{2}r\d r\w\vol_{\mathcal{S}^3}.
\end{align*}
Combining the above with~\eqref{eq:d-b-S3} gives that
\begin{equation*}
\d(b_1\w\Omega_1+b_2\w\Omega_2+b_3\w\Omega_3)=-\frac{3\kappa}{2}r\d r\w\vol_{\mathcal{S}^3}.
\end{equation*}
It then follows from~\eqref{eq:phi.S3.general} that
\begin{align*}
\d\varphi_{c_0,c_1,\kappa}=6\kappa c_1r\d r\w\vol_{\mathcal{S}^3}-4c_1\frac{3\kappa}{2}r\d r\w\vol_{\mathcal{S}^3}=0,
\end{align*}
so the $\GG_2$-structure is closed.

Turning to the coclosed condition, we obtain from~\eqref{eq:b-S3},~\eqref{eq:d-zetas-S3}, and~\eqref{eq:dr-S3} that
\begin{align*}
\d(\zeta_0\w\zeta_1\w\zeta_2\w\zeta_3)=\frac{\kappa}{2}r\d r\w (b_2\w b_3\w\Omega_1+b_3\w b_1\w\Omega_2+b_1\w b_2\w\Omega_3).
\end{align*}
We can also compute from~\eqref{eq:Omegas-S3},~\eqref{eq:d-zetas-S3}, and~\eqref{eq:b-S3} that
\begin{align*}
b_2\w b_3\w\d\Omega_1&= b_2\w b_3\w (2\rho_3\w\Omega_2-2\rho_2\w\Omega_3)=0,\\
b_3\w b_1\w\d\Omega_2&= b_3\w b_1\w (2\rho_1\w\Omega_3-2\rho_3\w\Omega_1)=0,\\
b_1\w b_2\w\d\Omega_3&= b_1\w b_2\w (2\rho_2\w\Omega_1-2\rho_1\w\Omega_2)=0.
\end{align*}
Therefore, since $b_i \w b_j$ is closed, we have
\[
\d(b_2\w b_3\w\Omega_1+b_3\w b_1\w\Omega_2+b_1\w b_2\w\Omega_3)=0.
\]
Combining these observations with~\eqref{eq:dr-z4}, taking the exterior derivative of~\eqref{eq:starphi.S3.general} yields
\begin{align*}
\d*_{\varphi_{c_0,c_1,\kappa}}\varphi_{c_0,c_1,\kappa}&=16\left(\frac{c_1^3}{3\kappa}\right)^{\frac{2}{3}}(c_0+c_1r^2)^{-\frac{2}{3}}\frac{\kappa}{2}r\d r\w (b_2\w b_3\w\Omega_1+b_3\w b_1\w\Omega_2+b_1\w b_2\w\Omega_3)\\
&-\frac{8}{3}(3\kappa c_1^3)^{\frac{1}{3}}c_1r\d r\w(b_2\w b_3\w\Omega_1+b_3\w b_1\w\Omega_2+b_1\w b_2\w\Omega_3)=0,
\end{align*}
so that $\varphi_{c_0,c_1,\kappa}$ is indeed torsion-free.
\end{proof}

Bryant--Salamon show that the metric $g_{c_0,c_1,\kappa}$ in~\eqref{eq:g.S3.general} in fact has holonomy equal to $\GG_2$. It is also clear from~\eqref{eq:phi.S3.general} that the zero section $\mathcal{S}^3$ is associative in $(M,\varphi_{c_0,c_1,\kappa})$.

\subsubsection{Asymptotically conical \texorpdfstring{G\textsubscript{2}}{G2} manifolds}

Bryant--Salamon show that, although there appear to be three positive parameters in the construction of torsion-free $\GG_2$-structures on $M$ (namely $c_0$, $c_1$, and $\kappa$), one can always rescale and reparametrize coordinates in the fibres of $M$ so that, in reality, there is only one  true parameter in this family of $\GG_2$-structures.

Concretely, we can assume that $\kappa=1$ and for $c>0$ we can let
$$c_0= \sqrt{3} c\quad\text{and}\quad c_1=\sqrt{3}$$
and define
$$\varphi_c=\varphi_{\sqrt{3} c,\sqrt{3},1}.$$
We see from~\eqref{eq:phi.S3.general} that
\begin{align}\label{eq:phic.S3}
\varphi_c&=3\sqrt{3}(c+r^2)\vol_{\mathcal{S}^3}+4\sqrt{3} (b_1\wedge\Omega_1+b_2\wedge\Omega_2+b_3\wedge\Omega_3)
\end{align}
and its induced metric, Hodge dual and volume form are:
\begin{align}
g_c&=3(c + r^2)^{\frac{2}{3}}g_{\mathcal{S}^3}+4(c + r^2)^{-\frac{1}{3}}(\zeta_0^2+\zeta_1^2+\zeta_2^2+\zeta_3^2),\label{eq:gc.S3}\\
*_{\varphi_c}\varphi_c&=16(c+r^2)^{-\frac{2}{3}}\zeta_0\wedge\zeta_1\wedge\zeta_2\wedge\zeta_3-12(c+ r^2)^{\frac{1}{3}}(b_2\wedge b_3\wedge\Omega_1+b_3\wedge b_1\wedge\Omega_2+b_1\wedge b_2\wedge\Omega_3),\label{eq:starphic.S3}\\
\vol_c&=48\sqrt{3}(c+r^2)^{\frac{1}{3}}\vol_{\mathcal{S}^3}\wedge\zeta_0\wedge\zeta_1\wedge\zeta_2\wedge\zeta_3.\label{eq:volc.S3}
\end{align}

Hence, we have a 1-parameter family $(M,\varphi_c)$ of complete holonomy $\GG_2$ manifolds. As mentioned above, these manifolds are asymptotically conical with rate $-3$, which is straightforward to verify from~\eqref{eq:gc.S3}.

Moreover, as $M\cong\R^4\times\mathcal{S}^3$, one sees that the asymptotic cone is $M_0=M\setminus\mathcal{S}^3=\R^+\times\mathcal{S}^3\times\mathcal{S}^3$. Setting $c=0$ in~\eqref{eq:phic.S3}--\eqref{eq:volc.S3} gives the conical torsion-free $\GG_2$-structure $\varphi_0$ on $M_0$ and its induced holonomy $\GG_2$ metric, Hodge dual 4-form, and volume form. We summarize these and our earlier observations.

\begin{thm}\label{thm:BS.S3} Let $\mathcal{S}^3$ be endowed with the constant curvature $1$ metric. There is a 1-parameter family $\varphi_c$, for $c>0$, of 
torsion-free $\GG_2$-structures on $M=\mathbb{S}(\mathcal{S}^3)$ which induce complete holonomy $\GG_2$ metrics $g_c$ on $M$ and which are asymptotically conical with rate $-3$. Moreover, the zero section $\mathcal{S}^3$ is associative in $(M,\varphi_c)$, and the asymptotic cone $\R^+\times\mathcal{S}^3\times\mathcal{S}^3=M\setminus\mathcal{S}^3$ is endowed with a torsion-free $\GG_2$-structure
$\varphi_0$ inducing a conical holonomy $\GG_2$ metric $g_0$ such that 
$\varphi_0$ is the limit of $\varphi_c$ on $M\setminus\mathcal{S}^3$ as $c\to 0$.
\end{thm}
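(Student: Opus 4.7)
The plan is to assemble the theorem from pieces already in place. Since $\varphi_c = \varphi_{\sqrt{3}c,\sqrt{3},1}$ is a specialization of the three-parameter family, Lemma~\ref{lem:torsion-free.S3} immediately gives that $\varphi_c$ is torsion-free, so Definition~\ref{dfn:G2.manifold} yields $\mathrm{Hol}(g_c) \subseteq \GG_2$. Upgrading this to $\mathrm{Hol}(g_c) = \GG_2$ is the only genuinely delicate step, since it amounts to ruling out reduction to $\SU(3)$, $\Sp(1)\times\Sp(1)$, or smaller holonomy; I would handle this by appeal to the direct curvature computation of Bryant--Salamon~\cite{BryantSalamon} rather than redo it. Completeness of $g_c$ is then clear from~\eqref{eq:gc.S3}: the conformal factor $(c+r^2)^{-1/3}$ is bounded when $c > 0$, so the metric extends smoothly across the compact zero section; and along a vertical geodesic the radial arc length $\int_0^\infty 2(c+r^2)^{-1/6}\,\d r$ diverges, with nothing else able to cause incompleteness.

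To see that the zero section is associative, I would restrict~\eqref{eq:phic.S3} to $\{r=0\}$. There $a_\mu = 0$, so by~\eqref{eq:zetas-S3} each $\zeta_\mu$ reduces to $\d a_\mu$ and each vertical 2-form $\Omega_i$ in~\eqref{eq:Omegas-S3} annihilates any tangent vector to $\mathcal{S}^3$. Hence $\varphi_c|_{\mathcal{S}^3} = 3\sqrt{3}\,c\,\vol_{\mathcal{S}^3}$. Meanwhile~\eqref{eq:gc.S3} gives $g_c|_{\mathcal{S}^3} = 3 c^{2/3} g_{\mathcal{S}^3}$, whose induced volume form is $(3 c^{2/3})^{3/2}\,\vol_{\mathcal{S}^3} = 3\sqrt{3}\,c\,\vol_{\mathcal{S}^3}$, matching $\varphi_c|_{\mathcal{S}^3}$, so $\mathcal{S}^3$ is calibrated by $\varphi_c$.

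Finally I would define $\varphi_0$ as the $c \to 0$ limit of~\eqref{eq:phic.S3}; the formula is manifestly smooth on $M_0 = M \setminus \mathcal{S}^3$ where $r > 0$, and the identities in the proof of Lemma~\ref{lem:torsion-free.S3} are polynomial in $c$, so setting $c=0$ preserves closedness and coclosedness and yields a torsion-free $\varphi_0$ on $M_0$. To recognise $g_0$ as a cone, I would write $\zeta_\mu = \omega_\mu \d r + r \hat\zeta_\mu$ with $\omega_\mu = a_\mu/r$ on each fibre; combined with $r\,\d r = a_\mu \zeta_\mu$ this forces $\sum_\mu \omega_\mu \hat\zeta_\mu = 0$ and gives $\sum_\mu \zeta_\mu^2 = \d r^2 + r^2 h$ for a metric $h$ on $\mathcal{S}^3 \times \mathcal{S}^3$. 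Introducing the cone coordinate $\tilde r = 3 r^{2/3}$ converts~\eqref{eq:gc.S3} at $c = 0$ into
\[
g_0 = \d\tilde r^2 + \tilde r^2 \bigl( \tfrac{1}{3} g_{\mathcal{S}^3} + \tfrac{4}{9} h \bigr),
\]
exhibiting $(M_0, g_0)$ as a Riemannian cone on $\mathcal{S}^3 \times \mathcal{S}^3$. For the asymptotic rate, the expansions $(c+r^2)^{2/3} = r^{4/3} + O(r^{-2/3})$ and $(c+r^2)^{-1/3} = r^{-2/3} + O(r^{-8/3})$, together with the scaling $\tilde r \sim r^{2/3}$, make the difference $g_c - g_0$ of size $\tilde r^{-3}$ relative to $g_0$; differentiation in $r$ preserves the expansion structure, giving decay at the same rate for covariant derivatives, which is asymptotic conicity with rate $-3$ in the sense of Definition~\ref{dfn:AC}.
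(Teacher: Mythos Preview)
Your proposal is correct and follows essentially the same route as the paper, which presents this theorem as a summary of the preceding observations (torsion-freeness from Lemma~\ref{lem:torsion-free.S3}, holonomy equal to $\GG_2$ by appeal to~\cite{BryantSalamon}, associativity of the zero section read off from~\eqref{eq:phic.S3}, and the asymptotically conical rate $-3$ said to be straightforward from~\eqref{eq:gc.S3}). You have simply filled in the verifications the paper leaves implicit---in particular your explicit check that $\varphi_c|_{\mathcal{S}^3}$ matches the induced volume form, your fibre decomposition $\zeta_\mu = \omega_\mu\,\d r + r\hat\zeta_\mu$ to exhibit $g_0$ as a cone, and your expansion of the conformal factors to read off the rate---so there is no genuine methodological difference.
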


\subsubsection{Flat limit}\label{subs:S3.flat}

We conclude this section by showing, in a formal manner, how we can take a limit of Bryant--Salamon holonomy $\GG_2$ metrics on $\mathbb{S}(\mathcal{S}^3)$ to obtain the flat metric on $\R^7$, which we view as $\mathbb{S}(\R^3)$.

Recall the family $\varphi_{c_0,c_1,\kappa}$ of torsion-free $\GG_2$-structures from Lemma~\ref{lem:torsion-free.S3} and consider the curvature $\kappa$ of the metric on $\mathcal{S}^3$ as a parameter. In the limit as $\kappa\to 0$ the metric becomes flat and $\mathcal{S}^3$ becomes $\R^3$. Therefore, the spin connection becomes trivial, which says that $\d b_j \to 0$ for $j=1, 2, 3$. Then by~\eqref{eq:d-b-S3} and the fact that the $b_i$ are orthonormal, we deduce that
$$\lim_{\kappa\to 0}\rho_j=0 \quad \text{for $j=1,2,3$.}$$
It then follows from~\eqref{eq:zetas-S3} that
$$\lim_{\kappa\to 0}\zeta_k=\d a_k \quad \text{for $k=0,1,2,3$.} $$
Consequently, by~\eqref{eq:Omegas-S3}, for $j=1,2,3$ we have that
$$\lim_{\kappa\to 0}\Omega_j=\omega_j,$$
where $\omega_1, \omega_2, \omega_3$ are the standard anti-self-dual 2-forms on $\R^4$ as in~\eqref{eq:omegas-R7}, where $y_k=a_k$ for $k=0,1,2,3$. If we then let
$$c_0=\frac{1}{3\kappa}\quad\text{and} \quad c_1=\frac{1}{4},$$
we find from~\eqref{eq:phi.S3.general} that
\begin{align*}
\lim_{\kappa\to 0}\varphi_{\frac{1}{3\kappa},\frac{1}{4},\kappa}
&=\lim_{\kappa\to 0}\big((1+\textstyle\frac{3\kappa}{4} r^2) b_1\wedge b_2\wedge b_3+ b_1\wedge\Omega_1+b_2\wedge\Omega_2+b_3\wedge\Omega_3\big)\\
&=b_1\wedge b_2\wedge b_3+b_1\wedge\omega_1+b_2\wedge\omega_2+b_3\wedge\omega_3.
\end{align*}
Since the $b_j$ form a parallel orthonormal frame on $\R^3$ in the limit as $\kappa\to 0$, we can set $b_j=\d x_j$ for $j=1,2,3$ and deduce that
$$\lim_{\kappa\to 0}\varphi_{\frac{1}{3\kappa},\frac{1}{4},\kappa}=\varphi_{\R^7}$$
as given in~\eqref{eq:varphi.R7}. Therefore, in this limit as $\kappa\to 0$, the Bryant--Salamon torsion-free $\GG_2$-structures on $\SS(\mathcal{S}^3)$ become the standard flat $\GG_2$-structure on $\R^7$, viewed as $\SS(\mathbb{R}^3)=\R^3\oplus\R^4$.

The discussion above, together with work in~\cite{BryantSalamon}, shows how, after reparametrization in the fibres of $\SS(\mathcal{S}^3)$, we can view the limit of $\varphi_c$ as $c\to\infty$ as the flat $\GG_2$-structure $\varphi_{\R^7}$ on $\R^7=\SS(\R^3)$.

\subsection{Self-dual Einstein 4-manifolds} \label{sec:N}

Let $N^4$ be a compact self-dual Einstein 4-manifold with positive scalar curvature $12\kappa$. That is, $N$ is endowed with a metric $g_N$ such that $\Ric(g_N)=3\kappa g_N$ and such that the anti-self-dual part of the Weyl tensor is zero. Then we know from~\cite{Hitchin} that either $N$ is $\mathcal{S}^4$ with the constant curvature $\kappa$ metric or $N$ is $\C\P^2$ with the Fubini--Study metric of the appropriate scale.

\begin{remark}
The Bryant--Salamon construction also works for self-dual Einstein 4-manifolds with non-positive scalar curvature and for self-dual Einstein 4-orbifolds, but one does not obtain smooth complete holonomy $\GG_2$ metrics in those cases.
\end{remark}

Let $M^7=\Lambda^2_-(T^*N)$ be the bundle of anti-self-dual 2-forms on $N$. This is the space on which Bryant--Salamon construct their torsion-free $\GG_2$-structures, as well as on the cone over the twistor space of $N$. We briefly describe this construction. The precise statement is summarized in Theorem~\ref{thm:BS.S4CP2} below.

\subsubsection{Coframe, anti-self-dual 2-forms, induced connection and vertical 1-forms}

Given a positively oriented local orthonormal coframe $\{b_0,b_1,b_2,b_3\}$ on $N$, we define a local orthogonal basis for the anti-self-dual 2-forms on $N$ by
\begin{equation}\label{eq:big-Omega-N}
\Omega_1=b_0\wedge b_1-b_2\wedge b_3,\quad \Omega_2=b_0\w b_2-b_3\w b_1,\quad \Omega_3=b_0\w b_3-b_1\w b_2.
\end{equation}
We note that the volume form on $N$ is
\begin{equation}\label{eq:volN}
\vol_N=b_0\wedge b_1\wedge b_2\wedge b_3.
\end{equation}
Using the canonical projection $\pi_N:M\to N$, we can pullback $b_i$ for $i=0,1,2,3$ to horizontal 1-forms on $M$, and thus the same is true of any forms constructed purely from the $b_i$. We omit the pullback from our notation throughout this paper. This should not cause any confusion.

One may see that
\begin{equation}\label{eq:d-big-Omega-N}
\d\left(\begin{array}{c}\Omega_1\\ \Omega_2\\ \Omega_3\end{array}\right) = -\left(\begin{array}{ccc}0 & -2\rho_3 & 2\rho_2 \\ 2\rho_3 & 0 & -2\rho_1 \\ -2\rho_2 & 2\rho_1 & 0\end{array}\right)\w \left(\begin{array}{c}\Omega_1\\ \Omega_2\\ \Omega_3\end{array}\right),
\end{equation}
where the forms $1$-forms $\rho_1,\rho_2,\rho_3$ describe the induced connection on $M$ from the Levi-Civita connection of the Einstein metric $g_N$ on $N$. (The factor of 2 is natural here because if we decompose the Levi-Civita connection 1-forms using the splitting $\mathfrak{so}(4)=\mathfrak{su}(2)_+\oplus\mathfrak{su}(2)_-$, then the connection forms on $\Lambda^2_-(T^*N$) are twice the $\mathfrak{su}(2)_-$ component.) Furthermore, we have that
\begin{equation}\label{eq:d-rho-N}
\d\left(\begin{array}{c} \rho_1\\ \rho_2\\ \rho_3\end{array}\right)+2\left(\begin{array}{c} \rho_2\wedge\rho_3\\ \rho_3\w\rho_1\\ \rho_1\w\rho_2\end{array}\right) = \frac{\kappa}{2}\left(\begin{array}{c} \Omega_1\\ \Omega_2\\ \Omega_3\end{array}\right),
\end{equation}
which is equivalent~\cite[p. 842]{BryantSalamon} to saying that the metric on $N$ is self-dual Einstein with scalar curvature equal to $12\kappa$.

Using the local basis $\{\Omega_1,\Omega_2,\Omega_3\}$ for the anti-self-dual 2-forms on $N$, we can define linear coordinates $(a_1,a_2,a_3)$ on the fibres of $M$ over $N$ with respect to this basis. Then 
\[a_1\Omega_1+a_2\Omega_2+a_3\Omega_3\]
is the tautological 2-form on $M=\Lambda^2_-(T^*N)$ and so is in fact globally defined. 

Using the coordinates $(a_1,a_2,a_3)$, the vertical 1-forms  on $M$ for the induced connection are:
\begin{equation} \label{eq:zetas-N}
\zeta_1=\d a_1-2a_2\rho_3+2a_3\rho_2,\quad
\zeta_2=\d a_2-2a_3\rho_1+2a_1\rho_3,\quad
\zeta_3=\d a_3-2a_1\rho_2+2a_2\rho_1.
\end{equation}
For use below, one may compute that
\begin{equation}\label{eq:d-zetas-N}
\d\left(\begin{array}{c} \zeta_1 \\ \zeta_2 \\ \zeta_3\end{array}\right)
=-\left(\begin{array}{ccc} 0 & -2\rho_3 & 2\rho_2 \\ 2\rho_3 & 0 & -2\rho_1 \\ -2\rho_2 & 2\rho_1 & 0\end{array}\right)\w\left(\begin{array}{c}\zeta_1\\ \zeta_2\\ \zeta_3\end{array}\right)
+\kappa\left(\begin{array}{c} a_3\Omega_2-a_2\Omega_3\\ a_1\Omega_3-a_3\Omega_1\\ a_2\Omega_1-a_1\Omega_2\end{array}\right).
\end{equation}

\subsubsection{Torsion-free \texorpdfstring{G\textsubscript{2}}{G2}-structures}

Let $r^2=a_1^2+a_2^2+a_3^2$ be the squared distance in the fibres of $M$ from the zero section, and let $c_0>0$ and $c_1>0$. 
Bryant--Salamon then define the following $\GG_2$-structure on $M$:
\begin{equation}\label{eq:phi.BS.general}
\varphi_{c_0,c_1,\kappa}=\left(\frac{c_1^2}{2\kappa}\right)^{\frac{3}{4}}(c_0+c_1r^2)^{-\frac{3}{4}}\zeta_1\w\zeta_2\w\zeta_3+(2\kappa c_1^2)^\frac{1}{4}(c_0+c_1r^2)^{\frac{1}{4}}(\zeta_1\w\Omega_1+\zeta_2\w\Omega_2+\zeta_3\w\Omega_3).
\end{equation}
The metric induced by $\varphi_{c_0,c_1,\kappa}$ is given by
\begin{equation}\label{eq:g.BS.general}
g_{c_0,c_1,\kappa}=\left(\frac{c_1^2}{2\kappa}\right)^{\frac{1}{2}}(c_0+c_1r^2)^{-\frac{1}{2}}(\zeta_1^2+\zeta_2^2+\zeta_3^2)+(2\kappa)^{\frac{1}{2}}(c_0+c_1r^2)^{\frac{1}{2}}g_N
\end{equation}
and the Hodge dual of $\varphi_{c_0,c_1,\kappa}$ is given by
\begin{equation}\label{eq:starphi.BS.general}
*_{\varphi_{c_0,c_1,\kappa}}\varphi_{c_0,c_1,\kappa}=2\kappa(c_0+c_1r^2)\vol_N-c_1(\zeta_2\w\zeta_3\w\Omega_1+\zeta_3\w\zeta_1\w\Omega_2+\zeta_1\w\zeta_2\w\Omega_3).
\end{equation}
Finally, the volume form is given by
\begin{equation}\label{eq:vol.BS.general}
\vol_{c_0,c_1,\kappa}=(2\kappa c_1^6)^{\frac{1}{4}}(c_0+c_1r^2)^{\frac{1}{4}}\zeta_1\w\zeta_2\w\zeta_3\w\vol_N.
\end{equation}

\begin{lem}\label{lem:torsion-free.N}
 The $\GG_2$-structure $\varphi_{c_0,c_1,\kappa}$ in~\eqref{eq:phi.BS.general} is torsion-free.
\end{lem}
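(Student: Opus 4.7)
The plan is to follow exactly the strategy of Lemma~\ref{lem:torsion-free.S3}: verify separately that $\d\varphi_{c_0,c_1,\kappa}=0$ and $\d*_{\varphi_{c_0,c_1,\kappa}}\varphi_{c_0,c_1,\kappa}=0$, using the structure equations~\eqref{eq:d-big-Omega-N},~\eqref{eq:d-rho-N}, and~\eqref{eq:d-zetas-N}. The essential preliminary identity is
\[
r\,\d r = a_1\zeta_1 + a_2\zeta_2 + a_3\zeta_3,
\]
immediate from $r^2 = a_1^2+a_2^2+a_3^2$ and~\eqref{eq:zetas-N}; in particular $r\,\d r \w \zeta_1\w\zeta_2\w\zeta_3 = 0$. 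The crucial structural observation is that the connection $1$-form matrix appearing in~\eqref{eq:d-big-Omega-N} and~\eqref{eq:d-zetas-N} is the \emph{same} skew-symmetric $\mathfrak{so}(3)$-valued matrix. Consequently, whenever the expression being differentiated is invariant under the simultaneous rotation of $(\zeta_1,\zeta_2,\zeta_3)$ and $(\Omega_1,\Omega_2,\Omega_3)$ by $\SO(3)$, all $\rho$-contributions cancel, and one is left only with the curvature ($\kappa$) terms from~\eqref{eq:d-zetas-N}.

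For closedness, I would compute $\d(\zeta_1\w\zeta_2\w\zeta_3)$ and $\d(\zeta_1\w\Omega_1+\zeta_2\w\Omega_2+\zeta_3\w\Omega_3)$ using~\eqref{eq:d-zetas-N} and~\eqref{eq:d-big-Omega-N}. By the observation above, in each case the $\rho$-terms cancel. The surviving $\kappa$-contributions, after using $\Omega_i\w\Omega_j = 2\delta_{ij}\vol_N$, collapse to multiples of $r\,\d r\w\vol_N$. These are then matched against the exterior derivatives of the scalar coefficients $(c_1^2/2\kappa)^{3/4}(c_0+c_1r^2)^{-3/4}$ and $(2\kappa c_1^2)^{1/4}(c_0+c_1r^2)^{1/4}$ in~\eqref{eq:phi.BS.general}, which by the chain rule each produce a multiple of $r\,\d r$. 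The identity $r\,\d r\w\zeta_1\w\zeta_2\w\zeta_3=0$ kills the contribution coming from differentiating the coefficient of $\zeta_1\w\zeta_2\w\zeta_3$, and the two exponents $-\tfrac{3}{4}$ and $\tfrac{1}{4}$ are precisely what makes the remaining $r\,\d r\w\vol_N$-terms cancel.

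For coclosedness, the approach is entirely analogous, applied now to~\eqref{eq:starphi.BS.general}. The form $\vol_N$ is the pullback of a top form on $N$ and hence closed, so $\d\bigl((c_0+c_1r^2)\vol_N\bigr) = 2c_1\, r\,\d r\w\vol_N$. Meanwhile, $\d(\zeta_2\w\zeta_3\w\Omega_1 + \zeta_3\w\zeta_1\w\Omega_2 + \zeta_1\w\zeta_2\w\Omega_3)$ again has $\rho$-terms that cancel by $\SO(3)$-equivariance, and $\kappa$-terms from~\eqref{eq:d-zetas-N} that, after using $\Omega_i\w\Omega_j = 2\delta_{ij}\vol_N$ together with $r\,\d r = a_i\zeta_i$, combine into another multiple of $r\,\d r\w\vol_N$. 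With the coefficients $2\kappa$ and $-c_1$ appearing in~\eqref{eq:starphi.BS.general}, the two contributions exactly cancel.

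The main obstacle is purely bookkeeping: one has to carefully track cyclic permutations, anti-commutation signs, and ensure that several of the intermediate $\kappa$-terms cancel only after fully symmetrizing over the index $\{1,2,3\}$. Conceptually, however, the cancellations are forced: the Einstein condition~\eqref{eq:d-rho-N} and the self-dual-bundle structure~\eqref{eq:d-big-Omega-N} of the base $N$ are precisely what is needed for the $\SO(3)$-equivariant ansatz~\eqref{eq:phi.BS.general} to be torsion-free, and the specific conformal exponents in~\eqref{eq:phi.BS.general} and~\eqref{eq:starphi.BS.general} are uniquely determined by matching coefficients in the closedness and coclosedness equations.
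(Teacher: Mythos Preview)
Your overall strategy coincides with the paper's, and the coclosedness half is essentially right (up to the sign $\Omega_i\w\Omega_j=-2\delta_{ij}\vol_N$, not $+2$). But your description of the closedness step contains a genuine error: $\d(\zeta_1\w\zeta_2\w\zeta_3)$ and $\d(\zeta_1\w\Omega_1+\zeta_2\w\Omega_2+\zeta_3\w\Omega_3)$ are $4$-forms, while $r\,\d r\w\vol_N$ is a $5$-form (here $\dim N=4$, unlike the $\mathcal{S}^3$ case), so neither can collapse to a multiple of it. The identity $\Omega_i\w\Omega_j\propto\vol_N$ plays no role at this stage because no $\Omega\w\Omega$ products arise when differentiating those two $3$-forms. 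In effect you have transposed the $\mathcal{S}^3$ argument without swapping the roles of horizontal and vertical.

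What actually happens is this. The curvature terms in $\d(\zeta_1\w\zeta_2\w\zeta_3)$ reorganize into
\[
\d(\zeta_1\w\zeta_2\w\zeta_3)=-\kappa\,r\,\d r\w(\zeta_1\w\Omega_1+\zeta_2\w\Omega_2+\zeta_3\w\Omega_3),
\]
and this is what cancels against the $r\,\d r$ produced by differentiating the coefficient $(c_0+c_1r^2)^{1/4}$ in front of $\sum_i\zeta_i\w\Omega_i$. The other contribution is cleaner than you suggest: from~\eqref{eq:d-big-Omega-N} and~\eqref{eq:zetas-N} one has
\[
\zeta_1\w\Omega_1+\zeta_2\w\Omega_2+\zeta_3\w\Omega_3=\d(a_1\Omega_1+a_2\Omega_2+a_3\Omega_3),
\]
the exterior derivative of the tautological $2$-form, so $\sum_i\zeta_i\w\Omega_i$ is \emph{exact} and hence closed --- no cancellation at all is required for that piece. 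This exactness is the structural ingredient missing from your outline; once you have it together with $r\,\d r\w\zeta_1\w\zeta_2\w\zeta_3=0$, closedness of $\varphi_{c_0,c_1,\kappa}$ follows in one line.
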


\begin{proof}
One may check from~\eqref{eq:d-big-Omega-N} and~\eqref{eq:zetas-N} that
\[\zeta_1\w\Omega_1+\zeta_2\w\Omega_2+\zeta_3\w\Omega_3=\d(a_1\Omega_1+a_2\Omega_2+a_3\Omega_3)\]
and hence
\begin{equation} \label{eq:d(z.O)}
\d(\zeta_1\w\Omega_1+\zeta_2\w\Omega_2+\zeta_3\w\Omega_3)=0.
\end{equation}
Moreover, it is straightforward to verify from~\eqref{eq:zetas-N} that
\begin{equation} \label{eq:rdr}
r\d r=a_1\d a_1+a_2 \d a_2+ a_3\d a_3=a_1\zeta_1+a_2\zeta_2+a_3\zeta _3.
\end{equation}
It then follows from~\eqref{eq:rdr} and~\eqref{eq:d-zetas-N} that
\begin{equation} \label{eq:d(z^3)}
\d(\zeta_1\w\zeta_2\w\zeta_3)=-\kappa r\d r\w(\zeta_1\w\Omega_1+\zeta_2\w\Omega_2+\zeta_3\w\Omega_3).
\end{equation}
It is also straightforward to compute using~\eqref{eq:d-big-Omega-N} and~\eqref{eq:d-zetas-N} that
\begin{equation}\label{eq:d(z^2.O)}
\d (\zeta_2\w\zeta_3\w\Omega_1+\zeta_3\w\zeta_1\w\Omega_2+\zeta_1\w\zeta_2\w\Omega_3)=4\kappa r\d r\w \vol_N.
\end{equation}
Taking the exterior derivative of~\eqref{eq:phi.BS.general} and using the identities~\eqref{eq:d(z.O)},~\eqref{eq:rdr}, and~\eqref{eq:d(z^3)}, we find that
\begin{align*}
\d\varphi_{c_0,c_1,\kappa}&=\left(\frac{c_1^2}{2\kappa}\right)^{\frac{3}{4}}(c_0+c_1r^2)^{-\frac{3}{4}}\d(\zeta_1\w\zeta_2\w\zeta_3)\\
&\quad+(2\kappa c_1^2)^{\frac{1}{4}}\frac{c_1}{2}(c_0+c_1r^2)^{-\frac{3}{4}}r\d r\w(\zeta_1\w\Omega_1+\zeta_2\w\Omega_2+\zeta_3\w\Omega_3)=0.
\end{align*}
Similarly taking the exterior derivative of~\eqref{eq:starphi.BS.general} and using the identity~\eqref{eq:d(z^2.O)} and the fact that $\vol_N$ is closed, we find that
\begin{align*}
\d *_{\varphi_{c_0,c_1,\kappa}}\varphi_{c_0,c_1,\kappa}&=4\kappa c_1 r\d r\w\vol_N-c_1\d(\zeta_2\w\zeta_3\w\Omega_1+\zeta_3\w\zeta_1\w\Omega_2+\zeta_1\w\zeta_2\w\Omega_3)=0.
\end{align*}
Thus, we conclude that $\varphi_{c_0,c_1,\kappa}$ is torsion-free for all positive $c_0,c_1,\kappa$.
\end{proof}

Moreover, Bryant--Salamon show that $g_{c_0,c_1,\kappa}$ in~\eqref{eq:g.BS.general} has holonomy equal to $\GG_2$, and it is evident from~\eqref{eq:phi.BS.general} and~\eqref{eq:starphi.BS.general} that the zero section $N$ is coassociative in $(M,\varphi_{c_0,c_1,\kappa})$.

\subsubsection{Asymptotically conical \texorpdfstring{G\textsubscript{2}}{G2} manifolds}

Bryant--Salamon show that, although there appear to be three positive parameters in $\varphi_{c_0,c_1,\kappa}$, in fact we can eliminate two of these parameters by rescaling and reparametrisation of the coordinates in the fibres of $M$, to obtain a true 1-parameter family of torsion-free $\GG_2$-structures.

Explicitly, we can scale the metric $g_N$ on $N$ so that $\kappa=1$, and for $c>0$ we can set
$$c_0=2 c\quad\text{and}\quad c_1=2.$$
Then we define 
$$\varphi_c=\varphi_{2c,2,1}.$$
With these choices, we find from~\eqref{eq:phi.BS.general} that
\begin{equation} \label{eq:phic}
\varphi_c=(c+r^2)^{-\frac{3}{4}}\zeta_1\w\zeta_2\w\zeta_3+2(c+r^2)^{\frac{1}{4}}(\zeta_1\w\Omega_1+\zeta_2\w\Omega_2+\zeta_3\w\Omega_3).
\end{equation}
Moreover, the metric determined by $\varphi_c$ is given by~\eqref{eq:g.BS.general} as
\begin{equation} \label{eq:gc}
g_c=(c+r^2)^{-\frac{1}{2}}(\zeta_1^2+\zeta_2^2+\zeta_3^2)+2(c+r^2)^{\frac{1}{2}}(b_0^2+b_1^2+b_2^2+b_3^2),
\end{equation}
and the 4-form $*_{\varphi_c} \varphi_c$ can be seen from~\eqref{eq:starphi.BS.general} to be
\begin{equation} \label{eq:psic}
*_{\varphi_c} \varphi_c=4(c+r^2)\vol_{N}-2(\zeta_2\w\zeta_3\w\Omega_1+\zeta_3\w\zeta_1\w\Omega_2+\zeta_1\w\zeta_2\w\Omega_3).
\end{equation}
Finally, the volume form given by~\eqref{eq:vol.BS.general} is
\begin{equation} \label{eq:volc}
\vol_c = 4(c+r^2)^{\frac{1}{4}}\zeta_1\w\zeta_2\w\zeta_3\w\vol_{N}.
\end{equation}

We thus have a 1-parameter family $(M,\varphi_c)$ of complete holonomy $\GG_2$ manifolds. As mentioned above, these are all asymptotically conical with rate $-4$, which one can verify directly from~\eqref{eq:gc}. Moreover, the asymptotic cone is $M_0=M\setminus N=\R^+\times\Sigma$ where $\Sigma$ is the twistor space of $N$, since this may be identified with the unit sphere bundle in $M$. We also observe that setting $c=0$ in~\eqref{eq:phic}--\eqref{eq:volc} gives all of the data determined by the conical torsion-free $\GG_2$-structure $\varphi_0$, which induces the conical holonomy $\GG_2$ metric $g_0$. We collect all of these observations in the following result.

\begin{thm}\label{thm:BS.S4CP2}
Let $N$ be either $\mathcal{S}^4$ with the constant curvature $1$ metric or $\C\P^2$ with the Fubini--Study metric of scalar curvature $12$. There is a 1-parameter family $\varphi_c$, for $c>0$, of torsion-free $\GG_2$-structures on $M=\Lambda^2_-(T^*N)$ which induce complete holonomy $\GG_2$ metrics $g_c$ on $M$ and which are asymptotically conical with rate $-4$. Moreover, the zero section $N$ is coassociative in $(M,\varphi_c)$, and the asymptotic cone $\R^+\times\Sigma=M\setminus N$, where $\Sigma=\C\P^3$ if $N=\mathcal{S}^4$ and $\Sigma=\SU(3)/T^2$ if $N=\C\P^2$, is endowed with a torsion-free $\GG_2$-structure $\varphi_0$ inducing a conical holonomy $\GG_2$ metric $g_0$ such that $\varphi_0$ is the limit of $\varphi_c$ on $M\setminus N$ as $c\to 0$.
\end{thm}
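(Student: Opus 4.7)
The torsion-free condition for $\varphi_c = \varphi_{2c,2,1}$ is already established by Lemma~\ref{lem:torsion-free.N}, so my first step is just to confirm that the three parameters $(c_0,c_1,\kappa)$ can genuinely be reduced to a single parameter $c$. This is a direct substitution: set $\kappa = 1$ (absorbed into $g_N$ by rescaling $N$) and $(c_0,c_1) = (2c,2)$, and check that the resulting expressions agree with~\eqref{eq:phic}--\eqref{eq:volc}. The coassociativity of the zero section is immediate: on $N$ we have $a_i \equiv 0$, so $\d a_i|_{TN} = 0$, and hence the defining formula $\zeta_i = \d a_i - 2 a_j \rho_k + 2 a_k \rho_j$ in~\eqref{eq:zetas-N} gives $\zeta_i|_N = 0$. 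Since every term of $\varphi_c$ in~\eqref{eq:phic} contains some $\zeta_i$, we conclude $\varphi_c|_N = 0$, which is the condition for coassociativity in Definition~\ref{dfn:coassociative}. The pointwise convergence $\varphi_c \to \varphi_0$ on $M\setminus N$ as $c\to 0$ is visible directly from~\eqref{eq:phic}, and $\varphi_0$ is torsion-free because the proof of Lemma~\ref{lem:torsion-free.N} only uses $c_1,\kappa>0$ substantively, with $c_0 \geq 0$ being harmless away from the zero section.

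To identify the asymptotic cone I would put $g_0$ in standard cone form. Inspection of $g_0 = r^{-1}(\zeta_1^2+\zeta_2^2+\zeta_3^2) + 2r(b_0^2+\cdots+b_3^2)$ along a radial ray in the fibre gives the line element $r^{-1/2}\d r$, suggesting the change of variable $s = 2\sqrt{r}$. A direct check, using $\d s = r^{-1/2}\d r$ and $r = s^2/4$, then confirms $g_0 = \d s^2 + s^2 g_\Sigma$ for a metric $g_\Sigma$ on the link $\Sigma = \{r = 1/4\}$. Since $\Sigma$ is the unit sphere bundle in $\Lambda^2_-(T^*N)$, it is by definition the twistor space of $N$, which is $\C\P^3$ when $N = \mathcal{S}^4$ and $\SU(3)/T^2$ when $N = \C\P^2$.

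For the asymptotically conical rate, I would expand $(c+r^2)^{\pm 1/2}$ in powers of $r^{-2}$ to obtain
\[
g_c - g_0 = -\tfrac{c}{2r^3}\bigl(1 + O(r^{-2})\bigr)(\zeta_1^2+\zeta_2^2+\zeta_3^2) + \tfrac{c}{r}\bigl(1 + O(r^{-2})\bigr)(b_0^2+\cdots+b_3^2).
\]
Measured in $g_0$, the dual norms satisfy $g_0^{-1}(\zeta_i,\zeta_i) = O(r)$ and $g_0^{-1}(b_i,b_i) = O(r^{-1})$, so both terms of $g_c - g_0$ have pointwise $g_0$-norm of order $r^{-2} = O(s^{-4})$. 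Higher covariant derivatives pick up additional factors of $s^{-1}$ because the forms $\zeta_i,b_j$ normalized for $g_0$ are scale-invariant along the cone, and the coefficient functions depend only on $r$; this yields $|\nabla^j(g_c - g_0)|_{g_0} = O(s^{-4-j})$, the rate $\lambda = -4$ required by Definition~\ref{dfn:AC}.

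The remaining claims—geodesic completeness of $g_c$ and $\mathrm{Hol}(g_c) = \GG_2$—are the original conclusions of Bryant--Salamon~\cite{BryantSalamon}. Completeness follows because the metric extends smoothly over the zero section $N$ and is AC at infinity. The full-holonomy statement is the more subtle one, proved in~\cite{BryantSalamon} by examining the curvature to exclude reductions of the holonomy to a proper subgroup of $\GG_2$. The main obstacle in this plan is the covariant-derivative bookkeeping for the AC estimate: one must check that differentiating the cross terms between horizontal and vertical forms does not produce unexpectedly slow decay, which ultimately is controlled by the bounded geometry of $g_\Sigma$ on the link.
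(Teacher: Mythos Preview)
Your proposal is correct and follows essentially the same route as the paper, which presents this theorem as a summary of observations already made: torsion-freeness from Lemma~\ref{lem:torsion-free.N}, coassociativity of the zero section by inspection of~\eqref{eq:phic}, the cone limit from~\eqref{eq:phic}--\eqref{eq:volc}, and completeness and full holonomy deferred to~\cite{BryantSalamon}. You supply more detail than the paper does---in particular the explicit radial coordinate $s = 2\sqrt{r}$ putting $g_0$ in cone form and the norm estimate giving the AC rate $-4$, both of which the paper leaves as the one-line assertion ``one can verify directly from~\eqref{eq:gc}''---but the underlying argument is the same.
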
 

\subsubsection{Flat limit}\label{subs:flat}

We conclude this section by showing, in a formal manner, how we can take a limit of Bryant--Salamon holonomy $\GG_2$ metrics on $\Lambda^2_-(T^*N)$ to obtain the flat metric on $\R^7$, which we view as $\Lambda^2_-(T^*\R^4)$.

Recall the family $\varphi_{c_0,c_1,\kappa}$ of torsion-free $\GG_2$-structures from Lemma~\ref{lem:torsion-free.N} and consider the curvature $\kappa$ of the metric on $N$ as a parameter. In the limit as $\kappa\to 0$ the metric becomes flat and $N$ becomes $\R^4$. Hence we have
\begin{equation*}
\lim_{\kappa\to 0}\Omega_i=\omega_i,
\end{equation*}
are the standard anti-self-dual 2-forms on $\R^4$ in~\eqref{eq:omegas-R7}. As the $\omega_i$ are closed, we see from~\eqref{eq:d-big-Omega-N} that 
\begin{equation*}
\lim_{\kappa\to 0}\rho_i=0
\end{equation*}
and thus from~\eqref{eq:zetas-N} we have
\begin{equation*}
\lim_{\kappa\to 0}\zeta_i=\d a_i.
\end{equation*}
Moreover, if we let
\begin{equation*}
c_0= \frac{1}{2\kappa} \quad\text{and}\quad c_1=1
\end{equation*}
we see from~\eqref{eq:phi.BS.general} that
\begin{align*}
\lim_{\kappa\to 0}\varphi_{\frac{1}{2\kappa},1,\kappa}&=
\lim_{\kappa\to 0}\big((1+ 2\kappa r^2)^{-\frac{3}{4}}\zeta_1\w\zeta_2\w\zeta_3+(1+2\kappa r^2)^{\frac{1}{4}}(\zeta_1\w\Omega_1+\zeta_2\w\Omega_2+\zeta_3\w\Omega_3)\big)
\\
&=\d a_1\w\d a_2\w \d a_3+\d a_1\w\omega_1+\d a_2\w\omega_2+\d a_3\w\omega_3
=\varphi_{\mathbb{R}^7},
\end{align*}
as in~\eqref{eq:varphi.R7} (after setting $a_i=x_i$). We conclude that taking this particular limit as $\kappa\to 0$, we obtain the standard flat $\GG_2$-structure on $\mathbb{R}^7$, where $\R^7$ is viewed as $\Lambda^2_-(T^*\mathbb{R}^4)$.

This analysis, together with the discussion in~\cite{BryantSalamon}, also indicates how, after reparametrization in the fibres of $\Lambda^2_-(T^*N)$, one can take the limit as $c\to\infty$ in $\varphi_c$ to obtain the flat limit $\varphi_{\R^7}$ on $\R^7=\Lambda^2_-(T^*\R^4)$.

\section{Spinor bundle of \texorpdfstring{$\mathcal{S}$\textsuperscript{3}}{S3}} \label{sec:SS3}

In this short section we consider $M^7=\mathbb{S}(\mathcal{S}^3)$ with the Bryant--Salamon torsion-free $\GG_2$-structure $\varphi_c$ from~\eqref{eq:phic.S3}, and let $M_0=\mathbb{S}(\mathcal{S}^3)\setminus\mathcal{S}^3=\R^+\times\mathcal{S}^3\times\mathcal{S}^3$ be its asymptotic cone, with the conical $\GG_2$-structure $\varphi_0$. We describe both $M$ and $M_0$ as coassociative fibrations over $\mathcal{S}^3$. Specifically we prove the following result.

\begin{thm}\label{thm:S3.fib} Let $(M=\SS(\mathcal{S}^3),\varphi_c)$ and $(M_0=\SS(\mathcal{S}^3)\setminus\mathcal{S}^3,\varphi_0)$ be as in Theorem~\ref{thm:BS.S3}.
\begin{itemize}
\item[\emph{(a)}] The canonical projection $\pi:M\to\mathcal{S}^3$ is a coassociative fibration with respect to $\varphi_c$, where the fibres are all $\SO(4)$-invariant and diffeomorphic to $\R^4$. 
\item[\emph{(b)}] The canonical projection $\pi:M_0\to\mathcal{S}^3$ is a coassociative fibration with respect to $\varphi_0$, where the fibres are all $\SO(4)$-invariant and diffeomorphic to $\R^4\setminus\{0\}$.
\end{itemize}
\end{thm}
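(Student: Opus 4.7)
The plan is direct and exploits the explicit formula for $\varphi_c$. Recall from~\eqref{eq:phic.S3} that
\[
\varphi_c = 3\sqrt{3}(c+r^2)\vol_{\mathcal{S}^3} + 4\sqrt{3}\,(b_1 \w \Omega_1 + b_2 \w \Omega_2 + b_3 \w \Omega_3),
\]
and by~\eqref{eq:g.vol.S3} we have $\vol_{\mathcal{S}^3} = b_1 \w b_2 \w b_3$. Thus every term in $\varphi_c$ contains at least one of the horizontal 1-forms $b_1, b_2, b_3$ pulled back from the base via $\pi: M \to \mathcal{S}^3$. Since each fibre $F = \pi^{-1}(p)$ is vertical, the pullback of each $b_i$ to $F$ vanishes identically, whence $\varphi_c|_F \equiv 0$. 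By Definition~\ref{dfn:coassociative} (and with the preferred orientation on the fibres of the spinor bundle), each fibre is therefore coassociative. The same argument applies verbatim to $\varphi_0$ on $M_0$, since setting $c=0$ preserves the structure of the formula.

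Next I would identify the topology of the fibres and the group action. By construction, each fibre of the spinor bundle $\pi : \SS(\mathcal{S}^3) \to \mathcal{S}^3$ is diffeomorphic to $\R^4 = \mathbb H$; removing the zero section in the cone case $M_0 = M \setminus \mathcal{S}^3$ yields fibres diffeomorphic to $\R^4 \setminus \{0\}$. To see the $\SO(4)$-invariance of each fibre, observe that the structure group of the spinor bundle is $\Spin(4) = \Sp(1) \times \Sp(1)$, acting on $\mathbb H$ by $(p,q) \cdot a = p a \bar q$ and covering the standard $\SO(4)$ action. This action preserves the radial coordinate $r^2 = a_0^2 + a_1^2 + a_2^2 + a_3^2$, and from~\eqref{eq:gc.S3} the induced metric on each fibre is a conformal multiple of $\zeta_0^2 + \zeta_1^2 + \zeta_2^2 + \zeta_3^2$, which in fibre coordinates is the standard flat metric on $\R^4$ up to a radial function and is therefore manifestly $\SO(4)$-invariant.

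Finally, the fibration criteria of Definition~\ref{dfn:fibration} are satisfied by taking $\mathcal{B} = \mathcal{B}^\circ = \mathcal{S}^3$ and $M' = M$ (respectively $M_0' = M_0$): the canonical projection $\pi$ is a genuine locally trivial fibration, being a rank-$4$ real vector bundle in case (a) and the complement of its zero section in case (b), so in particular every point of $M$ (respectively $M_0$) lies in exactly one fibre and all fibres are smooth.

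I expect no real obstacle: this is precisely why the introduction describes the $\SS(\mathcal{S}^3)$ case as ``essentially trivial''. Every nontrivial input is already encoded in the explicit Bryant--Salamon formula~\eqref{eq:phic.S3} and the general fact that $\SS(\mathcal{S}^3)$ is a $\Spin(4)$-bundle. The substantive content for this section of the paper — the determination of the induced hypersymplectic triple, the conformal flatness of the fibre metric, and the identification with the flat coassociative fibration of $\R^7 = \R^3 \oplus \R^4$ in the $\kappa \to 0$ limit — is separate from the bare assertion of Theorem~\ref{thm:S3.fib} itself.
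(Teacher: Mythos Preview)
Your proposal is correct and follows essentially the same approach as the paper: the coassociativity of the fibres is obtained by observing that every term in~\eqref{eq:phic.S3} contains at least one horizontal $b_i$, so $\varphi_c$ restricts to zero on each vertical fibre, which is exactly what the paper means by ``manifestly coassociative''. The only minor difference is in how the $\SO(4)$-invariance is packaged: the paper exhibits $\SO(4)$ concretely as the subgroup $\SU(2)^2/\{\pm 1\}$ of the ambient $\SU(2)^3$ symmetry group of $\varphi_c$ that fixes each base point (taking $q_2 = \mathbf{x} q_3 \overline{\mathbf{x}}$ in the action $(q_1,q_2,q_3)\cdot(\mathbf{a},\mathbf{x}) = (q_1\mathbf{a}\overline{q_3}, q_2\mathbf{x}\overline{q_3})$), whereas you invoke the structure group of the spinor bundle more abstractly; both amount to the same action on the fibre $\mathbb{H}$.
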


Note that Theorem~\ref{thm:S3.fib} is quite straightforward, but we nevertheless present it here with full details, for comparison with the analogous but significantly more non-trivial theorems for the other two Bryant--Salamon manifolds. These are Theorem~\ref{thm:S4.fib} for $\Lambda^2_-(T^* \mathcal{S}^4)$ and Theorem~\ref{thm:CP2.fib} for $\Lambda^2_-(T^* \C\P^2)$. In particular, we refer the reader to the discussion in the introduction about the marked difference between the $\SS(\mathcal{S}^3)$ case and the two $\Lambda^2_-(T^* N^4)$ cases.

\subsection{Group actions}

We observe that there is a natural action of $\SU(2)^3$ on $M$, which we can define in terms of a triple $(q_1,q_2,q_3)$ of unit quaternions acting on $\SS(\mathcal{S}^3)\cong\H\times\mathcal{S}^3\subseteq\H\oplus\H$ by
\begin{equation}\label{eq:SU(2)cubed}
(q_1,q_2,q_3):(\mathbf{a},\mathbf{x})\mapsto (q_1\mathbf{a} \overline{q_3},q_2\mathbf{x}\overline{q_3})
\end{equation}
for $\mathbf{a}\in\H$ and $\mathbf{x}\in\mathcal{S}^3\subseteq\H$. Note that if we take $q_2=\mathbf{x}q_3\overline{\mathbf{x}}$ for any $\mathbf{x},q_3\in\mathcal{S}^3$, then 
$$q_2\mathbf{x}\overline{q_3}=\mathbf{x}$$
and hence by~\eqref{eq:SU(2)cubed} we have an action of $\SU(2)^2$ on each fibre of $\SS(\mathcal{S}^3)$ given by 
$$(q_1,q_3):\mathbf{a}\mapsto q_1\mathbf{a}\overline{q_3}.$$
In fact, this is an $\SO(4)$ action on each fibre since the action of $(-1,-1)\in\SU(2)^2$ on $\mathbf{a}$ is trivial.

One can verify from the construction of $\varphi_c$ in~\eqref{eq:phic.S3} that it is invariant under the $\SU(2)^3$ action in~\eqref{eq:SU(2)cubed}. (See also~\cite{BryantSalamon}.) Moreover, the fibres of $\SS(\mathcal{S}^3)$ are $\SO(4)$-invariant and are manifestly coassociative with respect to $\varphi_c$ by Definition~\ref{dfn:coassociative}, as claimed in Theorem~\ref{thm:S3.fib}.

Notice that we have a global $\SU(2)$ action on $M$, given by taking $(q,1,1)$ for a unit quaternion $q$ in~\eqref{eq:SU(2)cubed}, which acts on every fibre of the $\SS(\mathcal{S}^3)$ but acts trivially on the base $\mathcal{S}^3$, i.e.~
\begin{equation}\label{eq:SU2.action.S3}
 q:(\mathbf{a},\mathbf{x})\mapsto (q\mathbf{a},\mathbf{x})
\end{equation}
for $\mathbf{a}\in\H$ and $\mathbf{x}\in\mathcal{S}^3\subseteq\H$.

\subsection{Relation to multi-moment maps}\label{subs:multimoment.S3}

Consider the $\SU(2)$ action in~\eqref{eq:SU2.action.S3}. In this section we find the multi-moment map $\nu$ for this $\SU(2)$ action for the $4$-form $*_{\varphi_c} \varphi_c$ as in~\eqref{eq:mmm4}.

Let $X_1$, $X_2$, $X_3$ denote vector fields generating the left $\SU(2)$ action~\eqref{eq:SU2.action.S3}, for example
$$X_1(\mathbf{a},\mathbf{x})=i\mathbf{a},\quad X_2(\mathbf{a},\mathbf{x})=j\mathbf{a},\quad
X_3(\mathbf{a},\mathbf{x})=k\mathbf{a}$$
for $\mathbf{a}\in\H$ and $\mathbf{x}\in\mathcal{S}^3\subseteq\H$. We may then compute directly from~\eqref{eq:starphic.S3} (for example by rotating to the point $\mathbf{a}=r\in\R_{\geq 0}$), that
\[
*_{\varphi_c} \varphi_c (X_1,X_2,X_3,\cdot)=-16(c+r^2)^{-\frac{2}{3}}r^3\d r=\d\big(6(3c-r^2)(c+r^2)^{\frac{1}{3}}\big).
\]
We deduce the following.

\begin{prop} The multi-moment map for the $\SU(2)$ action~\eqref{eq:SU2.action.S3} on $*_{\varphi_c} \varphi_c$ is 
$$\nu=6(3c-r^2)(c+r^2)^{\frac{1}{3}}-18c^{\frac{4}{3}},$$
which maps onto $(-\infty,0]$ for $c>0$ and $(-\infty,0)$ for $c=0$.
\end{prop}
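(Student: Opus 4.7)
The plan is essentially to read off the formula for $\nu$ from the displayed computation immediately preceding the statement, and then to study $\nu$ as a function of the single variable $r$.

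First, by definition of the multi-moment map in~\eqref{eq:mmm4}, $\nu$ is characterized (up to an additive constant) by $\d\nu = *_{\varphi_c}\varphi_c(X_1,X_2,X_3,\cdot)$. The computation in the excerpt has already produced
\[
*_{\varphi_c}\varphi_c(X_1,X_2,X_3,\cdot) = -16(c+r^2)^{-\frac{2}{3}}r^3\,\d r = \d\bigl(6(3c-r^2)(c+r^2)^{\frac{1}{3}}\bigr),
\]
so any function differing from $6(3c-r^2)(c+r^2)^{1/3}$ by a constant is a valid multi-moment map. I would fix the constant by the natural normalization that $\nu$ vanish on the fixed-point locus of the $\SU(2)$ action~\eqref{eq:SU2.action.S3}, namely the zero section $\mathcal{S}^3 = \{r=0\}$. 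Evaluating at $r=0$ gives $6(3c)(c)^{1/3} = 18c^{4/3}$, so the correctly normalized multi-moment map is
\[
\nu = 6(3c-r^2)(c+r^2)^{\frac{1}{3}} - 18c^{\frac{4}{3}},
\]
as claimed.

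For the range, I would observe that $\nu$ depends only on $r$, and from the same boxed identity one has $\tfrac{d\nu}{dr} = -16r^3(c+r^2)^{-2/3}$. For $c>0$ this derivative is non-positive on $[0,\infty)$ and vanishes only at $r=0$, so $\nu$ is strictly monotonically decreasing in $r$, starting at $\nu|_{r=0} = 0$; since the leading-order behaviour at infinity is $-6r^{8/3}$, we have $\nu \to -\infty$ as $r \to \infty$. Hence $\nu(M) = (-\infty,0]$, with the maximum $0$ attained precisely on the associative zero section. For the cone version $c=0$, the formula reduces to $\nu = -6r^{8/3}$, and since $M_0 = \SS(\mathcal{S}^3)\setminus\mathcal{S}^3$ consists of points with $r>0$, the range is $(-\infty,0)$ with $0$ excluded.

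There is no real obstacle here: the nontrivial input is the identification of $*_{\varphi_c}\varphi_c(X_1,X_2,X_3,\cdot)$ as an exact form, which is already handled in the excerpt. The remaining work is the elementary monotonicity and asymptotic analysis of a single-variable function, together with making a sensible choice for the additive constant.
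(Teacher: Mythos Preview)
Your proposal is correct and follows essentially the same approach as the paper: the key computation $*_{\varphi_c}\varphi_c(X_1,X_2,X_3,\cdot) = \d\bigl(6(3c-r^2)(c+r^2)^{1/3}\bigr)$ is already displayed before the proposition, and the remaining content is the choice of additive constant and the elementary range analysis. Your treatment is in fact more detailed than the paper's, which simply states the result after that computation without further justification of the normalization or the range.
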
 

\begin{remark} \label{rmk:mmmS3}
One can compute from equation~\eqref{eq:phic.S3} for $\varphi_c$ that
\begin{align*}
\varphi_c(X_2,X_3,\cdot) & = 4\sqrt{3} \big( (-a_0^2 - a_1^2 + a_2^2 + a_3^2) b_1 + (2 a_0 a_3 - 2 a_1 a_2) b_2 + (-2 a_0 a_2 - 2 a_3 a_1) b_3 \big), \\
\varphi_c(X_3,X_1,\cdot) & = 4\sqrt{3} \big( (-2 a_0 a_3 - 2 a_1 a_2) b_1 + (-a_0^2 + a_1^2 - a_2^2 + a_3^2) b_2 + (2 a_0 a_1 - 2 a_2 a_3) b_3 \big), \\
\varphi_c(X_1,X_2,\cdot) & = 4\sqrt{3} \big( (2 a_0 a_2 - 2 a_3 a_1) b_1 + (-2 a_0 a_1 - 2 a_2 a_3) b_2 + (-a_0^2 + a_1^2 + a_2^2 - a_3^2) b_3 \big).
\end{align*}
As expected by Remark~\ref{rmk:mmm}, these are not closed forms. However, the forms $b_1,b_2,b_3$ are a left-invariant coframe on the base $\mathcal{S}^3$, and so are natural in this context.

The above expressions simplify considerably if we write them in terms of quaternion-valued forms. Define an $\H$-valued $0$-form $\mathbf{a} = a_0 \mathbf{1} + a_1 \mathbf{i} + a_2 \mathbf{j} + a_3 \mathbf{k}$ and an $\Imm\H$-valued $1$-form $\mathbf{b} = b_1 \mathbf{i} + b_2 \mathbf{j} + b_3 \mathbf{k}$. Then one can check that
\begin{align*}
\varphi_c(X_2,X_3,\cdot) & = -4\sqrt{3} \langle \mathbf{\bar{a}} \mathbf{i} \mathbf{a}, \mathbf{b} \rangle, \quad
\varphi_c(X_3,X_1,\cdot)  = -4\sqrt{3} \langle \mathbf{\bar{a}} \mathbf{j} \mathbf{a}, \mathbf{b} \rangle, \quad
\varphi_c(X_1,X_2,\cdot)  = -4\sqrt{3} \langle \mathbf{\bar{a}} \mathbf{k} \mathbf{a}, \mathbf{b} \rangle.
\end{align*}
The above way of writing these 1-forms exhibits a relation to the \emph{hyperk\"ahler moment map} for the action of $\SU(2) \cong \Sp(1)$ on $\H$ by right multiplication, which is given by $\frac{1}{2} ( \mathbf{\bar{a}} \mathbf{i} \mathbf{a}, \mathbf{\bar{a}} \mathbf{j} \mathbf{a}, \mathbf{\bar{a}} \mathbf{k} \mathbf{a})$. This is interesting as the ``multi-moment map'' becomes an $\Imm\H$-valued left-invariant 1-form built from the hyperk\"ahler moment map in a natural way.\end{remark}

\subsection{Riemannian and hypersymplectic geometry on the fibres} \label{sec:hypersymplectic.S3}

Let $N$ denote one of the coassociative fibres on $M$ or $M_0$. In this section we describe the geometry induced on $N$ from the ambient torsion-free $\GG_2$ structure. This includes the Riemannian geometry of the induced metric, and the induced \emph{hypersymplectic} structure in the sense of Definition~\ref{dfn:hypersymplectic}.

\begin{prop} Let $g_{\mathcal{S}^3}$ denote the constant curvature $1$ metric on $\mathcal{S}^3$.
\begin{itemize}
\item[(a)] On $M$, the induced metric on the coassociative $\R^4$ fibres is conformally flat and asymptotically conical with rate $-3$ to the metric
\begin{equation}\label{eq:cone.R4minus0}
g_{\R^+\times\mathcal{S}^3}=\d R^2+\frac{4}{9}R^2g_{\mathcal{S}^3},
\end{equation} 
where $R$ is the coordinate on $\R^+$.
\item[(b)] On $M_0$, the induced metric on the coassociative $\R^4\setminus\{0\}$ fibres is precisely the cone metric in~\eqref{eq:cone.R4minus0}, which is conformally flat, but not flat, and so is not smooth at the origin.
\end{itemize}
Moreover, in both $M$ and $M_0$, the induced hypersymplectic triple on the coassociative fibres is (up to a constant multiple) the standard hyperk\"ahler triple $(\omega_1,\omega_2,\omega_3)$ on $\R^4$ given in~\eqref{eq:omegas-R7}.
\end{prop}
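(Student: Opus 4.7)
The plan is to restrict all relevant forms to a generic coassociative fibre $N$ over $p\in\mathcal{S}^3$ and exploit that the horizontal forms pulled up from the base vanish on $N$. From~\eqref{eq:b-S3} with $\kappa=1$ we have $\rho_j = -\tfrac{1}{2}b_j$, so $\rho_j|_N = 0$, whence~\eqref{eq:zetas-S3} gives $\zeta_k|_N = da_k$. Substituting into~\eqref{eq:gc.S3} and writing $\R^4$ in polar coordinates via $\sum_k da_k^2 = dr^2 + r^2 g_{\mathcal{S}^3}$ produces
\[
g_c|_N = 4(c+r^2)^{-1/3}\bigl(dr^2 + r^2 g_{\mathcal{S}^3}\bigr),
\]
which is manifestly a conformal rescaling of the flat Euclidean metric on the fibre.

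For part (b), setting $c=0$ and substituting $R = 3r^{2/3}$, so $dR^2 = 4r^{-2/3}dr^2$ and $4r^{4/3} = \tfrac{4}{9}R^2$, gives exactly the cone metric~\eqref{eq:cone.R4minus0}; this metric fails to extend smoothly across $R = 0$ because the cross-section is $\mathcal{S}^3$ with the $\tfrac{4}{9}$-rescaled round metric, not the unit round sphere. For part (a), the binomial expansion
\[
(c+r^2)^{-1/3} - r^{-2/3} = -\tfrac{c}{3}\,r^{-8/3} + O(r^{-14/3})
\]
shows that $g_c|_N - g_0|_N$ is of order $r^{-2}$ relative to $g_0|_N$, and since $R \sim 3r^{2/3}$ gives $r^{-2} = O(R^{-3})$, this yields the asymptotically conical rate $-3$ of Definition~\ref{dfn:AC}, with higher derivatives controlled by differentiating the same expansion termwise.

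To identify the hypersymplectic structure I would apply Lemma~\ref{lem:hypersymplectic} using the normal frame $(n_1, n_2, n_3)$ obtained as the horizontal lifts to $M$ (via the connection $\rho_j$) of vector fields $Y_1, Y_2, Y_3$ on $\mathcal{S}^3$ normalized so that $b_j(Y_k)|_p = \delta_{jk}$. Each $n_k$ is connection-horizontal and hence normal to $N$ in $g_c$ (the two distributions coincide on $N$ since $\rho_j|_N = 0$), and $b_j(n_k)|_N = \delta_{jk}$ because the $b_j$ are pulled back from $p$. Contracting $n_k$ with~\eqref{eq:phic.S3} and restricting to $N$, the term $3\sqrt{3}(c+r^2)\,b_1\wedge b_2\wedge b_3$ vanishes (a factor of some $b_j$ survives after contraction), while among the remaining summands only $b_k \wedge \Omega_k$ contributes, giving
\[
(n_k \lrcorner \varphi_c)|_N = 4\sqrt{3}\,\Omega_k|_N.
\]
Since $\zeta_j|_N = da_j$, comparing~\eqref{eq:Omegas-S3} with~\eqref{eq:omegas-R7} identifies $\Omega_k|_N$ with $\omega_k$; these restrictions are manifestly closed on $N$, so using $d\varphi_c = 0$ the hypothesis $(\mathcal{L}_{n_k}\varphi_c)|_N = d\bigl((n_k \lrcorner \varphi_c)|_N\bigr) = 0$ of Lemma~\ref{lem:hypersymplectic} is satisfied, and the induced hypersymplectic triple is $4\sqrt{3}(\omega_1,\omega_2,\omega_3)$, the standard flat hyperk\"ahler triple up to the constant $4\sqrt{3}$. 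The only mildly delicate point is the AC rate estimate in part (a), which is nonetheless routine once one observes that $R \sim 3r^{2/3}$ converts $r^{-2}$ decay in $r$ to $R^{-3}$ decay in the cone coordinate.
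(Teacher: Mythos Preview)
Your proof is correct and follows essentially the same route as the paper: you restrict to a fibre by setting $\rho_j|_N=0$ so that $\zeta_k|_N=da_k$, read off the conformally flat metric $g_c|_N=4(c+r^2)^{-1/3}(dr^2+r^2g_{\mathcal{S}^3})$, change variables to $R=3r^{2/3}$, and then apply Lemma~\ref{lem:hypersymplectic} with the normal frame dual to $b_1,b_2,b_3$ to obtain the triple $4\sqrt{3}\,\omega_k$. The only cosmetic difference is that the paper rewrites the metric explicitly as $(1+27c/R^3)^{-1/3}(dR^2+\tfrac{4}{9}R^2g_{\mathcal{S}^3})$ and reads off the AC rate $-3$ directly, whereas you reach the same conclusion via a binomial expansion in $r$; both are equivalent.
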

\begin{proof}
We begin by describing the induced metric on the coassociative fibres $N$. The metric on the fibres is obtained by setting the $\sigma_i = 0$, which by~\eqref{eq:b-S3} is equivalent to setting the $\rho_i = 0$. Thus we deduce from~\eqref{eq:zetas-S3} and~\eqref{eq:gc.S3} that the induced metric on $N$ is
$$g_c|_N=4(c+r^2)^{-\frac{1}{3}}(\d a_0^2+\d a_1^2+\d a_2^2+\d a_3^2),$$
where $(a_0,a_1,a_2,a_3)$ are linear coordinates on $N$. Since $r$ is the distance in the fibres with respect to these coordinates on $N$, we have that
$$g_c|_N=4(c+r^2)^{-\frac{1}{3}}g_{\R^4}=4(c+r^2)^{-\frac{1}{3}}(\d r^2+r^2g_{\mathcal{S}^3}),$$
where $g_{\mathcal{S}^3}$ is the constant curvature 1 metric on $\mathcal{S}^3$. Thus, $g_c|_N$ is conformally flat, but \emph{not} flat.

If we define $R=3r^{\frac{2}{3}}$, then one can compute that the metric becomes
$$g_c|_N=\left(1+\frac{27 c}{R^3}\right)^{-\frac{1}{3}}\left(\d R^2+\frac{4}{9}R^2g_{\mathcal{S}^3}\right).$$
Thus we see that with its induced metric, $N$ is asymptotically conical with rate $-3$ to the cone metric~\eqref{eq:cone.R4minus0} on $\R^4\setminus\{0\}$ which is \emph{not} flat and \emph{not} smooth at $0$. Moreover, when $c=0$ we see that $g_0|_N$ is exactly the cone metric on $\R^4\setminus\{0\}$ in~\eqref{eq:cone.R4minus0}. 

Let $N \cong \R^4$ be a fibre of $\SS(\mathcal{S}^3)$, which is coassociative. Then $b_k|_N = 0$, and by~\eqref{eq:b-S3} and~\eqref{eq:d-b-S3} we have that $\d b_k|_N = 0$ as well. It follows from~\eqref{eq:d-zetas-S3} and~\eqref{eq:Omegas-S3} that
\begin{equation} \label{eq:dOmegas.S3}
\d \Omega_k|_N = 0 \quad \text{for $k=1, 2, 3$}.
\end{equation}
Now let $n_1, n_2, n_3$ be the linearly independent normal vector fields that are dual to $b_1, b_2, b_3$. From~\eqref{eq:phic.S3} and~\eqref{eq:g.vol.S3} and~\eqref{eq:dOmegas.S3} we thus deduce that $\d (n_k \lrcorner \varphi_c)|_N = 0$. We can therefore apply Lemma~\ref{lem:hypersymplectic} to conclude that the hypersymplectic triple induced on $N$ is 
$$4\sqrt{3}(\Omega_1,\Omega_2,\Omega_3)|_N.$$
Recalling that the fibre corresponds to setting the $\rho_i = 0$, from~\eqref{eq:zetas-S3} and~\eqref{eq:Omegas-S3} we deduce that
$$\Omega_j|_N=\omega_j,$$
the standard hyperk\"ahler forms on $\R^4$ given in~\eqref{eq:omegas-R7} (with $a_k=y_k$ for $k=0,1,2,3$). 

Therefore, the induced hypersymplectic triple on $N$ is just the hyperk\"ahler triple $4\sqrt{3}(\omega_1,\omega_2,\omega_3)$ on $\R^4$ as claimed in Theorem~\ref{thm:S3.fib}.
\end{proof}

\subsection{Flat limit} \label{sec:flat.limit.SS3}

We have seen throughout this section that the geometry on the fibres is independent of the projection to $\mathcal{S}^3$. Consider the flat limit of the torsion-free $\GG_2$-structure $\varphi_{\frac{1}{3\kappa},\frac{1}{4},\kappa}$ on $M$ as $\kappa\to 0$ as in~\S\ref{subs:S3.flat}. Since we are simply rescaling, the fibres of $M$ are coassociative for all values of $\kappa$ in this family. This can also be seen by examining~\eqref{eq:phi.S3.general}.

In the limit as $\kappa\to 0$, $\mathcal{S}^3$ becomes the flat $\R^3$, and we have a coassociative fibration of $\R^7$ over $\R^3$ which is independent of $\R^3$. Moreover, the $\SO(4)$-invariance of the fibres is preserved, and so the resulting fibration must be the trivial coassociative $\R^4$ fibration of $\R^7=\R^3\oplus\R^4$ over $\R^3$.

\section{Anti-self-dual 2-form bundle of \texorpdfstring{$\mathcal{S}$\textsuperscript{4}}{S4}} \label{sec:ASDS4}

In this section we consider $M^7=\Lambda^2_-(T^*\mathcal{S}^4)$ with the Bryant--Salamon torsion-free $\GG_2$-structure $\varphi_c$ from~\eqref{eq:phic}, and let $M_0=\R^+\times\C\P^3$ be its asymptotic cone, with the conical $\GG_2$-structure $\varphi_0$. We describe both $M$ and $M_0$ as coassociative fibrations over $\R^3$. Specifically we prove the following result.

\begin{thm}\label{thm:S4.fib}
Let $M=\Lambda^2_-(T^*\mathcal{S}^4)$, let $M_0=\R^+\times\C\P^3=M\setminus\mathcal{S}^4$, and recall the Bryant--Salamon $\GG_2$-structures $\varphi_c$ given in Theorem~\ref{thm:BS.S4CP2} for $c\geq 0$. 
\begin{itemize}
\item[\emph{(a)}] For every $c>0$, there is an $\SO(3)$-invariant projection $\pi_c:M\to\R^3$ such that each fibre $\pi_c^{-1}(x)$ is coassociative in $(M,\varphi_c)$. Moreover, there is a circle $\mathcal{S}^1_c\subseteq\{0\}\times\R^2\subseteq\R^3$ such that the fibres of $\pi_c$ are given by
$$\pi_c^{-1}(x)\cong \left\{\begin{array}{cl} T^*\mathcal{S}^2, & x\notin\mathcal{S}^1_c,\\[2pt]
(\R^+\times\R\P^3)\cup\{0\}, & x\in\mathcal{S}^1_c.\end{array} \right.$$
\item[\emph{(b)}] There is an $\SO(3)$-invariant projection $\pi_0:M_0=M\setminus\mathcal{S}^4\to\R^3$ such that $\pi_0^{-1}(x)$ is coassociative in $(M,\varphi_0)$. Moreover, the fibres of $\pi_0$ are given by
 $$\pi_0^{-1}(x)\cong \left\{\begin{array}{cl} T^*\mathcal{S}^2, & x\neq 0,\\[2pt]
\R^+\times\R\P^3, & x=0. \end{array}\right.$$
\end{itemize}
That is, both $(M,\varphi_c)$ and $(M_0,\varphi_0)$ can be realized as $\SO(3)$-invariant coassociative fibrations whose fibres are generically smooth and diffeomorphic to $T^*\mathcal{S}^2$, and whose singular fibres consist of a circle of $(\R^+\times\R\P^3)\cup\{0\}$ singular fibres in $M$, and a single $\R^+\times\R\P^3$ singular fibre in $M_0$. 
\end{thm}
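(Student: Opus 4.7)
The strategy is to build $\pi_c : M \to \R^3$ as the combination of two natural $\SO(3)$-invariant maps: the projection of $\mathcal{S}^4 \subset \R^3 \oplus \R^2$ onto the invariant $\R^2$ factor (pulled back via $\pi_N : M \to \mathcal{S}^4$) will supply two coordinates $y_4, y_5$, and the Madsen--Swann multi-moment map $\nu$ for $*_{\varphi_c}\varphi_c$ from \S\ref{sub:multimoment.intro} will supply the third. Writing $\mathcal{S}^4 = \{(\mathbf{u},\mathbf{v}) \in \R^3 \oplus \R^2 : |\mathbf{u}|^2 + |\mathbf{v}|^2 = 1\}$, the action $A\cdot(\mathbf{u},\mathbf{v}) = (A\mathbf{u},\mathbf{v})$ lifts canonically to $M$ and preserves $\varphi_c$; the coordinates $(y_4,y_5) = \mathbf{v}$ then pull back to $M$ with image $\overline{D^2} \subset \R^2$. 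I would first adapt the Bryant--Salamon coframe of \S\ref{sec:N} so that $b_0$ is dual to the radial direction away from the singular orbit $\{\mathbf{u}=0\}$, $b_1, b_2, b_3$ are tangent to the $\SO(3)$-orbit $\mathcal{S}^2 \subset \mathcal{S}^4$ and invariant direction along $\mathcal{S}^1 \subset \R^2$, and compute everything on an open dense set.

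Next, I would let $X_1, X_2, X_3$ generate the lifted $\SO(3)$-action; these are vertical in the sense of Lemma~\ref{lemma:adapted-frame-2}. I would evaluate the horizontal $1$-forms $\varphi_c(X_j, X_k, \cdot)$ and the vertical $1$-form $*_{\varphi_c}\varphi_c(X_1,X_2,X_3, \cdot)$ directly from~\eqref{eq:phic},~\eqref{eq:psic}, and~\eqref{eq:zetas-N} on a convenient representative orbit. The $4$-form contraction is automatically closed in the Madsen--Swann framework, so integrates on the simply connected $M$ to a primitive $\nu$, giving $\pi_c := (y_4, y_5, \nu)$. To establish coassociativity of each level set, I would then use Lemma~\ref{lemma:adapted-frame-2} followed by Lemma~\ref{lemma:adapted-frame} to promote $\{\d y_4, \d y_5, \d \nu\}$ to an orthogonal horizontal triple $\{\hat h_1, \hat h_2, \hat h_3\}$ and complete to a $\GG_2$-adapted coframe; on the open set where $\d y_4 \w \d y_5 \w \d \nu \neq 0$, the vertical distribution is coassociative by the canonical expression~\eqref{eq:canonical-3form}.

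For the fibre topology I would argue as follows. For generic $x$, the fibre $\pi_c^{-1}(x)$ is a rank-$2$ real vector bundle over the $\SO(3)$-orbit $\mathcal{S}^2_{|\mathbf{v}|} \subset \mathcal{S}^4$. A computation of the generic $\SO(3)$-isotropy at a point off the zero section will identify $\pi_c^{-1}(x)\setminus B \cong \R^+ \times \R\P^3$, so by the criterion in the introduction $\pi_c^{-1}(x) \cong T^*\mathcal{S}^2$. The singular circle $\mathcal{S}^1_c \subset \{0\} \times \R^2$ arises as the locus where the orbit $\mathcal{S}^2_{|\mathbf{v}|}$ collapses to a point (so $|\mathbf{v}|=1$) while the level set $\{\nu = \nu(x)\}$ still meets the collapsed fibre; its precise radius depends on $c$ through the normalization of $\nu$. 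The exceptional fibre then sits inside $\Lambda^2_-(T_p^*\mathcal{S}^4) \cong \R^3$ together with its radial scaling and is identified as a cone on $\R\P^3$ in case (a). Removing the zero section $\mathcal{S}^4$ in case (b) deletes the cone vertex, leaving $\R^+ \times \R\P^3$; as $c \to 0$ the circle $\mathcal{S}^1_c$ collapses to the origin, consistent with the single singular fibre in (b).

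The main obstacle is the singular fibre analysis. First, the $\R\P^3$ link (rather than $\mathcal{S}^3$) arises because the $\pm 1 \in \SO(3)$-stabilizer of a point in the degenerate orbit acts as $\pm 1$ on the corresponding fibre $\R^4 \cong \Lambda^2_-(T^*\mathcal{S}^4)|_p$, and this identification must be tracked carefully through the explicit formula for $\varphi_c$. Secondly, one must verify that the critical locus of $\pi_c$ lies entirely in $\{0\} \times \R^2$ and not in the full $\R^3$; this amounts to checking that $\nu$ vanishes on the singular orbits in $\mathcal{S}^4$, and then computing its image on $\mathcal{S}^1_c$ as an explicit function of $c$. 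Once these two points are settled, the remaining verifications reduce to routine computations in the adapted coframes of \S\ref{sec:N}.
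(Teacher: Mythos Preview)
There is a genuine gap: your proposed map $\pi_c = (y_4, y_5, \nu)$ is \emph{not} constant on the $\SO(3)$-invariant coassociative $4$-folds, so its level sets are not the fibres you want.

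The multi-moment map $\nu$ for $*_{\varphi_c}\varphi_c$ satisfies $\d\nu = *_{\varphi_c}\varphi_c(X_1,X_2,X_3,\cdot)$. On a coassociative $N$ we have $*_{\varphi_c}\varphi_c|_N = \vol_N$, so $\d\nu|_N = \vol_N(X_1,X_2,X_3,\cdot)$, which is nonzero wherever the orbit is $3$-dimensional. Thus $\nu$ varies \emph{along} each coassociative fibre rather than parametrising the family; in the paper's notation $\nu = \rho^2 = s^2\cos^2\alpha$ is precisely the radial coordinate in the $T^*\mathcal{S}^2$ fibres measuring distance from the bolt. Likewise, the pullback $(y_4,y_5) = \sin\alpha\,(\cos\beta,\sin\beta)$ is not constant on the coassociatives either: while $\beta$ is indeed a genuine invariant of the fibration, the radius $\sin\alpha$ is not, because the coassociative condition forces $\alpha$ to vary with the fibre coordinates $s,t$. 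The actual conserved quantities are $u = t\cos\alpha$ and $v = 2(c+s^2+t^2)^{1/4}\sin\alpha$, which mix base and fibre data nontrivially and can only be found by writing out $\varphi_c|_N$ in adapted coordinates and integrating the resulting ODEs (this is the content of Proposition~\ref{prop:smooth.coass.S4}). Your picture of the generic fibre as a rank-$2$ bundle over a fixed $\mathcal{S}^2_{|\mathbf{v}|} \subset \mathcal{S}^4$ is therefore also incorrect: the image of a coassociative fibre under $\pi_N$ sweeps through a range of $\alpha$-values, not a single orbit.

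The paper's route is to parametrise a $1$-parameter family of $\SO(3)$-orbits, impose $\varphi_c|_N \equiv 0$, and observe that the resulting system of ODEs in $(s,t,\alpha,\beta)$ integrates exactly to $\dot\beta=0$ together with the two first integrals $u,v$ above; the map $\pi_c = (u, v\cos\beta, v\sin\beta)$ then does the job. The multi-moment map enters only afterward, as a natural vertical coordinate on the fibres.
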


We also study the induced Riemannian and hypersymplectic geometry on the coassociative fibres. The coassociative fibres that we call \emph{singular} in Theorem~\ref{thm:S4.fib} turn out to have conically singular induced Riemannian metrics, including Riemannian cones in some cases. The precise statement is given in Proposition~\ref{prop:AC.CS-S4}. 

\subsection{A coframe on \texorpdfstring{$\mathcal{S}^4$}{S4}}

As discussed in~\S\ref{sec:N}, the starting point for describing the Bryant--Salamon $\GG_2$-structure on $M$ is an orthonormal coframe on $\mathcal{S}^4$. We therefore start by constructing such a coframe which is \emph{adapted} to the symmetries we wish to impose.

Consider $\mathcal{S}^4$ as the unit sphere in $\R^5$ with the induced Riemannian metric. Choose a 3-dimensional linear subspace $P$ in $\R^5$. Our construction will depend on this choice, and thus will break the usual $\SO(5)$ symmetry of $\mathcal{S}^4$. We will identify $P\cong \R^3$ and $P^{\perp}\cong \R^2$.
 
With respect to the splitting $\R^5=P\oplus P^{\perp}$, write
\[\mathcal{S}^4=\{(\mathbf{x},\mathbf{y})\in P\oplus P^{\perp} : |\mathbf{x}|^2+|\mathbf{y}|^2=1\}.\]
For all $(\mathbf{x},\mathbf{y})\in\mathcal{S}^4$, there exists a unique $\alpha\in[0,\frac{\pi}{2}]$ such that
\[\mathbf{x}=\cos\alpha \mathbf{u},\quad \mathbf{y}=\sin\alpha\mathbf{v},\]
where $\mathbf{u}\in P$ with $|\mathbf{u}|=1$ and $\mathbf{v}\in P^{\perp}$ with $|\mathbf{v}|=1$. More precisely, for $\alpha \neq 0,\frac{\pi}{2}$, the vectors $\mathbf{u}, \mathbf{v}$ are uniquely determined. For $\alpha = 0$, we have $\mathbf{u} = \mathbf{x} \in \mathcal{S}^2$ and $\mathbf{y} = 0$ so $\mathbf{v}$ is undefined. For $\alpha = \frac{\pi}{2}$, we have $\mathbf{v} = \mathbf{y} \in \mathcal{S}^1$, and $\mathbf{x} = 0$, so $\mathbf{u}$ is undefined. Thus we write $\mathcal{S}^4$ as the disjoint union of an $\mathcal{S}^2$, an $\mathcal{S}^1$, and a $(0, \frac{\pi}{2})$ family of $\mathcal{S}^2 \times \mathcal{S}^1$ spaces.

Moreover, for each $\mathbf{u}\in P$ with $|\mathbf{u}|=1$ there is a unique $\theta\in [0,\pi]$ and some $\phi\in [0,2\pi)$ such that
\[\mathbf{u}=(\cos\theta, \sin\theta\cos\phi, \sin\theta\sin\phi).\]
(There are just the usual spherical coordinates on $P \cong \R^3$.) Note that $\phi$ is unique if $\theta\neq 0,\pi$. Finally, for each $\mathbf{v}\in P^{\perp}$ with $|\mathbf{v}|=1$ there exists a unique $\beta\in[0,2\pi)$ such that
\[\mathbf{v}=(\cos\beta,\sin\beta).\]

Therefore, we have local coordinates $(\alpha,\beta,\theta,\phi)$ on $\mathcal{S}^4$ which are well-defined when both $\theta\in(0,\pi)$ and $\alpha\in(0,\frac{\pi}{2})$. (Note that the restriction $\alpha \in (0, \frac{\pi}{2})$ corresponds to those $(\mathbf{x},\mathbf{y})\in\mathcal{S}^4$ where neither $\mathbf{x}$ nor $\mathbf{y}$ is zero.) This coordinate patch $U$ is geometrically given as the complement of two totally geodesic $\mathcal{S}^2$'s in $\mathcal{S}^4$, the first one being
\[ \mathcal{S}^2_{y_1 = y_2 = 0} = \{ (\mathbf{x}, \mathbf{0}) \in P \oplus P^{\perp} \, : \, |\mathbf{x}|^2 = 1 \}, \]
which is the unit sphere in the $3$-plane $y_1 = y_2 = 0$ (i.e.~$P$), corresponding to $\alpha = 0$, and the second one being 
\begin{align}
\mathcal{S}^2_{x_2 = x_3 = 0} & = \{ (\mathbf{x}, \mathbf{y}) = ( \pm \cos \alpha, 0, 0, \sin \alpha \cos \beta, \sin \alpha \sin \beta) \in P \oplus P^{\perp} \, : \, \alpha \in (0, \tfrac{\pi}{2}) \}, \label{eq:S2.x2x3}\\
& = \{ (\mathbf{x}, \mathbf{y}) = ( \cos \alpha, 0, 0, \sin \alpha \cos \beta, \sin \alpha \sin \beta) \in P \oplus P^{\perp} \, : \, \alpha \in (0, \pi) \},\nonumber
\end{align}
which is the unit sphere in the $3$-plane $x_2 = x_3 = 0$, corresponding to $\theta = 0$ or $\theta = \pi$. We note that the $\mathcal{S}^1$ where $\alpha=\frac{\pi}{2}$ is contained as the equator in $\mathcal{S}^2_{x_2=x_3=0}$ where $\theta=0$ or $\theta=\pi$. It is clear that these two totally geodesic $2$-spheres intersect at the two points $(\pm 1,0,0,0,0)$ in $\mathcal{S}^4$.

We have an orthonormal coframe on this coordinate patch on $\mathcal{S}^4$ given by
\begin{equation}\label{eq:coframe.S4}
b_0=\d\alpha,\quad b_1=\sin\alpha\d\beta, \quad b_2=\cos\alpha\d\theta,\quad b_3=\cos\alpha\sin\theta\d\phi.
\end{equation}
One may check that if $r\partial_r$ is the dilation vector field on $\R^5$ and $\vol_{\R^5}$ is the standard volume form on $\R^5$ then 
\[ (\partial_r\lrcorner\vol_{\R^5})|_{\mathcal{S}^4}=b_0\wedge b_1\wedge b_2\wedge b_3=\sin\alpha\cos^2\alpha\sin\theta \d\alpha\wedge \d\beta\wedge\d\theta\w\d\phi.\]
In other words, $\{b_0,b_1,b_2,b_3\}$ is a positively oriented orthonormal coframe on $\mathcal{S}^4$, so that by~\eqref{eq:volN} we have
\begin{equation} \label{eq:vols4}
\vol_{\mathcal{S}^4}=b_0\w b_1\w b_2\w b_3 = \sin\alpha\cos^2\alpha\sin\theta \d\alpha\wedge \d\beta\wedge\d\theta\w\d\phi.
\end{equation}

\subsection{Induced connection and vertical 1-forms}

We now define a basis $\{\Omega_1,\Omega_2,\Omega_3\}$ for the anti-self-dual 2-forms on $\mathcal{S}^4$ on the coordinate patch $U$ from the previous section as in~\eqref{eq:big-Omega-N} using~\eqref{eq:coframe.S4}. We obtain
\begin{equation} \label{eq:big-Omegas-S4}
\begin{aligned}
\Omega_1&
=\sin\alpha\d\alpha\w\d\beta-\cos^2\alpha\sin\theta\d\theta\w\d\phi,\\
\Omega_2&
=\cos\alpha\d\alpha\w\d\theta-\sin\alpha\cos\alpha\sin\theta\d\phi\w\d\beta,\\
\Omega_3&
=\cos\alpha\sin\theta\d\alpha\w\d\phi-\sin\alpha\cos\alpha\d\beta\w\d\theta.
\end{aligned}
\end{equation}
One can then compute that
\begin{align*}
\d\Omega_1&=2\sin\alpha\cos\alpha\sin\theta\d\alpha\w\d\theta\w\d\phi,\\
\d\Omega_2&=(\sin^2\alpha-\cos^2\alpha)\sin\theta\d\alpha\w\d\phi\w\d\beta-\sin\alpha\cos\alpha\cos\theta\d\theta\w\d\phi\w\d\beta,\\
\d\Omega_3&=\cos\alpha\cos\theta\d\theta\w\d\alpha\w\d\phi+(\sin^2\alpha-\cos^2\alpha)\d\alpha\w\d\beta\w\d\theta.
\end{align*}
Hence, we can check that~\eqref{eq:d-big-Omega-N} is satisfied with
\begin{align}\label{eq:rhos-S4}
2\rho_1=-\cos\alpha\d\beta+\cos\theta\d\phi,\quad
2\rho_2=\sin\alpha\d\theta,\quad 2\rho_3=\sin\alpha\sin\theta\d\phi.
\end{align}
One can then verify that~\eqref{eq:d-rho-N} is satisfied with $\kappa=1$, so the forms $\rho_1,\rho_2,\rho_3$ describe the induced connection on $M$ from the Levi-Civita connection of the round constant curvature 1 metric on $\mathcal{S}^4$.

Letting $(a_1,a_2,a_3)$ be linear coordinates on the fibres of $M$ with respect to the basis $\{\Omega_1,\Omega_2,\Omega_3\}$, we find the vertical 1-forms on $M$ over $N$ with respect to the induced connection from~\eqref{eq:zetas-N} and~\eqref{eq:rhos-S4}. They are
\begin{equation} \label{eq:zetas-S4}
\begin{aligned}
\zeta_1&
=\d a_1-a_2\sin\alpha\sin\theta\d\phi+a_3\sin\alpha\d\theta,\\
\zeta_2&
=\d a_2-a_3(-\cos\alpha\d\beta+\cos\theta\d\phi)+a_1\sin\alpha\sin\theta\d\phi,\\
\zeta_3&
=\d a_3-a_1\sin\alpha\d\theta+a_2(-\cos\alpha\d\beta+\cos\theta\d\phi).
\end{aligned}
\end{equation}

\subsection{SO(3) action} \label{sec:SO3.action}

Recall that we have split $\R^5=P\oplus P^{\perp}$ where $P$ is a 3-dimensional linear subspace. We can therefore define an $\SO(3)$ action on $\R^5$, contained in $\SO(5)$, by taking the standard $\SO(3)$ action on $P\cong\R^3$ and the trivial action on $P^{\perp}\cong\R^2$. This then induces an $\SO(3)$ action on $\mathcal{S}^4$.

Recall from~\eqref{eq:big-Omegas-S4} that
\[
\Omega_1=\sin\alpha\d\alpha\w\d\beta-\cos^2\alpha\sin\theta\d\theta\w\d\phi.
\]
We observe that $\alpha$ and $\beta$ are fixed by this particular $\SO(3)$ action. Moreover, $\sin\theta\d\theta\w\d\phi$ is the volume form of the unit sphere in $P$, which is invariant under this $\SO(3)$ action. We thus deduce that $\Omega_1$ is invariant under the $\SO(3)$ action.

One can see, for example by computing the Lie derivatives of $\Omega_2$ and $\Omega_3$ with respect to generators of the $\SO(3)$ action on $\mathcal{S}^4$, that the $\SO(3)$ action on $\mathcal{S}^4$ rotates $\Omega_2$ and $\Omega_3$. This is also clear from the fact that $\Omega_1, \Omega_2, \Omega_3$ are mutually orthogonal, so since $\Omega_1$ is fixed by the $\SO(3)$ action, the other two must be rotated amongst themselves because the action preserves the metric on $\mathcal{S}^4$ and hence preserves the induced metric on $M$.

\begin{remark} \label{remark:SO3-special}
When $\alpha = \frac{\pi}{2}$, we are no longer in the domain of our coordinate patch $U$. Thus we cannot use~\eqref{eq:big-Omegas-S4} to define the forms $\Omega_1$, $\Omega_2$, $\Omega_3$ . Such a point $p$ in $\mathcal{S}^4$ has coordinates $(0,0,0,\cos \beta, \sin \beta)$ and the cotangent space to $\mathcal{S}^4$ has orthonormal basis
\begin{equation*}
e^0 = - \sin \beta \d x^4 + \cos \beta \d x^5, \quad e^1 = \d x^1, \quad e^2 = \d x^2, \quad e^3 = \d x^3.
\end{equation*}
Thus at such a point $p$ we can take $\Omega_k = e^0 \w e^k - e^i \w e^j$ for $i,j,k$ a cyclic permutation of $1,2,3$ for our basis of anti-self-dual $2$-forms. Then one can straightforwardly compute that the $\SO(3)$ action defined above induces the standard action on the fibre $\Lambda^2_- (T_p^* \mathcal{S}^4)$. That is, if $A \in \SO(3)$, then
\begin{equation*}
A^* (a_1 \Omega_1 + a_2 \Omega_2 + a_3 \Omega_3) = \tilde a_1 \Omega_1 + \tilde a_2 \Omega_2 + \tilde a_3 \Omega_3,
\end{equation*}
where $\tilde a_i = \sum_{j=1}^3 A_{ij} a_j$.
\end{remark}

\paragraph{Orbits.} We now describe the orbits of this $\SO(3)$ action. Recall that $\alpha \neq \frac{\pi}{2}$ on our coordinate patch $U$ and that we defined $\mathcal{S}^2_{x_2=x_3=0}$ in~\eqref{eq:S2.x2x3}. Let
\[ r^2 = a_1^2 + a_2^2 + a_3^2. \]
The orbits are as follows:
\begin{itemize}
\item If $\alpha \neq \frac{\pi}{2}$ and $ a_2^2 + a_3^2 = 0$, then we are at a multiple of $\Omega_1$ in the fibre over a point in the base which does not lie in $P^{\perp}$. Since $\Omega_1$ is invariant, this orbit is an $\mathcal{S}^2$.
\item If $\alpha \neq \frac{\pi}{2}$ and $ a_2^2 + a_3^2 > 0$, then we are at a generic point in $M$, and its orbit will be $\SO(3)\cong\R\P^3$.
\item If $\alpha=\frac{\pi}{2}$ and $r=0$, then we are at the origin of the fibre over a point on the equator $\mathcal{S}^1 = \mathcal{S}^2_{x_2=x_3=0} \cap P^{\perp}$. Such a point is fixed by the $\SO(3)$ action, so its orbit is a point.
\item If $\alpha=\frac{\pi}{2}$ and $r> 0$, then we are at a non-zero vector in the fibre over a point on the equator $\mathcal{S}^1 = \mathcal{S}^2_{x_2=x_3=0} \cap P^{\perp}$. The point in the base is fixed by the $\SO(3)$ action, and by Remark~\ref{remark:SO3-special}, the vector in the fibre is acted on by $\SO(3)$ transitively on the sphere of radius $r$ in the fibre. Thus such an orbit is an $\mathcal{S}^2$.
\end{itemize}

We collect the above observations in a lemma.

\begin{lem} \label{lemma:SO3.orbits}
The orbits of the $\SO(3)$ action are given in Table~\ref{table:SO(3).orbits}.
\begin{table}[H]
\begin{center}
{\setlength{\extrarowheight}{4pt}
\begin{tabular}{|c|c|c|c|}
\hline
$\alpha$ & $r$ & $\sqrt{a_2^2+a_3^2}$ & Orbit\\[2pt]
\hline
$\neq\frac{\pi}{2}$ & $>0$ & $>0$ & $\R\P^3$\\[2pt]
\hline
$\neq\frac{\pi}{2}$ & $\geq 0$ & $0$ & $\mathcal{S}^2$\\[2pt]
\hline
$\frac{\pi}{2}$ & $>0$ & $\geq 0$ & $\mathcal{S}^2$\\[2pt]
\hline
$\frac{\pi}{2}$ & $0$ & $0$ & Point \\[2pt]
\hline
\end{tabular}
}
\end{center}
\caption{$\SO(3)$ orbits}\label{table:SO(3).orbits}
\end{table}
\end{lem}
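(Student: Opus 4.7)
The plan is to organise the proof by a two-step orbit-stabilizer analysis: first determine the stabilizer $H_p\subset\SO(3)$ of a base point $p=(\cos\alpha\,\mathbf{u},\sin\alpha\,\mathbf{v})\in\mathcal{S}^4$, then analyse how $H_p$ acts on the fibre $\Lambda^2_-(T^*_p\mathcal{S}^4)$. Since $\SO(3)$ acts trivially on $P^\perp$ and acts on $P\cong\R^3$ in the standard way, it preserves $\alpha$ and fixes $\mathbf{v}$. Thus if $\alpha\neq\frac{\pi}{2}$, the vector $\mathbf{u}\in P\cap\mathcal{S}^2$ is well-defined and nonzero, and $H_p=\SO(2)$ consists of the rotations fixing $\mathbf{u}$; if $\alpha=\frac{\pi}{2}$, then $\mathbf{x}=0$ and $p$ is fixed by all of $\SO(3)$, so $H_p=\SO(3)$.

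Next I would determine the induced action of $H_p$ on the fibre in each case. For $\alpha\neq\frac{\pi}{2}$, I have already observed in~\S\ref{sec:SO3.action} that $\Omega_1$ is $\SO(3)$-invariant, so $H_p=\SO(2)$ fixes the $a_1$ direction; since $\Omega_1,\Omega_2,\Omega_3$ are mutually orthogonal and $H_p$ preserves the (positive definite) fibre metric, it must act on $\mathrm{span}\{\Omega_2,\Omega_3\}$ by rotations preserving $a_2^2+a_3^2$. Moreover the generator of $H_p$ acts nontrivially on $\Omega_2,\Omega_3$: this can be checked by computing $\mathcal{L}_X\Omega_2$ at $p$ for the generator $X$ of $H_p$, using the explicit expressions~\eqref{eq:big-Omegas-S4} (alternatively, if $H_p$ fixed $\Omega_2$ and $\Omega_3$ at $p$, it would fix the full fibre, contradicting the fact that the only vectors in $P$ fixed by $H_p$ are multiples of $\mathbf{u}$, so the $\SO(2)$ action on the anti-self-dual $2$-forms at $p$ would have to be nontrivial, as $\Lambda^2_-$ is an irreducible $\SO(4)$-summand). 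For $\alpha=\frac{\pi}{2}$, Remark~\ref{remark:SO3-special} gives that $H_p=\SO(3)$ acts on the fibre $\cong\R^3$ via the standard action, preserving only $r$.

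The rest is bookkeeping via orbit-stabilizer. When $\alpha\neq\frac{\pi}{2}$ and $a_2^2+a_3^2=0$ the fibre point $a_1\Omega_1$ is fixed by all of $H_p=\SO(2)$, so the total $\SO(3)$-stabilizer is $\SO(2)$ and the orbit is $\SO(3)/\SO(2)\cong\mathcal{S}^2$. When $\alpha\neq\frac{\pi}{2}$ and $a_2^2+a_3^2>0$, the $\SO(2)$-action on $(a_2,a_3)$ is free, so the total stabilizer is trivial and the orbit is $\SO(3)\cong\R\P^3$. When $\alpha=\frac{\pi}{2}$ and $r>0$ the orbit is the sphere of radius $r$ in $\R^3$, i.e.~$\mathcal{S}^2$; and when $\alpha=\frac{\pi}{2}$, $r=0$ the point is fixed, so the orbit is a single point. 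These four cases are exactly the rows of Table~\ref{table:SO(3).orbits}.

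The main obstacle is the explicit verification that the $\SO(2)$ stabilizer of a base point with $\alpha\neq\frac{\pi}{2}$ acts on $\mathrm{span}\{\Omega_2,\Omega_3\}$ by a genuinely nontrivial rotation, so that the $(a_2,a_3)$-action is free off the origin. The cleanest way is to pick a convenient $p$ (say $\theta=\phi=0$ with $\alpha\in(0,\frac{\pi}{2})$), write down a generator $X$ of the $\SO(2)$ stabilizer explicitly as a vector field on $\mathcal{S}^4$ in the coordinates $(\alpha,\beta,\theta,\phi)$, and compute $\mathcal{L}_X\Omega_k$ at $p$ from~\eqref{eq:big-Omegas-S4}; equivariance under the transitive $\SO(3)$-action on the base orbit then propagates this to every $p$ in the coordinate patch, and continuity (or a separate check using~\eqref{eq:S2.x2x3}) handles $\theta\in\{0,\pi\}$.
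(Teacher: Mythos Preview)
Your proof is correct and follows essentially the same orbit-stabilizer analysis as the paper, which simply lists the four cases in the paragraph immediately preceding the lemma. If anything, you are more careful than the paper: the paper merely asserts that the generic orbit (when $\alpha\neq\frac{\pi}{2}$ and $a_2^2+a_3^2>0$) is $\SO(3)\cong\R\P^3$, whereas you correctly identify and address the point that one must check the $\SO(2)$ stabilizer of the base point acts with weight $\pm 1$ (not merely nontrivially) on $\mathrm{span}\{\Omega_2,\Omega_3\}$, which indeed follows from the explicit coframe~\eqref{eq:coframe.S4} since $b_0,b_1$ have weight $0$ and $(b_2,b_3)$ has weight $\pm 1$ under this $\SO(2)$.
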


\subsection{SO(3) adapted coordinates}

The discussion of the orbit structure in the previous section motivates us to introduce the following notation. Let
\begin{equation}\label{eq:a-S4}
a_1=t,\quad a_2=s\cos\gamma,\quad a_3=s\sin\gamma,
\end{equation}
where $s\geq 0$, $t\in\R$ and $\gamma\in [0,2\pi)$. The coordinates $s,\gamma,t$ are well-defined away from $s=0$, so we now make the further restriction going forward that $s > 0$. Observe that
\[
\d a_2=\cos\gamma\d s-s\sin\gamma \d\gamma\quad\text{and}\quad 
\d a_3=\sin\gamma\d s+s\cos\gamma\d\gamma.
\]
Hence we have
\[
\cos\gamma\d a_2+\sin\gamma\d a_3=\d s\quad\text{and}\quad 
-\sin\gamma\d a_2+\cos\gamma\d a_3=s\d\gamma.
\]

We also introduce the notation
\begin{equation}
\label{eq:sigmas-S4}
\sigma_1=\d\gamma+\cos\theta\d\phi,\quad
\sigma_2=\cos\gamma\d\theta+\sin\gamma\sin\theta\d\phi,
\quad \sigma_3=\sin\gamma\d\theta-\cos\gamma\sin\theta\d\phi,
\end{equation}
which define the standard left-invariant coframe on $\SO(3)$, with coordinates $\gamma,\theta,\phi$, and in fact are a coframe on the 3-dimensional orbits of the $\SO(3)$-action. We note that
\[
\d\left(\begin{array}{c}\sigma_1\\ \sigma_2\\ \sigma_3\end{array}\right)=\left(\begin{array}{c}\sigma_2\w\sigma_3\\ \sigma_3\w\sigma_1\\ \sigma_1\w\sigma_2\end{array}\right)
\]
and that
\begin{equation} \label{eq:s2s3-temp}
\sigma_2\w\sigma_3=-\sin\theta\d\theta\w\d\phi.
\end{equation}

We then see using~\eqref{eq:sigmas-S4} that in the coordinates~\eqref{eq:a-S4}, the 1-form $\zeta_1$ from~\eqref{eq:zetas-S4} is given by
\begin{align*}
\zeta_1 &=\d t+s\sin\alpha \sigma_3.
\end{align*}
We also compute from~\eqref{eq:zetas-S4} and~\eqref{eq:sigmas-S4} that
\begin{align*}
\cos\gamma\zeta_2+\sin\gamma\zeta_3 &=\d s-t\sin\alpha\sigma_3.
\end{align*}
Similarly, we can compute that
\begin{align*}
-\sin\gamma\zeta_2+\cos\gamma\zeta_3 &=s(\sigma_1-\cos\alpha\d\beta)-t\sin\alpha\sigma_2.
\end{align*}
We observe from~\eqref{eq:big-Omegas-S4} and~\eqref{eq:s2s3-temp} that
\[
\Omega_1=\sin\alpha\d\alpha\w\d\beta+\cos^2\alpha\sigma_2\w\sigma_3.
\]
We can further compute from~\eqref{eq:big-Omegas-S4} and~\eqref{eq:sigmas-S4} that:
\begin{align*}
\cos\gamma\Omega_2+\sin\gamma\Omega_3 &=\cos\alpha(\d\alpha\w \sigma_2-\sin\alpha\d\beta\w\sigma_3),\\
-\sin\gamma\Omega_2+\cos\gamma\Omega_3 &=\cos\alpha(-\d\alpha\w\sigma_3-\sin\alpha\d\beta\w\sigma_2).
\end{align*}
The above expressions give
\begin{align*}
\zeta_2 \w \zeta_3 & = (\cos \gamma \zeta_2 + \sin \gamma \zeta_3) \w (-\sin \gamma \zeta_2 + \cos \gamma \zeta_3) \\
& = (\d s-t\sin\alpha\sigma_3) \w (s(\sigma_1-\cos\alpha\d\beta)-t\sin\alpha\sigma_2)
\end{align*}
and
\begin{align*}
\zeta_2 \w \Omega_2 + \zeta_3 \w \Omega_3 & = ( \cos \gamma \zeta_2 + \sin \gamma \zeta_3) \w ( \cos \gamma \Omega_2 + \sin \gamma \Omega_3) \\
& \qquad{} + (-\sin \gamma \zeta_2 + \cos \gamma \zeta_3) \w (- \sin \gamma \Omega_2 + \cos \gamma \Omega_3) \\
& = \cos \alpha (\d s-t\sin\alpha\sigma_3) \w (\d\alpha\w \sigma_2-\sin\alpha\d\beta\w\sigma_3) + \\
& \qquad {} + \cos \alpha (s(\sigma_1-\cos\alpha\d\beta)-t\sin\alpha\sigma_2) \w (-\d\alpha\w\sigma_3-\sin\alpha\d\beta\w\sigma_2).
\end{align*}
Using all of the above formulae, we can write the $\GG_2$-structure $\varphi_c$ from~\eqref{eq:phic} in terms of $\sigma_1, \sigma_2, \sigma_3$ and the coordinates $s,t,\alpha,\beta$ as follows:
\begin{align}
\varphi_c & = (c+s^2+t^2)^{-\frac{3}{4}} (\d t+s\sin\alpha \sigma_3) \w (\d s-t\sin\alpha\sigma_3) \w (s(\sigma_1-\cos\alpha\d\beta)-t\sin\alpha\sigma_2) \nonumber \\
&\quad {}+2(c+s^2+t^2)^{\frac{1}{4}} (\d t+s\sin\alpha \sigma_3) \w (\sin\alpha\d\alpha\w\d\beta+\cos^2\alpha\sigma_2\w\sigma_3) \nonumber \\
& \quad{} +2(c+s^2+t^2)^{\frac{1}{4}} \cos \alpha (\d s-t\sin\alpha\sigma_3) \w (\d\alpha\w \sigma_2-\sin\alpha\d\beta\w\sigma_3) \nonumber \\
& \quad{} +2(c+s^2+t^2)^{\frac{1}{4}} \cos \alpha (s(\sigma_1-\cos\alpha\d\beta)-t\sin\alpha\sigma_2) \w (-\d\alpha\w\sigma_3-\sin\alpha\d\beta\w\sigma_2).
\label{eq:phi.c.SO3}
\end{align}
Using similar computations one can show that we can write the dual 4-form $\ast_{\varphi_c} \varphi_c$ from~\eqref{eq:psic} in terms of $\sigma_1, \sigma_2, \sigma_3$ and the coordinates $s,t,\alpha,\beta$ as follows:
\begin{align}
\ast_{\varphi_c} \varphi_c & = -4 (c+ s^2 + t^2) \sin \alpha \cos^2 \alpha \d \alpha \w \d \beta \w \sigma_2 \w \sigma_3 \nonumber \\
& \quad {}- 2 (\d s-t\sin\alpha\sigma_3) \w (s(\sigma_1-\cos\alpha\d\beta)-t\sin\alpha\sigma_2) \w (\sin\alpha\d\alpha\w\d\beta+\cos^2\alpha\sigma_2\w\sigma_3) \nonumber \\
& \quad {}- 2 \cos \alpha (\d t+s\sin\alpha \sigma_3) \w (\d s-t\sin\alpha\sigma_3) \w (-\d\alpha\w\sigma_3-\sin\alpha\d\beta\w\sigma_2) \nonumber \\
& \quad {} +2 \cos \alpha (\d t+s\sin\alpha \sigma_3) \w (s(\sigma_1-\cos\alpha\d\beta)-t\sin\alpha\sigma_2) \w (\d\alpha\w \sigma_2-\sin\alpha\d\beta\w\sigma_3). \label{eq:psi.c.SO3}
\end{align}

\subsection{SO(3)-invariant coassociative 4-folds}\label{sub:SO(3)-invariant}

We observe that 
\[
\sigma_1\w\sigma_2\w\sigma_3=-\sin\theta\d\gamma\w\d\theta\w\d\phi
\]
is a volume form on the 3-dimensional $\SO(3)$-orbits, since $\partial_\gamma,\partial_{\theta},\partial_{\phi}$ span the tangent space to the orbits. By inspection, this term does not appear in the expression~\eqref{eq:phi.c.SO3} for $\varphi_c$. Moreover, the rest of the coframe consists of invariant 1-forms. It follows that $\varphi_c$ vanishes on all 3-dimensional $\SO(3)$-orbits. Therefore, this fact certainly motivates the search for $\SO(3)$-invariant coassociative 4-folds: abstractly they exist by Harvey--Lawson's local existence theorem in~\cite{HarveyLawson}, but we can also find them explicitly and describe the fibration.

\begin{remark}
The coassociative 4-folds which are invariant under the $\SO(3)$ action studied here were already described in~\cite{Kawai}. However, the fibration and the induced structures on the coassociative 4-folds, which are the main focus here, were not examined in~\cite{Kawai}.
\end{remark}

We want to consider a 1-parameter family of 3-dimensional $\SO(3)$-orbits in $M$ defining a coassociative submanifold $N$. Thus the remaining coordinates $s,t,\alpha,\beta$ must be functions of a parameter $\tau$. If we then restrict the form $\varphi_c$ to this $4$-dimensional submanifold, from the expression~\eqref{eq:phi.c.SO3} we find that
\begin{align} \nonumber
\varphi_c|_N & = (c+s^2+t^2)^{-\frac{3}{4}} \big( -s(t \dot{t} + s \dot{s}) \sin \alpha \d \tau \w \sigma_3 \w \sigma_1 - t (t \dot{t} + s \dot{s}) \sin^2 \alpha \d \tau \w \sigma_2 \w \sigma_3 \big) \\
& \qquad {} + 2 (c+s^2+t^2)^{\frac{1}{4}} \big( s \sin \alpha \cos \alpha \dot{\beta} \d \tau \w \sigma_1 \w \sigma_2 - s \cos \alpha \dot{\alpha} \d \tau \w \sigma_3 \w \sigma_1 \big) \nonumber \\
& \qquad {} + 2 (c+s^2+t^2)^{\frac{1}{4}} \big( \dot{t} \cos^2 \alpha - 2 t \sin \alpha \cos \alpha \dot{\alpha} \big) \d \tau \w \sigma_2 \w \sigma_3.
\label{eq:phi.c.ODE.temp}
\end{align}
By considering the $\sigma_1 \w \sigma_2 \w d\tau$ term in~\eqref{eq:phi.c.ODE.temp}, we find that $\dot{\beta} = 0$, so $\beta$ must be independent of $\tau$. Thus such an $\SO(3)$-invariant coassociative submanifold must in fact be invariant under the larger symmetry group $\SO(3) \times \SO(2)$.

Hence we can write
\begin{equation}\label{eq:N.SO(3)} N=\{\big((\cos\alpha(\tau)\mathbf{u},\sin\alpha(\tau)\mathbf{v}),(t(\tau),s(\tau)\cos\gamma,s(\tau)\sin\gamma)\big) : |\mathbf{u}|=1,\,|\mathbf{v}|=1,\gamma\in[0,\pi),\,\tau\in(-\epsilon,\epsilon)\}.
\end{equation}
Because we have only three independent functions $\alpha,s,t$ in~\eqref{eq:N.SO(3)} defining the 4-dimensional submanifold $N$, we deduce that $N$ will be determined by two relations between them. We establish the following result, which also appeared in a slightly different form in~\cite{Kawai}.

\begin{prop} \label{prop:smooth.coass.S4}
Let $N$ be an $\SO(3)$-invariant coassociative 4-fold in $M$. Then we have that $\cos\alpha \not \equiv 0$ on $N$, and the following are constant on $N$:
\begin{equation}\label{eq:uv.S4}
u=t\cos\alpha \, \in\R \qquad\text{and}\qquad v=2(c+s^2+t^2)^{\frac{1}{4}}\sin\alpha \, \in[0,\infty).
\end{equation}
\end{prop}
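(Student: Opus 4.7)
The plan is to pick up directly from~\eqref{eq:phi.c.ODE.temp}, where the $\sigma_1\w\sigma_2$ coefficient has already forced $\dot\beta = 0$. Coassociativity of $N$ now reduces to the vanishing of the coefficients of $\d\tau\w\sigma_3\w\sigma_1$ and $\d\tau\w\sigma_2\w\sigma_3$ in~\eqref{eq:phi.c.ODE.temp}. I would work on the open subset of $N$ where $s > 0$ and $\sin\alpha \neq 0$ (the latter holds throughout the chart $U$, where $\alpha\in(0,\tfrac{\pi}{2})$), and extract one conservation law from each of the two resulting ODEs; the general statement then follows by continuity.

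From the $\sigma_3\w\sigma_1$ coefficient, after clearing $s$ and the power of $R := c+s^2+t^2$, I expect a relation of the form $\dot R\sin\alpha + 4R\cos\alpha\,\dot\alpha = 0$. This is precisely $\frac{d}{d\tau}(R\sin^4\alpha) = 0$, so $v = 2R^{1/4}\sin\alpha$ drops out immediately as a conserved quantity. For the $\sigma_2\w\sigma_3$ coefficient, a similar clearing yields an expression containing the three terms $\dot R\sin^2\alpha$, $\dot t\cos^2\alpha$, and $t\sin\alpha\cos\alpha\,\dot\alpha$. Substituting $\dot R\sin^2\alpha = -4R\sin\alpha\cos\alpha\,\dot\alpha$ from the first conservation law eliminates $\dot R$, and the two $t$-terms collapse to $2R\cos\alpha(\dot t\cos\alpha - t\sin\alpha\,\dot\alpha) = 0$. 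Since $R > 0$, on the locus $\{\cos\alpha\neq 0\}$ this reads $\frac{d}{d\tau}(t\cos\alpha) = 0$, so $u = t\cos\alpha$ is constant there, and hence constant on all of $N$ by continuity.

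It remains to rule out $\cos\alpha\equiv 0$ on $N$. If this held, $N$ would be forced to coincide with $\pi^{-1}(\mathcal{S}^1)$, where $\mathcal{S}^1\subset\mathcal{S}^4$ is the equator $\{\alpha = \tfrac{\pi}{2}\}$, since this preimage is itself $4$-dimensional. Using the coframe at $\alpha = \tfrac{\pi}{2}$ from Remark~\ref{remark:SO3-special}, I would note that the horizontal forms $e^1, e^2, e^3$ all vanish on the tangent $4$-plane $W$ of $\pi^{-1}(\mathcal{S}^1)$, so each $\Omega_k|_W = 0$; the formula~\eqref{eq:phic} for $\varphi_c$ then gives $\varphi_c(\partial_{a_1},\partial_{a_2},\partial_{a_3}) = (c+r^2)^{-3/4}\neq 0$, contradicting coassociativity. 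I expect the principal difficulty to be the algebraic simplification of the second ODE, which looks unwieldy until $\dot R$ is eliminated using the first conservation law; everything else is bookkeeping.
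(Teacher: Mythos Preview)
Your proposal is correct and follows essentially the same route as the paper: you extract the two ODEs from the $\sigma_3\wedge\sigma_1$ and $\sigma_2\wedge\sigma_3$ coefficients of~\eqref{eq:phi.c.ODE.temp}, recognize the first as $\frac{d}{d\tau}(R\sin^4\alpha)=0$, and then use this to reduce the second to $R\cos\alpha\,\frac{d}{d\tau}(t\cos\alpha)=0$, exactly as the paper does (the paper phrases the first integral as $\frac{d}{d\tau}\big(2R^{1/4}\sin\alpha\big)=0$ and writes the reduced second equation as $2R^{1/4}\cos\alpha(\cos\alpha\,\dot t - t\sin\alpha\,\dot\alpha)=0$, but the algebra is identical).

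The one genuine difference is in ruling out $\cos\alpha\equiv 0$. The paper simply invokes the orbit table (Lemma~\ref{lemma:SO3.orbits}): at $\alpha=\tfrac{\pi}{2}$ every $\SO(3)$-orbit is at most $2$-dimensional, so a $1$-parameter family of such orbits cannot fill out a $4$-fold. Your argument instead identifies $N$ with an open piece of $\pi^{-1}(\mathcal{S}^1)$ and checks directly that $\varphi_c(\partial_{a_1},\partial_{a_2},\partial_{a_3})=(c+r^2)^{-3/4}\neq 0$ there. This works (indeed the vertical $3$-plane is associative at every point, since $\zeta_k(\partial_{a_j})=\delta_{kj}$ and the $\Omega_k$ are horizontal), but the orbit-dimension observation is shorter and avoids the need to step outside the coordinate chart.
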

\begin{proof}
We first observe from either Lemma~\ref{lemma:SO3.orbits} or equation~\eqref{eq:N.SO(3)} that the condition $\cos \alpha \equiv 0$ yields orbits that are less than 3-dimensional, and hence does not lead to coassociative 4-folds.

Using the formula~\eqref{eq:phi.c.ODE.temp} for $\varphi_c|_N$, we examine the coassociative condition that $\varphi_c|_N\equiv 0$. By considering the $\sigma_3 \w \sigma_1 \w d\tau$ and the $\sigma_2 \w \sigma_3 \w d\tau$ terms, the coassociative condition becomes the following pair of ordinary differential equations:
\begin{gather}
-(c+s^2+t^2)^{-\frac{3}{4}}s\sin\alpha(t\dot{t}+s\dot{s})-2(c+s^2+t^2)^{\frac{1}{4}}s\cos\alpha\dot{\alpha}=0,\label{eq:SO3.ODE.1} \\
-(c+s^2+t^2)^{-\frac{3}{4}}t\sin^2\alpha(t\dot{t}+s\dot{s})+2(c+s^2+t^2)^{\frac{1}{4}}\cos^2\alpha \dot{t}
-4(c+s^2+t^2)^{\frac{1}{4}}t\sin\alpha\cos\alpha \dot{\alpha}=0.\label{eq:SO3.ODE.2}
\end{gather}
Observe that
\[
\frac{\d}{\d\tau}\big(2(c+s^2+t^2)^{\frac{1}{4}}\sin\alpha\big)=
(c+s^2+t^2)^{-\frac{3}{4}}(s\dot{s}+t\dot{t})\sin\alpha+2(c+s^2+t^2)^{\frac{1}{4}}\cos\alpha\dot{\alpha},
\]
which is equivalent to equation~\eqref{eq:SO3.ODE.1}. Therefore, one of the conditions on $\alpha,s,t$ is that
\begin{equation*}
2(c+s^2+t^2)^{\frac{1}{4}}\sin\alpha =v\in[0,\infty)
\end{equation*}
is constant. Given that $v$ as defined above is constant, a computation yields that the remaining condition for $N$ to be coassociative from equation~\eqref{eq:SO3.ODE.2} becomes:
\begin{equation}\label{eq:SO3.ODE.3}
2(c+s^2+t^2)^{\frac{1}{4}}\cos \alpha (\cos \alpha \dot{t} - t\sin\alpha \dot{\alpha})=0.
\end{equation}
If $c>0$ then $(c+s^2+t^2)$ is never zero. If $c=0$ then $s^2+t^2=0$ corresponds to the zero section $\mathcal{S}^4$ which is excluded on $M_0 = M \setminus \mathcal{S}^4$. Thus in either case $(c+s^2+t^2)$ is never zero. Since $\cos \alpha \not \equiv 0$, we deduce that
\[
\frac{\d}{\d\tau} (t\cos\alpha)=\cos\alpha\dot{t} - t\sin\alpha\dot{\alpha} = 0
\]
and thus the condition given by~\eqref{eq:SO3.ODE.3} for $N$ to be coassociative is that 
$t\cos\alpha=u\in\R$
 is constant. 
\end{proof}

\begin{remark} \label{rmk:K-work}
The coassociative fibres where $\alpha=0$ appear in~\cite{KarigiannisMinOo} for $t = 0$ and in~\cite{KarigiannisNat} for $t \neq 0$.
\end{remark}

\subsection{The fibration} \label{sec:fibration.S4}

In this section we describe the coassociative fibrations of $M$ and $M_0$, and the topology of the fibres.

The $\mathcal{S}^1$ action given by $\beta$ degenerates when $\sin\alpha=0$, which by~\eqref{eq:uv.S4} corresponds precisely to $v=0$, both in the smooth case $M$ (where $c+s^2+t^2>0$ as $c>0$) and in the cone case $M_0$ (where $s^2+t^2>0$). We therefore see that the parameter space $(u,v,\beta)$ is naturally $\R\times\R^2=\R^3$, where $v$ is the radial coordinate in $\R^2$ and $\beta$ is the angular coordinate in $\R^2$. 

\begin{dfn} We define a projection map $\pi_{c}:M\to\R^3$ from~\eqref{eq:uv.S4} by
\begin{align}
\pi_{c}\big(&(\cos\alpha\cos\theta,\cos\alpha\sin\theta\cos\phi,\cos\alpha\sin\theta\sin\phi,\sin\alpha\cos\beta,\sin\alpha\sin\beta),(t,s\cos\gamma,s\sin\gamma)\big)\nonumber\\
&=(t\cos\alpha,2(c+s^2+t^2)^{\frac{1}{4}}\sin\alpha\cos\beta,2(c+s^2+t^2)^{\frac{1}{4}}\sin\alpha\sin\beta)=(u,v\cos\beta,v\sin\beta).\label{eq:pi-S4}
\end{align}
The map $\pi_c$ is well-defined even where $\alpha,\beta,\theta,\phi,t,s,\gamma$ do not provide coordinates on $M$. Moreover, this construction realises the smooth $\GG_2$ manifold $M$ as a coassociative fibration over $\R^3$ as described in Theorem~\ref{thm:S4.fib}. We observe that the image of the zero section $\mathcal{S}^4$ in $M$ is the 2-dimensional disc
$$\pi_{c}(\mathcal{S}^4)=\{(0,v\cos\beta,v\sin\beta):v\in [0,2c^{\frac{1}{4}}],\,\beta\in[0,2\pi)\}.$$
We can also use the same formula in~\eqref{eq:pi-S4} to define a coassociative fibration $\pi_{0}:M_0\to\R^3$ of the $\GG_2$ cone $M_0$ as in Theorem~\ref{thm:S4.fib}.
\end{dfn}

\begin{prop} \label{prop:fibration.S4}
As stated in Theorem~\ref{thm:S4.fib}, the coassociative fibres of $\pi_c$ and of $\pi_0$ are given as follows:
\begin{itemize}
\item The fibres of $\pi_0$ are all topologically $T^* \mathcal{S}^2$ except for a \emph{single} singular fibre, corresponding to the origin in $\R^3$ (where $u=v=0$). This singular fibre is topologically $\R^+ \times \R\P^3$.
\item The fibres of $\pi_c$ for $c>0$ are all topologically $T^* \mathcal{S}^2$ except for a \emph{circle} $\mathcal{S}^1_c$ of \emph{singular} fibres where $\mathcal{S}^1_c = \{ u = 0, v = 2 c^{\frac{1}{4}} , \beta \in [0,2\pi) \}$ is the boundary of the disc $\pi_c (\mathcal{S}^4)$. These singular fibres are all topologically $(\R^+ \times \R\P^3) \cup \{ 0 \}$.
\end{itemize}
\end{prop}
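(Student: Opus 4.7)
The plan is to describe each fibre $N=\pi_c^{-1}(u,v\cos\beta,v\sin\beta)$ by solving the level-set equations $t\cos\alpha=u$ and $2(c+s^2+t^2)^{1/4}\sin\alpha=v$ from Proposition~\ref{prop:smooth.coass.S4}, and then to read off its topology from the $\SO(3)$-orbit stratification of Lemma~\ref{lemma:SO3.orbits} together with the characterization of $\mathcal{O}_{\C\P^1}(-k)$ via the topology of the complement of the zero section that is recalled in the introduction.

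I would first parameterize a fibre by $\alpha$ on the open stratum $\alpha\in(0,\pi/2)$, where the equations yield $t=u/\cos\alpha$ and $s^2=v^4/(16\sin^4\alpha)-c-u^2/\cos^2\alpha$ smoothly. A direct monotonicity analysis of $s^2(\alpha)$ shows that the admissible range is an interval $(0,\alpha_{\max}]$ along which $s\to\infty$ as $\alpha\to 0^+$ (the asymptotic end of the fibre) and $s=0$ at $\alpha_{\max}$, except when $u=0$ and $v^4\geq 16c$, in which case the range extends to $\alpha=\pi/2$ and one must append the slice there: by Remark~\ref{remark:SO3-special} this slice is the $\mathcal{S}^2$ of values $(a_1,a_2,a_3)$ with $a_1^2+a_2^2+a_3^2=v^4/16-c$, which collapses to a point when $v^4=16c$. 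The loci $\alpha=0$ and $\alpha=\pi/2$ are where the Hopf-type coordinates on $\mathcal{S}^4$ degenerate, so continuity across them must be checked in local coordinates on $\mathcal{S}^4$ that are smooth there.

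For a smooth fibre, Lemma~\ref{lemma:SO3.orbits} then tells us that every interior $\SO(3)$-orbit is diffeomorphic to $\R\P^3\cong\SO(3)$ (with trivial generic stabilizer), and that there is a unique degenerate orbit $B\cong\mathcal{S}^2$ in $N$, located either at $\alpha=\alpha_{\max}$ with $s=0$ (when $u\neq 0$ or $v^4<16c$) or on the slice $\alpha=\pi/2$ (when $u=0$ and $v^4>16c$). Using $\alpha$ as a smooth transverse parameter to $B$ yields a diffeomorphism $N\setminus B\cong\R^+\times\R\P^3$, and since $\mathcal{S}^3/\Z_2\cong\R\P^3$ the criterion from the introduction forces $N\cong\mathcal{O}_{\C\P^1}(-2)=T^*\mathcal{S}^2$.

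The singular fibres arise precisely where either the degeneration at the top becomes an $\SO(3)$-fixed point or the zero-section end of the fibre is missing. For $c>0$, the condition $s=t=0$ at $\alpha=\pi/2$ (orbit type 4 of Lemma~\ref{lemma:SO3.orbits}) forces $u=0$ and $v=2c^{1/4}$, carving out the circle $\mathcal{S}^1_c\subset\{0\}\times\R^2$; the $\R\P^3$-orbits for $\alpha\in(0,\pi/2)$ then collapse to a single fixed point, giving $(\R^+\times\R\P^3)\cup\{0\}$. For $c=0$ the only singular fibre lies over $(0,0,0)\in\R^3$, where the equations force $\alpha=0$, $t=0$ and $s\geq 0$, but the tip $s=0$ lies in the excluded zero section $\mathcal{S}^4$, leaving $\R^+\times\R\P^3$. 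The most delicate point will be the smooth glueing across $\alpha=\pi/2$ in the case $u=0$, $v^4>16c$, where the Case-A family of $\R\P^3$ orbits limits only onto the equator $\{a_1=0\}$ of the Case-C $\mathcal{S}^2$; here Remark~\ref{remark:SO3-special} and the $\SO(3)\times\SO(2)$-equivariance of the data are needed to see that the combined fibre is a smooth $T^*\mathcal{S}^2$ with its standard zero section.
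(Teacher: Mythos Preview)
Your proposal is correct and takes essentially the same approach as the paper: identify the unique $\mathcal{S}^2$ bolt in each fibre via the $\SO(3)$-orbit stratification of Lemma~\ref{lemma:SO3.orbits}, then invoke the criterion from the end of \S\ref{introduction} (that $N\setminus B\cong\R^+\times\R\P^3$ forces $N\cong T^*\mathcal{S}^2$). The paper's proof is terser---it defers to Kawai~\cite{Kawai} and to the bolt-size computations in \S\ref{sub:induced.S4} (where the separate treatment of the ``non-generic bolts'' at $\alpha=\tfrac{\pi}{2}$ handles precisely the delicate gluing you flag)---but the substance is the same.
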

\begin{proof}
It is straightforward to verify that the fibres of $\pi_{c}$ are all either $T^*\mathcal{S}^2$ or $(\R^+\times\R\P^3) \cup \{ 0 \}$. (See Kawai~\cite{Kawai} for details, or in~\S\ref{sub:induced.S4} of the present paper, where we discuss the ``bolt sizes'' of each fibre, noting also the discussion on the topology of the smooth fibres at the end of \S\ref{introduction}.)

Explicitly, we show in~\S\ref{sub:induced.S4} that for $\pi_0$, the fibres are all $T^*\mathcal{S}^2$ except over the origin, where it is $\R^+\times\R\P^3$. When $c>0$, we show that there is instead a circle $\mathcal{S}^1_c$ given by the points where $(u,v)=(0,2c^{\frac{1}{4}})$ in $\R^3$ such that the fibres of $\pi_c$ over $\mathcal{S}^1_c$ are $(\R^+\times\R\P^3) \cup \{ 0 \}$, whereas the rest of the fibres are $T^*\mathcal{S}^2$.
\end{proof}

\subsection{Relation to multi-moment maps} \label{subs:multimoment.S4}

Let $X_1$, $X_2$, $X_3$ denote the vector fields which are dual to the 1-forms $\sigma_1$, $\sigma_2$, $\sigma_3$ in~\eqref{eq:sigmas-S4}, that is, such that $\sigma_i(X_j)=\delta_{ij}$. Then $X_1$, $X_2$, $X_3$ generate the $\SO(3)$ action and hence preserve both $\varphi_c$ and $*_{\varphi_c} \varphi_c$. In this section we find the multi-moment map $\nu$ for this $\SO(3)$ action for the $4$-form $*_{\varphi_c} \varphi_c$ as in~\eqref{eq:mmm4}.

Using the expression~\eqref{eq:psi.c.SO3} for the 4-form, a computation gives
\[
*_{\varphi_c} \varphi_c (X_1,X_2,X_3,\cdot)= 2 s \cos^2 \alpha \d s - 2 s^2 \sin \alpha \cos \alpha \d \alpha = \d (s^2 \cos^2 \alpha).
\]
The above computation motivates the following definition.

\begin{dfn}
We define the function $\rho$ by
\begin{equation}\label{eq:rho.S4}
\rho=s\cos\alpha
\end{equation}
so that $*_{\varphi_c} \varphi_c (X_1,X_2,X_3,\cdot) = \d (\rho^2)$. Notice that $\rho=0$ precisely when $s=0$ or $\alpha=\frac{\pi}{2}$. By Table~\ref{table:SO(3).orbits}, on a smooth fibre of the fibration this corresponds exactly to the zero section $\mathcal{S}^2$ (the ``bolt'') in $T^*\mathcal{S}^2$, and otherwise corresponds to the vertex of a singular fibre. (In fact, we will see that $\rho$ is effectively the distance from the bolts in the smooth fibres.) 
\end{dfn}

Thus we have established the following proposition.

\begin{prop} $\!\!$The multi-moment map for the $\SO(3)$ action on $*_{\varphi_c} \varphi_c$ is $\rho^2$, which maps onto $[0,\infty)$.
\end{prop}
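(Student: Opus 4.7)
The plan is to verify the explicit identity $\ast_{\varphi_c}\varphi_c(X_1,X_2,X_3,\cdot) = \d(\rho^2)$, which by~\eqref{eq:mmm4} exhibits $\rho^2 = s^2\cos^2\alpha$ as a multi-moment map (unique up to an additive constant, fixed here so that $\rho^2$ vanishes at its minimum), and then to check that $\rho^2$ sweeps out all of $[0,\infty)$.

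Since $X_1,X_2,X_3$ are dual to $\sigma_1,\sigma_2,\sigma_3$, the contraction $\ast_{\varphi_c}\varphi_c(X_1,X_2,X_3,\cdot)$ is exactly the $1$-form $\eta$ appearing in any decomposition
\[
\ast_{\varphi_c}\varphi_c = \sigma_1\w\sigma_2\w\sigma_3\w\eta + (\text{terms with fewer than three }\sigma_j\text{'s}),
\]
where $\eta$ involves only $\d s,\d t,\d\alpha,\d\beta$. Using the explicit formula~\eqref{eq:psi.c.SO3}, the first and third lines contain no $\sigma_1$ factor and so contribute nothing. For the second and fourth lines one expands the triple wedge and systematically discards terms containing $\sigma_i\w\sigma_i$, $\d\beta\w\d\beta$, or missing some $\sigma_j$. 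Only two products survive the expansion: $\d s\w s\sigma_1\w\cos^2\alpha\,\sigma_2\w\sigma_3$ from the second line, and $s\sin\alpha\,\sigma_3\w s\sigma_1\w\d\alpha\w\sigma_2$ from the fourth. Tracking the overall coefficients of $-2$ and $+2\cos\alpha$, respectively, and reordering each product into the form $\sigma_1\w\sigma_2\w\sigma_3\w\eta$ yields
\[
\eta = 2 s\cos^2\alpha\,\d s - 2 s^2 \sin\alpha\cos\alpha\,\d\alpha = \d(s^2\cos^2\alpha) = \d(\rho^2),
\]
which is the desired identity.

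For the image, note that $s\ge 0$ and $\alpha\in[0,\frac{\pi}{2}]$ on both $M$ and $M_0$, so $\rho\ge 0$ and hence $\rho^2\in[0,\infty)$. The value $0$ is attained on the nonempty $\SO(3)$-invariant subset $\{s=0\}\cup\{\alpha=\frac{\pi}{2}\}$, while any positive value $\rho_0^2$ is attained by, for instance, taking $\alpha=0$ and $s=\rho_0$.

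The main obstacle is simply the bookkeeping in extracting the $\sigma_1\w\sigma_2\w\sigma_3$-coefficient from the four lines of~\eqref{eq:psi.c.SO3}: one must correctly track the sign produced by moving a single $1$-form past three $\sigma_j$'s, and recognise that most a priori triple products in the expansion vanish. What is noteworthy, given that the $3$-form multi-moment map for a non-abelian $\SO(3)$-action cannot exist (cf.~\S\ref{sub:multimoment.intro}), is that the $4$-form multi-moment map takes the remarkably simple closed form $\rho^2 = s^2\cos^2\alpha$, foreshadowing its geometric role later as essentially the squared distance from the bolt in the smooth coassociative fibres.
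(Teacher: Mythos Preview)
Your argument is correct and follows essentially the same route as the paper: both compute $\ast_{\varphi_c}\varphi_c(X_1,X_2,X_3,\cdot)$ directly from~\eqref{eq:psi.c.SO3} by isolating the $\sigma_1\wedge\sigma_2\wedge\sigma_3$-coefficient, obtain $2s\cos^2\alpha\,\d s - 2s^2\sin\alpha\cos\alpha\,\d\alpha = \d(s^2\cos^2\alpha)$, and then observe the image is $[0,\infty)$. Your write-up merely supplies more detail on the bookkeeping; one tiny cosmetic point is that $\alpha=0$ lies outside the coordinate chart $U$, so for the surjectivity it is cleaner to fix any $\alpha\in(0,\tfrac{\pi}{2})$ and let $s$ range over $[0,\infty)$.
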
 

One can compute using the expression~\eqref{eq:phi.c.SO3} for $\varphi_c$ or by studying the proof of Proposition~\ref{prop:smooth.coass.S4} that
\begin{align*}
\varphi_c(X_2,X_3,\cdot)&=2(c+s^2+t^2)^{\frac{1}{4}}\cos\alpha\d u-t\sin\alpha \d v,\\
\varphi_c(X_3,X_1,\cdot)&=-s\d v,\\
\varphi_c(X_1,X_2,\cdot)&=\rho v\d\beta.
\end{align*}
As expected by Remark~\ref{rmk:mmm}, these are not closed forms.

\begin{remark} \label{rmk:mmmS4}
One might hope that a suitable modification or extension of the notion of multi-moment map (perhaps to the setting of bundle-valued maps) would mean that the data of $u$, $v$ and $\beta$ could be interpreted as a kind of multi-moment map for the $\SO(3)$ action on $\varphi_c$. Such extensions are known in the setting of multi-symplectic geometry. (See~\cite{Herman}, for example.)
\end{remark}

\subsection{Rewriting the package of the \texorpdfstring{G\textsubscript{2}}{G2}-structure} \label{sec:package.SO3}

We can now construct a $\GG_2$ adapted coframe that is compatible with the coassociative fibration structure as in Lemma~\ref{lemma:adapted-frame}. In this section, due to the complexity of the intermediate formulae, we do not give the step-by-step computations, but we present enough details that the reader will know how to reproduce the computation if desired.

From~\S\ref{sub:SO(3)-invariant} we know that $u, v, \beta$ are good coordinates for the $\R^3$ base of the fibration away from the line where $v=0$. Using the fundamental relation~\eqref{eq:metric-from-form}, we can obtain an explicit, albeit quite complicated, formula for $g_c$ in terms of the local coordinates $s, t, \alpha, \beta, \gamma, \theta, \phi$. We omit the particular expression here. However, one can use this expression to obtain the inverse metric $g_c^{-1}$ on 1-forms and verify that the three horizontal 1-forms $\d u, \d v, \d \beta$ and the 1-form $\d \rho$ are all mutually orthogonal. Moreover, one can also verify that $\d u, \d v, \d \beta$ are positively oriented in the sense that $(\d u)^{\sharp} \times (\d v)^{\sharp}$ is a positive multiple of $(\d \beta)^{\sharp}$.

We can therefore apply Lemma~\ref{lemma:adapted-frame} to obtain our $\GG_2$ adapted oriented orthonormal coframe
\[ \{ \hat h_1, \hat h_2, \hat h_3, \hat \varpi_0,  \hat \varpi_1,  \hat \varpi_2,  \hat \varpi_3 \} \]
where
\begin{equation*}
h_1 = \d u, \quad h_2 = \d v, \quad h_3 = \d \beta, \quad \varpi_0 = \d \rho.
\end{equation*}
It is useful to scale $\hat \varpi_1, \hat \varpi_2, \hat \varpi_3$ to unnormalized versions such that $\varpi_k = \pm \sigma_k + \text{\emph{other terms}}$, giving
\begin{align}
\varpi_1 &=\sigma_1-\frac{t\sin\alpha}{s}\sigma_2-\cos\alpha \d\beta, \nonumber \\
\varpi_2 &= -\sigma_2, \nonumber \\
\varpi_3 &=\sigma_3-\frac{t\sin\alpha}{\cos\alpha(2c\cos^2\alpha+(s^2+t^2)(1+\cos^2\alpha))}\d\rho \nonumber \\
&\qquad+\frac{s\sin\alpha}{\cos\alpha(2c\cos^2\alpha+(s^2+t^2)(1+\cos^2\alpha))}\d u, \label{eq:vertical.SO3}
\end{align}
  on the open dense subset where both $\rho$ and $v$ are positive. Then the formulae~\eqref{eq:canonical-metric},~\eqref{eq:canonical-3form}, and~\eqref{eq:canonical-4form} for $g_c$, $\varphi_c$, and $\ast_{\varphi_c} \varphi_c$, respectively become
\begin{align} \nonumber
g_c&=\frac{2(c+s^2+t^2)^{\frac{1}{2}}}{2c\cos^2\alpha+(s^2+t^2)(1+\cos^2\alpha)}\d u^2\\ \nonumber
&\qquad+\frac{(c+s^2+t^2)}{2c\cos^2\alpha+(s^2+t^2)(1+\cos^2\alpha)}\d v^2+2 (c+s^2+t^2)^{\frac{1}{2}} \sin^2 \alpha \d\beta^2\\ \nonumber 
&\qquad+\frac{2(c+s^2+t^2)^{\frac{1}{2}}}{2c\cos^2\alpha+(s^2+t^2)(1+\cos^2\alpha)}\d \rho^2+\frac{s^2}{(c+s^2+t^2)^{\frac{1}{2}}}\varpi_1^2\\ \label{eq:g.c.SO3}
&\qquad+2(c+s^2+t^2)^{\frac{1}{2}}\cos^2\alpha \varpi_2^2+\frac{2c\cos^2\alpha+(s^2+t^2)(1+\cos^2\alpha)}{(c+s^2+t^2)^{\frac{1}{2}}}\varpi_3^2
\end{align}
and
\begin{align}
\varphi_c&=\frac{2 (c+s^2+t^2) \sin \alpha}{2c\cos^2\alpha+(s^2+t^2)(1+\cos^2\alpha)}\d u\w\d v\w \d\beta\nonumber\\
& \qquad {} +\d u\wedge\Big(\frac{2s(c+s^2+t^2)^{\frac{1}{4}}}{2c\cos^2\alpha+(s^2+t^2)(1+\cos^2\alpha)}\d \rho\w \varpi_1-2(c+s^2+t^2)^{\frac{1}{4}}\cos\alpha \varpi_2\w \varpi_3 \Big) \nonumber\\
& \qquad {} +\d v\wedge\Big(\frac{2(c+s^2+t^2)\cos\alpha}{2c\cos^2\alpha+(s^2+t^2)(1+\cos^2\alpha)}\d \rho\w \varpi_2 - s \varpi_3\w \varpi_1\Big) \nonumber\\
& \qquad {} +2 (c+s^2+t^2)^{\frac{1}{4}} \sin \alpha \d\beta\wedge\left(\d \rho\w \varpi_3 - s\cos\alpha \varpi_1\w \varpi_2\right)\label{eq:phi.c.SO3.hv}
\end{align}
and
\begin{align}
*_{\varphi_c} \varphi_c&=2s\cos\alpha \d \rho\w \varpi_1\w \varpi_2\w \varpi_3\nonumber\\
& \qquad {} -\d v\w \d\beta\w \Big(\frac{2 s(c+s^2+t^2)^{\frac{3}{4}} \sin \alpha}{2c\cos^2\alpha+(s^2+t^2)(1+\cos^2\alpha)}\d \rho\w \varpi_1-2 (c+s^2+t^2)^{\frac{3}{4}}\sin \alpha \cos\alpha \varpi_2\w \varpi_3\Big) \nonumber\\
& \qquad {} -\d\beta\w \d u\w \Big(\frac{4(c+s^2+t^2) \sin \alpha \cos\alpha}{2c\cos^2\alpha+(s^2+t^2)(1+\cos^2\alpha)}\d \rho\w \varpi_2 - 2 s \sin \alpha \varpi_3\w \varpi_1\Big) \nonumber\\
& \qquad {} -\d u\w \d v\w \frac{2(c+s^2+t^2)^{\frac{3}{4}}}{2c\cos^2\alpha+(s^2+t^2)(1+\cos^2\alpha)}( \d \rho\w \varpi_3-s\cos\alpha \varpi_1\w \varpi_2 ).\label{eq:psi.c.SO3.hv}
\end{align}
Moreover, the volume form is
\begin{equation} \label{eq:vol.c.SO3}
\vol_c =-\frac{4s (c+s^2+t^2) \sin \alpha \cos\alpha}{2c\cos^2\alpha+(s^2+t^2)(1+\cos^2\alpha)}\d u\w \d v\wedge \d\beta\w \d\rho\w \sigma_1\w\sigma_2\w \sigma_3.
\end{equation}

\subsection{Riemannian and hypersymplectic geometry on the fibres} \label{sec:hypersymplectic.S4}

We now discuss the induced geometric structure on the coassociative fibres coming from the torsion-free $\GG_2$-structure. Recall that by Lemma~\ref{lem:hypersymplectic}, the ambient $\GG_2$ structure induces a hypersymplectic triple on the fibres in the sense of Definiton~\ref{dfn:hypersymplectic}.

In this section we establish the following result. (Recall that topologically $\SO(3) \cong \R\P^3$.)

\begin{prop}\label{prop:AC.CS-S4} Let $\sigma_1,\sigma_2,\sigma_3$ be as in~\eqref{eq:sigmas-S4}.
\begin{itemize}
\item[\emph{(a)}] All the coassociative fibres (both smooth and singular) in $M$ or $M_0$ are asymptotically conical with rate at least $-2$, and their asymptotic cone is topologically $\R^+\times\R\P^3$, equipped with the particular cone metric
\begin{equation}\label{eq:gAC-S4}
g_{\text{AC}}=\d R^2+\frac{R^2}{4}\sigma_1^2+\frac{R^2}{2}(\sigma_2^2+\sigma_3^2).
\end{equation}
\item[\emph{(b)}] All the \emph{singular} coassociative fibres in $M$ are conically singular with asymptotic cone which is topologically $\R^+\times\R\P^3$, equipped with the particular cone metric
\begin{equation}\label{eq:gCS-S4}
g_{\text{CS}}=\d R^2+\frac{R^2}{2}(\sigma_1^2+\sigma_2^2)+R^2\sigma_3^2.
\end{equation}
\item[\emph{(c)}] The unique singular coassociative fibre in $M_0$ is exactly a Riemannian cone, equipped with the cone metric in~\eqref{eq:gAC-S4}.
\end{itemize}
\end{prop}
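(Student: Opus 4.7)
The plan is to compute the induced metric on a single coassociative fibre $N$ by restricting~\eqref{eq:g.c.SO3} to the locus $\d u=\d v=\d\beta=0$. By Proposition~\ref{prop:smooth.coass.S4}, the two constraints $u=t\cos\alpha$ and $v=2(c+s^2+t^2)^{1/4}\sin\alpha$ are constant along $N$, which determines $t$ and $c+s^2+t^2$ as explicit functions of $\alpha$; the remaining internal radial freedom on $N$ is conveniently parametrised by the multimoment map $\rho=s\cos\alpha$ of~\eqref{eq:rho.S4}. Simultaneously, $\d u$ and $\d\beta$ vanish on $N$, so the vertical forms $\varpi_1,\varpi_2,\varpi_3$ of~\eqref{eq:vertical.SO3} restrict to explicit combinations of the orbit 1-forms $\sigma_1,\sigma_2,\sigma_3$ (with $\varpi_3|_N$ picking up a small multiple of $\d\rho$). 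The set $\{\d\rho,\sigma_1,\sigma_2,\sigma_3\}$ thus furnishes a coframe on the smooth locus of every fibre, and $g_c|_N$ is diagonal in this coframe with coefficients that can be read off directly from~\eqref{eq:g.c.SO3}.

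For part~(a), I will study the regime $\rho\to\infty$. Holding $v>0$ fixed while $s\to\infty$ forces $\sin\alpha\sim v/(2\sqrt s)\to 0$ and $t\to u$, with $\rho\sim s$ and the orbits remaining $\R\P^3$ by Table~\ref{table:SO(3).orbits}. The leading-order coefficients of $\d\rho^2,\sigma_1^2,\sigma_2^2,\sigma_3^2$ in the restricted metric are $1/s,\,s,\,2s,\,2s$; substituting $R=2\sqrt\rho$ (so $s\sim R^2/4$) yields precisely~\eqref{eq:gAC-S4}. The corrections to these coefficients are $O(1)=O(R^0)$ in absolute terms, which translates to a $g_{\text{AC}}$-tensor-norm bound of $O(R^{-2})$ because $|\sigma_i|_{g_{\text{AC}}}\sim R^{-1}$; the $\d\rho$ contamination in $\varpi_3|_N$ is $O(s^{-5/2})$ and is absorbed in the same remainder. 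Analyticity of the expansion in $R^{-1}$ then ensures the same rate under all covariant derivatives.

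For part~(b), the singular fibres of $M$ are precisely those with $u=0$ and $v=2c^{1/4}$, on which $t\equiv 0$ and $s^2=c(\sin^{-4}\alpha-1)$; the singular point sits at $\alpha=\pi/2$, $s=0$, $\rho=0$. Setting $\epsilon=\cos\alpha$, one finds $s\sim\sqrt{2c}\,\epsilon$ and $\rho\sim\sqrt{2c}\,\epsilon^2$. The leading coefficients of $\d\rho^2,\sigma_1^2,\sigma_2^2,\sigma_3^2$ work out to $(2c^{1/2}\epsilon^2)^{-1},\,2c^{1/2}\epsilon^2,\,2c^{1/2}\epsilon^2,\,4c^{1/2}\epsilon^2$, and the change of variable $\rho=R^2/(2\sqrt 2)$ turns these into $1,\,R^2/2,\,R^2/2,\,R^2$, exactly matching~\eqref{eq:gCS-S4}; both $\varpi_1|_N=\sigma_1$ and $\varpi_3|_N=\sigma_3$ here because $t\equiv 0$. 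For part~(c), specialising to $c=0$ and $u=v=0$ on $M_0$ forces $\alpha\equiv 0$ and $t\equiv 0$ identically along the corresponding fibre, so $\rho=s$, each $\varpi_i$ reduces to $\pm\sigma_i$, and the restricted metric becomes \emph{exactly} $s^{-1}\d s^2+s\sigma_1^2+2s(\sigma_2^2+\sigma_3^2)$; substituting $R=2\sqrt s$ recovers~\eqref{eq:gAC-S4} with no error term, so this fibre is truly a Riemannian cone.

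The main obstacle is the verification of the AC rate $-2$ and its propagation to higher derivatives in part~(a); this requires expanding each metric coefficient as a series in $\rho^{-1}$ (using the implicit relation between $\alpha$ and $\rho$ coming from the constancy of $v$) and checking that every subsequent correction behaves at least as well. Each derivative along $\partial_R$ drops the power by one, while derivatives along $\sigma_i$-dual vector fields are bounded using the orbit structure, so the derivative bounds come out uniformly in the orbit directions. Identifying each cone's cross-section as $\R\P^3$ is immediate from Table~\ref{table:SO(3).orbits} and Proposition~\ref{prop:fibration.S4}, which already assigns $T^*\mathcal{S}^2$ (link $\R\P^3$) to the smooth fibres and $(\R^+\times\R\P^3)\cup\{0\}$ to the singular ones in $M$.
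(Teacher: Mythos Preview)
Your proposal is correct and follows essentially the same route as the paper: restrict~\eqref{eq:g.c.SO3} to $\d u=\d v=\d\beta=0$, express $\varpi_i|_N$ via $\sigma_j$ and $\d\rho$, and analyse the resulting metric asymptotically as $\rho\to\infty$ (yielding $g_{\text{AC}}$ at rate $-2$, or $-4$ when $v=0$) and near $\rho=0$ on the singular fibres (yielding $g_{\text{CS}}$ in $M$, and exactly $g_{\text{AC}}$ in $M_0$). One small slip: $g_c|_N$ is \emph{not} diagonal in $\{\d\rho,\sigma_1,\sigma_2,\sigma_3\}$ when $t\neq 0$---there are $\d\rho\,\sigma_3$ and $\sigma_1\sigma_2$ cross terms coming from $\varpi_3|_N$ and $\varpi_1|_N$---but you effectively treat these correctly as lower-order in part~(a), and they vanish in parts~(b) and~(c) since $t\equiv 0$ there, so the conclusion is unaffected.
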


\begin{remark}
We emphasize that the \emph{singular} coassociative fibres in $M = \Lambda^2_- (T^* \mathcal{S}^4)$ are \emph{not} Riemannian cones. They are both asymptotically conical and conically singular, but with different cone metrics at infinity and at the singular point.
\end{remark}

Throughout this section, we use $N$ to denote \emph{any} coassociative fibre and we use $N_0$ to denote a singular coassociative fibre.

\subsubsection{Induced metric} \label{sub:induced.S4}

We begin by describing the induced metric on the coassociative fibres $N$. The metric on the fibres is obtained by setting the horizontal $1$-forms $\d u, \d v, v \d \beta$ equal to zero. From~\eqref{eq:g.c.SO3} and the expressions~\eqref{eq:vertical.SO3} for the 1-forms $\varpi_i$, one can straightforwardly compute that the induced metric on $N$ is
\begin{align}
g_c|_N&=\frac{2(c+s^2+t^2)\cos^2\alpha+t^2\sin^2\alpha}{(c+s^2+t^2)^{\frac{1}{2}}\cos^2\alpha(2c\cos^2\alpha+(s^2+t^2)(1+\cos^2\alpha))}\d\rho^2-\frac{2t\sin\alpha}{(c+s^2+t^2)^{\frac{1}{2}}\cos\alpha}\d\rho\sigma_3\nonumber\\
& \qquad {} +\frac{2c\cos^2\alpha+(s^2+t^2)(1+\cos^2\alpha)}{(c+s^2+t^2)^{\frac{1}{2}}}\sigma_3^2
+\frac{s^2}{(c+s^2+t^2)^{\frac{1}{2}}}\sigma_1^2-\frac{2st\sin\alpha}{(c+s^2+t^2)^{\frac{1}{2}}}\sigma_1\sigma_2\nonumber\\ 
& \qquad {} +\frac{2(c+s^2+t^2)\cos^2\alpha+t^2\sin^2\alpha}{(c+s^2+t^2)^{\frac{1}{2}}}\sigma_2^2.
\label{eq:gc.N}
\end{align}
Recall that $\rho$ is a coordinate in the fibre of $N=T^*\mathcal{S}^2$ and that $\sigma_2,\sigma_3$ form a coframe on the bolt when $\rho=0$.

\paragraph{Bolt size.} We observe from Table~\ref{table:SO(3).orbits} that we have two classes of ``bolts''. These are the \emph{generic} bolts, corresponding to $s = 0$ when $\alpha \in (0, \frac{\pi}{2})$, and the \emph{non-generic} bolts, corresponding to $\alpha = \frac{\pi}{2}$.

Consider first the generic bolts. From the expression~\eqref{eq:gc.N} for the metric on the smooth fibres $N = T^* \mathcal{S}^2$, we can deduce the size of the ``bolt'', which is the zero section $\mathcal{S}^2$. The zero section corresponds to $\rho = 0$ (equivalently by~\eqref{eq:rho.S4} to $s=0$) and the size of the bolt is given (up to a factor of $4\pi$) by the coefficient of $\sigma_2^2 + \sigma_3^2$, as that is the round metric on $\mathcal{S}^2$. Thus we find from~\eqref{eq:gc.N} that the size of the bolt in $N$ is given (up to a multiple of $4\pi$) by 
\begin{equation}\label{eq:bolt.size}
a_c=\frac{2c\cos^2\alpha+t^2(1+\cos^2\alpha)}{(c+t^2)^{\frac{1}{2}}},
\end{equation}
which can actually be expressed explicitly in terms of $u,v$ (although it does not seem useful).

It is a reassuring consistency check to observe from~\eqref{eq:bolt.size} that, in the $c>0$ case, the generic bolt size vanishes if and only if $t=\cos\alpha=0$, which by~\eqref{eq:uv.S4} is equivalent to $u=0$ and $v=2c^{\frac{1}{4}}$. This corresponds to the circle of singular coassociative fibres which are $(\R^+\times\mathbb{RP}^3)\cup\{0\}$, as discussed in~\S\ref{sec:fibration.S4}. By contrast, in the $c=0$ case, the generic bolt size (again, up to a multiple of $4\pi$) is
 $$a_0=|t|(1+\cos^2\alpha),$$
which vanishes if and only if $t=0$, which by~\eqref{eq:uv.S4} corresponds to $u=v=0$, and hence to a single singular coassociative fibre over the origin, which is $\R^+\times\R\P^3$.

Now consider the non-generic bolts. From~\eqref{eq:uv.S4} we have that $u=0$ for any coassociative fibre containing a point where $\alpha = \frac{\pi}{2}$. By the discussion on $\SO(3)$ orbits preceding Lemma~\ref{lemma:SO3.orbits}, a non-generic bolt is precisely a sphere $r^2 = a_1^2 + a_2^2 + a_3^2 = \text{constant}$ in a fibre of $M = \Lambda^2_-( T^* \mathcal{S}^4)$ or $M_0 = M \setminus \mathcal{S}^4$. Hence, with respect to the Euclidean fibre metric, the bolt size (up to a factor of $4 \pi$) is just $r^2$. Therefore, by~\eqref{eq:gc}, with respect to the Bryant--Salamon metric the bolt size is $r^2(c+r^2)^{-\frac{1}{2}}$. Since $v=2(c+r^2)^{\frac{1}{4}}\sin \alpha$ is constant, taking $\alpha=\frac{\pi}{2}$ gives $c+r^2=\frac{1}{16} v^4$.  Therefore, the bolt size for the non-generic bolts is
\begin{equation*}
a_c = \Big( \frac{v^4}{16}-c \Big) \frac{4}{v^2} = \frac{v^2}{4} \Big( 1 - \frac{16 c}{v^4} \Big).
\end{equation*}
We see that when $c>0$, this is positive for $v > 2c^{\frac{1}{4}}$ and zero for $v = 2 c^{\frac{1}{4}}$. So in the $(u,v)$ half-space these non-generic bolts correspond to the coassociative fibres lying over the ray with $u=0$ starting at $(0,2c^{1/4})$. Notice also that when $c=0$, the non-generic bolt size reduces to $\frac{1}{4} v^2$, which is positive for all $v>0$ and zero for $v = 0$. These observations are again in agreement with the discussion in \S\ref{sec:fibration.S4} on the topology of the coassociative fibres.

We remark that the bolt size for the non-generic bolts is not given by the na\"ive limit obtained by putting $\alpha = \frac{\pi}{2}$ in the formula~\eqref{eq:bolt.size} for the bolt size of the generic bolts. This is not surprising as the $t$ coordinate is not valid at $\alpha = \frac{\pi}{2}$.

All of the above observations are displayed visually in plots for the bolt size, given in Figures~\ref{figure:bolt.0} and~\ref{figure:bolt.c}. These plots in the $(u,v)$ half-space show the level sets for the bolt size, with concentric curves indicating the shrinking bolt size.

\begin{figure}[H]
\begin{center}
\includegraphics[width=0.8\textwidth]{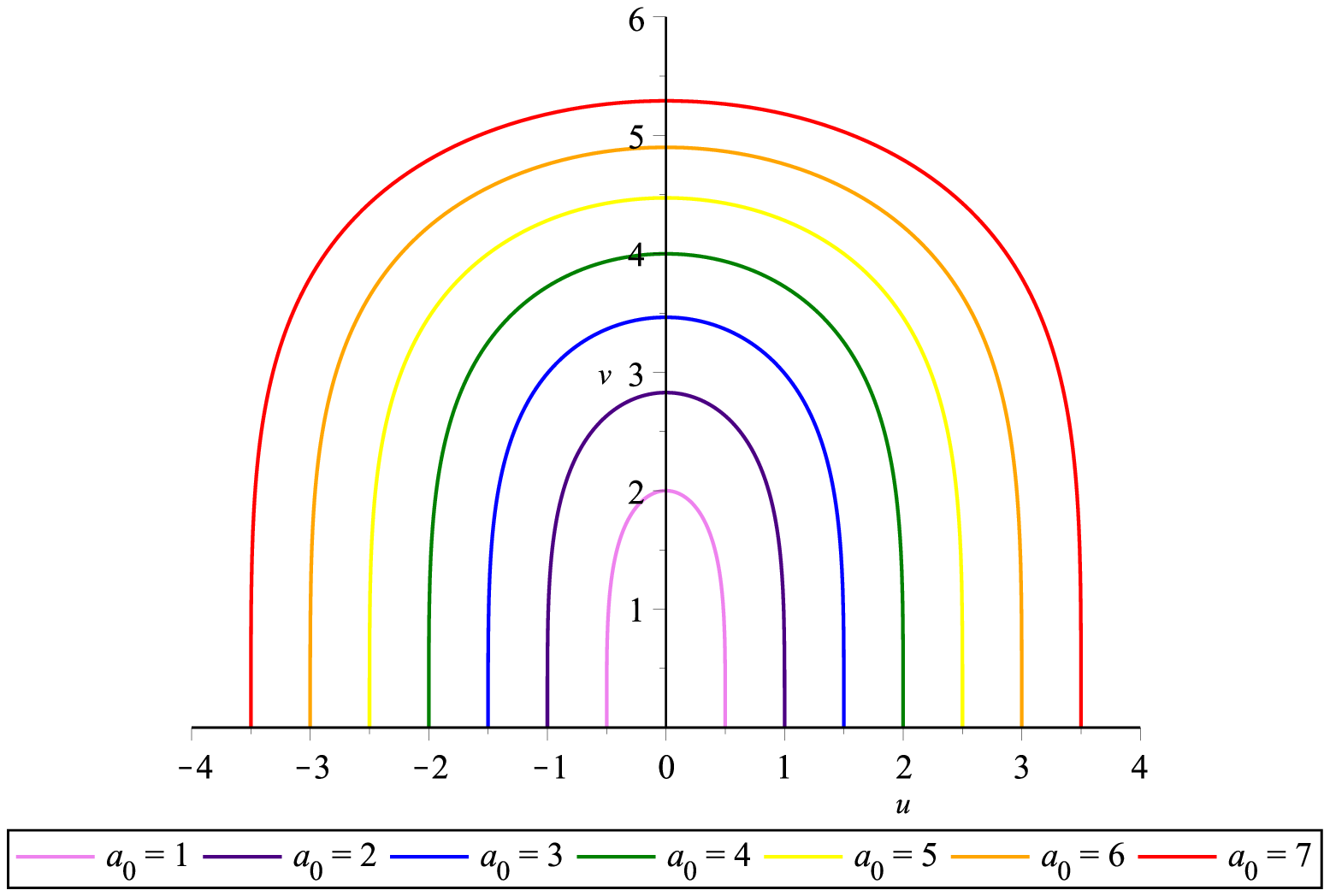}
\end{center}
\caption{Bolt size for $c=0$} \label{figure:bolt.0}
\end{figure}

\begin{figure}[H]
\begin{center}
\includegraphics[width=0.8\textwidth]{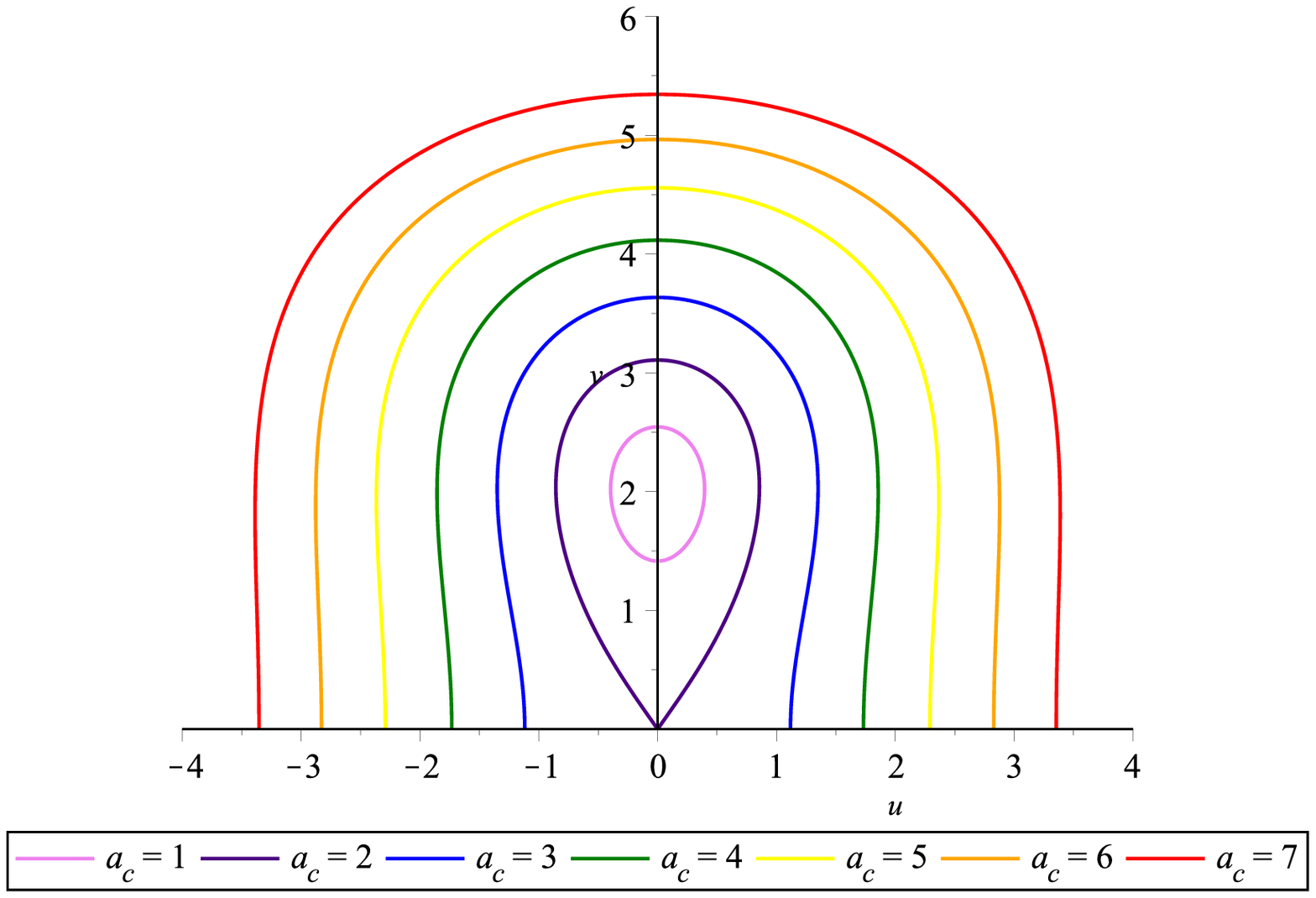}
\end{center}
\caption{Bolt size for $c=1$} \label{figure:bolt.c}
\end{figure}

\begin{remark} It is worth observing the following, by considering the rotation of Figures~\ref{figure:bolt.0} and~\ref{figure:bolt.c} about the horizontal $u$-axis. For the cone ($c=0$), the smooth fibres with a given bolt size come in a $2$-sphere family. By contrast, for the smooth case ($c>0$) there are three distinct cases: 2-spheres of smooth fibres with large bolt size, 2-tori of smooth fibres with small bolt size, and an immersed 2-sphere with a critical bolt size, which corresponds to $a_c = 2$ when $c=1$ in Figure~\ref{figure:bolt.c}. This transition clearly realizes the way in which a 2-sphere can become immersed with a double point, then become a 2-torus before collapsing to a circle.
\end{remark}

\subsubsection{Asymptotic geometry}

From~\eqref{eq:uv.S4} and~\eqref{eq:rho.S4} we obtain
\begin{equation}\label{eq:v.asymptotic}
v^4=16(c+s^2+t^2)\sin^4\alpha=16 \Big(c+\frac{u^2+\rho^2}{\cos^2\alpha} \Big) \sin^4\alpha.
\end{equation}
Hence if $\rho=s\cos\alpha
\to\infty$ on $N$, then as $u,v$ are constant, we must have that $\sin\alpha\to 0$, which means that $\cos\alpha\to 1$. Therefore, asymptotically we have that $s\approx \rho$, $\alpha\approx 0$, and $u\approx t$ must stay bounded. We can therefore compute from~\eqref{eq:gc.N} that asymptotically (that is for $\rho$ large) the metric $g_c|_N$ becomes
\begin{align} 
g_c|_N&\approx \frac{1}{(c+\rho^2+u^2)^{\frac{1}{2}}}\d\rho^2+\frac{\rho^2}{(c+\rho^2+u^2)^{\frac{1}{2}}}\sigma_1^2+2(c+\rho^2+u^2)^{\frac{1}{2}}\sigma_2^2
+2(c+\rho^2+u^2)^{\frac{1}{2}}\sigma_3^2\nonumber\\
&\approx \frac{1}{\rho}\d\rho^2+\rho\sigma_1^2+2\rho\sigma_2^2+2\rho\sigma_3^2\nonumber\\
&=\d R^2+\frac{R^2}{4}\sigma_1^2+\frac{R^2}{2}(\sigma_2^2+\sigma_3^2),\label{eq:gc.N.tgtcone}
\end{align}
where $\rho= \frac{1}{4}R^2$. This asymptotic metric is \emph{not} the flat metric on $\R^+\times \mathbb{RP}^3$, but rather differs from it by a dilation of a factor of $\sqrt{2}$ on the base $\mathcal{S}^2$ of the fibration of $\mathbb{RP}^3$ over $\mathcal{S}^2$.

In particular, we see that for all of the smooth coassociative fibres:
 \begin{itemize}
\item they have complete metrics with Euclidean volume growth;
\item they have the same asymptotic cone; 
\item they are \emph{not} Ricci flat.
\end{itemize}
(In particular the induced metric on the smooth coassociative fibres is \textbf{not} the Eguchi--Hanson metric on $T^*\mathcal{S}^2$).

Consider now the cone setting ($c=0$). When $v=0$, by~\eqref{eq:uv.S4} and~\eqref{eq:rho.S4} we \emph{exactly} have $\alpha=0$, $t=u$, $s=\rho$. Thus we see that
\begin{align}
g_0|_N&=\frac{1}{(\rho^2+u^2)^{\frac{1}{2}}}\d\rho^2+\frac{\rho^2}{(\rho^2+u^2)^{\frac{1}{2}}}\sigma_1^2+2(\rho^2+u^2)^{\frac{1}{2}}(\sigma_2^2+\sigma_3^2)\nonumber\\
&=\Big(1-\frac{16u^2}{R^4} \Big)^{-1}\d R^2+ \frac{R^2}{4}\Big(1-\frac{16u^2}{R^4} \Big)\sigma_1^2+\frac{R^2}{2}(\sigma_2^2+\sigma_3^2),\label{eq:approx.EH}
\end{align}
where $(\rho^2+u^2)^{\frac{1}{2}}= \frac{1}{4} R^2$. The above metric is very similar to the Eguchi--Hanson metric (with bolt at $R^2=4|u|=2a_0$), except that there is an additional factor of $2$ multiplying the 2-sphere metric $\sigma_2^2+\sigma_3^2$. Though this may seem like a minor variation, it destroys the hyperk\"ahler structure and gives a very different asymptotic cone, although the metric is still asymptotically conical with rate $-4$. That is, the metric differs from the cone metric by terms that are $O(R^{-4})$ for $R$ large.

We can also perform our asymptotic analysis in a more refined manner to see the rate of decay of the metric to the asymptotic cone in general. We obtain from \eq{eq:v.asymptotic} that, for large $\rho$, we have
$$\sin^2\alpha=\frac{1}{4}v^2\rho^{-1}+o(\rho^{-1}) \quad\text{and}\quad \cos^2\alpha=1-\frac{1}{4}v^2\rho^{-1}+o(\rho^{-1}).$$
Since $\rho=s\cos\alpha$ and $u=t\cos\alpha$, it follows that, for large $\rho$, we have
$$s=\rho+\frac{1}{8}v^2+o(1)\quad\text{and}\quad t=u+\frac{1}{8}uv^2\rho^{-1}+o(\rho^{-1}).$$
We thus deduce from~\eqref{eq:gc.N} that, for large $\rho$, the metric becomes
\begin{align*}
g_c|_N&=\rho^{-1}\left(1+O(\rho^{-2})\right)\d\rho^2-uv\rho^{-\frac{3}{2}}\left(1+O(\rho^{-1})\right)\d\rho\sigma_3+2\rho\left(1+O(\rho^{-2})\right)\sigma_3^2\\
&\quad {} +\rho\big(1+\tfrac{1}{8}v^2 \rho^{-1}+O(\rho^{-2})\big)\sigma_1^2+uv\rho^{2}\big(\rho^{-\frac{5}{2}}+O(\rho^{-\frac{7}{2}})\big)\sigma_1\sigma_2 +2\rho\big(1-\frac{v^2}{8}\rho^{-1}+O(\rho^{-2})\big)\sigma_2^2.
\end{align*}
Therefore, because $\rho$ is approximately $\frac{1}{4} R^2$ for large $\rho$, we conclude that $g_c|_N$ is asymptotically conical with rate $-2$, except when $v=0$, when it is asymptotically conical with rate $-4$, in agreement with the calculation in~\eqref{eq:approx.EH}.

\subsubsection{Hypersymplectic triple}

The three horizontal 1-forms $h_1 = \d u$, $h_2 = \d v$, and $h_3 = \d \beta$ are closed on $M$ and vanish when restricted to each coassociative fibre. Thus we can apply Corollary~\ref{cor:hypersymplectic} to obtain the induced hypersymplectic triple on each fibre. Using~\eqref{eq:phi.c.SO3.hv}, we deduce that the hypersymplectic triple induced on the coassociative 4-fold $N$ with $u,v,\beta$ all constant is given by
\begin{align}
\omega_1&=\Big(\frac{2s(c+s^2+t^2)^{\frac{1}{4}}}{2c\cos^2\alpha+(s^2+t^2)(1+\cos^2\alpha)}\d \rho\w \varpi_1-2(c+s^2+t^2)^{\frac{1}{4}}\cos\alpha \varpi_2\w \varpi_3\Big)\Big|_N\nonumber\\
&=\frac{2s(c+s^2+t^2)^{\frac{1}{4}}}{2c\cos^2\alpha+(s^2+t^2)(1+\cos^2\alpha)}\d\rho\wedge\sigma_1+2(c+s^2+t^2)^{\frac{1}{4}}\cos\alpha\sigma_2\wedge\sigma_3\label{eq:omega1.c}\\
\omega_2&=\Big(\frac{2(c+s^2+t^2)\cos\alpha}{2c\cos^2\alpha+(s^2+t^2)(1+\cos^2\alpha)}\d \rho\w \varpi_2-s\varpi_3\w \varpi_1\Big)\Big|_N\nonumber\\
&=-\frac{2(c+s^2+t^2)\cos^2\alpha+t^2\sin^2\alpha}{\cos\alpha(2c\cos^2\alpha+(s^2+t^2)(1+\cos^2\alpha))}\d\rho\wedge\sigma_2-s\sigma_3\wedge\sigma_1\nonumber\\
&\quad+\frac{st\sin\alpha}{\cos\alpha(2c\cos^2\alpha+(s^2+t^2)(1+\cos^2\alpha))}\d\rho\wedge\sigma_1
-t\sin\alpha\sigma_2\wedge\sigma_3\label{eq:omega2.c}\\
\omega_3&=\left( 2 (c+s^2+t^2)^{\frac{1}{4}} \sin \alpha (\d \rho\wedge \varpi_3-s\cos\alpha \varpi_1\wedge \varpi_2 ) \right)|_N\nonumber\\
&=2 (c+s^2+t^2)^{\frac{1}{4}} \sin \alpha (\d\rho\wedge\sigma_3+s\cos\alpha \sigma_1\wedge\sigma_2). \label{eq:omega3.c}
\end{align}

By construction, the $\omega_i$ are self-dual and one can verify explicitly that these 2-forms are indeed all closed on the fibres. We can also check that the matrix $\omega_i\wedge\omega_j$ of 4-forms is diagonal. One can compute:
\begin{align*}
\omega_1\w\omega_1&=\frac{8s \cos \alpha (c+s^2+t^2)^\frac{1}{2}}{2c\cos^2\alpha+(s^2+t^2)(1+\cos^2\alpha)}\d\rho\w\sigma_1\w\sigma_2\w\sigma_3,\\
\omega_2\w\omega_2 &=\frac{4s \cos \alpha (c+s^2+t^2)}{2c\cos^2\alpha+(s^2+t^2)(1+\cos^2\alpha)}\d\rho\w\sigma_1\w\sigma_2\w\sigma_3\\
\omega_3\w\omega_3&=8s(c+s^2+t^2)^{\frac{1}{2}} \sin^2 \alpha \cos \alpha \d\rho\w\sigma_1\w\sigma_2\w\sigma_3.
\end{align*}
It is straightforward to check using the formula~\eqref{eq:gc.N} that the volume form induced on $N$ is
\begin{equation} \label{eq:volN.S4}
\vol_N=2\rho\d\rho\w\sigma_1\w\sigma_2\w\sigma_3
\end{equation}
for any $c$. From this, one deduces that $\omega_i\w\omega_j=2 Q_{ij}\vol_N$ where the matrix $Q$ is given by
\begin{align}\label{eq:Q.matrix}
Q=\text{diag}\Big(\frac{2(c+s^2+t^2)^\frac{1}{2}}{2c\cos^2\alpha+(s^2+t^2)(1+\cos^2\alpha)},\frac{c+s^2+t^2}{2c\cos^2\alpha+(s^2+t^2)(1+\cos^2\alpha)}, 2 (c+s^2+t^2)^{\frac{1}{2}} \sin^2 \alpha \Big).
\end{align}
Note that the matrix $Q$ is not constant (nor is it a functional multiple of the identity matrix), and thus the hypersymplectic structure is \emph{not} hyperk\"ahler.

\subsubsection{Singular fibres}

We saw in~\S\ref{sub:induced.S4} that in $M_0$ that there is precisely one singular fibre $N_0$, corresponding to $u=v=0$, which by~\eqref{eq:uv.S4} is equivalent to $t=\alpha=0$. In this setting, $\rho=s$ by~\eqref{eq:rho.S4} and hence from~\eqref{eq:gc.N} the induced metric on the singular fibre is
\begin{align*}
g_0|_{N_0}&= \frac{1}{\rho}\d\rho^2 + \rho\sigma_1^2+2\rho\sigma_2^2+2\rho\sigma_3^2.
\end{align*}
This is \emph{exactly} the cone metric~\eqref{eq:gAC-S4}, which is the same as the asymptotic cone of all of the smooth fibres by~\eqref{eq:gc.N.tgtcone}.

By contrast, in~\S\ref{sub:induced.S4} we saw that in $M$ that there is instead a circle $\mathcal{S}^1_c$ of singular fibres $N_0$, corresponding to $u=0$ and $v=2c^{\frac{1}{4}}$. By~\eqref{eq:uv.S4}, this forces $t=0$ and
\begin{equation} \label{eq:sing.S4}
\sin\alpha=\left(\frac{c}{c+s^2}\right)^{\frac{1}{4}}, \quad \cos\alpha=\sqrt{1-\sqrt{\frac{c}{c+s^2}}}.
\end{equation}
Hence, using~\eqref{eq:gc.N}, the induced metric becomes
\begin{align*}
g_c|_{N_0}&=\frac{2(c+s^2 ) }{ 2c((c+s^2)^{\frac{1}{2}}-c^{\frac{1}{2}})+s^2(2(c+s^2)^{\frac{1}{2}}-c^{\frac{1}{2}})} \d\rho^2 +\frac{s^2}{(c+s^2)^{\frac{1}{2}}}\sigma_1^2 \\
& \quad +2((c+s^2)^{\frac{1}{2}}-c^{\frac{1}{2}})\sigma_2^2+\frac{2c((c+s^2)^{\frac{1}{2}}-c^{\frac{1}{2}})+s^2(2(c+s^2)^{\frac{1}{2}}-c^{\frac{1}{2}})}{c+s^2}\sigma_3^2. 
\end{align*}
We can see immediately that the above is \emph{not} a conical metric. However, for $s$ small, we can deduce from~\eqref{eq:sing.S4} that
$$\cos\alpha\approx \sqrt{\frac{s^2}{2c}} \quad \Rightarrow \quad \rho = s \cos \alpha \approx \frac{s^2}{\sqrt{2c}},$$
and from this we can deduce that for small $\rho$, we have
\begin{align}
g_c|_{N_0}&\approx \frac{1}{\rho\sqrt{2}}\d\rho^2+\rho\sqrt{2}\sigma_1^2+\rho\sqrt{2}\sigma_2^2+2\rho\sqrt{2}\sigma_3^2\nonumber\\
&= \d R^2+ \frac{R^2}{2}(\sigma_1^2+\sigma_2^2)+R^2\sigma_3^2,\nonumber
\end{align}
where $\rho= \frac{\sqrt{2}}{4} R^2$. The above metric \emph{is} the conical metric of~\eqref{eq:gCS-S4}, and is the same~\cite[Example 4.3]{LotayStab} as the cone metric induced on the complex cone
\begin{equation}\label{eq:cx.cone.singularity}
\{(z_1,z_2,z_3)\in\mathbb{C}^3 : z_1^2+z_2^2+z_3^2=0\}.
\end{equation}
This can clearly be realized as a coassociative cone in $\mathbb{R}^7$ and is the ordinary double point singularity for a complex surface.

\paragraph{Stability.} From the results in~\cite{LotayStab} on stability of coassociative conical singularities (or just explicitly), we can deduce that $N_0$ has an isolated conical singularity at the point where $\rho=0$. Notice that the cone metric at the singularity is over a \emph{squashed} $\mathbb{RP}^3$ with a different ratio of the Hopf fibres to the base than for the tangent cone at infinity. This ratio is 2:1 in the \emph{opposite} direction (that is, the Hopf fibres now have been dilated by a factor of $\sqrt{2}$ versus the Hopf fibration).

This is one of the coassociative conical singularities that was studied in detail in~\cite[$\S$5]{LotayStab} and we see that (as predicted by~\cite{LotayStab} and by the results in~\cite{LotayCS} on deformations of conically singular coassociative 4-folds) the singular fibres have a smooth 1-dimensional moduli space of deformations.

\subsection{Harmonic 1-form and metric on the base} \label{sec:harmonicS4}

\paragraph{Motivation.} The recent construction~\cite{JoyceKarigiannis}, by Joyce and the second author, of compact $\GG_2$ manifolds begins with an associative 3-fold $L$, which is given as the fixed point set of a non-trivial $\GG_2$ involution on a $\GG_2$ orbifold $X_0$, together with a nowhere vanishing harmonic 1-form $\lambda$ on $L$. Using $\lambda$ allows the authors of~\cite{JoyceKarigiannis} to construct a resolution of $X_0$ by gluing in a family of Eguchi--Hanson spaces $T^*\mathcal{S}^2$ along $L$ to obtain a smooth 7-manifold $X$, which then admits a torsion-free $\GG_2$-structure $\varphi$. 

The role of the 1-form $\lambda$ is such that at each point of $L$ we have an Eguchi--Hanson space attached at that point where $\frac{1}{|\lambda|} \lambda$ determines which complex structure in the 2-sphere of hyperk\"ahler complex structures is distinguished, and $|\lambda|$ determines the size of the bolt. Given $\varphi$ on $X$ and the 5-dimensional submanifold $B$ of $X$ which is the $\mathcal{S}^2$ bundle over $L$ given by the bolts in the Eguchi--Hanson spaces, one can recover $\lambda$ on $L$ by the pushforward of $\varphi_c$ along the 2-spheres in $B$. 

Since we have exhibited dense open subsets of $M$ and $M_0$ as a family of $T^*\mathcal{S}^2$ fibres  over a 3-dimensional base, we are motivated to compute the pushforward of $\varphi_c$ along the $\mathcal{S}^2$ subbundle to obtain a 1-form $\lambda_c$ which will extend naturally by zero to $\R^3$. In this section we will establish the following result.

\begin{prop}\label{prop:harmonic.1form}
Let $B$ be the $\mathcal{S}^2$-bundle over the $(u,v,\beta)$-space $\R^3$ given by the union of the bolts in the coassociative fibration of $M$ or $M_0$. Let $k_c$ and $\lambda_c$ be the pushforward of $g_c|_B$ and $\varphi_c|_B$ along the $\mathcal{S}^2$ fibres of $B$. Then $\lambda_c$ is an $\mathcal{S}^1$-invariant harmonic 1-form on $\R^3$ with respect to the metric $k_c$. That is, we have
$$\d\lambda_c=\d\!*_{k_c}\!\lambda_c=0.$$
Moreover, (up to a constant multiplicative factor) we have $|\lambda_c|_{k_c}=a_c$, where $a_c$ is the bolt size given in~\eqref{eq:bolt.size}. Consequently,
\begin{itemize}
\item[\emph{(a)}] if $c>0$, then $\lambda_c$ vanishes precisely on the circle $\mathcal{S}^1_c$ in Theorem~\ref{thm:S4.fib};
\item[\emph{(b)}] if $c=0$, then $\lambda_0$ only vanishes at the origin.
\end{itemize}
\end{prop}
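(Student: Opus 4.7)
The plan is to compute $\lambda_c$ and $k_c$ explicitly via fibre integration over the $\mathcal{S}^2$ bolts, and then deduce harmonicity from the closedness and coclosedness of $\varphi_c$ on $M$ via commutation of fibre integration with the exterior derivative.

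First, on the open dense region of $B$ where the bolts correspond to $s=0$ (the generic bolts of Table~\ref{table:SO(3).orbits}), the restriction $\varphi_c|_B$ is computed directly from~\eqref{eq:phic},~\eqref{eq:zetas-S4}, and~\eqref{eq:big-Omegas-S4}, using that at $s=0$ one has $\zeta_1|_B = \d t$, $\zeta_2|_B = t\sin\alpha\sin\theta\,\d\phi$, and $\zeta_3|_B = -t\sin\alpha\,\d\theta$. Extracting the terms of $\varphi_c|_B$ proportional to the bolt area form $\sin\theta\,\d\theta\wedge\d\phi$ and integrating over $\mathcal{S}^2$ (which multiplies by $4\pi$) gives
\[
\lambda_c = 4\pi (c+t^2)^{-\frac{3}{4}}\bigl\{\bigl[t^2\sin^2\alpha - 2(c+t^2)\cos^2\alpha\bigr]\,\d t + 4t(c+t^2)\sin\alpha\cos\alpha\,\d\alpha\bigr\}.
\]
The analogous computation for $g_c|_B$ yields the diagonal metric $k_c = (c+t^2)^{-\frac{1}{2}}\d t^2 + 2(c+t^2)^{\frac{1}{2}}\d\alpha^2 + 2(c+t^2)^{\frac{1}{2}}\sin^2\alpha\,\d\beta^2$ on the base, and the coefficient of $\d\theta^2 + \sin^2\theta\,\d\phi^2$ in $g_c|_B$ is exactly $a_c$ from~\eqref{eq:bolt.size}. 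The absence of a $\d\beta$-component in $\lambda_c$ is the $\mathcal{S}^1$-invariance.

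Second, the norm identity $|\lambda_c|_{k_c} = 4\pi a_c$ is an elementary algebraic check. Setting $X = t^2\sin^2\alpha$ and $Y = 2(c+t^2)\cos^2\alpha$, one finds $(c+t^2)|\lambda_c/(4\pi)|^2_{k_c} = (X-Y)^2 + 4XY = (X+Y)^2$, and $X + Y = (c+t^2)^{1/2} a_c$ by~\eqref{eq:bolt.size}. The vanishing loci then follow immediately: $a_c = 0$ iff $\cos\alpha = 0$ and $t=0$, which for $c>0$ gives the circle $\mathcal{S}^1_c = \{u=0,\,v=2c^{1/4}\}$ and for $c=0$ gives the single origin $(u,v)=(0,0)$.

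Third, to establish harmonicity, let $\iota\colon B\hookrightarrow M$ and $\pi\colon B\to\R^3$ denote the inclusion and projection. Closedness is immediate since fibre integration commutes with $\d$ (the $\mathcal{S}^2$ fibres are closed, so no boundary term):
\[
\d\lambda_c \;=\; \d\pi_*(\iota^*\varphi_c) \;=\; \pi_*(\iota^*\d\varphi_c) \;=\; 0.
\]
For co-closedness, the key step is the pointwise identity $*_{k_c}\lambda_c = -\pi_*(\iota^*{*_{\varphi_c}\!\varphi_c})$. To derive it, choose a $\GG_2$-adapted orthonormal coframe $\{\hat h_1,\hat h_2,\hat h_3,\hat\varpi_0,\ldots,\hat\varpi_3\}$ on $M$ as in Lemma~\ref{lemma:adapted-frame} with horizontal $\hat h_i$, and use the canonical expressions~\eqref{eq:canonical-3form} and~\eqref{eq:canonical-4form}. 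Writing $A_i = \int_{\Sigma_2}(\hat\varpi_0\wedge\hat\varpi_i - \hat\varpi_j\wedge\hat\varpi_k)|_{\Sigma_2}$ for the periods over the bolt $\Sigma_2$, fibre integration of~\eqref{eq:canonical-3form} and~\eqref{eq:canonical-4form} yields $\lambda_c = \sum_i A_i \hat h_i$ and $\pi_*(\iota^*{*_{\varphi_c}\!\varphi_c}) = -\sum_{(ijk)\text{ cyclic}} A_i\,\hat h_j\wedge\hat h_k$. Since the $\hat h_i$ remain orthonormal for the pushforward metric $k_c$, the claimed Hodge identity follows, and then $\d*_{k_c}\lambda_c = 0$ follows from $\d{*_{\varphi_c}\!\varphi_c} = 0$ by the same commutation argument. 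The main obstacle will be making this last identification rigorous near the \emph{non-generic} bolts at $\alpha=\pi/2$ of Table~\ref{table:SO(3).orbits}, where the $(t,\alpha,\beta)$ coordinates degenerate but $B$ itself remains smooth; a continuity argument from the dense open region where $s=0$, combined with the smoothness of the resulting formulae for $\lambda_c$ and $k_c$ when expressed in $(u,v,\beta)$-coordinates, should suffice to extend the identity across this locus.
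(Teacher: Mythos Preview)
Your explicit formulas for $\lambda_c$ and $k_c$ agree (up to an overall sign and the $4\pi$ factor) with the paper's, and your algebraic verification of $|\lambda_c|_{k_c}=4\pi a_c$ via the $(X-Y)^2+4XY=(X+Y)^2$ identity is the same check the paper does. Where you genuinely diverge is in the harmonicity argument.

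The paper proves closedness by exhibiting an explicit primitive $h_c$ with $\lambda_c=\d h_c$, and proves coclosedness by computing $*_{k_c}\d u$ and $*_{k_c}\d v$ directly from the explicit form~\eqref{eq:kc} of $k_c$, assembling $*_{k_c}\lambda_c$ in $(t,\alpha,\beta)$-coordinates, and then differentiating to see it vanishes. Your route instead pushes both equations back to the torsion-free condition on $M$: $\d\lambda_c=0$ follows from $\d\varphi_c=0$ and commutation of $\pi_*$ with $\d$, and $\d{*_{k_c}}\lambda_c=0$ follows from $\d{*_{\varphi_c}}\varphi_c=0$ via the pointwise identity $*_{k_c}\lambda_c=-\pi_*(\iota^*{*_{\varphi_c}}\varphi_c)$, which you deduce from the canonical forms~\eqref{eq:canonical-3form}--\eqref{eq:canonical-4form} after noting that the $\hat h_i$ descend to an orthonormal coframe for $k_c$ (the normalizations depend only on $t,\alpha$, hence are constant along each bolt). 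This is correct and more conceptual: it explains \emph{why} the pushforward is harmonic rather than merely verifying it, and it would apply to other torsion-free $\GG_2$ manifolds with a coassociative fibration and a well-defined bolt bundle. The paper's direct computation, on the other hand, has the advantage of also producing the explicit primitive $h_c$ and the explicit formula for $*_{k_c}\lambda_c$, which are used elsewhere in the paper (e.g.\ in the discussion of the degenerate critical point and in \S\ref{sec:VCTS4} on associative thimbles). One small point to tighten in your argument: the sign in the Hodge identity depends on the orientation convention on the base, and the paper records that the induced orientation is $-\d u\wedge\d v\wedge\d\beta$; you should check your sign against this.
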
 

\begin{remark}
By the Poincar\'e Lemma, $\lambda_c=\d h_c$ for an $\mathcal{S}^1$-invariant function $h_c$. The behaviour of the zeros of $\lambda_c$, and thus the critical points of $h_c$, in Proposition~\ref{prop:harmonic.1form}, recalls the behaviour of homogeneous harmonic cubic polynomials on $\R^3$. This suggests that $h_0$ has a \emph{degenerate} critical point, and thus $\lambda_0$ has a \emph{degenerate} zero, at the origin. In fact, the authors verified explicitly using the expression~\eqref{eq:kc} for the metric $k_c$ which we derive below, that the Hessian of $h_0$ with respect to $k_0$ has determinant
\begin{equation} \label{eq:Hessian}
t^{\frac{3}{2}} \sin^2 \alpha (3 - \cos^2 \alpha).
\end{equation}
The origin in $(u,v,\beta)$-space corresponds to $u = 0$, $v=0$. From~\eqref{eq:uv.S4}, this implies that either $\alpha = 0, t=0$ or $\alpha = \frac{\pi}{2}, s^2 + t^2 =0$, which in either case requires $t=0$. Thus we deduce from~\eqref{eq:Hessian} that the critical point of $h_0$, which is the origin, is indeed \emph{degenerate}.
\end{remark}

\subsubsection{The 1-form \texorpdfstring{$\lambda_c$}{lambdac}}

Because $\sigma_2$, $\sigma_3$ define a coframe on the bolt, which corresponds to $\rho=0$, we simply need to examine the term in $\varphi_c$ of the form $\lambda\w\sigma_2\w\sigma_3$ in $\varphi_c$, where $\lambda$ is a horizontal 1-form. We see from the formulae~\eqref{eq:phi.c.SO3.hv} for $\varphi_c$ and~\eqref{eq:omega1.c}--\eqref{eq:omega3.c} for the hypersymplectic triple that $\lambda$ is given by
\begin{equation} \label{eq:lambda-temp}
\lambda=2(c+s^2+t^2)^{\frac{1}{4}}\cos\alpha\d u-t\sin\alpha \d v.
\end{equation}
Using~\eqref{eq:uv.S4} we write this as
\begin{align*}
\lambda&=(c+s^2+t^2)^{-\frac{3}{4}}\big(-st\sin^2\alpha\d s+\big(2(c+s^2+t^2)\cos^2\alpha-t^2\sin^2\alpha\big)\d t\big)\\
&\qquad{} -4t(c+s^2+t^2)^{\frac{1}{4}}\sin\alpha\cos\alpha\d\alpha.
\end{align*}
Setting $\rho=0$ in $\varphi_c$ is equivalent to restricting to the bundle $B$ of bolts from Proposition~\ref{prop:harmonic.1form}, and thus we obtain the pushforward $\lambda_c$ of $\varphi_c|_B$ (up to a factor of $4\pi$) as follows:
\begin{align}
\lambda_c&=\big(2(c+t^2)^{\frac{1}{4}}\cos^2\alpha-(c+t^2)^{-\frac{3}{4}}t^2\sin^2\alpha\big)\d t-4t(c+t^2)^{\frac{1}{4}}\sin\alpha\cos\alpha\d\alpha\nonumber\\
&=\frac{2}{3}\d\big(t(c+t^2)^{\frac{1}{4}}(3\cos^2\alpha-1)+l_c(t)\big)\label{eq:lambda.c}
\end{align}
where 
\begin{equation}\label{eq:l.c}
l_c'(t)=c(c+t^2)^{-\frac{3}{4}}\quad\text{and}\quad l_c(0)=0.
\end{equation}
(One can in fact give an ``explicit'' expression for $l_c(t)$ using hypergeometric functions, and write it in terms of $u$ and $v$, but we do not do this.) Notice that $\lambda_c$ is \emph{exact} and that it is invariant under the $\mathcal{S}^1$ action given by $\beta$. 

We can deduce from the first line of~\eqref{eq:lambda.c} that $\lambda_c$ is zero if and only if $t=\cos\alpha=0$, which, by~\eqref{eq:uv.S4}, corresponds to $(u,v)=(0,2c^{\frac{1}{4}})$. Hence, $\lambda_c$ vanishes precisely on $\mathcal{S}^1_c$ for $c>0$ and $\lambda_0$ only vanishes at the origin as claimed.

\begin{remark}
Notice that when $c=0$, which means $l_c=0$ by~\eqref{eq:l.c}, then $\lambda_c$ in~\eqref{eq:lambda.c} becomes
\begin{equation}\label{eq:lambda.0}
\lambda_0=\textstyle\frac{2}{3}\d\big(t|t|^{\frac{1}{2}}(3\cos^2\alpha-1)\big).
\end{equation}
Thus $\lambda_0=\d h_0$ where $h_0$ has a branch point at the origin.
\end{remark}

\subsubsection{Base metric}

To compute the pushforward $k_c$ of the metric $g_c$ on $B$ given in Proposition~\ref{prop:harmonic.1form}, it suffices to compute the induced metric on the horizontal space for the fibration when $\rho=0$. Recall that we are working in our coordinate patch $U$, where $\alpha \in (0, \frac{\pi}{2})$. Thus $\rho = 0$ corresponds to $s = 0$.

We see from~\eqref{eq:g.c.SO3} that when $\rho=0$ (so that $s=0$) the metric on the horizontal space becomes
\begin{align}\label{eq:kc}
k_c&=\frac{2(c+t^2)^{\frac{1}{2}}}{2c\cos^2\alpha+t^2(1+\cos^2\alpha)}\d u^2+\frac{c+t^2}{2c\cos^2\alpha+t^2(1+\cos^2\alpha)}\d v^2+\frac{1}{2}v^2\d\beta^2.
\end{align}
In fact, some miraculous cancellations occur if we express the base metric $k_c$ in terms of $(t,\alpha, \beta)$ where $u = t \cos \alpha$ and $v = 2 (c+t^2)^{\frac{1}{4}} \sin \alpha$. A straightforward computation reveals that
\begin{align}\label{eq:kc-2}
k_c&= \frac{1}{(c+t^2)^{\frac{1}{2}}} \d t^2 + 2 (c+t^2)^{\frac{1}{2}} \d \alpha^2 + 2(c+t^2)^{\frac{1}{2}} \sin^2 \alpha \d \beta^2.
\end{align}
In the cone case $(c=0)$, if we define $\varrho$ for $t>0$ by $\varrho^2=4t$ then~\eqref{eq:kc-2} becomes
\begin{equation*}
k_0 = \d \varrho^2+\frac{\varrho^2}{2}(\d\alpha^2+\sin^2\alpha\d\beta^2).
\end{equation*}
We see that $k_0$ is a cone metric on a half-space in $\mathbb{R}^3$. It is the cone metric on $\mathbb{R}^+\times \mathcal{S}_+^2(\frac{1}{\sqrt{2}})$. Thus overall, $k_0$ is a conical metric on two half-spaces in $\R^3$, with a common vertex.

For $c>0$, if we define $\varrho$ as above, an asymptotic expansion for large $\rho$ gives
\begin{equation} \label{eq:kc-AC}
k_c = \d \varrho^2+\frac{\varrho^2}{2}(\d\alpha^2+\sin^2\alpha\d\beta^2) + c \, O(\varrho^{-2}) = k_0 + c \, O(\varrho^{-2}).
\end{equation}
From~\eqref{eq:kc-2} and~\eqref{eq:kc-AC}, we deduce that $k_c$ is a smooth asymptotically conical metric on $\mathbb{R}^3$, which converges at infinity to its asymptotic cone $k_0$ with rate $-2$.

Finally, the induced orientation on the horizontal space can be seen from~\eqref{eq:vol.c.SO3} to be given by the 3-form $- \d u \w \d v \w \d \beta$. We can then use~\eqref{eq:kc} or~\eqref{eq:kc-2} to deduce that the volume form on the horizontal space at $\rho=0$, associated to the metric $k_c$ and the induced orientation, is 
\begin{align}\label{eq:vol.kc}
\vol_{k_c} & = -\frac{(c +t^2)^{\frac{3}{4}}v}{2c\cos^2\alpha+t^2(1+\cos^2\alpha)}\d u\w\d v\w \d\beta \\
& = 2 (c+t^2)^{\frac{1}{4}} \sin \alpha \, \d t \w \d \alpha \w \d \beta. \nonumber
\end{align}

\subsubsection{Properties of \texorpdfstring{$\lambda_c$}{lambdac}}

We have already seen that the 1-form $\lambda_c$ is closed, and in fact even exact. We now show that it is also \emph{coclosed}. One can compute using~\eqref{eq:kc} and~\eqref{eq:vol.kc} that
\begin{equation*}
*_{k_c} \d u = - \frac{(c + t^2)^{\frac{1}{4}} v}{2} \d v \w \d \beta, \qquad *_{k_c} \d v = \frac{v}{(c + t^2)^{\frac{1}{4}}} \d u \w \d \beta.
\end{equation*}
It then follows from~\eqref{eq:lambda-temp} (with $s=0$) that
\begin{align*}
*_{k_c}\lambda_c&=2(c+t^2)^{\frac{1}{4}}\cos\alpha *_{k_c}\d u -t\sin\alpha *_{k_c}\d v\\
&=-v(c+t^2)^{\frac{1}{2}}\cos\alpha\d v\w \d\beta-\frac{tv\sin\alpha}{(c+t^2)^{\frac{1}{4}}}\d u\w\d \beta.
\end{align*}
Using~\eqref{eq:uv.S4} the above can then be expressed as
\begin{equation*}
*_{k_c}\lambda_c =-4t\sin^2\alpha\cos\alpha\d t\w\d\beta+\big(-4c\sin\alpha\cos^2\alpha-2t^2\sin\alpha(2\cos^2\alpha-\sin^2\alpha)\big)\d\alpha\w\d\beta.
\end{equation*}
From the above it follows easily that
$$\d*_{k_c}\lambda_c=0.$$
Since we already know that $\d \lambda_c = 0$, we conclude that $\lambda_c$ is harmonic, as claimed. Hence, by~\eqref{eq:lambda.c}, we have
$$\Delta_{k_c}\big(t(c+t^2)^{\frac{1}{4}}(3\cos^2\alpha-1)+l_c(t)\big)=0.$$

\begin{remark}
We deduce from~\eqref{eq:lambda.0} that the function $h_0$ defined by $\lambda_0=\d h_0$ is a branched harmonic function with respect to $k_0$.
\end{remark}

We can also compute directly from~\eqref{eq:lambda-temp} (with $s=0$) and~\eqref{eq:kc} that
\begin{align*}
|\lambda_c|^2_{k_c} &=4(c+t^2)^{\frac{1}{2}}\cos^2\alpha |\d u|^2_{k_c}+t^2\sin^2\alpha|\d v|^2_{k_c}\\
&=2\cos^2\alpha(2c\cos^2\alpha+t^2(1+\cos^2\alpha))+\frac{t^2\sin^2\alpha(2c\cos^2\alpha+t^2(1+\cos^2\alpha))}{c+t^2}\\
&=\frac{(2c\cos^2\alpha+t^2(1+\cos^2\alpha))^2}{c+t^2},
\end{align*}
and hence $|\lambda_c|=a_c$ is the size of the bolt from~\eqref{eq:bolt.size}.

\subsection{Flat limit} \label{sec:flat.limit.SO3}

In this section we describe what happens to the coassociative fibration of $M$ as we take the flat limit as in $\S$\ref{subs:flat}, including the limiting harmonic 1-form.

\subsubsection{The coassociative fibration}

Recall that in taking the flat limit as in $\S$\ref{subs:flat} we simply undertake a rescaling. Thus the coassociative fibration and the $\SO(3)$-invariance is preserved along the rescaling, and in the limit we obtain an $\SO(3)$-invariant coassociative fibration of $\R^7=\C^3\oplus\R$ which is also translation invariant, because we have an additional \emph{commuting} circle action which becomes a translation action in the limit.

Recall that the coassociative fibres all have \emph{constant} $\beta$. So in the flat limit each fibre lies in a translate of $\C^3$ in $\C^3 \oplus \R$. It is well-known that a coassociative 4-fold in $\R^7$ which is contained in $\C^3$ is   a complex surface. Thus we deduce that the coassociative fibration over $\R^3$ becomes the product of $\R$ with an $\SO(3)$-invariant complex surface fibration of $\C^3$ over $\R^2=\C$. The $\SO(3)$-invariant complex surface fibration of $\C^3$ is known to be the standard Lefschetz fibration given by 
$$\pi:\C^3\to\C, \qquad \pi(z_1,z_2,z_3)=z_1^2+z_2^2+z_3^2.$$
This has to be (after a coordinate change) the flat limit of the coassociative fibration on $\Lambda^2_-(T^*\mathcal{S}^4)$.

The Lefschetz fibration has a unique singular fibre at $0$, which is a cone over $\R\P^3$, that is precisely describing the conical singularity of the singular fibres in the coassociative fibration for $\varphi_c$ as we saw in~\eqref{eq:cx.cone.singularity}. The smooth fibres are also well-known to be diffeomorphic to $T^*\mathcal{S}^2$ but (as we show below) they are \emph{not} endowed with the Eguchi--Hanson metric.

Moreover, we see explicitly that the circle of singular fibres in $M$ becomes a line in the flat limit and that the 2-tori of smooth fibres become cylinders of smooth fibres in the limit (namely, the product of a circle in $\C$ with the $\R$ factor in $\C^3\oplus\R$).

\subsubsection{Hypersymplectic geometry}

The singular fibre of the Lefschetz fibration is a cone endowed with the conical metric in~\eqref{eq:gCS-S4}, which one should note is the same as the asymptotic cone metric at the conical singularity of the singular fibres in the smooth ($c>0$) Bryant--Salamon coassociative fibration, as we noted above in the discussion surrounding equation~\eqref{eq:cx.cone.singularity}.

Let $(z_1,z_2,z_3)\in\C^3$ be a point on a smooth fibre $\pi^{-1}(w)$. Thus $w = \pi(z_1, z_2, z_3) = z_1^2 + z_2^2 + z_3^2$. We describe the fibre as a normal graph over its asymptotic cone. That is, we write $z_j=a_j+b_j$ where $a=(a_1,a_2,a_3)\in\C^3$ satisfies $a_1^2+a_2^2+a_3^2=0$ and $b=(b_1,b_2,b_3)$ is \emph{orthogonal} to $a$. We deduce that $|b|=O(|w||a|^{-1})$ when $|a|$ is large, so that, as a submanifold, a smooth fibre converges with $O(r^{-1})$ to the asymptotic cone~\eqref{eq:cx.cone.singularity}. Therefore, the induced metric on a smooth fibre is asymptotically conical with rate $-2$. This matches well with the analysis of the asymptotic behaviour for the induced metrics on the coassociative fibres in the Bryant--Salamon setting as given in Proposition~\ref{prop:AC.CS-S4}.

We can also verify the claims about the induced metric on the smooth fibres, as well as the induced hypersymplectic triple, by introducing coordinates as follows. Let
\[ \pi(z_1, z_2, z_3) = z_1^2+z_2^2+z_3^2=r^2 e^{i 2 \eta}.\]
Then we can define coordinates $\xi, \theta, \phi, \psi$ by
\begin{align*}
(z_1,z_2,z_3)&=re^{i\eta}\cosh \xi(\sin\theta\cos\phi,\sin\theta\sin\phi,\cos\theta)\\
&\quad +ire^{i\eta}\sinh\xi\big(-\sin\psi(\cos\theta\cos\phi,\cos\theta\sin\phi,-\sin\theta)+\cos\psi(-\sin\phi,\cos\phi,0)\big).
\end{align*}

If we let $\sigma_1$, $\sigma_2$, $\sigma_3$ be as in~\eqref{eq:sigmas-S4} then we can explicitly compute that the flat metric $g_{\C^3}$ on $\C^3$ is
\begin{align}
g_{\C^3}&=\cosh 2\xi \d r^2+r^2\cosh 2\xi\d\eta^2+2r\sinh2\xi \d r \d\xi+2r^2\sinh 2\xi\d\eta\sigma_3\nonumber\\
&\qquad+r^2\cosh 2\xi\d \xi^2+r^2\sinh^2\xi\sigma_1^2+r^2\cosh^2\xi\sigma_2^2+r^2\cosh 2\xi\sigma_3^2\nonumber\\
&=\frac{\d r^2+r^2\d\eta^2}{\cosh 2\xi}+\cosh 2\xi(r\d\xi+\tanh 2\xi\d r)^2
+r^2\sinh^2\xi\sigma_1^2+r^2\cosh^2\xi\sigma_2^2\nonumber\\
&\qquad +r^2\cosh 2\xi(\sigma_3+\tanh 2\xi \d\eta)^2.\label{eq:metric.C3}
\end{align}
Thus, the induced metric on the fibre $N = \pi^{-1} (w)$, which corresponds to constant $r$ and $\eta$, is 
\begin{align} \label{eq:metric.C3.N.0}
g_{\C^3}|_N&=r^2\cosh 2\xi\d \xi^2+r^2\sinh^2\xi\sigma_1^2+r^2\cosh^2\xi\sigma_2^2+r^2\cosh 2\xi\sigma_3^2.
\end{align}
Writing $\rho=\sqrt{2}r\sinh\xi$, this becomes
\begin{align} \label{eq:metric.C3.N}
g_{\C^3}|_N&=\Big(1-\frac{r^2}{2r^2+\rho^2}\Big)\d\rho^2+\frac{\rho^2}{2}\sigma_1^2+\frac{\rho^2}{2}\Big(1+\frac{2r^2}{\rho^2}\Big)\sigma_2^2+\rho^2\Big(1+\frac{r^2}{\rho^2}\Big)\sigma_3^2.
\end{align}

\begin{remark} For fixed $r>0$ we have $\rho = \sqrt{2} r \sinh \xi = 0$ if and only if $\xi = 0$. Moreover from~\eqref{eq:metric.C3.N.0} we see that $\xi = 0$ corresponds to an $\mathcal{S}^2$. Hence $\rho$ can be interpreted as a measure of the distance to the $\mathcal{S}^2$ ``bolt'' in $N\cong T^*\mathcal{S}^2$.
\end{remark}

We can see explicitly from~\eqref{eq:metric.C3.N} that the metric is asymptotically conical with rate $-2$ to the conical metric in~\eqref{eq:gCS-S4}. Moreover, it also follows from~\eqref{eq:metric.C3.N} that the volume form is
\begin{equation} \label{eq:volume.C3.N}
\vol_N=\frac{1}{4}r^4\sinh 4\xi\d\xi\w\sigma_1\w\sigma_2\w\sigma_3
=\frac{1}{2}\rho(r^2+\rho^2)\d\rho\w\sigma_1\w\sigma_2\w\sigma_3.
\end{equation}

The standard $\GG_2$-structure $\varphi_{\R^7}$ in this setting can be written as the product structure
\begin{equation}\label{eq:varphi.R7.product}
\varphi_{\R^7}=\Ree\Omega - \d x \w \omega,
\end{equation}
where $\omega$ and $\Omega$ are the standard K\"ahler form and holomorphic volume from on $\C^3$, respectively. Hence, to obtain the hypersymplectic triple on $N$, where $x$, $r$, and $\eta$ are constant, we must compute
\begin{equation} \label{eq:hyper.lefschetz}
\begin{aligned}
\omega_1&= \Big( \frac{\partial}{\partial r}\lrcorner\varphi_{\R^7}\Big) \Big|_N=
\Big( \frac{\partial}{\partial r}\lrcorner\Ree\Omega\Big) \Big|_N,\\
\omega_2&= \Big(\frac{1}{r}\frac{\partial}{\partial \eta}\lrcorner\varphi_{\R^7}\Big) \Big|_N=
\Big( \frac{1}{r}\frac{\partial}{\partial \eta}\lrcorner\Ree\Omega\Big) \Big|_N,\\
\omega_3&= \Big(\frac{\partial}{\partial x}\lrcorner\varphi_{\R^7}\Big) \Big|_N=-\omega|_N.
\end{aligned}
\end{equation}
In these coordinates, one can compute that
\begin{align*}
\omega&=\frac{r^2}{2}\left(2\cosh 2\xi\d\xi\wedge\sigma_3+\sinh 2\xi\sigma_1\w\sigma_2\right) +r\sinh 2\xi \d r\w\sigma_3\\
& \qquad {}-r^2\sinh 2\xi\d\eta\w\d\xi +r\cosh 2\xi\d r\w\d\eta
\end{align*}
and
\begin{align*}
\Omega&=-e^{3i\eta}(r^2\d r+ir^3\d\eta)\wedge\big((\sinh \xi\d\xi\w\sigma_1+\cosh\xi\sigma_2\w\sigma_3)+i(\cosh\xi\d\xi\w\sigma_2+\sinh\xi\sigma_3\w\sigma_1)\big).
\end{align*}
It follows that
\begin{align*}
\Ree\Omega&= -r^2 \cos 3\eta \d r \w (\sinh \xi\d\xi\w\sigma_1+\cosh\xi\sigma_2\w\sigma_3) + r^3 \cos 3\eta \d \eta \w (\cosh\xi\d\xi\w\sigma_2+\sinh\xi\sigma_3\w\sigma_1) \\
& \qquad {}+ r^2 \sin 3 \eta \d r \w  (\cosh\xi\d\xi\w\sigma_2+\sinh\xi\sigma_3\w\sigma_1) + r^3 \sin 3\eta \d \eta \w (\sinh \xi\d\xi\w\sigma_1+\cosh\xi\sigma_2\w\sigma_3).
\end{align*}
From the above expressions for $\omega$ and $\Ree\Omega$ and the formulas in~\eqref{eq:hyper.lefschetz} for the hypersymplectic triple, we can compute that
\begin{align*}
\omega_1&=-r^2\cos 3\eta (\sinh\xi \d\xi\w\sigma_1+\cosh\xi\sigma_2\w\sigma_3)
+r^2\sin 3\eta(\cosh\xi\d\xi\w\sigma_2+\sinh\xi\sigma_3\w\sigma_1)\\
&=-\frac{r\cos 3\eta}{\sqrt{2}}\bigg(\frac{\rho}{\sqrt{2r^2+\rho^2}}\d\rho\w\sigma_1+\sqrt{2r^2+\rho^2}\sigma_2\w\sigma_3\bigg)+\frac{r\sin 3\eta}{\sqrt{2}}\left(\d\rho\w\sigma_2+\rho\sigma_3\w\sigma_1\right),\displaybreak[0]\\ 
\omega_2&=r^2\sin 3\eta(\sinh\xi\d\xi\w\sigma_1+\cosh\xi\sigma_2\w\sigma_3)
+r^2\cos 3\eta (\cosh\xi\d\xi\w\sigma_2+\sinh\xi\sigma_3\w\sigma_1)\\
&=\frac{r\sin 3\eta}{\sqrt{2}}\bigg(\frac{\rho}{\sqrt{2r^2+\rho^2}}\d\rho\w\sigma_1+\sqrt{2r^2+\rho^2}\sigma_2\w\sigma_3\bigg)+\frac{r\cos 3\eta}{\sqrt{2}}\left(\d\rho\w\sigma_2+\rho\sigma_3\w\sigma_1\right),\displaybreak[0]\\
\omega_3&=-\frac{r^2}{2}\left(2\cosh 2\xi\d\xi\w\sigma_3+\sinh 2\xi\sigma_1\w\sigma_2\right)\\
&=-\frac{r^2+\rho^2}{\sqrt{2r^2+\rho^2}}\d\rho\w\sigma_3-\frac{\rho}{2}\sqrt{2r^2+\rho^2}\sigma_1\w\sigma_2,
\end{align*}
where $\rho=\sqrt{2} r\sinh\xi$ as before. It is easy to check that the 2-forms $\omega_i$ are closed and, using~\eqref{eq:metric.C3.N} and~\eqref{eq:volume.C3.N}, that they are also self-dual. If we now define the matrix $Q$ by $\omega_i\w\omega_j=2 Q_{ij}\vol_N$ then
$$Q=\text{diag}\Big(\frac{1}{\cosh 2\xi},\frac{1}{\cosh 2\xi},1\Big)=\text{diag}\Big(\frac{r^2}{r^2+\rho^2},\frac{r^2}{r^2+\rho^2},1\Big).$$
In particular, we see that this matrix is not constant, nor a functional multiple of the identity, and so the induced structure on the fibres is \emph{not} hyperk\"ahler.

\subsubsection{Harmonic 1-form}

In this section we compute the 1-form $\lambda$ which one obtains on $\C\oplus\R$ by restricting the $\GG_2$-structure $\varphi_{\R^7}$ in~\eqref{eq:varphi.R7.product} to the $\mathcal{S}^2$-bundle over $\C\oplus\R$, which is the ``bundle of bolts'', and then taking the pushforward. Notice from the expression~\eqref{eq:metric.C3} that the metric on the horizontal space on the $\mathcal{S}^2$-bundle (which corresponds to $\xi=0$ in the notation there, and $(r,\eta)$ are the coordinates for the horizontal directions) is just the flat metric, and so we would expect to obtain a harmonic 1-form with respect to the flat metric. (There is a subtlety here in that the coordinates in~\eqref{eq:metric.C3} for the horizontal space provide a double cover for the coordinates on the base of the fibration, and so we should actually expect a 1-form which lifts to be harmonic for the flat metric on a double cover of $\C$.)

Suppose that $\pi(z_1,z_2,z_3)=r^2 e^{i2 \eta}=w$. Then the 2-sphere $\Sigma$ in the fibre $N=\pi^{-1}(r^2 e^{i2 \eta})$ is given by the intersection of $N$ with the plane
$$P_\eta=\{(e^{i\eta}x_1,e^{i\eta}x_2,e^{i\eta}x_3):x_1,x_2,x_3\in\R\}.$$
Since $P_{\eta}$ is a Lagrangian plane we see that $\omega|_{\Sigma}=0$. Hence, $\lambda$ has no $\d x$ component. Moreover,  
\begin{align*}
\Omega|_{P_{\eta}}&=e^{3i\eta}\d x_1\w \d x_2\w\d x_3+ i e^{3i\eta}\d\eta\w(x_1\d x_2\w\d x_3 +x_2\d x_3\w\d x_1+x_3\d x_1\w\d x_2)\\
&=e^{3i\eta}\big(R^2\d R\w\vol_{\mathcal{S}^2}+ i R^3\d\eta\w\vol_{\mathcal{S}^2}\big),
\end{align*}
where $R$ is the radial coordinate in $\R^3$ and $\mathcal{S}^2$ is the unit sphere. Since $R^2=r$, we see that
$$R^2\d R=\frac{1}{3}\d R^3=\frac{1}{3}\d r^{\frac{3}{2}}.$$
Therefore, integrating $\Omega$ over $\Sigma$ will give:
$$4\pi e^{3i\eta}\Big(\frac{1}{3}\d r^\frac{3}{2}+ i r^{\frac{3}{2}}\d\eta\Big)=\frac{4\pi}{3}\d (r^{\frac{3}{2}}e^{3i\eta})=\frac{4\pi}{3}\d w^{\frac{3}{2}}.$$
Hence, up to a multiplicative constant, the 1-form $\lambda$ which is the pushforward to $\C\oplus\R$ of $\varphi_{\mathbb{R}^7}$ restricted to the 2-sphere bundle is:
$$\lambda=\d \Ree w^{\frac{3}{2}}.$$
Notice that this 1-form has a line of zeros given by $w=0$. Moreover, the function $w^{\frac{3}{2}}$ has a branch point at $w=0$. 

\begin{remarks} Branched harmonic functions such as these appear in recent work of Donaldson~\cite{Donaldson-KovalevLefschetz} when studying deformations of adiabatic Kovalev--Lefschetz fibrations, which we do not believe is coincidental, but rather suggests a link to Donaldson's work. Moreover, the fact that $\lambda$ is naturally well-defined and a regular harmonic 1-form on a double cover of $\C$ fits well with our earlier comments. See also the related recent preprint~\cite{Donaldson-new} of Donaldson.
\end{remarks}

\subsection{Circle quotient} \label{sec:circlequotientS4}

There is a particular $\mathcal{S}^1$ action on $M$ and $M_0$, that commutes with the $\SO(3)$ action considered here, which is considered in the case of $M_0$ in~\cites{AcharyaBryantSalamon, AtiyahWitten}. Our analysis in Theorem~\ref{thm:S4.fib} allows us to describe the quotient by this particular $\mathcal{S}^1$ action, which was shown in~\cite{AcharyaBryantSalamon} to be topologically $\C^3$ but with a \emph{non-flat} and \emph{non-torsion-free} $\SU(3)$-structure, as a fibration by $T^*\mathcal{S}^2$ over the upper half-plane in $\R^2$ (defined by the parameters $u \in \R$ and $v \geq 0$), with a single singular fibre which is $(\R^+\times\R\P^3)\cup\{0\}$ in $M$ and $\R^+\times\R\P^3$ in $M_0$.

In the case of $M_0$, the singular fibre is over the origin in the half-plane $v \geq 0$ and the image of the fixed points of the $\mathcal{S}^1$ action under the projection to the half-plane is the boundary $v=0$. Moreover, because this particular $\mathcal{S}^1$ action induces a rotation in each fibre of a smooth $T^* \mathcal{S}^2$ fibre, the fixed points of the $\mathcal{S}^1$ action are given by the union of the bolts (the zero section in $T^*\mathcal{S}^2$) over the boundary of the half-plane (minus the origin). If we include the vertex of the cone $M_0$, the fixed point set then becomes two copies of $\R^3$ which intersect at a point, which is the origin in each $\R^3$. 

In $M$, the singular fibres lie over the point $(u,v) = (0,2c^{\frac{1}{4}})$, and the image of the fixed points of the $\mathcal{S}^1$ action is still the line $v=0$. As above, the fixed point set  is still the union of bolts over $v=0$, but this is now topologically $\R\times\mathcal{S}^2$. 

From this perspective, it is natural to try to relate the Bryant--Salamon cone $M_0$ and its smoothings $M$ to the union of two transverse copies of $\R^3$ in $\C^3$ and a ``smoothing'' $\R\times\mathcal{S}^2$ which is analogous to how the special Lagrangian Lawlor necks smooth a pair of transverse special Lagrangian planes in $\C^3$ with the \emph{flat} $\SU(3)$-structure. (See, for example,~\cite[Chapter 7]{Harvey} for more details on Lawlor necks.)

It is also worth observing that in the flat limit the circle action becomes a translation action, and thus the circle quotient limits to the flat $\C^3$, and the coassociative fibration is the standard Lefschetz fibration over $\C$ by complex surfaces.

\subsection{Vanishing cycles and thimbles} \label{sec:VCTS4}

As we have seen in~\S\ref{sec:flat.limit.SO3}, the flat limit of the coassociative fibration we have constructed is the product of a real line and the standard Lefschetz fibration of $\mathbb{C}^3$. In fact, our coassociative fibration of $M$ is a non-compact example of what Donaldson calls a \emph{Kovalev--Lefschetz fibration}~\cite{Donaldson-KovalevLefschetz}. In the study of Lefschetz fibrations, particularly from the symplectic viewpoint, two of the central objects are so-called \emph{vanishing cycles} and \emph{thimbles}. In this section we show that we have analogues of these objects for our coassociative fibrations which, moreover, can be represented by distinguished submanifolds.

\subsubsection{Special Lagrangian vanishing cycles}

Suppose we have a path in the base of the fibration connecting a smooth fibre to a singular fibre. Following this path one finds that a certain cycle in the smooth fibre collapse to zero, hence the term ``vanishing cycle''. In our setting, just as in the standard Lefschetz fibration of $\mathbb{C}^3$, these cycles are the 2-sphere ``bolts'' in $T^*\mathcal{S}^2$. In this section we show that these 2-spheres can naturally be viewed as distinguished submanifolds in $T^*\mathcal{S}^2$ from the symplectic viewpoint.

\begin{prop}
There is a natural $\SU(2)$-structure on each smooth coassociative $T^*\mathcal{S}^2$ fibre in $M$ or $M_0$ such that the zero section $\mathcal{S}^2$ is special Lagrangian. Hence, we have special Lagrangian representatives for the ``vanishing cycles'' for the coassociative fibration.
\end{prop}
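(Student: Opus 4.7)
The plan is to build the $\SU(2)$-structure on each smooth fibre from the hypersymplectic triple \eqref{eq:omega1.c}--\eqref{eq:omega3.c} by rescaling, and then adjust an overall phase of the complex volume form so that the zero section satisfies the two special Lagrangian conditions.

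First I would exploit the diagonal form of $Q$ in \eqref{eq:Q.matrix}: the rescaled triple $\omega_i' := \omega_i/\sqrt{Q_{ii}}$ satisfies $\omega_i' \wedge \omega_j' = 2\delta_{ij}\,\vol_N$, which in dimension four is exactly the data of an $\SU(2)$-structure, with K\"ahler form $\omega := \omega_3'$ and complex volume form $\Omega := \omega_1' + i\omega_2'$. This structure is natural in that it is built directly from the ambient $\GG_2$-structure and the canonical horizontal $1$-forms $\d u, \d v, \d \beta$ via Corollary~\ref{cor:hypersymplectic}.

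Next I would verify the Lagrangian condition. On the zero section $\mathcal{S}^2 \subset N$, characterized by $\rho = 0$ (equivalently $s = 0$ on smooth fibres, since $\cos\alpha \neq 0$ there), the form $\omega_3$ vanishes by \eqref{eq:omega3.c}, so $\omega|_{\mathcal{S}^2} = 0$. From \eqref{eq:omega1.c} and \eqref{eq:omega2.c} one sees that the restrictions
\begin{equation*}
\omega_1|_{\mathcal{S}^2} = 2(c+t^2)^{\frac{1}{4}}\cos\alpha\, \sigma_2 \wedge \sigma_3, \qquad \omega_2|_{\mathcal{S}^2} = -t\sin\alpha\, \sigma_2 \wedge \sigma_3
\end{equation*}
are both proportional to the volume form on $\mathcal{S}^2$, with a ratio that is constant along $\mathcal{S}^2$ because $t$ and $\alpha$ are determined by the fibre parameters $u, v$ (through $u = t\cos\alpha$ and $v = 2(c+t^2)^{\frac{1}{4}}\sin\alpha$ with $s = 0$).

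The proportionality of $\omega_1'|_{\mathcal{S}^2}$ and $\omega_2'|_{\mathcal{S}^2}$ then means there is a unique $\theta_0 \in [0,\pi)$, depending only on $u, v$, such that $\Imm(e^{-i\theta_0}\Omega)|_{\mathcal{S}^2} = 0$. Replacing $\Omega$ by $e^{-i\theta_0}\Omega$ yields another $\SU(2)$-structure with the \emph{same} underlying Riemannian metric and almost complex structure---since the phase of the $(2,0)$-form does not affect these---and now the zero section is calibrated by $\Ree(e^{-i\theta_0}\Omega)$, giving the required special Lagrangian representative of the vanishing cycle. The argument goes through identically on both $M$ and $M_0$.

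I expect no substantial obstacle: the hard computational work has already been carried out in producing the hypersymplectic triple, and the remaining check amounts to evaluating the explicit formulae on $\mathcal{S}^2$ and selecting the phase. The only mild subtlety is that the formula for $\theta_0$ degenerates at fibres where $\sin\alpha = 0$ or $t = 0$; but in these cases $\omega_2|_{\mathcal{S}^2}$ vanishes automatically, so $\theta_0 = 0$ is a valid choice and the construction extends smoothly across the whole family.
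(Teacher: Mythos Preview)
Your approach is correct and is essentially the same strategy as the paper's: build the $\SU(2)$-structure from the hypersymplectic triple, observe $\omega_3$ vanishes on the bolt, and then arrange the phase of the $(2,0)$-form. In fact your normalized $\omega_3' = \omega_3/\sqrt{Q_{33}}$ is exactly the paper's $\omega = \frac{\sqrt{2}}{v}\omega_3$, since $\sqrt{Q_{33}} = v/\sqrt{2}$.

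The difference is one of execution. You normalize all three $\omega_i$ simultaneously by the diagonal entries of $Q$, obtain a standard $\SU(2)$-structure with underlying metric $g_c|_N$, and then fix the phase of $\Omega = \omega_1' + i\omega_2'$ by its restriction to $\Sigma$; the $(2,0)$-type of $\Omega$ is cited as a general fact about orthonormal self-dual triples. The paper instead defines $J$ explicitly from $g_c|_N$ and $\omega$, writes down a particular complex combination of $\omega_1,\omega_2$ (whose norm is \emph{not} everywhere equal to that of $\omega$, only on the bolt), verifies by a direct computation of $JX_3\lrcorner\Omega$ etc.\ that this combination is of type $(2,0)$, and then checks by hand that $\Ree\Omega|_\Sigma = \vol_\Sigma$ and $\Imm\Omega|_\Sigma = 0$. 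Your route is shorter and yields a globally normalized $\SU(2)$-structure; the paper's is heavier but gives explicit formulae for $J$ and $\Omega$ throughout the fibre, which may be of independent use. One small point: whether $\omega_1'+i\omega_2'$ or $\omega_1'-i\omega_2'$ is of type $(2,0)$ depends on orientation conventions, so in writing this out you would need to check the sign once (the paper's calculation shows it is the ``minus'' combination).
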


\begin{proof}
Recall the restriction of the metric $g_c$ to a coassociative fibre $N$ in~\eqref{eq:gc.N} and the hypersymplectic triple on $N$ given in~\eqref{eq:omega1.c}--\eqref{eq:omega3.c}. Let $\Sigma$ be the central 2-sphere in $N$ if $N$ is smooth. Then, by setting $\rho = s = 0$, one sees that
\begin{align*}
 g_c|_{\Sigma}&= \frac{2c\cos^2\alpha+t^2(1+\cos^2\alpha)}{(c+t^2)^\frac{1}{2}}(\sigma_2^2+\sigma_3^2)
 \end{align*}
 and hence
 \begin{align}\label{eq:vol.Sigma}
\vol_{\Sigma}&=\frac{2c\cos^2\alpha+t^2(1+\cos^2\alpha)}{(c+t^2)^\frac{1}{2}}\sigma_2\wedge\sigma_3.
\end{align}
Moreover, we have
\begin{align*}
 \omega_1|_{\Sigma}= 2(c+t^2)^{\frac{1}{4}}\cos\alpha\sigma_2\w\sigma_3,\quad
 \omega_2|_{\Sigma}= -t\sin\alpha\sigma_2\w\sigma_3,\quad
 \omega_3|_{\Sigma}=0.
\end{align*}

Thus, viewing $(N,\omega_3)$ as a symplectic manifold we see that $\Sigma$ is a \emph{Lagrangian} vanishing cycle. 

Recall from~\eqref{eq:omega3.c} that
\begin{equation} \label{eq:omega3-temp}
\omega_3=v(\d\rho\w\sigma_3+\rho\sigma_1\w\sigma_2).
\end{equation}
(Note that $\omega_3$ degenerates at the origin.) Using~\eqref{eq:omega3-temp} and~\eqref{eq:volN.S4} we have
\begin{equation*}
\omega_3\w\omega_3=2 v^2 \rho\d\rho\w\sigma_1\w\sigma_2\w\sigma_3= v^2 \vol_N.
\end{equation*}
Since $\omega_3$ is self-dual, we deduce that $\omega_3$ has length $v$, rather than  2 as one might expect. Hence, we set
$$\omega=\frac{\sqrt{2}}{v} \omega_3=\sqrt{2}(\d\rho\w\sigma_3+\rho\sigma_1\w\sigma_2).$$
Of course, $\Sigma$ is still Lagrangian with respect to $\omega$.

Using the metric $g=g_c|_N$ and $\omega$ we may now define an almost complex structure $J$ in the usual way by
$$g(\cdot , \cdot)=\omega(\cdot ,J\cdot).$$
If we let $X_1,X_2,X_3$ denote the dual vector fields to $\sigma_1,\sigma_2,\sigma_3$, so that $\sigma_i(X_j)=\delta_{ij}$, one can compute that
\begin{align*}
J\frac{\partial}{\partial\rho}&=\frac{t\sin\alpha}{\sqrt{2}(c+s^2+t^2)^{\frac{1}{2}}\cos\alpha}\frac{\partial}{\partial\rho}+\frac{2(c+s^2+t^2)\cos^2\alpha+t^2\sin^2\alpha}{\sqrt{2}(c+s^2+t^2)^{\frac{1}{2}}\cos^2\alpha(2c\cos^2\alpha+(s^2+t^2)(1+\cos^2\alpha))}X_3,\\
JX_3&=-\frac{2c\cos^2\alpha+(s^2+t^2)(1+\cos^2\alpha)}{\sqrt{2}(c+s^2+t^2)^{\frac{1}{2}}}\frac{\partial}{\partial\rho}-\frac{t\sin\alpha}{\sqrt{2}(c+s^2+t^2)^\frac{1}{2}\cos\alpha}X_3,\\
JX_1&=\frac{t\sin\alpha}{\sqrt{2}(c+s^2+t^2)^{\frac{1}{2}}\cos\alpha}X_1+\frac{s}{\sqrt{2}(c+s^2+t^2)^{\frac{1}{2}}\cos\alpha}X_2,\\
JX_2&=-\frac{2(c+s^2+t^2)\cos^2\alpha+t^2\sin^2\alpha}{\sqrt{2}s(c+s^2+t^2)^{\frac{1}{2}}\cos\alpha}X_1-\frac{t\sin\alpha}{\sqrt{2}(c+s^2+t^2)^\frac{1}{2}\cos\alpha}X_2.
\end{align*}

We observe that if we define
\begin{equation} \label{eq:ReeImm}
\begin{aligned}
\Ree\Omega&=(c+s^2+t^2)^{\frac{1}{4}}\cos\alpha\omega_1-\frac{t\sin\alpha}{(c+s^2+t^2)^{\frac{1}{2}}} \omega_2,\\
\Imm\Omega&=-\frac{1}{\sqrt{2}}\frac{t\sin\alpha}{(c+s^2+t^2)^{\frac{1}{4}}}\omega_1-\sqrt{2}\cos\alpha\omega_2,
\end{aligned}
\end{equation}
then it is clear from~\eqref{eq:Q.matrix} that
\begin{align*}
\Ree\Omega\w\omega=0=\Imm\Omega\w\omega. 
\end{align*}

Recall that we are working in our coordinate patch $U$, where $\alpha \in (0, \frac{\pi}{2})$, and the bolts are given by $s=0$. Noting that
\begin{equation*}
\frac{2(c+s^2+t^2)\cos^2\alpha+t^2\sin^2\alpha}{2c\cos^2\alpha+(s^2+t^2)(1+\cos^2\alpha)} = 1 - \frac{s^2 \sin^2 \alpha}{2c\cos^2\alpha+(s^2+t^2)(1+\cos^2\alpha)} \leq 1 \quad \text{with equality iff $s = 0$}, 
\end{equation*}
we can explicitly compute, again from~\eqref{eq:Q.matrix}, that
\begin{align*}
\Ree\Omega\w\Ree\Omega&=2\left(\frac{2(c+s^2+t^2)\cos^2\alpha+t^2\sin^2\alpha}{2c\cos^2\alpha+(s^2+t^2)(1+\cos^2\alpha)}\right)\vol_N\leq 2\vol_N,\\
\Imm\Omega\w\Imm\Omega&=2\left(\frac{2(c+s^2+t^2)\cos^2\alpha+t^2\sin^2\alpha}{2c\cos^2\alpha+(s^2+t^2)(1+\cos^2\alpha)}\right)\vol_N\leq 2\vol_N,\\
\Ree\Omega\w\Imm\Omega&=2\sin\alpha\cos\alpha\bigg(\frac{\sqrt{2}t(c+s^2+t^2)^{\frac{1}{2}}-\sqrt{2}t(c+s^2+t^2)^{\frac{1}{2}}}{2c\cos^2\alpha+(s^2+t^2)(1+\cos^2\alpha)}\bigg)\vol_N=0,
\end{align*}
with equality for the first two inequalities if and only if we are on the bolt $s=0$.

Because $\Ree\Omega$ and $\Imm\Omega$ are self-dual and orthogonal to $\omega$, they span the forms of type $(2,0)+(0,2)$ with respect to $J$ (and the same was true of $\omega_1$ and $\omega_2$). Moreover, we have shown that the norms of $\Ree\Omega$ and $\Imm\Omega$ are less than that of $\omega$.

We claim that $\Omega=\Ree\Omega+i\Imm\Omega$ is of type $(2,0)$. To see this, we first compute that
\begin{align*}
X_3 \lrcorner\omega_1&=-2(c+s^2+t^2)^{\frac{1}{4}}\cos\alpha\sigma_2,\\
(JX_3) \lrcorner\omega_1&=-\frac{\sqrt{2} s}{(c+s^2+t^2)^{\frac{1}{4}}}\sigma_1+\frac{\sqrt{2} t\sin\alpha}{(c+s^2+t^2)^{\frac{1}{4}}}\sigma_2,\\
X_3 \lrcorner\omega_2&=-s\sigma_1+t\sin\alpha\sigma_2,\\
(JX_3) \lrcorner\omega_2&=\sqrt{2}(c+s^2+t^2)^{\frac{1}{2}}\cos\alpha\sigma_2.
\end{align*}
Using the above we obtain
\begin{align*}
X_3\lrcorner \big((c+s^2+t^2)^{\frac{1}{4}}\omega_1-i\sqrt{2}\omega_2\big)&=
-2(c+s^2+t^2)^{\frac{1}{2}}\cos\alpha\sigma_2-i(-\sqrt{2}s\sigma_1+\sqrt{2}t\sin\alpha\sigma_2),\\
JX_3\lrcorner \big((c+s^2+t^2)^{\frac{1}{4}}\omega_1-i\sqrt{2}\omega_2\big)&=-\sqrt{2} s\sigma_1+\sqrt{2} t\sin\alpha\sigma_2-i 2(c+s^2+t^2)^{\frac{1}{2}}\cos\alpha\sigma_2\\
&=iX_3\lrcorner \big((c+s^2+t^2)^{\frac{1}{4}}\omega_1-i\sqrt{2}\omega_2\big).
\end{align*}
From this and similar calculations we deduce that $(c+s^2+t^2)^{\frac{1}{4}}\omega_1-i\sqrt{2}\omega_2$ is of type $(2,0)$ and hence we conclude that
$$\Omega=\Ree\Omega+i\Imm\Omega=\bigg(\cos\alpha-i\frac{t\sin\alpha}{\sqrt{2} (c+s^2+t^2)^{\frac{1}{2}}} \bigg)\big((c+s^2+t^2)^{\frac{1}{4}}\omega_1-i\sqrt{2}\omega_2\big)$$
is also of type $(2,0)$ as claimed.

Now we can easily compute from~\eqref{eq:ReeImm} that when $s=0$, the term involving $\sigma_2\w\sigma_3$ in $\Ree\Omega$ is
$$\bigg(2(c+t^2)^{\frac{1}{2}}\cos^2\alpha+\frac{t^2\sin^2\alpha}{(c+t^2)^{\frac{1}{2}}}\bigg)\sigma_2\w\sigma_3=\frac{2 c\cos^2\alpha+t^2(1+\cos^2\alpha)}{(c+t^2)^{\frac{1}{2}}}\sigma_2\w\sigma_3,$$
and that there is no term involving $\sigma_2\w\sigma_3$ in $\Imm\Omega$ when $s=0$. We deduce from~\eqref{eq:vol.Sigma} that $\Sigma$ is calibrated by $\Ree\Omega$ and hence can be thought of as ``\emph{special Lagrangian}''. Although we emphasize that $\Omega$ is \emph{not} closed and so $N$ with this $\SU(2)$-structure $(\omega,\Omega)$ is \emph{not} Calabi--Yau.

In conclusion, the vanishing cycles can be naturally viewed as \emph{special Lagrangian}.
\end{proof}

\subsubsection{Associative thimbles}

In the standard Lefschetz fibration of $\mathbb{C}^3$, one has paths of vanishing cycles terminating in a singularity. Such paths are called \emph{thimbles}. For example, if we take a path in the base of the fibration consisting of a straight line from a real point (say $1$) in $\mathbb{C}$ to the origin, then the corresponding vanishing cycles are simply given by the intersection of the real $\mathbb{R}^3$ in $\mathbb{C}^3$ with the smooth fibres, and the thimble is just a ball in a real $\mathbb{R}^3$. Hence, this thimble is special Lagrangian.

In a similar way, suppose we are in the cone $M_0$ and we take a path in the base of the fibration where $v=0$, which is when $\alpha=0$. Then $u=t$ and, when $t>0$, recall that the term in $\varphi_0$ of the form $\lambda_0\wedge\sigma_2\wedge\sigma_3$ at $s=0$ (since we consider paths of vanishing cycles) has $\lambda_0$ given by~\eqref{eq:lambda.c} as:
$$\lambda_0=2|t|^\frac{1}{2}\d t.$$
If we therefore take the straight line path of vanishing cycles from say $(u,v)=(1,0)$ to the origin in the base of the fibration (recall that the origin corresponds to the singular fibre), then we obtain an \emph{associative thimble} in the Bryant--Salamon cone $M_0$.

More generally, the term in $\varphi_c$ of the form $\lambda_c\wedge\sigma_2\wedge\sigma_3$ at $s=0$ is given by~\eqref{eq:lambda.c} as
\[
\lambda_c=\frac{2}{3}\d\big(t(c+t^2)^{\frac{1}{4}}(3\cos^2\alpha-1)+l_c(t)\big)=\d h_c,
\]
where $l_c$ is determined by~\eqref{eq:l.c}. Thus, the critical points of   $h_c$ correspond to the singular fibres in the fibration. Hence, if one considers gradient flow lines of $h_c$ starting from a point $(t,\alpha)$ in the base of the fibration (equivalently, $(u,v)$ when $s=0$) corresponding to a smooth fibre and ending at a critical point of $h_c$, the union of vanishing cycles over this gradient flow line will be an associative thimble. 

\section{Anti-self-dual 2-form bundle of \texorpdfstring{$\mathbb{CP}$\textsuperscript{2}}{CP2}} \label{sec:ASDCP2}

In this section we consider $M^7=\Lambda^2_-(T^*\mathbb{CP}^2)$ with the Bryant--Salamon torsion-free $\GG_2$-structure $\varphi_c$ from~\eqref{eq:phic}, and let $M_0=\R^+\times (\SU(3)/T^2)$ be its asymptotic cone, with the conical $\GG_2$-structure $\varphi_0$. We describe both $M$ and $M_0$ as coassociative fibrations over a 3-dimensional base. (Unlike in Theorem~\ref{thm:S4.fib}, in this case we cannot describe the base more explicitly.) Specifically we prove the following result.

\begin{thm}\label{thm:CP2.fib}
Let $M=\Lambda^2_-(T^*\C\P^2)$, let $M_0=\R^+\times(\SU(3)/T^2)=M\setminus\C\P^2$ and recall the Bryant--Salamon $\GG_2$-structures $\varphi_c$ given in Theorem~\ref{thm:BS.S4CP2} for $c\geq 0$.
 
For every $c\geq 0$, there is a 3-parameter family of $\SU(2)$-invariant coassociative 4-folds which, together with the zero section $\C\P^2$ if $c>0$, forms a fibration of $(M,\varphi_c)$ if $c>0$ or $(M_0,\varphi_0)$ if $c=0$, in the sense of Definition~\ref{dfn:fibration}.
 
\begin{itemize}
\item[\emph{(a)}] The generic fibre in the fibration is smooth and diffeomorphic to $\mathcal{O}_{\C\P^1}(-1)$. Each $\mathcal{O}_{\C\P^1}(-1)$ fibre generically intersects a 2-parameter family of other $\mathcal{O}_{\C\P^1}(-1)$ fibres in the $\C\P^1$ zero section.
\item[\emph{(b)}] The other fibres in the fibration (other than the zero section $\C\P^2$ if $c>0$) form a codimension 1 subfamily and are each diffeomorphic to $\R^+\times\mathcal{S}^3$. Moreover, these $\R^+\times\mathcal{S}^3$ fibres do not intersect any other fibres.
\end{itemize}
\end{thm}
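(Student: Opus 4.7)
The plan is to parallel the structure of Section~\ref{sec:ASDS4} for $\Lambda^2_-(T^*\mathcal{S}^4)$, adapting it to the $\SU(2)$ action on $\Lambda^2_-(T^*\C\P^2)$ coming from the embedding $\SU(2)\times\{1\}\subseteq\SU(\C^2\oplus\C)$. First I would introduce local coordinates on $\C\P^2$ adapted to this $\SU(2)$ action, analogous to the $(\alpha,\beta,\theta,\phi)$ coordinates used for $\mathcal{S}^4$: the $\SU(2)$-fixed locus consists of the isolated point $[0{:}0{:}1]$ together with the $\C\P^1$ given by $\{[z_1{:}z_2{:}0]\}$, with a cohomogeneity-one foliation of the complement by $3$-dimensional orbits. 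In these coordinates I would write down a positively oriented orthonormal coframe $\{b_0,b_1,b_2,b_3\}$ for the Fubini--Study metric, extract the corresponding anti-self-dual basis $\{\Omega_1,\Omega_2,\Omega_3\}$ as in~\eqref{eq:big-Omega-N}, derive the induced connection 1-forms $\rho_i$ from~\eqref{eq:d-big-Omega-N}, and form the vertical 1-forms $\zeta_i$ via~\eqref{eq:zetas-N}. With the orbit-adapted left-invariant coframe $\sigma_1,\sigma_2,\sigma_3$ on generic orbits in hand, I would rewrite $\varphi_c$ in the form analogous to~\eqref{eq:phi.c.SO3}.

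Next, following the strategy of Proposition~\ref{prop:smooth.coass.S4}, I would consider a $1$-parameter family of $\SU(2)$-orbits making up a putative coassociative $4$-fold $N$ and impose $\varphi_c|_N=0$. Inspecting the coefficients of the $\d\tau\wedge\sigma_i\wedge\sigma_j$ terms should reduce the coassociative condition to a system of ODEs for the transverse parameters, which I expect to admit two independent first integrals (the analogues of $u$ and $v$); together with the orbit parameter on the $\SU(2)$-fixed direction this accounts for the advertised three parameters. I would then project to a $3$-dimensional base and show that the map is a coassociative fibration in the sense of Definition~\ref{dfn:fibration}.

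The third step is to analyse the topology of a given fibre $N$ as a union of $\SU(2)$-orbits along the curve in parameter space cut out by fixing the first integrals. Generically the curve has an endpoint at which the orbit degenerates to a $2$-sphere, namely the $\SU(2)$-fixed $\C\P^1\subset\C\P^2$ lifted into the fibre of $\Lambda^2_-$; following the topological criterion at the end of \S\ref{introduction}, I would show that $N$ minus this bolt is diffeomorphic to $\R^+\times\mathcal{S}^3$, identifying $N$ with $\mathcal{O}_{\C\P^1}(-1)$. In a codimension-$1$ subfamily the curve never reaches a bolt, producing fibres diffeomorphic to $\R^+\times\mathcal{S}^3$. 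For the intersection claim I would observe that all smooth $\mathcal{O}_{\C\P^1}(-1)$ fibres share the same $\C\P^1$ bolt in $\Lambda^2_-(T^*\C\P^2)$, and that varying the two first integrals transversely to the fixed-point locus produces the $2$-parameter family of fibres intersecting along this bolt, while the $\R^+\times\mathcal{S}^3$ fibres, having no bolt, are pairwise disjoint and disjoint from the $\mathcal{O}_{\C\P^1}(-1)$ fibres.

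The main obstacle will be the second step: guessing and verifying the correct first integrals of the coassociative ODE system. Unlike the $\mathcal{S}^4$ case, where $v^4\propto(c+r^2)\sin^4\alpha$ has a clean interpretation via the commuting $\U(1)$ action and the multi-moment map, here the ambient symmetry group $\SU(2)$ is non-abelian and has no commuting circle factor in $\SU(3)$, so the natural conserved quantities are less transparent and must be extracted by direct integration. A closely related subtlety, which I expect to require the most care, is keeping track of the degenerate orbits along the coassociative curves in order to correctly identify when two nearby solutions of the ODEs describe coassociatives that genuinely intersect rather than merely limit to a common point; this is precisely what underlies the weakening of Definition~\ref{dfn:fibration} and the codimension-$4$ estimate for $M\setminus M'$ announced at the end of \S\ref{introduction}.
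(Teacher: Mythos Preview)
Your overall strategy matches the paper's, but two of your expectations are wrong and would block the proof as stated.

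First, you anticipate finding ``two independent first integrals (the analogues of $u$ and $v$)''. In fact only $\beta$ and $v=2(c+r^2)^{1/4}\cos\alpha\cot\gamma$ can be extracted for general $c$; the remaining ODE relating $\alpha$ and $\gamma$ (after eliminating $r$ via $v$) is \emph{not} integrable in closed form when $c>0$. The paper handles this by a qualitative phase-plane analysis of that ODE: locating critical points, identifying the sign-changing curve $\Gamma$ for $\d\gamma/\d\alpha$, and tracking flow lines to determine which critical points they connect. Only for $c=0$ does the third integral $u=(2\cos^2\alpha-\sin^2\alpha\sin^2\gamma)/(\cos^2\alpha\cos\gamma)$ exist explicitly. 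So your plan of ``projecting to a $3$-dimensional base'' via three explicit conserved quantities does not go through for $c>0$; you must replace it by an ODE analysis in the $(\alpha,\gamma)$-plane, together with separate treatments of the degenerate loci $\sin\gamma=0$, $\cos\gamma=0$, $\alpha=\tfrac{\pi}{2}$.

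Second, your claim that ``all smooth $\mathcal{O}_{\C\P^1}(-1)$ fibres share the same $\C\P^1$ bolt'' is false, and your identification of the bolt as ``the $\SU(2)$-fixed $\C\P^1\subset\C\P^2$ lifted into the fibre'' is incomplete. The $\SU(2)$ action has a $1$-parameter family of $\C\P^1$ orbits: spheres of radius $r>0$ in the $\R^3$ fibre over the isolated fixed point $\alpha=0$, as well as orbits over the fixed $\C\P^1$ at $\alpha=\tfrac{\pi}{2}$. Different coassociatives bolt on different $\C\P^1$'s; what actually happens is that for each fixed $\C\P^1$ orbit there is a $2$-parameter family of coassociatives sharing it as their zero section (parametrised by $\beta$ and the angle of approach of the flow line into the relevant critical point). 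This is how the intersection statement in part~(a) arises, and it is also why the ``bad set'' $M\setminus M'$ is a $1$-parameter family of $\mathcal{S}^2$'s, hence $3$-dimensional.
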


\begin{remark} \label{rmk:CP2.fib}
There are \emph{two marked differences} between the $\C\P^2$ case in Theorem~\ref{thm:CP2.fib} and the $\mathcal{S}^4$ case in Theorem~\ref{thm:S4.fib}. First, we do not obtain a ``genuine'' fibration in Theorem~\ref{thm:CP2.fib}, unlike in Theorem~\ref{thm:S4.fib}. Rather it is a ``fibration'' in the looser sense of Definition~\ref{dfn:fibration}. Second, the singular fibres (the $\R^+\times\mathcal{S}^3$ fibres) form a codimension 1 family in the fibration of Theorem~\ref{thm:CP2.fib}, whereas the singular fibres form a codimension 2 (or 3 in the cone case) family in the fibration of Theorem~\ref{thm:S4.fib}. Thus, informally speaking, in the $\C\P^2$ case the singular fibres form a ``wall'' in the fibration; in the flat limit, this wall consists of cones and marks a topological transition of the fibres of the fibration. (See also~\S\ref{sec:flat.limit.SU2}.)
\end{remark}

A more detailed description of the fibration is given in Table~\ref{table:fibration-M} for $M$ and Table~\ref{table:fibration-M0} for $M_0$, in \S\ref{subs:summary}. In that section we also include a discussion of the structure of the subsets $M \setminus M'$ and $M_0 \setminus M_0'$ which contain the points that lie on multiple coassociative ``fibres''.

We also study the induced Riemannian geometry on the coassociative fibres. The fibres in Theorem~\ref{thm:CP2.fib}(b), which we refer to as \emph{singular} fibres, turn out to have conically singular induced Riemannian metrics, including Riemannian cones in some cases. The precise statement is in Proposition~\ref{prop:AC.CS-CP2}. 

\subsection{A coframe on \texorpdfstring{$\mathbb{CP}^2$}{CP2}}

We begin by constructing a suitable coframe on $\C\P^2$.

\subsubsection{Local coordinates}

Denote an element of $\C^*$ by $s e^{i \eta}$ where $s>0$. Consider $\mathbb{CP}^2$ as the quotient of $\mathbb{C}^3\setminus\{0\}$ by the standard diagonal $\C^*$-action given by
\[
s e^{i\eta}\cdot(z_1,z_2,z_3)=(se^{i\eta}z_1,se^{i\eta}z_2,se^{i\eta}z_3).
\]
We endow $\mathbb{CP}^2$ with the quotient metric, i.e.~the Fubini--Study metric. Choose a complex 2-dimensional linear subspace $P$ in $\C^3$. Our construction depends on this choice, which breaks the $\SU(3)$ symmetry of $\C\P^2$. Choose coordinates on 
 $\C^3$ so that $P\cong\C^2=\{z_3=0\}$ and $P^{\perp}\cong\C=\{z_1=z_2=0\}$. 

With respect to the splitting $\C^3=P\oplus P^{\perp}$ we can write points $(z_1,z_2,z_3)\in\C^3\setminus\{0\}$ uniquely as
\[
(z_1,z_2,z_3)=(sw_1,sw_2,sw_3)
\]
for $s>0$ and $(w_1,w_2,w_3)\in\mathcal{S}^5\subseteq\C^3$. Since $|w_1|^2+|w_2|^2+|w_3|^2=1$, there exists a unique $\alpha\in [0,\frac{\pi}{2}]$ such that
\[
(w_1,w_2)=\sin\alpha(u_1,u_2)\quad\text{and}\quad w_3=\cos\alpha v_3
\]
where $|u_1|^2+|u_2|^2=1$ and $|v_3|=1$. Similarly, because $|u_1|^2+|u_2|^2=1$, there exists a unique $\theta\in [0,\pi]$ such that 
\[
u_1=\cos(\textstyle\frac{\theta}{2})v_1\quad\text{and}\quad u_2=\sin(\frac{\theta}{2})v_2
\]
where $|v_1|=|v_2|=1$. Overall, we have that 
\begin{equation} \label{eq:CP2-coords}
(z_1,z_2,z_3)=(s\sin\alpha\cos(\textstyle\frac{\theta}{2})v_1,s\sin\alpha\sin(\frac{\theta}{2})v_2,s\cos\alpha v_3)
\end{equation}
for unique $s>0$, $\alpha\in [0,\frac{\pi}{2}]$, $\theta\in [0,\pi]$ and $|v_1|=|v_2|=|v_3|=1$.

We can also write
\[
v_1=e^{i(\eta+\frac{1}{2}(\psi+\phi))},\quad v_2=e^{i(\eta+\frac{1}{2}(\psi-\phi))},\quad v_3=e^{i\eta},
\]
for some $\eta,\phi,\psi\in [0,2\pi)$. 

Taking the quotient by the $s e^{i \eta}$ action, we conclude that we have local coordinates $(\alpha,\theta,\phi,\psi)$ on $\C\P^2$ which are well-defined for $\alpha\in(0,\frac{\pi}{2})$ and $\theta\in (0,\pi)$, so that we can determine the unit vectors $v_1,v_2,v_3$. From~\eqref{eq:CP2-coords} we can see that this coordinate patch $U$ is given by the complement of a point (corresponding to $\alpha=0$) and the union of three $\C\P^1$'s (corresponding to $\alpha=\frac{\pi}{2}$, $\theta=0$, and $\theta=\pi$, respectively).

\subsubsection{Fubini--Study metric}

In the coordinates defined above, we compute the Euclidean metric on $\C^3$ as follows:
\begin{align*}
\d z_1\d \overline{z}_1+\d z_2\d\overline{z}_2+\d z_3\d\overline{z}_3&=
\left(\sin\alpha\cos(\textstyle\frac{\theta}{2}) \d s + s\cos\alpha \cos(\frac{\theta}{2}) \d \alpha
-\frac{s}{2}\sin\alpha\sin(\frac{\theta}{2})\d\theta\right)^2\\
&\quad {}+s^2\sin^2\alpha\cos^2(\textstyle\frac{\theta}{2})(\d\eta+\frac{1}{2}(\d\psi+\d\phi))^2\\
&\quad {}+\left(\sin\alpha\sin(\textstyle\frac{\theta}{2}) \d s +s\cos\alpha \sin(\frac{\theta}{2}) \d \alpha 
+\frac{s}{2}\sin\alpha\cos(\frac{\theta}{2})\d\theta\right)^2\\
&\quad {}+s^2\sin^2\alpha\sin^2(\textstyle\frac{\theta}{2})(\d\eta+\frac{1}{2}(\d\psi-\d\phi))^2\\
&\quad {}+ \left(\cos\alpha \d s - s\sin\alpha\d\alpha\right)^2+s^2\cos^2\alpha\d\eta^2.
\end{align*}
The above simplifies as
\begin{align*}
\d z_1\d \overline{z}_1+\d z_2\d\overline{z}_2+\d z_3\d\overline{z}_3&=
\d s^2+s^2\d\eta^2+s^2\sin^2\alpha\d\eta(\d\psi+\cos\theta\d\phi)\\
&\quad{}+s^2\d\alpha^2+\textstyle\frac{s^2}{4}\sin^2\alpha(\d\psi +\cos\theta\d\phi)^2+\frac{s^2}{4}\sin^2\alpha(\d\theta^2+\sin^2\theta\d\phi^2)\\
&=\d s^2+s^2\big(\d\eta+\textstyle\frac{1}{2}\sin^2\alpha(\d\psi+\cos\theta\d\phi)\big)^2+s^2\d\alpha^2\\
&\quad{}+\textstyle\frac{s^2}{4}\sin^2\alpha\cos^2\alpha(\d\psi+\cos\theta\d\phi)^2+\frac{s^2}{4}\sin^2\alpha(\d\theta^2+\sin^2\theta\d\phi^2).
\end{align*}
From the above formula, we see that the horizontal space for the fibration of $\C^3\setminus\{0\}$ over $\C\P^2$ is spanned by $\d\alpha$, $\d\psi+\cos\theta\d\phi$, $\d\theta$, and $\d\phi$. Thus, we obtain the Fubini--Study metric by setting $s$ to be a constant and the connection 1-form $\d\eta + \frac{1}{2} \sin^2 \alpha (\d\psi + \cos \theta \d \phi)$ to zero. In order to obtain scalar curvature 12, which we verify below, we must take $s = \sqrt{2}$. Therefore this particular scale Fubini--Study metric on $\C\P^2$ is expressed in these coordinates as
\begin{equation} \label{eq:g.FS}
g_{FS} = 2\d\alpha^2+\frac{1}{2}\sin^2\alpha\cos^2\alpha(\d\psi+\cos\theta\d\phi)^2+\frac{1}{2}\sin^2\alpha(\d\theta^2+\sin^2\theta\d\phi^2).
\end{equation}
From the above expression for $g_{FS}$ one can verify that at $\alpha=0$ we get a point and at each of $\alpha=\frac{\pi}{2}$, $\theta = 0$, and $\theta = \pi$ we get a $\C\P^1$ with radius $\frac{1}{\sqrt{2}}$.

\subsubsection{SU(2)-invariant coframe}

If we now define
\begin{equation}\label{eq:sigmas-CP2}
\sigma_1=\d\psi+\cos\theta\d\phi,\quad \sigma_2=\cos\psi\d\theta+\sin\psi\sin\theta\d\phi,\quad \sigma_3=\sin\psi\d\theta-\cos\psi\sin\theta\d\phi,
\end{equation}
we see from~\eqref{eq:g.FS} that on $U\subseteq\C\P^2$, the following form an orthonormal coframe:
\begin{equation}\label{eq:coframe.CP2}
b_0=\sqrt{2}\d\alpha,\quad b_1=\tfrac{1}{\sqrt{2}}\sin\alpha\cos\alpha\sigma_1,\quad b_2=\tfrac{1}{\sqrt{2}}\sin\alpha\sigma_2,\quad b_3=\tfrac{1}{\sqrt{2}}\sin\alpha\sigma_3.
\end{equation}
Note that $\sigma_1,\sigma_2,\sigma_3$ define the standard left-invariant coframe on $\SU(2)\cong\mathcal{S}^3$ with coordinates $\theta,\phi,\psi$, and that we have
\[
\d\left(\begin{array}{c}\sigma_1\\ \sigma_2\\ \sigma_3\end{array}\right)
=\left(\begin{array}{c}\sigma_2\w\sigma_3\\ \sigma_3\w\sigma_1\\ \sigma_1\w\sigma_2\end{array}\right).
\]

We observe also from~\eqref{eq:coframe.CP2} that 
\begin{align*}
\d(b_0\w b_1+b_2\w b_3)&=\d(\sin\alpha\cos\alpha \d\alpha\w\sigma_1+\tfrac{1}{2}\sin^2\alpha\sigma_2\w\sigma_3)\\
&=-\sin\alpha\cos\alpha\d\alpha\w\sigma_2\w\sigma_3+\sin\alpha\cos\alpha\d\alpha\w\sigma_2\w\sigma_3=0,
\end{align*}
which says that $b_0\w b_1+b_2\w b_3$ is a closed 2-form and thus must be cohomologous to a constant multiple of the K\"ahler form. Since the K\"ahler form is self-dual with the standard orientation on $\C\P^2$, we see that $\{b_0,b_1,b_2,b_3\}$ is a positively oriented basis, and so $b_0\w b_1+b_2\w b_3$ is a multiple of the K\"ahler form.

We therefore see by~\eqref{eq:volN} and~\eqref{eq:coframe.CP2} that the volume form on $\C\P^2$ is:
\begin{align*}
\vol_{\C\P^2}&=b_0\w b_1\w b_2\w b_3=\frac{1}{2}\sin^3\alpha\cos\alpha\d\alpha\w\sigma_1\w\sigma_2\w\sigma_3 = -\frac{1}{2}\sin^3\alpha\cos\alpha\sin\theta\d\alpha\w\d\psi\w\d\theta\w\d\phi.
\end{align*}

\subsection{Induced connection and vertical 1-forms}

We now define a basis $\{\Omega_1,\Omega_2,\Omega_3\}$ for the anti-self-dual 2-forms on $\C\P^2$ on our coordinate patch $U$ from the previous section as in~\eqref{eq:big-Omega-N} using~\eqref{eq:coframe.CP2}:
\begin{equation}\label{eq:big-Omegas-CP2}
\begin{aligned}
\Omega_1&
=\sin\alpha\cos\alpha\d\alpha\w\sigma_1-\textstyle\frac{1}{2}\sin^2\alpha\sigma_2\w\sigma_3,\\
\Omega_2&
=\sin\alpha\d\alpha\w\sigma_2-\textstyle\frac{1}{2}\sin^2\alpha\cos\alpha\sigma_3\w\sigma_1,\\
\Omega_3&
=\sin\alpha\d\alpha\w\sigma_3-\textstyle\frac{1}{2}\sin^2\alpha\cos\alpha\sigma_1\w\sigma_2.
\end{aligned}
\end{equation}
It is easy to compute that
\begin{align*}
\d\Omega_1 &=-2\sin\alpha\cos\alpha\d\alpha\w\sigma_2\w\sigma_3,\\
\d\Omega_2 &=-\textstyle\frac{1}{2}(1+3\cos^2\alpha)\sin\alpha\d\alpha\w\sigma_3\w\sigma_1,\\
\d\Omega_3&=-\textstyle\frac{1}{2}(1+3\cos^2\alpha)\sin\alpha\d\alpha\w\sigma_1\w\sigma_2.
\end{align*}
It follows that~\eqref{eq:d-big-Omega-N} is satisfied where the induced connection 1-forms are
\begin{align}\label{eq:rhos-CP2}
 2\rho_1= -\tfrac{1}{2}(1+\cos^2\alpha)\sigma_1,\quad
 2\rho_2= -\cos\alpha\sigma_2,\quad
 2\rho_3=-\cos\alpha\sigma_3 .
\end{align}
One can then check that~\eqref{eq:d-rho-N} is satisfied with $\kappa=1$, so that the Fubini--Study metric on $\C\P^2$ we have chosen is indeed self-dual Einstein with scalar curvature $12$.

Let $(a_1,a_2,a_3)$ be linear coordinates on the fibres of $M$ with respect to the local basis $\{\Omega_1,\Omega_2,\Omega_3\}$. Then by~\eqref{eq:zetas-N} and~\eqref{eq:rhos-CP2}, the vertical 1-forms $\zeta_1, \zeta_2, \zeta_3$ for the induced connection over $U$ are given by
\begin{equation} \label{eq:zetas-CP2}
\begin{aligned}
\zeta_1&
=\d a_1+a_2\cos\alpha\sigma_3-a_3\cos\alpha\sigma_2,\\
\zeta_2&
=\d a_2+\textstyle\frac{1}{2}a_3(1+\cos^2\alpha)\sigma_1-a_1\cos\alpha\sigma_3,\\
\zeta_3&
=\d a_3+a_1\cos\alpha\sigma_2-\textstyle\frac{1}{2}a_2(1+\cos^2\alpha)\sigma_1.
\end{aligned}
\end{equation}

\subsection{SU(2) action}

Recall that we have split $\C^3=P\oplus P^{\perp}$ for a complex 2-dimensional linear subspace $P$ of $\C^3$. We may therefore define an action of $\SU(2)$ on $\C^3$, contained in $\SU(3)$, by having $\SU(2)$ act in the usual way on $P\cong \C^2$ and trivially on $P^{\perp}$. This action descends to $\C\P^2$ and, in fact, the 1-forms $\sigma_1,\sigma_2,\sigma_3$ given in~\eqref{eq:sigmas-CP2} are the left-invariant 1-forms for this $\SU(2)$ action on $\C\P^2$.

This $\SU(2)$ action lifts to $M$, and we see that $\Omega_1,\Omega_2,\Omega_3$ in~\eqref{eq:big-Omegas-CP2} are left-invariant, since these are defined using the left-invariant forms $\sigma_1, \sigma_2, \sigma_3$ and the invariant form $\d\alpha$. Thus, we see that $a_1$, $a_2$, and $a_3$ are all $\SU(2)$-invariant functions on the fibres of $M$ over $U\subseteq\C\P^2$. Define $r \geq 0$ by
$$r^2 = a_1^2 + a_2^2 + a_3^2.$$

\paragraph{Orbits.} Note that this $\SU(2)$ action is \emph{free} on the chart $U\subseteq \C\P^2$, as it corresponds to the usual left multiplication of $\SU(2) \cong \Sp(1)$ on non-zero quaternions. We can therefore describe the orbits of the $\SU(2)$-action on $M$ as follows.
\begin{itemize}
\item If $\alpha\in(0,\frac{\pi}{2})$ then since the $\SU(2)$ action is free on $U$, the orbit is $\mathcal{S}^3\cong\SU(2)$.
\item If $\alpha=0$ and $r=0$, then we are at a fixed point of the $\SU(2)$ action, so the orbit is a point.
\item If $\alpha=0$ and $r>0$, then we have the full $\SU(2)$ acting on the fibre, so the orbit is an $\mathcal{S}^2 \cong \C\P^1$. 
\item If $\alpha=\frac{\pi}{2}$ and $r=0$ then we are on a $\C\P^1$ in $\C\P^2$ which is acted on transitively by the $\SU(2)$ action, so the orbit is $\C\P^1\cong\mathcal{S}^2$.
\item If $\alpha=\frac{\pi}{2}$, $r>0$, and $a_2^2+a_3^2=0$, then the orbit is just $\C\P^1=\mathcal{S}^2$ as in the previous item.
\item If $\alpha=\frac{\pi}{2}$, $r>0$, and $a_2^2+a_3^2>0$, then the orbit is an $\mathcal{S}^1$-bundle over $\C\P^1$, which one sees is just $\mathcal{S}^3\cong\SU(2)$ because there is no stabilizer for the $\SU(2)$ action in this case.
\end{itemize}

We summarise all the above observations in the following lemma.

\begin{lem}
The orbits of the $\SU(2)$ action are given in Table~\ref{table:SU2.orbits}.
\begin{table}[H]
\begin{center}
{\setlength{\extrarowheight}{4pt}
\begin{tabular}{|c|c|c|c|}
\hline
$\alpha$ & $r$ & $\sqrt{a_2^2+a_3^2}$ & Orbit \\[2pt]
\hline
$\in(0,\frac{\pi}{2})$ & $\geq 0$ & $\geq 0$ & $\mathcal{S}^3$\\[2pt]
\hline
$0$ & $>0$ & $\geq 0$ & $\C\P^1$ \\[2pt]
\hline
$0$ & $0$ & $0$ & Point \\[2pt]
\hline
$\frac{\pi}{2}$ & $>0$ & $>0$ & $\mathcal{S}^3$\\[2pt]
\hline
$\frac{\pi}{2}$ & $>0$ & $0$ & $\C\P^1$\\[2pt]
\hline
$\frac{\pi}{2}$ & $0$ & $0$ & $\C\P^1$ \\[2pt]
\hline
\end{tabular}}
\end{center}
\caption{$\SU(2)$ orbits}\label{table:SU2.orbits}
\end{table}
\end{lem}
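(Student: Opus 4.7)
The plan is to proceed case-by-case by first determining the $\SU(2)$-orbit structure on the base $\C\P^2$ and then lifting the analysis to $M = \Lambda^2_-(T^*\C\P^2)$ via the induced action on each fiber of the bundle.

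First I would verify that on $\C\P^2$ itself the action has exactly three orbit types: the fixed point $[0{:}0{:}1]$ (corresponding to $\alpha = 0$); a single orbit equal to $\{z_3 = 0\} = \C\P^1$ (corresponding to $\alpha = \pi/2$) on which the stabilizer is the standard maximal torus $\U(1) \subset \SU(2)$; and free $\mathcal{S}^3$-orbits on the complement. The first two claims are immediate from the definition. Freeness on the complement follows because $\SU(2)$ acts freely on $\C^2 \setminus \{0\}$, and if $g \in \SU(2)$ fixes the class $[z_1{:}z_2{:}1]$ then $g(z_1,z_2) = \lambda(z_1,z_2)$ and $1 = \lambda$, forcing $g=\mathrm{id}$.

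Next, I would promote each base orbit to an orbit in $M$. On the generic locus $\alpha \in (0, \pi/2)$, the base action is free, so the action on the total space is free as well, and the orbits are $\mathcal{S}^3$ (the first row of Table~\ref{table:SU2.orbits}). Over the fixed point $[0{:}0{:}1]$, the $\SU(2)$-action restricts to the linear action on $\Lambda^2_-(T^*_{[0{:}0{:}1]}\C\P^2) \cong \R^3$, which factors through the double cover $\SU(2) \to \SO(3)$; the orbits are therefore the origin $\{0\}$ (when $r = 0$) or a round $\mathcal{S}^2 \cong \C\P^1$ (when $r > 0$), which handles rows 2 and 3.

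The delicate case, and the main obstacle, is $\alpha = \pi/2$. Here, the stabilizer $\U(1) = \{\mathrm{diag}(e^{i\vartheta}, e^{-i\vartheta})\}$ of a base point on the $\C\P^1$ rotates the tangent direction to $\C\P^1$ and the normal direction in $\C\P^2$ at different rates ($2\vartheta$ and $\vartheta$, respectively, from the standard weight calculation on the fundamental $\SU(2)$-representation). I would carry out a short linear-algebra calculation for the induced $\SO(4)$-action on anti-self-dual $2$-forms, showing that in the basis determined by~\eqref{eq:big-Omegas-CP2} this $\U(1)$ fixes $\Omega_1$ while rotating $(\Omega_2, \Omega_3)$ as the standard $\SO(2)$; equivalently, $a_1$ is $\U(1)$-invariant while $\U(1)$ rotates $(a_2, a_3)$. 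Once this is established, the fiber stabilizer at a point with $\alpha = \pi/2$ is all of $\U(1)$ precisely when $\sqrt{a_2^2 + a_3^2} = 0$, yielding an orbit $\SU(2)/\U(1) \cong \C\P^1$ (covering rows 5 and 6, with the sub-case $r = 0$), and is trivial otherwise, yielding an orbit $\mathcal{S}^3$ (row 4). Assembling the six cases produces Table~\ref{table:SU2.orbits}.
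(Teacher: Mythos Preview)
Your proposal is correct and follows essentially the same case-by-case analysis as the paper: the paper's argument is precisely the bullet list immediately preceding the lemma, which identifies the base orbit type at each value of $\alpha$ and then reads off the lifted orbit in $M$. Your treatment is slightly more explicit in one place: for $\alpha=\tfrac{\pi}{2}$ you compute the weights of the $\U(1)$ stabilizer on $T_p\C\P^2$ and deduce the splitting of $\Lambda^2_-$ into a fixed line and a rotated plane, whereas the paper simply asserts the outcome. One small caveat worth noting is that the local frame $\{\Omega_1,\Omega_2,\Omega_3\}$ of~\eqref{eq:big-Omegas-CP2} is defined on the chart $U$, which excludes $\alpha=\tfrac{\pi}{2}$; so when you say ``in the basis determined by~\eqref{eq:big-Omegas-CP2}'' you are implicitly passing to the limit $\alpha\to\tfrac{\pi}{2}$ (or re-doing the computation in an affine chart about a point of $\C\P^1$), exactly as the paper does implicitly when it speaks of $a_2^2+a_3^2$ there.
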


\subsection{SU(2) adapted coordinates}

Even though the functions $a_1$, $a_2$, $a_3$ are all $\SU(2)$-invariant, we notice that there is an additional $\U(1)$ symmetry in $\Omega_2,\Omega_3$, and hence in $a_2,a_3$. This motivates us to write
\begin{equation}\label{eq:r.gamma.beta}
a_1=r\cos\gamma,\quad a_2=r\sin\gamma\cos\beta,\quad a_3=r\sin\gamma\sin\beta
\end{equation}
for $\gamma\in [0,\pi]$ and $\beta\in [0,2\pi)$. We then have
\begin{equation}\label{eq:dak.CP2}
\begin{aligned}
\d a_1&=\cos\gamma\d r-r\sin\gamma\d\gamma,\\
\d a_2&=\sin\gamma\cos\beta\d r +r\cos\gamma \cos\beta\d\gamma-r\sin\gamma\sin\beta\d\beta,\\
\d a_3&=\sin\gamma\sin\beta\d r+r\cos\gamma\sin\beta\d\gamma+r\sin\gamma\cos\beta\d\beta.
\end{aligned}
\end{equation}

Using~\eqref{eq:zetas-CP2},~\eqref{eq:r.gamma.beta}, and~\eqref{eq:dak.CP2}, a lengthy computation gives
\begin{align*}
\zeta_1\w\zeta_2\w\zeta_3 &= r^2 \sin \gamma \d r \w \d \gamma \w \d \beta - \tfrac{1}{2} r^2 \sin \gamma (1 + \cos^2 \alpha) \d r \w \d \gamma \w \sigma_1 \\
& \quad {} + r^2 \cos \gamma \cos \beta \cos \alpha \d r\w \d \gamma \w \sigma_2 - r^2 \sin \gamma \sin \beta \cos \alpha \d r \w \d \beta \w \sigma_2 \\
& \quad {} + r^2 \cos \gamma \sin \beta \cos \alpha \d r\w \d \gamma \w \sigma_3 + r^2 \sin \gamma \cos \beta \cos \alpha \d r\w \d \beta \w \sigma_3 \\
& \quad {} + r^2 \cos \gamma \cos^2 \alpha \d r \w \sigma_2 \w \sigma_3 + \tfrac{1}{2} r^2 \sin \gamma \cos \beta \cos \alpha (1 + \cos^2 \alpha) \d r \w \sigma_3 \w \sigma_1 \\
& \quad {}+ \tfrac{1}{2} r^2 \sin \gamma \sin \beta \cos \alpha (1 + \cos^2 \alpha) \d r \w \sigma_1 \w \sigma_2.
\end{align*}
Similarly from~\eqref{eq:big-Omegas-CP2} and~\eqref{eq:zetas-CP2} more lengthy computations give
\begin{align*}
\zeta_1\w\Omega_1&=\cos \gamma \sin \alpha \cos \alpha \d r\w \d \alpha \w \sigma_1 - r \sin \gamma \sin \alpha \cos \alpha \d \gamma \w \d \alpha \w \sigma_1 \\
& \quad {} - \tfrac{1}{2} \cos \gamma \sin^2 \alpha \d r \w \sigma_2 \w \sigma_3 + \tfrac{1}{2} r \sin \gamma \sin^2 \alpha \d \gamma \w \sigma_2 \w \sigma_3 \\
& \quad {} - r \sin \gamma \cos \beta \sin \alpha \cos^2 \alpha \d \alpha \w \sigma_3 \w \sigma_1 - r \sin \gamma \sin \beta \sin \alpha \cos^2 \alpha \d \alpha \w \sigma_1 \w \sigma_2,
\\
\zeta_2\w\Omega_2&=\sin \gamma \cos \beta \sin \alpha \d r \w \d \alpha \w \sigma_2 + r \cos \gamma \cos \beta \sin \alpha \d \gamma \w \d \alpha \w \sigma_2 \\
& \quad {} + r \sin \gamma \sin \beta \sin \alpha \d \alpha \w \d \beta \w \sigma_2 - \tfrac{1}{2} \sin \gamma \cos \beta \sin^2 \alpha \cos \alpha \d r \w \sigma_3 \w \sigma_1 \\
& \quad {} - \tfrac{1}{2} r \cos \gamma \cos \beta \sin^2 \alpha \cos \alpha \d \gamma \w \sigma_3 \w \sigma_1 + \tfrac{1}{2} r \sin \gamma \sin \beta \sin^2 \alpha \cos \alpha \d \beta \w \sigma_3 \w \sigma_1 \\
& \quad {} - \tfrac{1}{2} r \sin \gamma \sin \beta \sin \alpha (1 + \cos^2 \alpha) \d \alpha \w \sigma_1 \w \sigma_2 - r \cos \gamma \sin \alpha \cos \alpha \d \alpha \w \sigma_2 \w \sigma_3,
\\
\zeta_3\w\Omega_3&= \sin \gamma \sin \beta \sin \alpha \d r\w \d \alpha \w \sigma_3 + r \cos \gamma \sin \beta \sin \alpha \d \gamma \w \d \alpha \w \sigma_3 \\
& \quad {} - r \sin \gamma \cos \beta \sin \alpha \d \alpha \w \d \beta \w \sigma_3 - \tfrac{1}{2} \sin \gamma \sin \beta \sin^2 \alpha \cos \alpha \d r \w \sigma_1 \w \sigma_2 \\
& \quad {} - \tfrac{1}{2} r \cos \gamma \sin \beta \sin^2 \alpha \cos \alpha \d \gamma \w \sigma_1 \w \sigma _2 - \tfrac{1}{2} r \sin \gamma \cos \beta \sin^2 \alpha \cos \alpha \d \beta \w \sigma_1 \w \sigma_2 \\
& \quad {} - r \cos \gamma \sin \alpha \cos \alpha \d \alpha \w \sigma_2 \w \sigma_3 - \tfrac{1}{2} r \sin \gamma \cos \beta \sin \alpha (1 + \cos^2 \alpha) \d \alpha \w \sigma_3 \w \sigma_1.
\end{align*}
The above expressions can be substituted into~\eqref{eq:phic} to obtain the 3-form
\begin{equation} \label{eq:phic.CP2}
\varphi_c=(c+r^2)^{-\frac{3}{4}}\zeta_1\w\zeta_2\w\zeta_3+2(c+r^2)^{\frac{1}{4}}(\zeta_1\w\Omega_1+\zeta_2\w\Omega_2+\zeta_3\w\Omega_3).
\end{equation}
In fact the expression for $\varphi_c$ simplifies considerably if we write it in terms of a $\beta$-rotated coframe
\begin{equation} \label{eq:beta-rotated}
p_1 = \sigma_1, \qquad p_2 = \cos \beta \sigma_2 + \sin \beta \sigma_3, \qquad p_3 = - \sin \beta \sigma_2 + \cos \beta \sigma_3,
\end{equation}
rather than $\sigma_1, \sigma_2, \sigma_3$. With respect to the coframe $\d r, \d \gamma, \d \alpha, \d \beta, p_1, p_2, p_3$ one can compute that
\begin{align}
\zeta_1 \w \zeta_2 \w \zeta_3 & = r^2 \sin \gamma \d r\w \d \gamma \w \d \beta - \tfrac{1}{2} r^2 \sin \gamma (1+\cos^2 \alpha) \d r \w \d \gamma \w p_1 \nonumber \\
& \quad {} + r^2 \cos \gamma \cos \alpha \d r \w \d \gamma \w p_2 + r^2 \sin \gamma \cos \alpha \d r \w \d \beta \w p_3 \nonumber \\
& \quad {} + r^2 \cos \gamma \cos^2 \alpha \d r \w p_2 \w p_3 + \tfrac{1}{2} r^2 \sin \gamma \cos \alpha (1+\cos^2 \alpha) \d r \w p_3 \w p_1 \label{eq:z1z2z3.CP2}
\end{align}
and
\begin{align}
& \zeta_1 \w \Omega_1 + \zeta_2 \w \Omega_2 + \zeta_3 \w \Omega_3 = \nonumber \\
& \qquad \cos \gamma \sin \alpha \cos \alpha \d r \w \d \alpha \w p_1 - r \sin \gamma \sin \alpha \cos \alpha \d \gamma \w \d \alpha \w p_1 \nonumber \\
& \qquad {} + \sin \gamma \sin \alpha \d r \w \d \alpha \w p_2 + r \cos \gamma \sin \alpha \d \gamma \w \d \alpha \w p_2 \nonumber \\
& \qquad {} - r \sin \gamma \sin \alpha \d \alpha \w \d \beta \w p_3 - \tfrac{1}{2} \cos \gamma \sin^2 \alpha \d r \w p_2 \w p_3 \nonumber \\
& \qquad {} + \tfrac{1}{2} r \sin \gamma \sin^2 \alpha \d \gamma \w p_2 \w p_3 - 2 r \cos \gamma \sin \alpha \cos \alpha \d \alpha \w p_2 \w p_3 \nonumber \\
& \qquad {} - \tfrac{1}{2} r \sin \gamma \sin \alpha (1 + 3 \cos^2 \alpha) \d \alpha \w p_3 \w p_1 - \tfrac{1}{2} r \cos \gamma \sin^2 \alpha \cos \alpha \d \gamma \w p_3 \w p_1 \nonumber \\
& \qquad {} - \tfrac{1}{2} \sin \gamma \sin^2 \alpha \cos \alpha \d r \w p_3 \w p_1 - \tfrac{1}{2} r \sin \gamma \sin^2 \alpha \cos \alpha \d \beta \w p_1 \w p_2. \label{eq:zkOk.CP2}
\end{align}

Similarly one can compute that with respect to the coframe $\d r, \d \gamma, \d \alpha, \d \beta, p_1, p_2, p_3$ we have
\begin{equation} \label{eq:volCP2.CP2}
\vol_{\C\P^2} = -\tfrac{1}{2} \Omega_k^2 = \tfrac{1}{2} \sin^3 \alpha \cos \alpha \d \alpha \w p_1 \w p_2 \w p_3
\end{equation}
and
\begin{align}
& \zeta_2 \w \zeta_3 \w \Omega_1 + \zeta_3 \w \zeta_1 \w \Omega_2 + \zeta_1 \w \zeta_2 \w \Omega_3 = \nonumber \\
& \qquad -r \sin^2 \gamma \sin \alpha \cos \alpha \d r \w \d \alpha \w \d \beta \w p_1 - r^2 \sin \gamma \cos \gamma \sin \alpha \cos \alpha \d \gamma \w \d \alpha \w \d \beta \w p_1 \nonumber \\
& \qquad {} + r \sin \gamma \cos \gamma \sin \alpha \d r \w \d \alpha \w \d \beta \w p_2 - r^2 \sin^2 \gamma \sin \alpha \d \gamma \w \d \alpha \w \d \beta \w p_2 \nonumber \\
& \qquad {} + r \sin \alpha \d r \w \d \gamma \w \d \alpha \w p_3 - \tfrac{1}{2} r \sin^2 \gamma \sin^2 \alpha \d r \w \d \beta \w p_2 \w p_3 \nonumber \\
& \qquad {} - \tfrac{1}{2} r^2 \sin \gamma \cos \gamma \sin^2 \alpha \d \gamma \w \d \beta \w p_2 \w p_3 - r^2 \sin^2 \gamma \sin \alpha \cos \alpha \d \alpha \w \d \beta \w p_2 \w p_3 \nonumber \\
& \qquad {} + \tfrac{1}{2} r \sin \gamma \cos \gamma \sin^2 \alpha \cos \alpha \d r \w \d \beta \w p_3 \w p_1 - \tfrac{1}{2} r^2 \sin^2 \gamma \sin^2 \alpha \cos \alpha \d \gamma \w \d \beta \w p_3 \w p_1 \nonumber \\
& \qquad {} + r^2 \sin \gamma \cos \gamma \sin \alpha \cos^2 \alpha \d \alpha \w \d \beta \w p_3 \w p_1 - \tfrac{1}{2} r \sin^2 \alpha \cos \alpha \d r \w \d \gamma \w p_1 \w p_2 \nonumber \\
& \qquad {} - \tfrac{1}{2} r \sin \gamma \cos \gamma \sin^3 \alpha \d r \w \d \alpha \w p_1 \w p_2 + \tfrac{1}{2} r^2 (2 \cos^2 \alpha + \sin^2 \gamma \sin^2 \alpha) \sin \alpha \d \gamma \w \d \alpha \w p_1 \w p_2 \nonumber \\
& \qquad {} + \tfrac{1}{4} r (4 \cos^2 \alpha + \sin^2 \gamma \sin^2 \alpha) \sin^2 \alpha \d r \w p_1 \w p_2 \w p_3 + \tfrac{1}{4} r^2 \sin \gamma \cos \gamma \sin^4 \alpha \d \gamma \w p_1 \w p_2 \w p_3 \nonumber \\
& \qquad {} + \tfrac{1}{2} r^2 (2 \cos^2 \alpha + \sin^2 \gamma \sin^2 \alpha) \sin \alpha \cos \alpha \d \alpha \w p_1 \w p_2 \w p_3. \label{eq:zizjOk.CP2}
\end{align}
The above expressions can be substituted into~\eqref{eq:psic} to obtain the 4-form
\begin{equation} \label{eq:psic.CP2}
\ast_{\varphi_c}\varphi_c=4(c+r^2)\vol_{N}-2(\zeta_2\w\zeta_3\w\Omega_1+\zeta_3\w\zeta_1\w\Omega_2+\zeta_1\w\zeta_2\w\Omega_3).
\end{equation}

\subsection{SU(2)-invariant coassociative 4-folds}\label{sub:SU2-invariant}

As $\sigma_1$, $\sigma_2$, $\sigma_3$ in~\eqref{eq:sigmas-CP2} are left-invariant 1-forms for the $\SU(2)$ action, $\sigma_1\w\sigma_2\w\sigma_3$ is a volume form on the 3-dimensional $\SU(2)$ orbits. Since $\sigma_1 \w \sigma_2 \w \sigma_3 = p_1 \w p_2 \w p_3$, we can see by inspection of~\eqref{eq:phic.CP2},~\eqref{eq:z1z2z3.CP2}, and~\eqref{eq:zkOk.CP2} that such a term does not appear in $\varphi_c$. Moreover, the rest of the coframe consists of invariant 1-forms. It follows that $\varphi_c$ vanishes on all the $\SU(2)$ orbits and hence, every 3-dimensional $\SU(2)$ orbit is contained in a unique coassociative 4-fold by the Harvey--Lawson existence theorem in~\cite{HarveyLawson}. We now describe these coassociative 4-folds. However, in contrast to the case of $\mathcal{S}^4$, we are unable to describe the generic coassociative 4-folds in this fibration explicitly in terms of conserved quantities, except when we are in the cone setting ($c=0$) of $M_0=\R^+\times(\SU(3)/T^2)$. However, we can still describe the fibration structure and the topology of the fibres.

The free coordinates (that is, the coordinates which are invariant under the $\SU(2)$ action) on our coordinate patch are $r, \gamma, \alpha, \beta$. Therefore any $\SU(2)$-invariant coassociative 4-fold $N$ is defined by a path 
\[
\big(r(\tau), \gamma(\tau), \alpha(\tau), \beta(\tau)\big)
\]
for some real parameter $\tau$. We want to find this path passing through any particular point. The results are stated below in Propositions~\ref{prop:smooth.coass.CP2} and~\ref{prop:cone.coass.CP2}. The proofs follow the two statements.

First consider the smooth case $M = \Lambda^2_- (T^* \C\P^2)$, where $c>0$. In this setting, we have the following partial result.
\begin{prop} \label{prop:smooth.coass.CP2}
Let $N$ be an $\SU(2)$-invariant coassociative 4-fold in $M$ which is \emph{not} the zero section $\C\P^2$. 
\begin{itemize}
\item[\emph{(a)}] If $\cos\alpha\sin\gamma\cos\gamma \not \equiv 0$ on $N$, then the following are constant on $N$:
\begin{equation*}
\beta \, \in [0,2\pi), \qquad v=2(c+r^2)^{\frac{1}{4}}\cos\alpha\cot\gamma \, \in\R.
\end{equation*}
\item[\emph{(b)}] If $\cos\alpha \equiv 0$ on $N$, then the following is constant on $N$:
\begin{equation*}
w_{\alpha=\frac{\pi}{2}}=r\cos\gamma \, \in\R.
\end{equation*}
\end{itemize}
\end{prop}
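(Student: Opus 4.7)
The plan is to mirror the strategy from the proof of Proposition~\ref{prop:smooth.coass.S4}: parametrize an $\SU(2)$-invariant coassociative 4-fold $N$ by a path $\tau \mapsto (r(\tau), \gamma(\tau), \alpha(\tau), \beta(\tau))$ in the $\SU(2)$-quotient, swept out by orbits. Using~\eqref{eq:phic.CP2},~\eqref{eq:z1z2z3.CP2}, and~\eqref{eq:zkOk.CP2} in the adapted coframe $\{\d r, \d\gamma, \d\alpha, \d\beta, p_1, p_2, p_3\}$, the restriction $\varphi_c|_N$ becomes a sum of three coefficients of $\d\tau \w p_i \w p_j$: wedge products containing two horizontal differentials pull back to zero under $\tau \mapsto (r(\tau),\gamma(\tau),\alpha(\tau),\beta(\tau))$, and no $p_1 \w p_2 \w p_3$ term appears in $\varphi_c$ as noted in~\S\ref{sub:SU2-invariant}. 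The coassociative condition $\varphi_c|_N \equiv 0$ is thus equivalent to a system of three ODEs in $(\dot r,\dot\gamma,\dot\alpha,\dot\beta)$.

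First I would extract the $\d\tau \w p_1 \w p_2$ component. Inspection of~\eqref{eq:z1z2z3.CP2} and~\eqref{eq:zkOk.CP2} shows that the only such contribution is from $-\tfrac{1}{2} r \sin\gamma\sin^2\alpha\cos\alpha \, \d\beta \w p_1 \w p_2$ in $2(c+r^2)^{1/4}\sum_k \zeta_k \w \Omega_k$, giving
\begin{equation*}
r(c+r^2)^{1/4}\sin\gamma\sin^2\alpha\cos\alpha \, \dot\beta = 0.
\end{equation*}
Under the hypothesis of part~(a), $\sin\gamma$ and $\cos\alpha$ are nonzero on a dense open subset of $N$; since $\alpha \in (0,\tfrac{\pi}{2})$ in the coordinate patch $U$, one has $\sin\alpha > 0$; and since $N$ is not the zero section, $r \not\equiv 0$ on $N$. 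The prefactor is therefore nonzero on an open dense subset, forcing $\dot\beta \equiv 0$ there, and hence $\beta$ is constant on $N$ by connectedness.

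With $\dot\beta = 0$, the remaining two components yield two ODEs in $(\dot r, \dot\gamma, \dot\alpha)$. After clearing the factor $(c+r^2)^{-3/4}$, the $\d\tau \w p_2 \w p_3$ equation becomes
\begin{equation*}
\cos\gamma\bigl(r^2 \cos^2\alpha - (c+r^2)\sin^2\alpha\bigr)\dot r + (c+r^2)r\sin\gamma\sin^2\alpha\,\dot\gamma - 4(c+r^2)r\cos\gamma\sin\alpha\cos\alpha\,\dot\alpha = 0,
\end{equation*}
and the $\d\tau \w p_3 \w p_1$ equation yields a second ODE of similar type. The core step is to verify by a direct algebraic manipulation that an appropriate linear combination of these two ODEs equals $(c+r^2)^{3/4}\sin^2\gamma \cdot \dot v$, where $v=2(c+r^2)^{1/4}\cos\alpha\cot\gamma$, so that $v$ is conserved along $N$. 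Conceptually, the two ODEs cut out a $1$-dimensional allowed tangent distribution on the $(r,\gamma,\alpha)$-quotient, whose annihilator is the $2$-dimensional span of the corresponding $1$-forms; one must show that $\d v$ lies in this span.

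For part~(b), the argument takes place near the $\C\P^1$ at $\alpha = \tfrac{\pi}{2}$, using coordinates valid there (the patch $U$ excludes $\alpha=\tfrac{\pi}{2}$). Taking the limit $\cos\alpha \to 0$ in the ODEs above: the $\d\tau \w p_3 \w p_1$ and $\d\tau \w p_1 \w p_2$ equations become automatic (every term contains a factor of $\cos\alpha$), while the $\d\tau \w p_2 \w p_3$ equation collapses to $(c+r^2)\bigl(r\sin\gamma\,\dot\gamma - \cos\gamma\,\dot r\bigr) = 0$, equivalent to $\tfrac{\d}{\d\tau}(r\cos\gamma) = 0$ since $c+r^2 > 0$, establishing that $w_{\alpha=\pi/2}=r\cos\gamma$ is constant on $N$. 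The main obstacle is the algebraic verification in part~(a) that $\dot v = 0$: identifying the correct linear combination of the two ODEs that produces $\d v$ is computationally intricate, but its existence is guaranteed by the codimension count sketched above.
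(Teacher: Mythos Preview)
Your overall strategy matches the paper's: restrict $\varphi_c$ to a one-parameter family of $\SU(2)$-orbits, read off the three ODEs from the $\d\tau\wedge p_i\wedge p_j$ coefficients, and use the $p_1\wedge p_2$ equation to get $\dot\beta=0$ in the generic case. Your derivation of the $p_2\wedge p_3$ equation and your treatment of part~(b) are both correct and essentially identical to the paper's (with one cosmetic slip: in the $p_3\wedge p_1$ equation not every term carries a $\cos\alpha$ factor --- there is a term $-r\sin\gamma\sin\alpha(1+3\cos^2\alpha)\dot\alpha$ --- but it vanishes anyway because $\dot\alpha=0$ when $\alpha\equiv\tfrac{\pi}{2}$).

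The genuine gap is in part~(a). You do not verify that $v$ is conserved; you assert that ``its existence is guaranteed by the codimension count.'' This is a logical error. A rank-one distribution on a $3$-manifold does have two local first integrals, but that tells you nothing about whether the \emph{specific} function $v=2(c+r^2)^{1/4}\cos\alpha\cot\gamma$ is one of them. The proposition asserts precisely this, so the verification cannot be bypassed. The paper carries it out: multiply the $p_2\wedge p_3$ equation~\eqref{eq:CP2.coass1} by $2\sin\gamma\cos\alpha$, multiply the $p_3\wedge p_1$ equation~\eqref{eq:CP2.coass2} by $\cos\gamma$, and subtract to eliminate the $(c+r^2)^{1/4}\dot r$ terms; after dividing by $-r\sin^2\gamma\sin^2\alpha$ the result is exactly $\tfrac{\d}{\d\tau}\bigl(2(c+r^2)^{1/4}\cos\alpha\cot\gamma\bigr)=0$. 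This computation is short and is the actual content of part~(a); without it your argument proves only that \emph{some} conserved quantity exists, not that $v$ is one.
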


Next consider the cone case $M_0 = \R^+\times(\SU(3)/T^2)$, corresponding to $c=0$. Here, we can completely integrate the equations and thus explicitly describe all of the $\SU(2)$-invariant coassociative 4-folds.
\begin{prop} \label{prop:cone.coass.CP2}
Let $N$ be an $\SU(2)$-invariant coassociative 4-fold in $M_0$. 
\begin{itemize}
\item[\emph{(a)}] If $\cos\alpha\sin\gamma\cos\gamma \not \equiv 0$ on $N$, then 
the following are constant on $N$:
\begin{equation*}
\beta \, \in [0,2\pi), \qquad v=2r^{\frac{1}{2}}\cos\alpha\cot\gamma \, \in\R, \qquad u=\frac{2\cos^2\alpha-\sin^2\alpha\sin^2\gamma}{\cos^2\alpha\cos\gamma} \, \in\R.
\end{equation*}
\item[\emph{(b)}] If $\cos\alpha \equiv 0$ on $N$, then the following is constant on $N$:
\begin{equation*}
w_{\alpha=\frac{\pi}{2}}=r\cos\gamma \, \in\R.
\end{equation*}
\item[\emph{(c)}] If $\sin\gamma \equiv 0$ on $N$, then the following is constant on $N$:
\begin{equation*}
w_{\sin\gamma=0}=r\cos 2\alpha \, \in\R.
\end{equation*}
\item[\emph{(d)}] If $\cos\gamma \equiv 0$ on $N$, then $w_{\cos\gamma=0}$ defined by the equation below is constant on $N$: 
\begin{equation*}
w_{\cos\gamma=0}^2=r\frac{(3\cos2\alpha+1)^2}{8(\cos2\alpha+1)} \, \in [0, \infty).
\end{equation*}
\end{itemize}
\end{prop}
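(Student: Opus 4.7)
The plan is to parametrize an $\SU(2)$-invariant coassociative 4-fold $N$ as the union of $\SU(2)$ orbits along a path $\tau \mapsto (r(\tau), \gamma(\tau), \alpha(\tau), \beta(\tau))$ in the four-dimensional space of invariants, and to reduce the coassociative condition $\varphi_0|_N = 0$ to a system of three ODEs. Using~\eqref{eq:phic.CP2} with $c=0$ together with the formulas~\eqref{eq:z1z2z3.CP2} and~\eqref{eq:zkOk.CP2}, I would compute the restriction of $\varphi_0$ to $N$. Since $\tau$ is the only base parameter on $N$, every term containing two or more of the 1-forms $\d r, \d\gamma, \d\alpha, \d\beta$ pulls back trivially, and only terms of the form $(\text{single base 1-form}) \w p_i \w p_j$ survive. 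Collecting these yields an expression $\varphi_0|_N = \sum_{(i,j)} A_{ij}\, \d\tau \w p_i \w p_j$ over $(i,j)\in\{(2,3),(3,1),(1,2)\}$, with coefficients $A_{ij}$ linear in $\dot r, \dot\gamma, \dot\alpha, \dot\beta$. The independence of the three 3-forms $\d\tau \w p_i \w p_j$ makes the coassociative condition equivalent to the three equations $A_{23} = A_{31} = A_{12} = 0$.

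Next I would exploit the structure of these equations. Inspection of~\eqref{eq:zkOk.CP2} shows that the only term involving $\d\beta$ wedged with two orbit 1-forms is $-\tfrac{1}{2} r \sin\gamma \sin^2\alpha \cos\alpha\, \d\beta \w p_1 \w p_2$, so $A_{12}$ consists of a single $\dot\beta$ term with prefactor proportional to $r^{3/2}\sin\gamma\sin^2\alpha\cos\alpha$. In case (a), where $\cos\alpha\sin\gamma\cos\gamma \not\equiv 0$ and we are in the coordinate chart so $\sin\alpha \neq 0$, this forces $\dot\beta = 0$, proving that $\beta$ is constant on $N$. The remaining equations $A_{23} = 0$ and $A_{31} = 0$ are then homogeneous linear relations among $\dot r, \dot\gamma, \dot\alpha$. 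To confirm that $v = 2r^{\frac{1}{2}}\cos\alpha\cot\gamma$ and $u = (2\cos^2\alpha - \sin^2\alpha\sin^2\gamma)/(\cos^2\alpha\cos\gamma)$ are conserved along the path, I would differentiate each along the flow, use the two surviving ODEs to eliminate (say) $\dot\gamma$, and verify by direct algebraic manipulation that $\dot v = \dot u = 0$ identically. Since $\beta$, $v$, $u$ are functionally independent on the generic locus, they pin down the one-parameter family of paths through any given point, consistent with $N$ being four-dimensional.

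For the degenerate cases (b), (c), (d), I would substitute the respective constraints $\cos\alpha \equiv 0$, $\sin\gamma \equiv 0$, $\cos\gamma \equiv 0$ directly into the three ODEs. Each constraint forces one of the $\dot{}$-variables to vanish ($\dot\alpha = 0$ in (b); $\dot\gamma = 0$ in (c) and (d)), trivializes one ODE, and collapses the remaining relations to a single first-order equation that integrates explicitly. For instance, in case (b) the equation $A_{31}=0$ reduces to a multiple of $\tfrac{\d}{\d\tau}(r\cos\gamma)$, yielding $w_{\alpha=\pi/2} = r\cos\gamma$. The computations in (c) and (d) follow the same pattern; case (d) requires recognising the integrating factor needed to rewrite the resulting ODE as the exact derivative of $r(3\cos 2\alpha + 1)^2/(8(\cos 2\alpha + 1))$.

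The main obstacle is the verification of conservation of $u$ in case (a). Unlike $v$, the quantity $u$ depends only on the angular variables $\alpha, \gamma$ and encodes the scaling symmetry $r \mapsto \lambda r$ of $\varphi_0$ that is specific to the cone case. It has no analogue in the smooth case $c>0$ of Proposition~\ref{prop:smooth.coass.CP2}, so its conservation is precisely the extra dividend paid by the $c=0$ symmetry. Establishing $\dot u = 0$ amounts to a non-trivial identity among $\cos\alpha$, $\sin\alpha$, $\cos\gamma$, $\sin\gamma$ modulo the two ODEs, which I expect to require careful bookkeeping of the trigonometric prefactors appearing in~\eqref{eq:z1z2z3.CP2} and~\eqref{eq:zkOk.CP2} rather than any conceptual shortcut.
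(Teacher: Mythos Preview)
Your proposal is correct and follows essentially the same route as the paper: parametrize $N$ by a path in the invariant coordinates, restrict $\varphi_0$ using~\eqref{eq:z1z2z3.CP2} and~\eqref{eq:zkOk.CP2}, read off three ODEs from the $\d\tau\wedge p_i\wedge p_j$ components, dispose of $\beta$ via the $p_1\wedge p_2$ equation, and then integrate the remaining two relations (or their specializations in the degenerate cases).

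The one place where the paper organizes the computation slightly differently is in case~(a). Rather than differentiating both $v$ and $u$ and checking each vanishes modulo the two ODEs, the paper first takes a specific linear combination of the $p_2\wedge p_3$ and $p_3\wedge p_1$ equations (multiplying by $2\sin\gamma\cos\alpha$ and $\cos\gamma$ respectively) chosen to cancel the $\dot r\,(c+r^2)^{1/4}$ terms; the result is immediately recognized as $\tfrac{\d}{\d\tau}v=0$. Having $v$ constant, the paper then uses $v$ to eliminate $r$ from the remaining ODE, obtaining a single equation~\eqref{eq:u.flow.c=0} in $\alpha,\gamma$ alone. Since $u$ depends only on $\alpha,\gamma$, verifying $\dot u=0$ against this reduced scalar ODE is a short computation. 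Your plan of verifying $\dot u=0$ directly against the original pair of ODEs would also work, but the paper's two-step reduction (first $v$, then $u$) exploits the fact that $u$ is independent of $r$ and keeps the algebra lighter. In the special cases your outline matches the paper exactly, modulo the harmless slip that in case~(b) it is the $p_2\wedge p_3$ equation, not $A_{31}$, that reduces to the total derivative of $r\cos\gamma$.
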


We examine the coassociative condition that $\varphi_c|_N\equiv 0$. (Recall that this   says that $N$ is coassociative up to a choice of orientation by Definition~\ref{dfn:coassociative}.) Let $\dot{}$ denote differentiation with respect to $\tau$. On $N$,
$$\d r = \dot{r} \d \tau, \quad \d \gamma = \dot{\gamma} \d \tau, \quad \d \alpha = \dot{\alpha} \d \tau, \quad \d \beta = \dot{\beta} \d \tau.
$$
Substituting the expressions into~\eqref{eq:z1z2z3.CP2} and~\eqref{eq:zkOk.CP2}, we obtain
\begin{align}
(\zeta_1 \w \zeta_2 \w \zeta_3)|_N & = r^2 \cos \gamma \cos^2 \alpha \dot{r} \d \tau \w p_2 \w p_3 + \tfrac{1}{2} r^2 \sin \gamma \cos \alpha (1+\cos^2 \alpha) \dot{r} \d \tau \w p_3 \w p_1, \label{eq:z1z2z3.N}
\end{align}
and
\begin{align}
& (\zeta_1 \w \Omega_1 + \zeta_2 \w \Omega_2 + \zeta_3 \w \Omega_3)|_N \\
& \qquad = - \tfrac{1}{2} \cos \gamma \sin^2 \alpha \dot{r} \d \tau \w p_2 \w p_3 + \tfrac{1}{2} r \sin \gamma \sin^2 \alpha \dot{\gamma} \d \tau \w p_2 \w p_3 \nonumber \\
& \qquad \quad{} - 2 r \cos \gamma \sin \alpha \cos \alpha \dot{\alpha} \d \tau \w p_2 \w p_3 - \tfrac{1}{2} r \sin \gamma \sin \alpha (1 + 3 \cos^2 \alpha) \dot{\alpha} \d \tau \w p_3 \w p_1 \nonumber \\
& \qquad \quad {} - \tfrac{1}{2} r \cos \gamma \sin^2 \alpha \cos \alpha \dot{\gamma} \d \tau \w p_3 \w p_1 - \tfrac{1}{2} \sin \gamma \sin^2 \alpha \cos \alpha \dot{r} \d \tau \w p_3 \w p_1 \nonumber \\
& \qquad \quad {} - \tfrac{1}{2} r \sin \gamma \sin^2 \alpha \cos \alpha \dot{\beta} \d \tau \w p_1 \w p_2. \label{eq:zkOk.N}
\end{align}
Using the formula~\eqref{eq:phic.CP2} for $\varphi_c$ we have
\begin{equation*}
\varphi_c|_N = (c+r^2)^{-\frac{3}{4}} (\zeta_1\w\zeta_2\w\zeta_3)|_N + 2(c+r^2)^{\frac{1}{4}}(\zeta_1\w\Omega_1+\zeta_2\w\Omega_2+\zeta_3\w\Omega_3)|_N.
\end{equation*}
Substituting~\eqref{eq:z1z2z3.N} and~\eqref{eq:zkOk.N} into the above, the coassociative condition $\varphi_c|_N\equiv 0$ becomes three independent ordinary differential equations, obtained from the $\d \tau \w p_i \w p_j$ terms.

The $\d \tau \w p_1 \w p_2$ term gives
\begin{equation} \label{eq:CP2.coass0}
r \sin \gamma \sin^2 \alpha \cos \alpha \dot{\beta} = 0.
\end{equation}
The $\d \tau \w p_2 \w p_3$ term gives
\begin{equation} \label{eq:CP2.coass1}
\begin{aligned}
& (c + r^2)^{-\frac{3}{4}} r^2 \cos \gamma \cos^2 \alpha \dot{r} \\
& \qquad {} + (c + r^2)^{\frac{1}{4}} (- \cos \gamma \sin^2 \alpha \dot{r} + r \sin \gamma \sin^2 \alpha \dot{\gamma} - 4 r \cos \gamma \sin \alpha \cos \alpha \dot{\alpha}) = 0,
\end{aligned}
\end{equation}
and the $\d \tau \w p_3 \w p_1$ term gives
\begin{equation} \label{eq:CP2.coass2} 
\begin{aligned}
& (c + r^2)^{-\frac{3}{4}} ( r^2 \sin \gamma \cos \alpha (1+\cos^2 \alpha) \dot{r} ) \\
& \qquad {} + 2(c+r^2)^{\frac{1}{4}} (- r \sin \gamma \sin \alpha (1 + 3 \cos^2 \alpha) \dot{\alpha} - r \cos \gamma \sin^2 \alpha \cos \alpha \dot{\gamma} - \sin \gamma \sin^2 \alpha \cos \alpha \dot{r} ) = 0. 
\end{aligned}
\end{equation}

Observe that in the smooth case $(c>0)$, the submanifold $r=0$ which corresponds to the zero section $\C\P^2$ in $\Lambda^2_- (T^* \C\P^2)$ solves the three equations~\eqref{eq:CP2.coass0}--\eqref{eq:CP2.coass2}, and thus is an $\SU(2)$-invariant coassociative submanifold. From now on we assume that $r$ is \emph{not} identically zero on $N$.

\paragraph{The generic case $\mathbf{\cos \alpha \sin \gamma \cos \gamma \not \equiv 0}$.} By~\eqref{eq:CP2.coass0} we have that $\beta\in [0,2\pi)$ is constant. Let us eliminate the $\dot{r} (c+r^2)^{\frac{1}{4}}$ terms, by multiplying~\eqref{eq:CP2.coass1} by $2 \sin \gamma \cos \alpha$, multiplying~\eqref{eq:CP2.coass2} by $\cos \gamma$, and taking the difference. After some manipulation, the result is
\begin{equation*}
0 = (c+r^2)^{-\frac{3}{4}} ( - r^2 \sin \gamma \cos \gamma \cos \alpha \sin^2 \alpha \dot{r} ) + (c+r^2)^{\frac{1}{4}} ( 2 r \sin^2 \alpha \cos \alpha \dot{\gamma} + 2 r \sin \gamma \cos \gamma \sin^3 \alpha \dot \alpha).
\end{equation*}
Dividing the above expression by $- r \sin^2 \gamma \sin^2 \alpha$ gives
\begin{equation*}
(c+r^2)^{-\frac{3}{4}} ( r \cos \alpha \cot \gamma \dot r) + 2 (c+r^2)^{\frac{1}{4}} (- \cos \alpha \csc^2 \gamma \dot{\gamma} - \sin \alpha \cot \gamma \dot{\alpha} ) = 0.
\end{equation*}
We can write the above as
\begin{equation*}
\frac{\d}{\d\tau}\big(2(c+r^2)^{\frac{1}{4}}\cos\alpha\cot\gamma\big)=0.\label{eq:dot.v}
\end{equation*}
We deduce in this case that
\begin{equation}\label{eq:v}
v=2(c+r^2)^{\frac{1}{4}}\cos\alpha\cot\gamma\in\R
\end{equation}
is constant on $N$.

We can now use $v$ in~\eqref{eq:v} to eliminate $r$ from the two differential equations~\eqref{eq:CP2.coass1}--\eqref{eq:CP2.coass2}. Specifically, from~\eqref{eq:v} and its derivative with respect to $\tau$, we obtain
\begin{equation} \label{eq:v.r.temp}
\begin{aligned}
r^2 & = \tfrac{1}{16} v^4 \sec^4 \alpha \tan^4 \gamma - c, \\
r \dot{r} & = 2(c+r^2) (\tan \alpha \dot{\alpha} + \sec \gamma \csc \gamma \dot{\gamma}).
\end{aligned}
\end{equation}
If we substitute the two expressions in~\eqref{eq:v.r.temp} into either~\eqref{eq:CP2.coass1} or~\eqref{eq:CP2.coass2}, straightforward algebraic manipulation yields a $v$-dependent ordinary differential equation relating $\alpha$ and $\gamma$. This equation is
\begin{equation} \label{eq:flow.smooth}
\begin{aligned}
\cos\alpha\big((v^4\sin^4\gamma&-16c\cos^4\alpha\cos^4\gamma)(2\cos^2\alpha+\sin^2\alpha\sin^2\gamma)-2v^4\sin^2\alpha\sin^4\gamma\big)\dot{\gamma} \\ &-2\sin\alpha\sin\gamma\cos\gamma\big(v^4\sin^2\alpha\sin^4\gamma+(v^4\sin^4\gamma-16c\cos^4\alpha\cos^4\gamma)\cos^2\alpha\big)\dot{\alpha}=0.
\end{aligned}
\end{equation}
When $c=0$, the equation simplifies to
\begin{equation} \label{eq:u.flow.c=0}
\cos\alpha\big(2 \cos^2\alpha- 2 \sin^2\alpha+\sin^2\alpha\sin^2\gamma\big)\dot{\gamma}
-2\sin\alpha\sin\gamma\cos\gamma\dot{\alpha}=0.
\end{equation}
We can then compute that
\begin{align*}
&\frac{\d}{\d\tau}\left(\frac{2\cos^2\alpha-\sin^2\alpha\sin^2\gamma}{\cos^2\alpha\cos\gamma}\right)\\
&\qquad=
\sin\gamma\left(\frac{2\cos^2\alpha-\sin^2\alpha(\sin^2\gamma+2\cos^2\gamma)}{\cos^2\alpha\cos^2\gamma}\right)\dot{\gamma}-\frac{2\sin\alpha\sin^2\gamma}{\cos^3\alpha\cos\gamma}\dot\alpha\\
&\qquad=\frac{\sin\gamma}{\cos^3\alpha\cos^2\gamma}
\left(\cos\alpha\big(2 \cos^2\alpha- 2\sin^2\alpha+\sin^2\alpha\sin^2\gamma\big)\dot{\gamma}-2\sin\alpha\sin\gamma\cos\gamma\dot\alpha\right)=0.
\end{align*}
Hence, for $c=0$, in the generic case where $\cos \alpha \sin \gamma \cos \gamma \not \equiv 0$, we have the following constant:
\begin{equation}\label{eq:u.c=0}
u=\frac{2\cos^2\alpha-\sin^2\alpha\sin^2\gamma}{\cos^2\alpha\cos\gamma}\in\R.
\end{equation}

\begin{remark}
We can explicitly integrate the ODE~\eqref{eq:flow.smooth} in the cone case where $c=0$, but we were unfortunately unable to do so in the smooth case where $c>0$. We can rewrite~\eqref{eq:flow.smooth} as $\frac{\d y}{\d x}=f(x,y)$ where $f(x,y)$ is a rational function in $x$ and $y$. One can then, in principle, check  lists of ODEs which are known to be integrable to see if this ODE is in that class (perhaps after a change of variables). This is a non-trivial task and it currently remains open whether~\eqref{eq:flow.smooth} can be integrated in closed  form.
\end{remark}

Next we consider the three special cases in Propositions~\ref{prop:smooth.coass.CP2} and~\ref{prop:cone.coass.CP2}. In all three of these special cases, the equation~\eqref{eq:CP2.coass0} is identically satisfied, so we need only consider the equations~\eqref{eq:CP2.coass1} and~\eqref{eq:CP2.coass2}.

\paragraph{The special case $\mathbf{\cos \alpha \equiv 0}$.} Assume that $\alpha$ is identically $\frac{\pi}{2}$ on $N$. In this case equations~\eqref{eq:CP2.coass1} and~\eqref{eq:CP2.coass2} become $\dot{\alpha} = 0$ which is automatic and
\[ \cos \gamma \dot{r} - r \sin \gamma \dot{\gamma} = 0, \]
which gives that $w_{\alpha = \frac{\pi}{2}} = r \cos \gamma$ is constant on $N$. Note that in this case the solution is independent of whether $c=0$ or $c>0$.

\paragraph{The special case $\mathbf{\sin \gamma \equiv 0}$.} Assume that $\gamma$ is identically $0$ or $\pi$ on $N$. In this case equations~\eqref{eq:CP2.coass1} and~\eqref{eq:CP2.coass2} become $\dot{\gamma} = 0$ which is automatic and
\begin{equation} \label{eq:sin.gamma.zero}
\big(r^2\cos^2\alpha-(c+r^2)\sin^2\alpha\big)\dot{r}-4r(c+r^2)\sin\alpha\cos\alpha\dot{\alpha}=0.
\end{equation}
This equation can be integrated \emph{implicitly} to give
\begin{equation} \label{eq:sin.gamma.zero.int}
w_{\sin \gamma = 0} = r^{\frac{1}{2}} (c+r^2)^{\frac{1}{4}} \cos 2\alpha - \frac{c}{2} \int_0^r \frac{1}{s^{\frac{1}{2}} (c+s^2)^{\frac{3}{4}}} \d s \qquad \text{is constant on $N$}.
\end{equation}
When $c=0$, we obtain the exact solution $w_{\sin \gamma = 0} = r \cos 2 \alpha$.

\paragraph{The special case $\mathbf{\cos \gamma \equiv 0}$.} Assume that $\gamma$ is identically $\frac{\pi}{2}$ on $N$. In this case equations~\eqref{eq:CP2.coass1} and~\eqref{eq:CP2.coass2} become $\dot{\gamma} = 0$ which is automatic and
\begin{equation} \label{eq:cos.gamma.zero}
(r^2(2-\sin^2\alpha)-2(c+r^2)\sin^2\alpha)\cos\alpha\dot{r}-2r(c+r^2)(1+3\cos^2\alpha)\sin\alpha\dot{\alpha}=0.
\end{equation}
We were unable to integrate~\eqref{eq:cos.gamma.zero}, even implicitly, when $c>0$. However, when $c=0$, one can compute that~\eqref{eq:cos.gamma.zero} is equivalent to
\begin{align}
(2-3\sin^2\alpha)\cos\alpha\dot{r}&-2r(1+3\cos^2\alpha)\sin\alpha\dot{\alpha}\nonumber\\
&=\frac{\cos^3\alpha}{2(2-3\sin^2\alpha)}\frac{\d}{\d\tau}\left(r\frac{(3\cos 2\alpha+1)^2}{\cos 2\alpha +1}\right)=0.\label{eq:gamma0.4}
\end{align}
We deduce that when $\alpha\neq\frac{\pi}{2}$, we have a real constant $w_{\cos\gamma=0}$ defined by
\begin{equation}\label{eq:w.cosgamma}
w_{\cos\gamma=0}^2=r\frac{(3\cos2\alpha+1)^2}{8(\cos2\alpha+1)}.
\end{equation}
Note from~\eqref{eq:gamma0.4} that when $c=0$, we \emph{appear to have} an additional possibility for $\dot\alpha=0$ if $\sin^2\alpha \equiv \frac{2}{3}$. However, since $2(2-3\sin^2\alpha)=1+3\cos 2\alpha$, this is just equivalent to $w_{\cos\gamma=0}=0$, and in this case $r$ can take any value. In the smooth case $c>0$, there exists a constant  $w_{\cos\gamma=0}$ along the flow lines where $\cos\gamma \equiv 0$, which reduces to~\eqref{eq:w.cosgamma} in the cone case $c=0$.

\subsection{The fibration} \label{sec:fibration.CP2}

Given the ordinary differential equations and constants determined in $\S$\ref{sub:SU2-invariant}, we now describe the \emph{topology of the fibres} and the structure of the coassociative fibration which we have constructed. This involves considering various cases. The treatment is somewhat lengthy and involved, so the reader interested only in the final results may wish to simply consult the summary in $\S$\ref{subs:summary}. 

We note from the outset that the differential equations~\eqref{eq:CP2.coass0}--\eqref{eq:CP2.coass2} that we are studying are invariant under the transformation $\gamma\mapsto\pi-\gamma$. We can therefore restrict ourselves throughout to $\gamma\in[0,\frac{\pi}{2}]$. Also, we mention here that the \emph{arrows} in the various vector field plots that we present in this section do not have any real meaning. There is no preferred ``forward direction'' to the parameters in these ordinary differential equations. Finally, we use wording such as ``flow lines passing through a critical point'' to mean flow lines which approach the critical point in the limit as the flow parameter tends to one of its limiting allowed values.

\subsubsection{Case 1: $r \equiv 0$}

The solution $r\equiv0$ gives the coassociative zero section $\C\P^2$ in $\Lambda^2_-(T^*\C\P^2)$. We emphasize that this is in marked contrast to the $\Lambda^2_- (T^* \mathcal{S}^4)$ case, where the zero section $\mathcal{S}^4$ was \emph{not} a fibre of the coassociative fibration, despite being coassociative. This is due to the fact that the action of $\SU(2)$ on $\Lambda^2_-(T^*\C\P^2)$ is very different from the action of $\SO(3)$ on $\Lambda^2_-(T^* \mathcal{S}^4)$.

Note that, in the generic setting where $\cos \alpha \sin \gamma \cos \gamma \not \equiv 0$, we have from~\eqref{eq:v.r.temp} that
$$r^2=\frac{v^4\tan^4\gamma}{16\cos^4\alpha}-c.$$
Therefore, for each fixed $v$, the curve given by
\begin{equation}\label{eq:r=0}
v^4\tan^4\gamma=16c\cos^4\alpha
\end{equation}
describes the points in the $(\alpha,\gamma)$-plane where $r=0$ on the coassociative fibration. We show the curve in Figure~\ref{fig:r-zero-plot} below for $c=1$ and various values of $v$.
\begin{figure}[H]
\caption{$r=0$ when $c=1$}\label{fig:r-zero-plot}
\begin{center}
\includegraphics[width=0.5\textwidth]{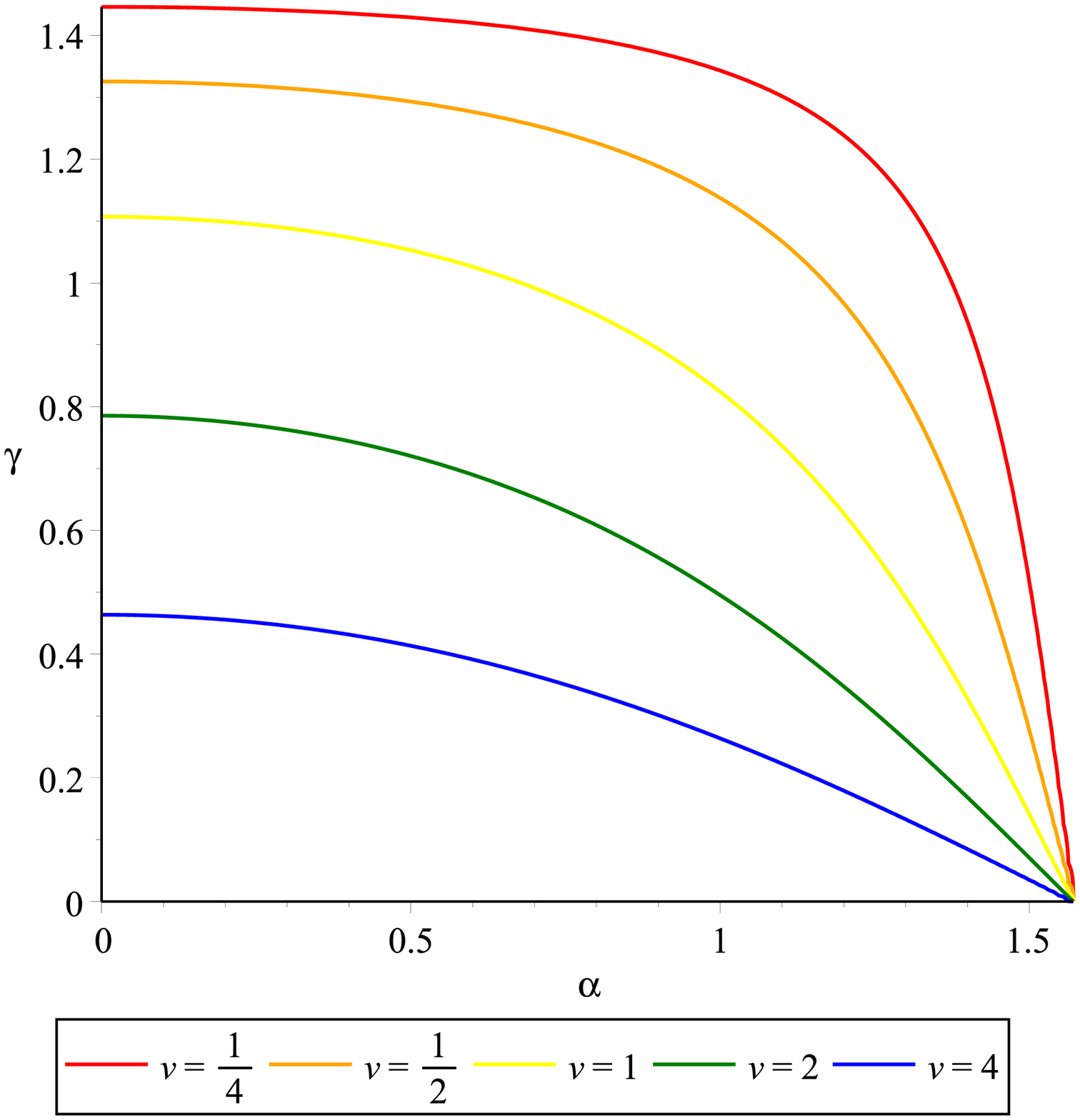}
\end{center}
\end{figure}

By the Harvey--Lawson local existence theorem~\cite[Theorem 4.6, page 139]{HarveyLawson}, two coassociatives cannot intersect in a 3-dimensional submanifold. Thus the orbit structure in Table~\ref{table:SU2.orbits} implies that any coassociative fibre cannot meet the $r=0$ fibre except when $\alpha\in\{0,\frac{\pi}{2}\}$, as they cannot both contain the same 3-dimensional orbit. However, there do exist coassociative fibres (see Case 3 below) which intersect the zero section $\C\P^2$ of $\Lambda^2_-(T^*\C\P^2)$ in the $\C\P^1$ given by $\alpha = \frac{\pi}{2}$ and $r=0$ in the last row of Table~\ref{table:SU2.orbits}. These thus are examples of the special type of calibrated submanifolds studied in~\cite{KarigiannisMinOo}.

\subsubsection{Case 2: $\sin\gamma\equiv0$}

In this case, by~\eqref{eq:r.gamma.beta} the circle action by $\beta$ is trivial, and by Table~\ref{table:SU2.orbits} we know that the orbits for $\alpha\in(0,\frac{\pi}{2})$ are $\mathcal{S}^3$, but for $\alpha=0$ or $\alpha=\frac{\pi}{2}$ and $r>0$, we have a $\C\P^1$. 

We reproduce here the defining equation~\eqref{eq:sin.gamma.zero} for the $\sin \gamma = 0$ case,
\begin{align}\label{eq:sin.gamma.zero.flow.2}
\big(r^2\cos^2\alpha-(c+r^2)\sin^2\alpha\big)\dot{r}-4r(c+r^2)\sin\alpha\cos\alpha\dot{\alpha}=0.
\end{align}

\paragraph{The smooth case $c>0$.} From~\eqref{eq:sin.gamma.zero.flow.2}, we see that if $r\to 0$ and $\alpha\to\alpha_0\neq 0$, then we must have that $\dot{r}\to 0$ at that point. However, the unique flow line in the $(\alpha,r)$-plane through $(\alpha_0,0)$ with $\dot{r}=0$ is just $r=0$, which we are excluding here. Hence, flow lines can only reach $r=0$ for $c>0$ when $\alpha=0$. However, from~\eqref{eq:sin.gamma.zero.flow.2} we see that at points $(0,r_0)$ with $r_0>0$, we must also have $\dot{r}=0$, hence by continuity the only flow line passing through $(0,0)$ must have $\dot{r}=0$ and therefore must be $r\equiv 0$. Thus, all flow lines must have $r>0$. 

Observe that~\eqref{eq:sin.gamma.zero.flow.2} has critical points only when 
$$r^2\cos^2\alpha=(c+r^2)\sin^2\alpha\quad\text{and}\quad r\sin\alpha\cos\alpha=0.$$
We see immediately that if $r>0$ then there is no solution to this pair of equations, so the unique critical point is at $r=0$ and $\alpha=0$, which matches well with our discussion above.

We rewrite~\eqref{eq:sin.gamma.zero.flow.2} as
\begin{equation} \label{eq:sin.gamma.zero.drdalpha}
\frac{\d r}{\d\alpha}=\frac{4r(c+r^2)\sin\alpha\cos\alpha} {r^2\cos^2\alpha-(c+r^2)\sin^2\alpha}.
\end{equation}
Since $r\sin\alpha\cos\alpha>0$ whenever $r>0$ and $\alpha\in(0,\frac{\pi}{2})$, we deduce that the sign of $\frac{\d r}{\d\alpha}$ is determined by the sign of 
$$r^2\cos^2\alpha-(c+r^2)\sin^2\alpha.$$
The set of points where this function vanishes, that is where
$$\cot^2\alpha=1+\frac{c}{r^2}>1$$
defines a curve $\Gamma$ in the $(\alpha,r)$-plane which is only defined for $\alpha<\frac{\pi}{4}$. Moreover, the curve $\Gamma$ has $r\to\infty$ as $\alpha\to \frac{\pi}{4}$, and $r\to 0$ as $\alpha\to 0$. See Figure~\ref{fig:sin-gamma-zero-smooth} below for the flow lines for~\eqref{eq:sin.gamma.zero.flow.2}, the curve $\Gamma$, and the asymptote $\alpha=\frac{\pi}{4}$. 

\begin{figure}[H]
\begin{center}
\caption{$\sin\gamma=0$ when $c=1$}\label{fig:sin-gamma-zero-smooth}
\includegraphics[width=0.5\textwidth]{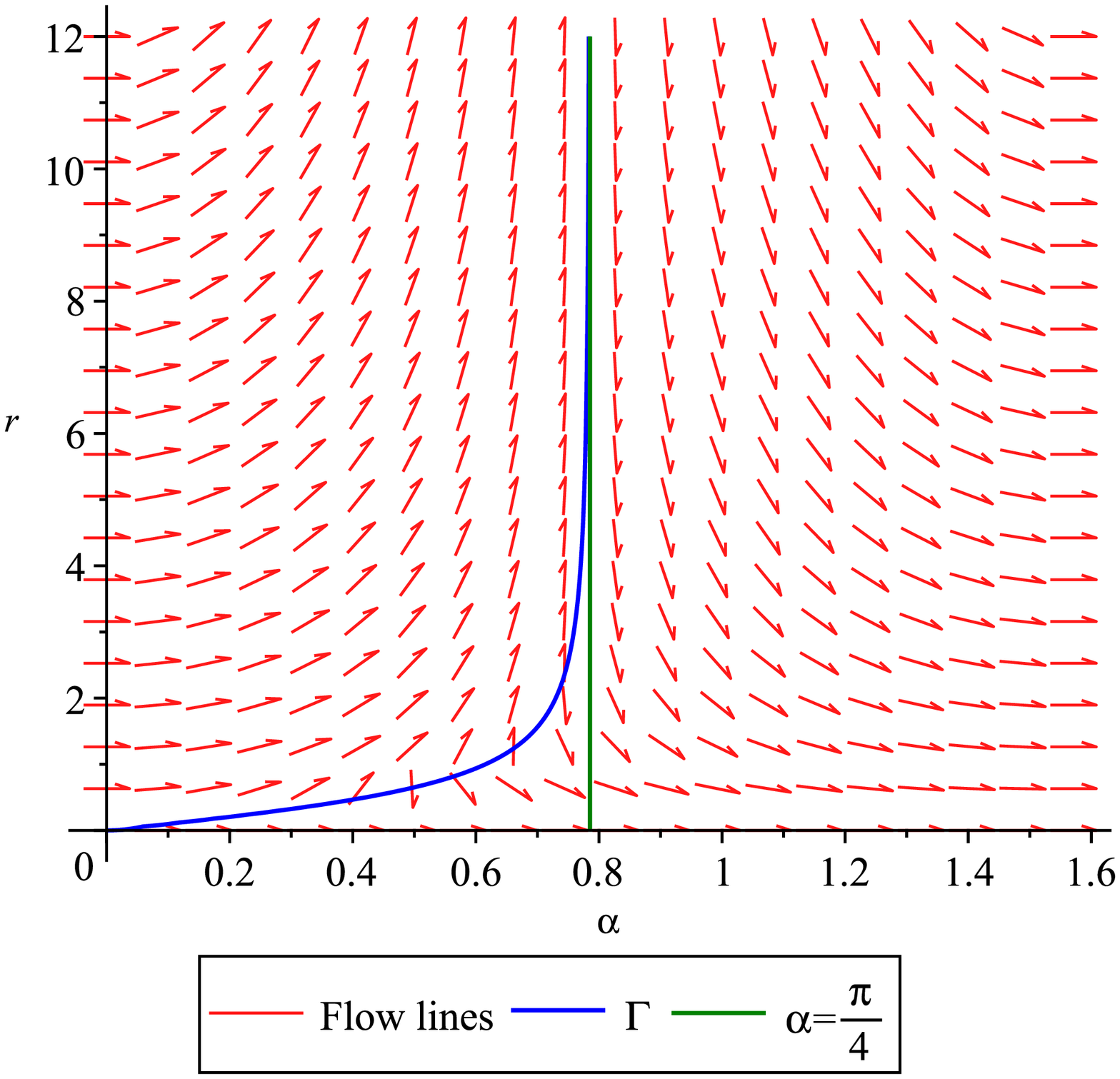}
\end{center}
\end{figure}

To the left of the curve $\Gamma$ we have that $\frac{\d r}{\d\alpha}>0$ and to the right of $\Gamma$ we have $\frac{\d r}{\d\alpha}<0$. Moreover, any flow line must cross $\Gamma$ vertically, and so any curve starting to the left of $\Gamma$ which crosses $\Gamma$ must stay to the right of $\Gamma$. Therefore, since $\frac{\d r}{\d\alpha}$ has a fixed sign on either side of $\Gamma$, we must have that $\dot{\alpha}\to 0$ and consequently $r \to \infty$ along any flow line. By equation~\eqref{eq:sin.gamma.zero.drdalpha}, this forces $\alpha\to\frac{\pi}{4}$ as $r \to \infty$. 

We have shown that all flow lines have $r\to\infty$ as $\alpha\to\frac{\pi}{4}$. Moreover, from~\eqref{eq:sin.gamma.zero.drdalpha} we see that for $r>0$ we have that $\frac{\d r}{\d\alpha}\to 0$ as $\alpha\to 0$ or $\alpha\to\frac{\pi}{2}$, so we have flow lines passing through $(0,r_0)$ and flow lines passing through $(\frac{\pi}{2},r_0)$ for all $r_0>0$. 

Since the flow lines are determined by a single constant, which is the angle or slope at $(0, r_0)$ or $(\frac{\pi}{2}, r_0)$, we have found all of the flow lines for the case $c>0$. We conclude that there exists a parameter $w$ such that for $w>0$ we have $\alpha\in[0,\frac{\pi}{4})$ and $r\to w>0$ as $\alpha\to 0$, and such that for $w<0$ we have $\alpha\in (0,\frac{\pi}{2}]$ and $r\to -w>0$ as $\alpha\to\frac{\pi}{2}$.

\paragraph{The cone case $c=0$.} In this case, we explicitly have $w=w_{\sin\gamma=0} = r \cos 2\alpha$ is constant as given in Proposition~\ref{prop:cone.coass.CP2}, so if $w=0$ then we have $\alpha\equiv\frac{\pi}{4}$. Otherwise, $\cos 2\alpha$ has a fixed sign (given by the sign of $w$) and so either $\alpha\in [0,\frac{\pi}{4})$ or $\alpha\in (\frac{\pi}{4},\frac{\pi}{2}]$. In each case, $r$ has a minimum at $\alpha=0$ or $\alpha=\frac{\pi}{2}$, given by $|w|$ and otherwise takes every greater value, tending to infinity as $\alpha\to\frac{\pi}{4}$. See Figure~\ref{fig:sin-gamma-zero-cone}.

Summarizing this case: for a real constant $w\neq 0$ we obtain $\mathcal{O}_{\C\P^1}(-1)$ for our coassociative fibre, but for $w=0$ and $c=0$ our coassociative fibre is $\R^+\times\mathcal{S}^3$.

\begin{figure}[H]
\begin{center}
\caption{$\sin\gamma=0$ when $c=0$}\label{fig:sin-gamma-zero-cone}
\includegraphics[width=0.5\textwidth]{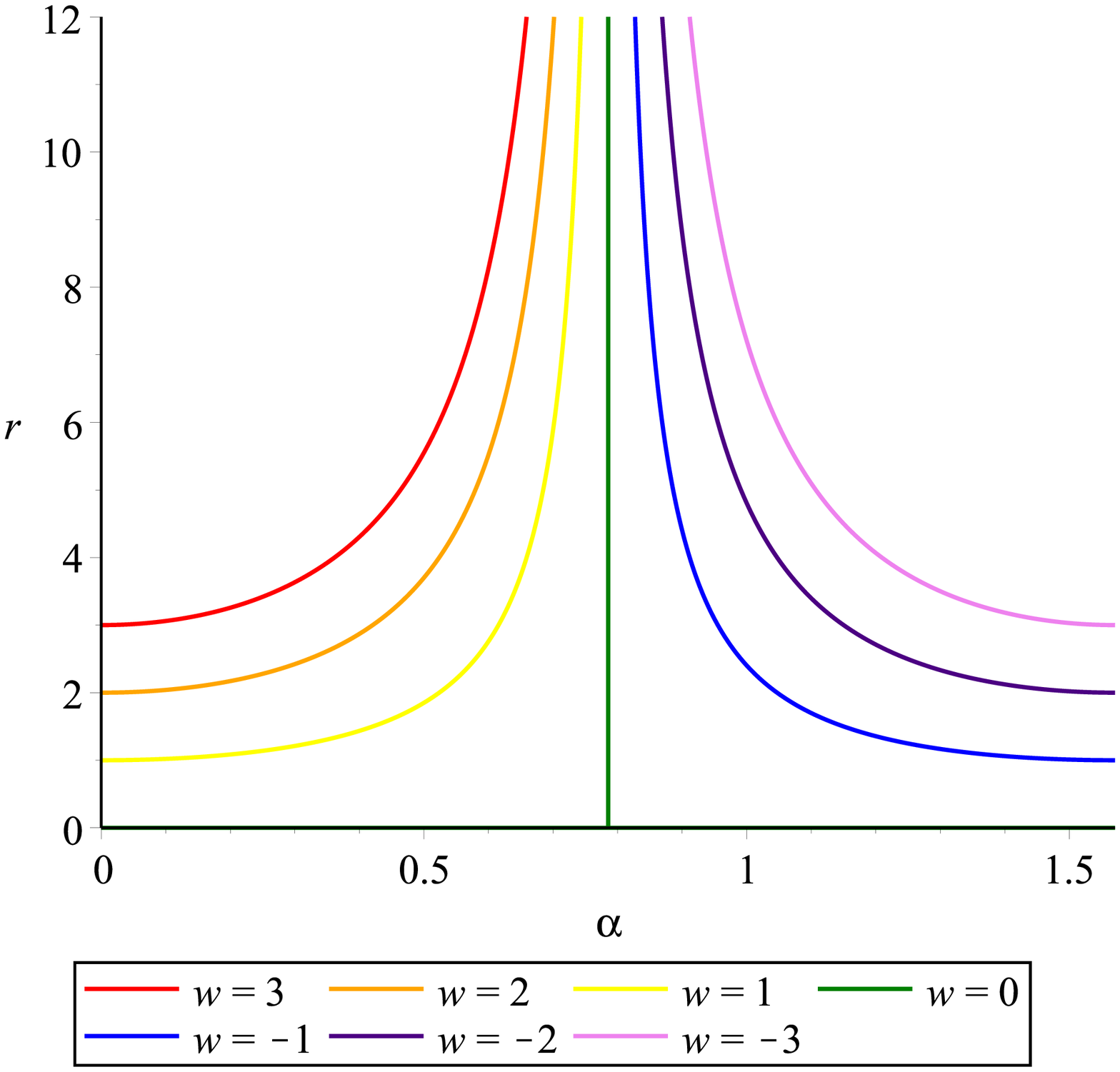}
\end{center}
\end{figure}

\subsubsection{Case 3: $\cos\gamma\equiv0$} \label{sec:cos.gamma.zero}

In this case, by~\eqref{eq:r.gamma.beta} we have a non-trivial $\mathcal{S}^1$-action parametrized by $\beta$, and by Table~\ref{table:SU2.orbits} we know that the orbits for $\alpha\in(0,\frac{\pi}{2})$ are $\mathcal{S}^3$, whereas the orbits for $\alpha=0$, $r>0$, and for $\alpha=\frac{\pi}{2}$, $r=0$, are both $\C\P^1$. (Note that $r=\sqrt{a_2^2+a_3^2}$ in this case since $\cos \gamma = 0$ implies $a_1=0$. Also, we show below that these coassociatives do not include the point orbit $\alpha = 0$, $r=0$.)

We reproduce here the defining equation~\eqref{eq:cos.gamma.zero} for the $\cos \gamma = 0$ case,
\begin{align}
(r^2(2-\sin^2\alpha)&-2(c+r^2)\sin^2\alpha)\cos\alpha\dot{r}-2r(c+r^2)(1+3\cos^2\alpha)\sin\alpha\dot{\alpha}=0.\label{eq:cos.gamma.zero.flow.2}
\end{align}

\paragraph{The smooth case $c>0$.} From~\eqref{eq:cos.gamma.zero.flow.2}, we see that if $r\to 0$ and $\alpha\to\alpha_0\in(0,\frac{\pi}{2})$, then we must have $\dot{r}\to 0$ at that point. Just as in the $\sin \gamma = 0$ case, we get a contradiction to the fact that we are assuming that $r$ is not identically $0$. Moreover, we see from~\eqref{eq:cos.gamma.zero.flow.2} that at $(0,r_0)$ for $r_0>0$ we must have $\dot{r}=0$, so by continuity just as in Case 2, the only flow line passing through $(0,0)$ is $r\equiv 0$. Therefore, the only possible way that $r\to 0$ is if $\alpha\to\frac{\pi}{2}$. 

Observe that~\eqref{eq:cos.gamma.zero.flow.2} has critical points only when 
$$r^2\cos\alpha(2-\sin^2\alpha)=2(c+r^2)\sin^2\alpha\cos\alpha\quad\text{and}\quad
r(1+3\cos^2\alpha)\sin\alpha=0.$$
We see immediately that if $r>0$ then there is no solution to this pair of equations, so the only critical points in the $(\alpha,r)$ plane are 
$$(0,0)\quad\text{and}\quad (\textstyle\frac{\pi}{2},0),$$
which fits well with the discussion above.

We rewrite~\eqref{eq:cos.gamma.zero.flow.2} as
\begin{equation} \label{eq:cos.gamma.zero.drdalpha}
\frac{\d r}{\d\alpha}=\frac{2r(c+r^2)(1+3\cos^2\alpha)\sin\alpha}{\big(r^2(2-\sin^2\alpha)-2(c+r^2)\sin^2\alpha\big)\cos\alpha}.
\end{equation}
Since $r(c+r^2)(1+3\cos^2\alpha)\sin\alpha>0$ whenever $r>0$ and $\alpha\in(0,\frac{\pi}{2})$, we deduce that the sign of $\frac{\d r}{\d\alpha}$ is determined by the sign of
$$r^2(2-\sin^2\alpha)-2(c+r^2)\sin^2\alpha.$$
The set of points where this function vanishes, that is where
$$\text{cosec}^2\alpha=\frac{3}{2}+\frac{c}{r^2}>\frac{3}{2}$$
defines a curve $\Gamma$ in the $(\alpha,r)$-plane which is only defined for $\cos 2 \alpha<-\frac{1}{3}$. We also see that $\Gamma$ passes through the critical point $(0,0)$. Moreover, the curve $\Gamma$ has $r\to\infty$ as $\alpha\to\frac{1}{2}\cos^{-1}(-\frac{1}{3})$. See Figure~\ref{fig:cos-gamma-zero-smooth} below for the flow lines for~\eqref{eq:cos.gamma.zero.flow.2}, the curve $\Gamma$, and the asymptote $\alpha=\frac{1}{2}\cos^{-1}(-\frac{1}{3})$.

\begin{figure}[H]
\begin{center}
\caption{$\cos\gamma=0$ when $c=1$}\label{fig:cos-gamma-zero-smooth}
\includegraphics[width=0.5\textwidth]{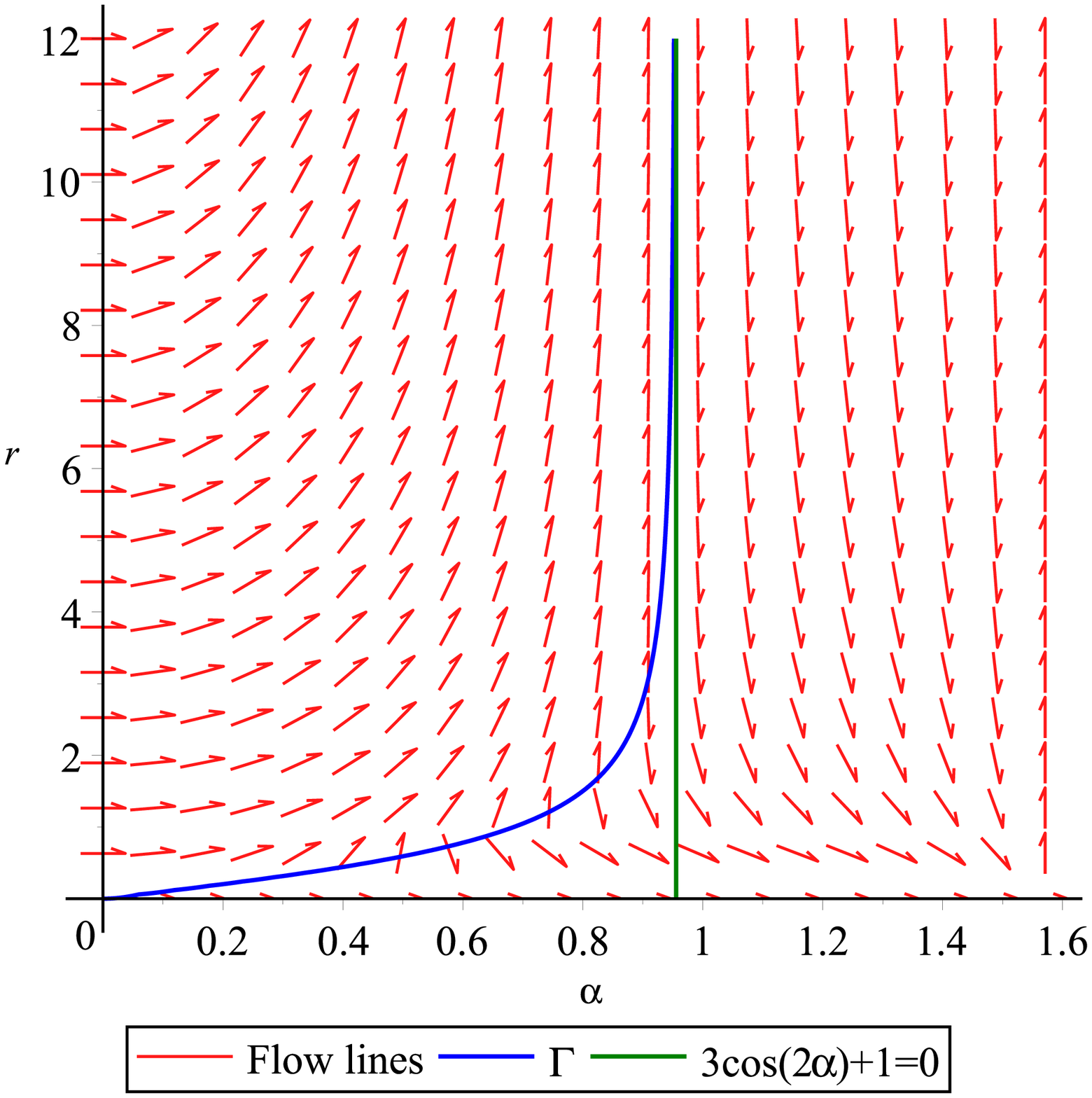}
\end{center}
\end{figure}

Since $\frac{\d r}{\d\alpha}>0$ to the left of $\Gamma$ and $\frac{\d r}{\d\alpha}<0$ to the right of $\Gamma$, as in the previous case we deduce that any flow line passing through $(\frac{\pi}{2},0)$ must either stay to the right of $\Gamma$ and have $r\to\infty$ as $\alpha\to\frac{1}{2}\cos^{-1}(-\frac{1}{3})$, or cross $\Gamma$ vertically then stay to the left of $\Gamma$ and have the same asymptotic behaviour.

Since $\alpha\equiv \frac{\pi}{2}$ is a solution to~\eqref{eq:cos.gamma.zero.flow.2}, a flow line can only reach $\alpha=\frac{\pi}{2}$ when $r=0$. Moreover, any flow line which tends to $(\frac{\pi}{2},r_0)$ for some $r_0>0$ would have to do so vertically, which is not possible, and so all flow lines with $\alpha\to\frac{\pi}{2}$ must pass through $(\frac{\pi}{2},0)$. In contrast, since $\frac{\d r}{\d\alpha}=0$ for any $(0,r_0)$, we have flow lines passing through $(0,r_0)$ for all $r_0>0$.

Overall, we have two disconnected (open) 1-parameter families of coassociative fibres diffeomorphic to $\mathcal{O}_{\C\P^1}(-1)$, which we can parameterize by a non-zero constant $w$ such that $w>0$ corresponds to flow lines meeting   $\alpha=0$ where $r$ has minimum $w$, and such that $w<0$ corresponds to flow lines passing through $(\frac{\pi}{2},0)$, where the correspondence is given by the angle at which they enter the critical point.

\paragraph{The cone case $c=0$.} In this case, we explicitly have $w=w_{\cos\gamma=0} = \sqrt{r} \frac{(3\cos2\alpha+1)}{\sqrt{8(\cos2\alpha+1)}}$ is constant as given in Proposition~\ref{prop:cone.coass.CP2}. See Figure~\ref{fig:cos-gamma-zero-cone} for some curves where $w$ is constant. 

If $w=0$ we know that $3\cos2\alpha+1=0$, so we obtain an $\mathcal{S}^1$-family of coassociative fibres diffeomorphic to $\R^+\times\mathcal{S}^3$. Otherwise, we know that there are two components corresponding to $3\cos 2\alpha+1>0$ and $3\cos 2\alpha+1<0$, both of which come in 2-parameter families (described by $\beta$ and $w$). The first case contains $\alpha=0$ and $r$ attains its minimum value $w$ (which is positive) as $\alpha\to 0$. Moreover, $r$ can take value above this minimum, so we have a coassociative fibre diffeomorphic to $\mathcal{O}_{\mathbb{CP}^1}(-1)$. The second case contains $\alpha=\frac{\pi}{2}$, but we see from~\eqref{eq:w.cosgamma} that as $\alpha\to \frac{\pi}{2}$, we must have $r\to 0$ for $w$ to remain finite. Therefore, since $r$ can take any value, these coassociative fibres are diffeomorphic to $\R^+\times\mathcal{S}^3$. 

\begin{figure}[H]
\caption{$\cos\gamma=0$ when $c=0$}\label{fig:cos-gamma-zero-cone}
\begin{center}
\includegraphics[width=0.5\textwidth]{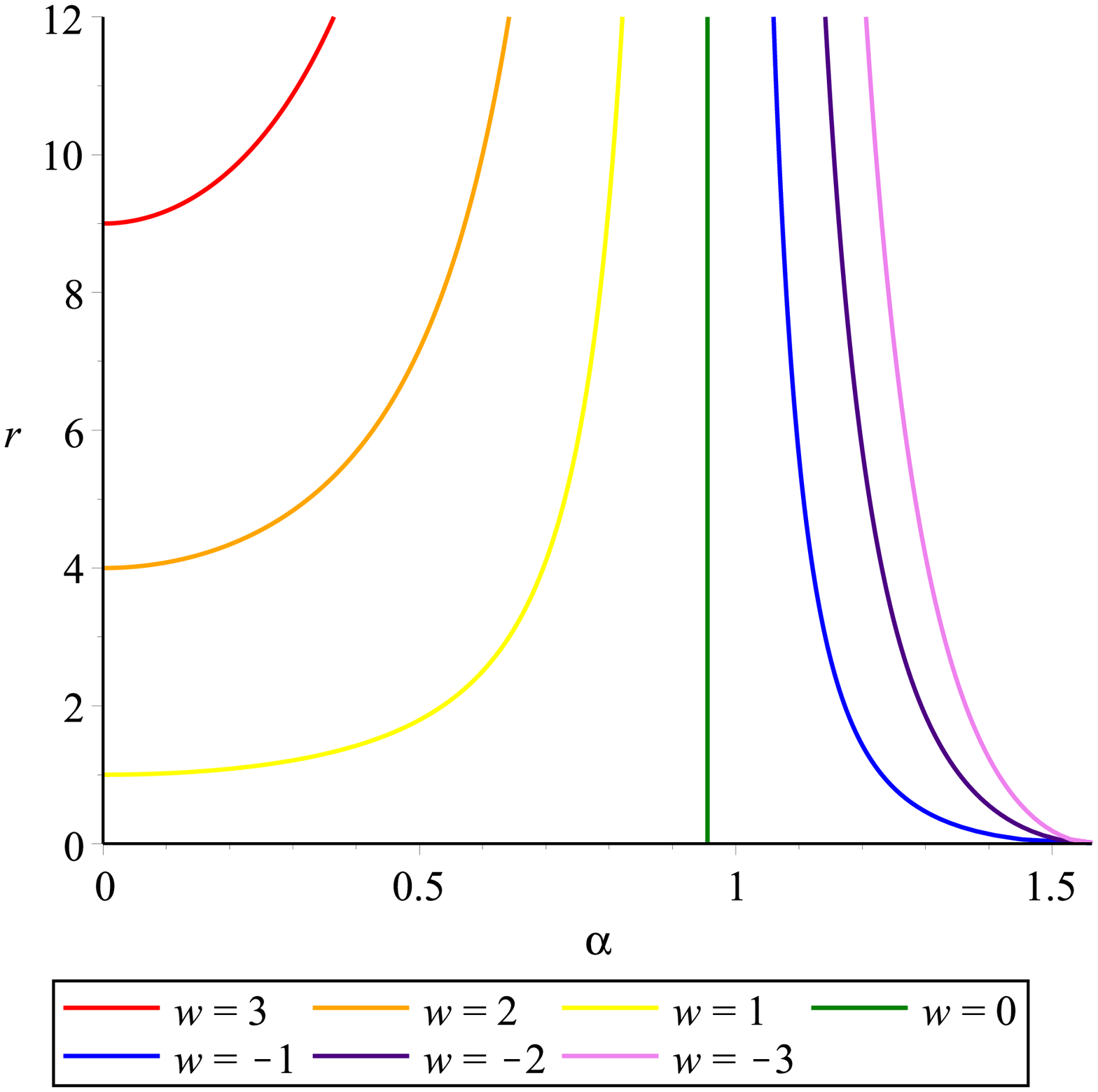}
\end{center}
\end{figure}

\subsubsection{Case 4: $\alpha \equiv \frac{\pi}{2}$}

When $\alpha \equiv \frac{\pi}{2}$, then by Table~\ref{table:SU2.orbits} the orbits are $\mathcal{S}^3$ if $\sin\gamma>0$, but are $\C\P^1$ if $\sin\gamma=0$ or $r=0$. Moreover, when $\sin\gamma>0$ the $\mathcal{S}^1$ action of $\beta$ is non-trivial. 

In this case, regardless of whether $c>0$ or $c=0$, we have a constant $w=w_{\alpha=\frac{\pi}{2}}=r\cos\gamma$ as given in Proposition~\ref{prop:cone.coass.CP2}. See Figure~\ref{fig:alpha-pi2-plot} for plots of some flow lines in the $(\gamma,r)$-plane given by constant $w$.

\begin{figure}[H]
\caption{$\alpha=\frac{\pi}{2}$}\label{fig:alpha-pi2-plot}
\begin{center}
\includegraphics[width=0.5\textwidth]{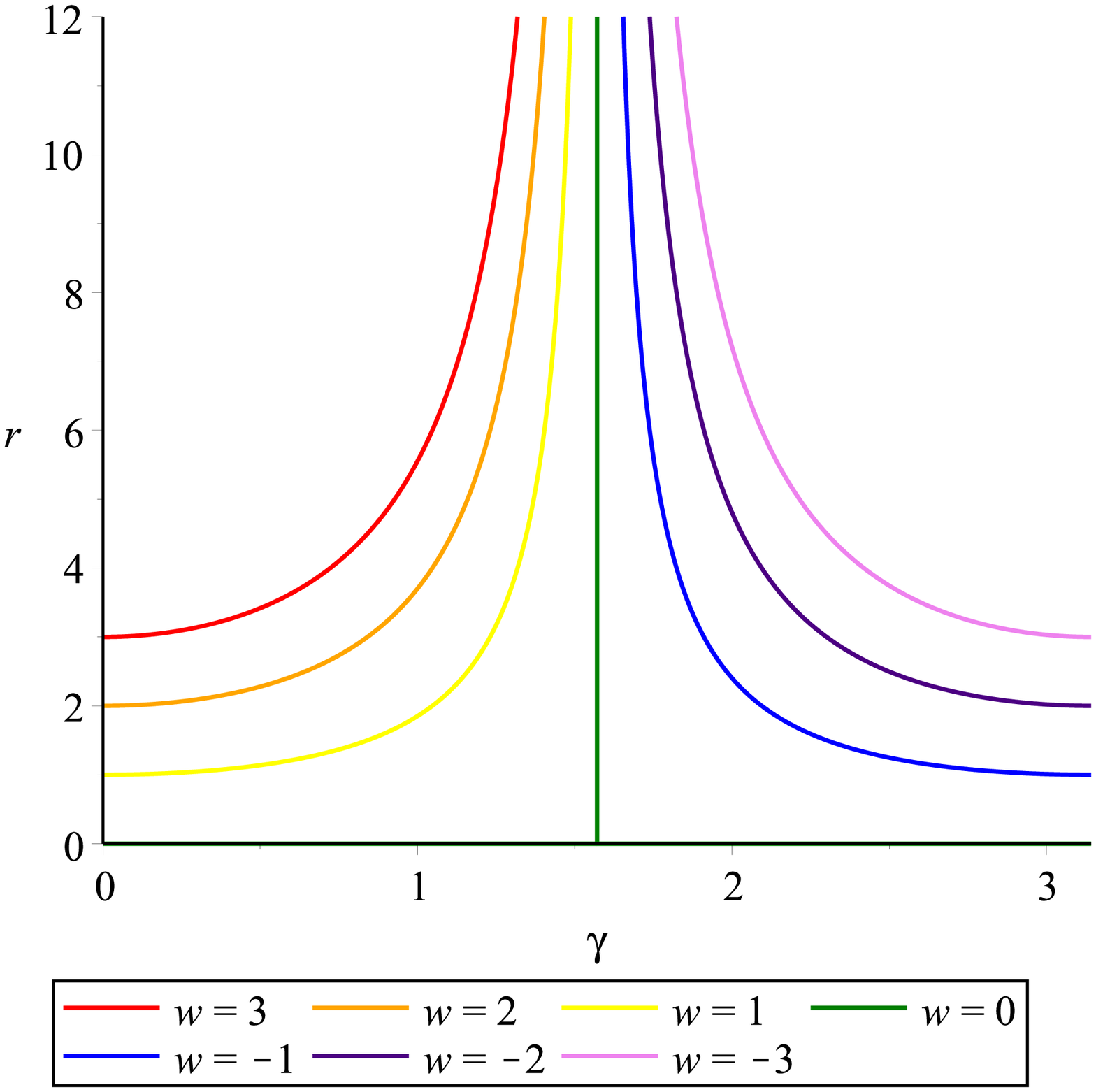}
\end{center}
\end{figure}

When $w=0$, we see that $r$ can take any positive value. If $c>0$, then we can also take $r = 0$, and by Table~\ref{table:SU2.orbits} we know that $\alpha=\frac{\pi}{2}$, $r = 0$ corresponds to a $\C\P^1\subseteq\C\P^2$. Hence, when $w=0$, we obtain coassociative fibres diffeomorphic to $\mathcal{O}_{\C\P^1}(-1)$ if $c>0$ and to $\R^+\times\mathcal{S}^3$ if $c=0$. When $w\neq 0$, its sign determines whether $\gamma\in [0,\frac{\pi}{2})$ or $\gamma\in (\frac{\pi}{2},\pi]$. In either case, we have that $r$ attains its minimum positive value when $\sin\gamma=0$ and takes every greater value, so each such coassociative fibre is diffeomorphic to $\mathcal{O}_{\C\P^1}(-1)$ and comes in a 2-parameter family.

\subsubsection{Case 5: generic setting} \label{sec:fibration-generic}

We now finally turn to the generic setting where $r>0$ and $\cos \alpha \sin \gamma \cos \gamma \not \equiv 0$, keeping in mind that we can restrict to $\gamma\in [0,\frac{\pi}{2}]$. Note that in this generic case, the expression $v = 2(c+r^2)^{\frac{1}{4}}\cos\alpha\cot\gamma$ given in~\eqref{eq:v} is a (necessarily positive) constant. 

To describe the coassociative fibres here it is equivalent to understand the flow lines for the vector field in the $(\alpha,\gamma)$-plane determined by~\eqref{eq:flow.smooth}. See Figures~\ref{fig:generic-smooth-plot} and~\ref{fig:generic-cone-plot} below.

\paragraph{Fixed points.} We observe from~\eqref{eq:flow.smooth} that critical points of the flow are the points where
\begin{align}\label{eq:smooth.crit.1}
\cos\alpha=0\quad\text{or}\quad (v^4\sin^4\gamma-16c\cos^4\alpha\cos^4\gamma)(2\cos^2\alpha+\sin^2\alpha\sin^2\gamma)=2v^4\sin^2\alpha\sin^4\gamma
\end{align}
and
\begin{align}\label{eq:smooth.crit.2-temp}
\sin\alpha\sin\gamma\cos\gamma=0\quad\text{or}\quad v^4\sin^2\alpha\sin^4\gamma+(v^4\sin^4\gamma-16c\cos^4\alpha\cos^4\gamma)\cos^2\alpha=0.
\end{align}
Note that from the first equation in~\eqref{eq:v.r.temp}  we have
\begin{equation*}
16r^2\cos^4\alpha\cos^4\gamma=v^4\sin^4\gamma-16c\cos^4\alpha\cos^4\gamma \qquad \text{ is non-negative}.
\end{equation*}
Thus~\eqref{eq:smooth.crit.2-temp} can be rewritten as
\begin{align}\label{eq:smooth.crit.2}
\sin\alpha\sin\gamma\cos\gamma=0\quad\text{or}\quad \sin\alpha\sin\gamma=(v^4\sin^4\gamma-16c\cos^4\alpha\cos^4\gamma)\cos^2\alpha=0.
\end{align}
From~\eqref{eq:smooth.crit.2} we see that at a fixed point we must have $\sin\alpha\sin\gamma\cos\gamma=0$, so $\alpha=0$ or $\gamma\in\{0,\frac{\pi}{2}\}$.

\emph{Fixed Point Case 1: $\alpha=0$.} In this case~\eqref{eq:smooth.crit.1} gives
$$\tan^4\gamma=\frac{16 c}{v^4}.$$
Notice by~\eqref{eq:r=0} that at such a point we would have $r=0$. Indeed, it is precisely the point where the curve in~\eqref{eq:r=0} meets the line $\alpha= 0$. (See Figure~\ref{fig:r-zero-plot}.) When $c=0$, this just gives the point $(0,0)$.

\emph{Fixed Point Case 2: $\gamma=0$.} In this case we see easily from~\eqref{eq:smooth.crit.2} that if $c>0$ we must have $\alpha=\frac{\pi}{2}$, which again lies on the curve in~\eqref{eq:r=0} corresponding to $r=0$. If $c=0$, we see from the simplified flow equation~\eqref{eq:u.flow.c=0} that in addition to the possibility that $\alpha = \frac{\pi}{2}$, we can also have
$$\sin^2\alpha-\cos^2\alpha=0$$
at the critical point, which gives $\alpha=\frac{\pi}{4}$.

\emph{Fixed Point Case 3: $\gamma=\frac{\pi}{2}$.} In this case if $\alpha<\frac{\pi}{2}$ then~\eqref{eq:smooth.crit.1} yields 
$$v^4(2\cos^2\alpha+\sin^2\alpha)=2v^4\sin^2\alpha.$$
Since $v > 0$ in this generic setting, this forces 
$$\tan^2\alpha=2.$$
We also have the critical point where $(\alpha,\gamma)=(\frac{\pi}{2},\frac{\pi}{2})$.

\emph{Fixed Points Summary.} We conclude that we have the following fixed points in the $(\alpha,\gamma)$-plane:
\begin{equation} \label{eq:fixed-points-c}
\big(0,\tan^{-1}(2c^{\frac{1}{4}}v^{-1})\big),\quad\big(\tfrac{\pi}{2},0\big),\quad \big(\tan^{-1}(\sqrt{2}),\tfrac{\pi}{2}\big),\quad \big(\tfrac{\pi}{2},\tfrac{\pi}{2}\big) \qquad \text{when $c>0$},
\end{equation}
and
\begin{equation} \label{eq:fixed-points-0}
(0,0), \quad \big(\tfrac{\pi}{4},0\big), \quad \big(\tfrac{\pi}{2},0\big), \quad \big(\tan^{-1}(\sqrt{2}),\tfrac{\pi}{2}\big),\quad \big(\tfrac{\pi}{2},\tfrac{\pi}{2}\big) \qquad \text{when $c=0$}.
\end{equation}

\paragraph{Boundary curve.} Now that we have the fixed points for our vector field in the $(\alpha,\gamma)$-plane, we seek to understand the dynamics of the vector field. 

When $c>0$, we are only interested in flow lines which lie above the curve defining $r=0$ in~\eqref{eq:r=0}, and here the coefficient in front of $\dot{\alpha}$ in~\eqref{eq:flow.smooth} is positive. Thus, the dynamics of the vector field are completely controlled by the coefficient of $\dot{\gamma}$ in~\eqref{eq:flow.smooth}, which vanishes on the curve $\Gamma$ where
\begin{equation}\label{eq:boundary.curve.smooth}
(v^4\sin^4\gamma-16c\cos^4\alpha\cos^4\gamma)(2\cos^2\alpha+\sin^2\alpha\sin^2\gamma)=2v^4\sin^2\alpha\sin^4\gamma.
\end{equation}
Away from this boundary curve $\Gamma$, we have that
\begin{equation}\label{eq:dgda.smooth}
\frac{\d\gamma}{\d\alpha}=\frac{2\sin\alpha\sin\gamma\cos\gamma\big(v^4\sin^2\alpha\sin^4\gamma+(v^4\sin^4\gamma-16 c\cos^4\alpha\cos^4\gamma)\cos^2\alpha\big)}{\cos\alpha\big((v^4\sin^4\gamma-16 c\cos^4\alpha\cos^4\gamma)(2\cos^2\alpha+\sin^2\alpha\sin^2\gamma)-2v^4\sin^2\alpha\sin^4\gamma\big)}.
\end{equation}
In Figure~\ref{fig:generic-smooth-plot} we show the flow lines for $c=1$ and $v=4$, as well as the boundary curve $\Gamma$, the curve where $r=0$, and the asymptote to $\Gamma$ where $\tan^2\alpha=2$.

\begin{figure}[H]
\caption{Flow lines when $c=1$}\label{fig:generic-smooth-plot}
\begin{center}
\includegraphics[width=0.5\textwidth]{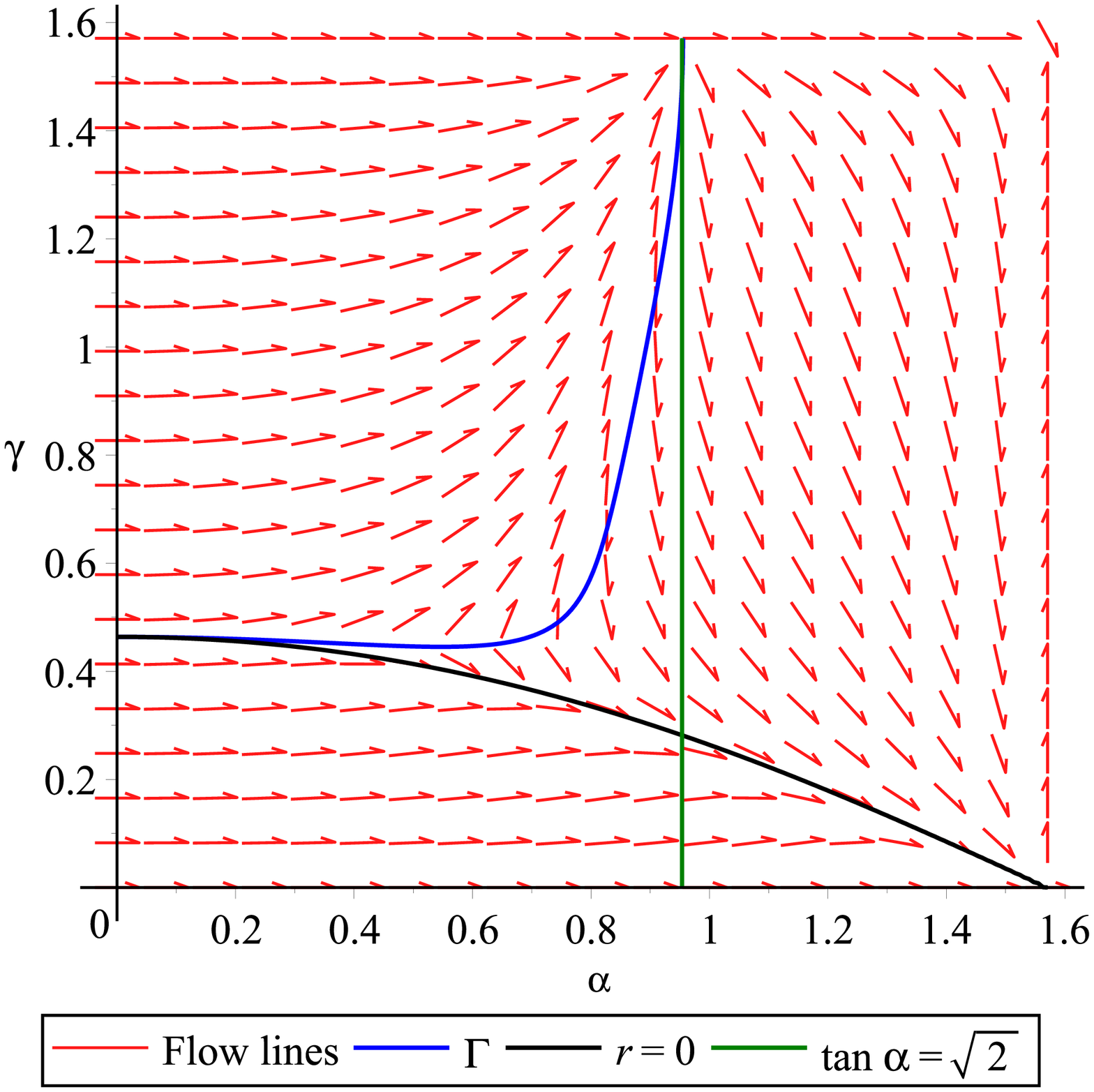}
\end{center}
\end{figure}

When $c=0$, equation~\eqref{eq:dgda.smooth} simplifies greatly, as one can also see directly from~\eqref{eq:u.flow.c=0}, to give
$$\frac{\d\gamma}{\d\alpha}=\frac{2 \sin\alpha\sin\gamma\cos\gamma}{\cos\alpha(2\cos^2\alpha-2\sin^2\alpha+\sin^2\alpha\sin^2\gamma)}$$
away from a boundary curve $\Gamma$ given by
$$2\cos^2\alpha-2\sin^2\alpha+\sin^2\alpha\sin^2\gamma=0.$$
In Figure~\ref{fig:generic-cone-plot} we again show the flow lines and the boundary curve $\Gamma$, but now we have two asymptotes to $\Gamma$ when $\tan\alpha=1$ and $\tan\alpha=\sqrt{2}$.

\begin{figure}[H]
\caption{Flow lines when $c=0$}\label{fig:generic-cone-plot}
\begin{center}
\includegraphics[width=0.6\textwidth]{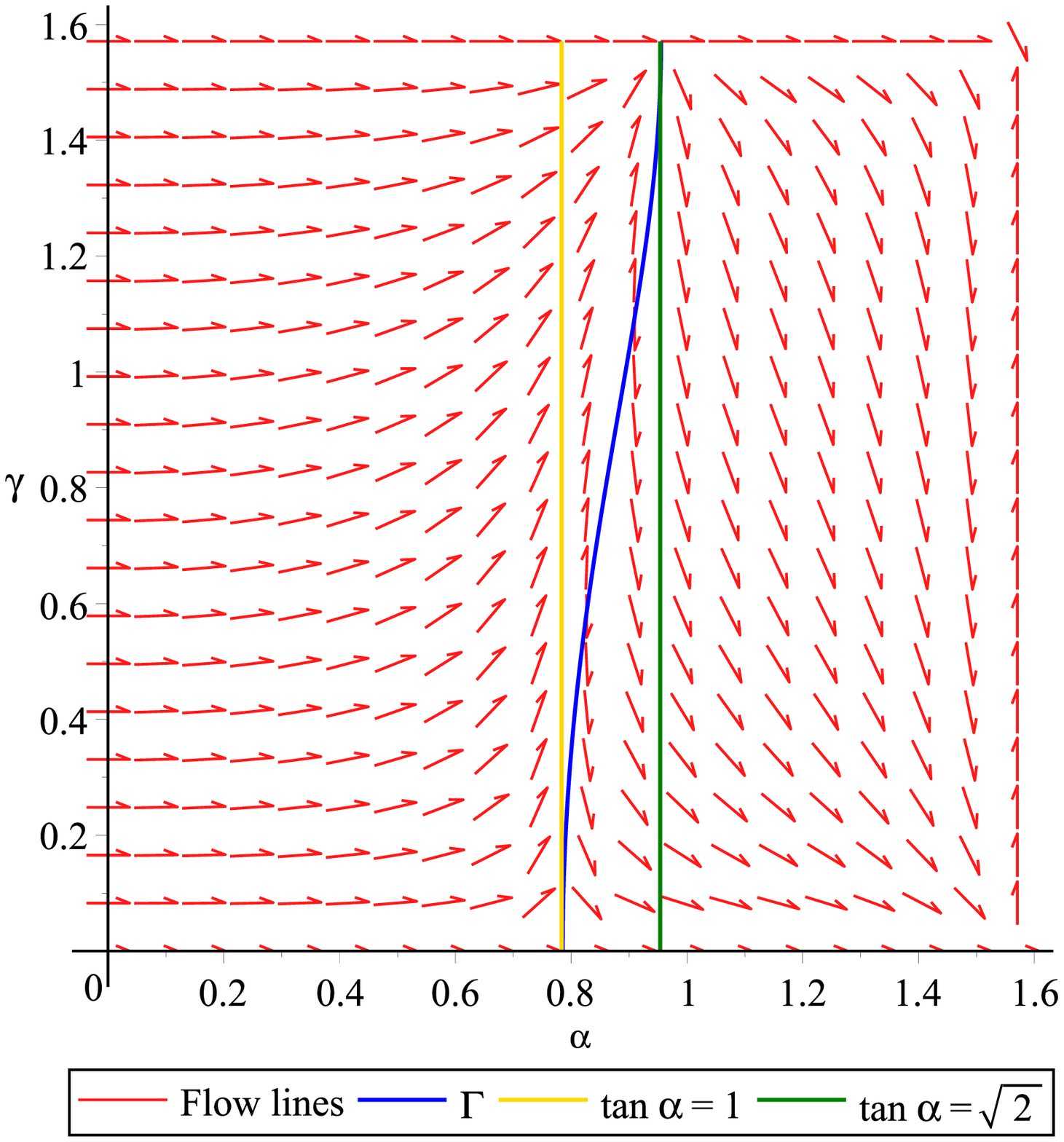}
\end{center}
\end{figure}

Of course, when $c=0$ we know that we can in fact integrate~\eqref{eq:u.flow.c=0} explicitly to give the constant~\eqref{eq:u.c=0} and thus we can plot the integral curves of the vector field below in Figure~\ref{fig:u-cone-plot}.

\begin{figure}[H]
\caption{Curves with constant $u$ when $c=0$}\label{fig:u-cone-plot}
\begin{center}
\includegraphics[width=0.7\textwidth]{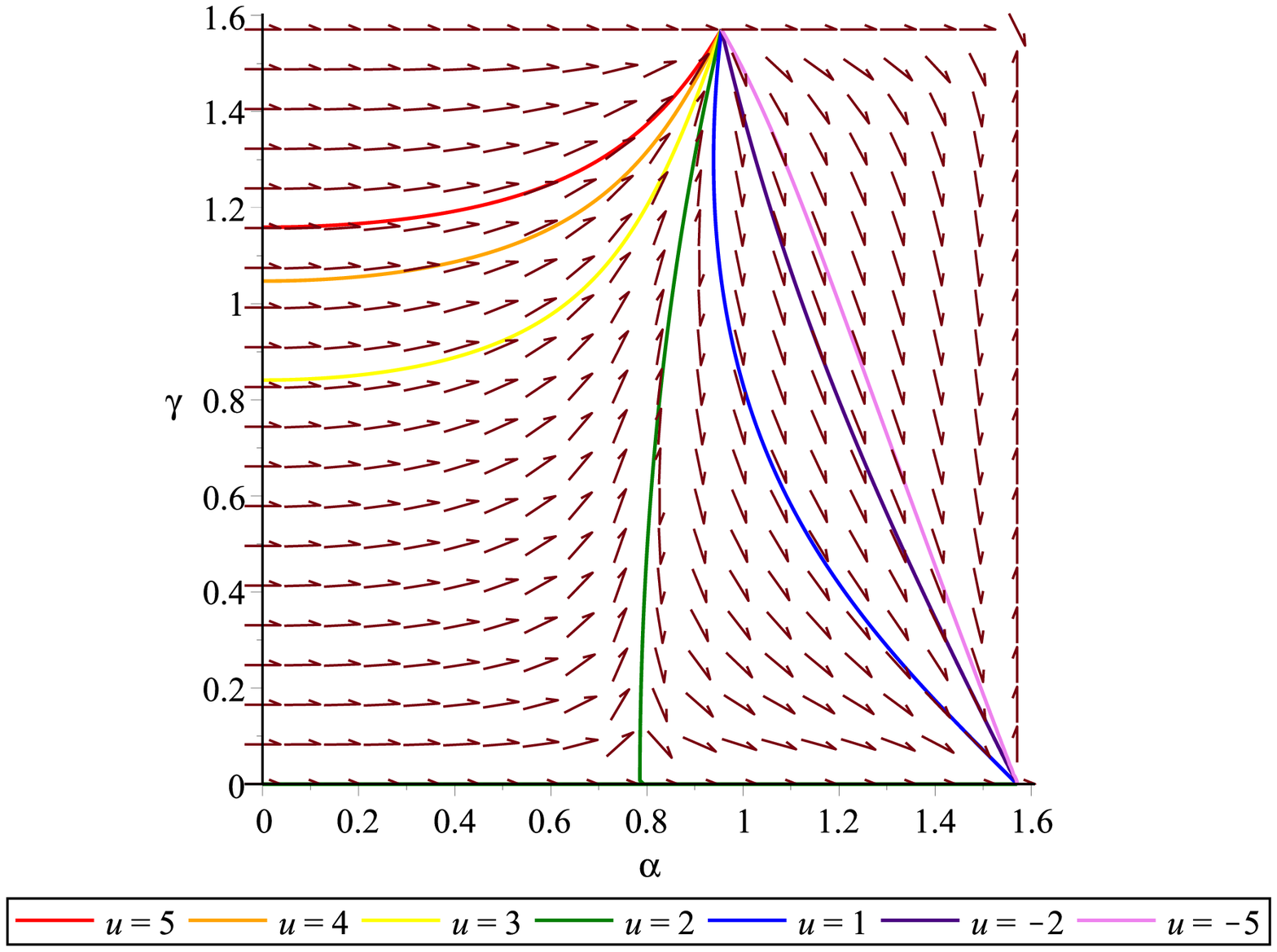}
\end{center}
\end{figure}

In both cases, $c>0$ and $c=0$, we again see that $\frac{\d\gamma}{\d\alpha}>0$ to the left of the boundary curve $\Gamma$, and $\frac{\d\gamma}{\d\alpha}<0$ to the right of $\Gamma$. Moreover, as we have seen before, flow lines must cross $\Gamma$ vertically.

\paragraph{Region 1: right of $\Gamma$.} We see that no flow line in this region can meet $\alpha=\frac{\pi}{2}$ except at $\gamma=0$, since $\alpha\equiv \frac{\pi}{2}$ solves~\eqref{eq:flow.smooth} and $\frac{\d\gamma}{\d\alpha}\to\infty$ as $(\alpha,\gamma)\to(\frac{\pi}{2},\gamma_0)$ for any $\gamma_0\in(0,\frac{\pi}{2})$. 

Moreover, as we already remarked earlier, no flow line can meet the curve where $r=0$ except when $\alpha\in\{0,\frac{\pi}{2}\}$. Therefore, any flow line in the region right of $\Gamma$ must flow into $(\frac{\pi}{2},0)$.

If the flow line is always to the right of $\Gamma$ then it must emanate from the critical point $(\tan^{-1}(\sqrt{2}),\frac{\pi}{2})$, since $\gamma\equiv\frac{\pi}{2}$ solves~\eqref{eq:flow.smooth} and we see that no flow line can meet this one except at a critical point.

If the flow line has crossed $\Gamma$, then for all earlier times it must have been to the left of $\Gamma$. However, $\gamma$ is decreasing as the flow line crosses the boundary curve and so $\alpha$ must have been increasing before then. Since the flow line cannot cross $\Gamma$ again it must have emanated from a point where $\gamma=\frac{\pi}{2}$ and, as before, must have come from a critical point, which is the same one as above.

We now want to understand the topology of these coassociatives.

As $\gamma\to\frac{\pi}{2}$ and $\alpha\to\tan^{-1}(\sqrt{2})$, we see that since $v$ in~\eqref{eq:v} is constant we must have that $r\to\infty$. When $c>0$, the point $(\frac{\pi}{2},0)$ corresponds to a $\C\P^1$, by Table~\ref{table:SU2.orbits} and the fact that $r=0$ there from~\eqref{eq:r=0}. Thus, when $c>0$, we see that all of the flow lines terminating at $(\frac{\pi}{2},0)$, of which there is an open 1-parameter family determined by the angle at which they enter the critical point, give coassociative fibres diffeomorphic to $\mathcal{O}_{\C\P^1}(-1)$.

When $c=0$ instead, we see again from~\eqref{eq:v} that as $(\alpha,\gamma)\to (\frac{\pi}{2},0)$ we must have that $\frac{r^{\frac{1}{2}}\cos\alpha}{\sin\gamma}$ tends to a positive constant, because $v>0$. Moreover, we must have that $\frac{\d\gamma}{\d\alpha}$ tends to a negative constant as $(\alpha,\gamma)\to (\frac{\pi}{2},0)$, because the unique curve passing through $(\frac{\pi}{2},0)$ with $\frac{\d\gamma}{\d\alpha}\to 0$ at $(\frac{\pi}{2},0)$ is $\gamma\equiv 0$, and the unique curve with $\frac{\d\alpha}{\d\gamma}\to 0$ at $(\frac{\pi}{2},0)$ is $\alpha\equiv \frac{\pi}{2}$. Therefore $\frac{\cos\alpha}{\sin\gamma}$ must tend to a positive constant as $(\alpha,\gamma)\to (\frac{\pi}{2},0)$ and hence $r$ tends to a positive constant, which by Table~\ref{table:SU2.orbits} corresponds to a $\C\P^1$. We note here for future use, that this argument shows that when $c=0$, as $(\alpha,\gamma)\to (\frac{\pi}{2},0)$, using~\eqref{eq:u.c=0} we have that
$$u\to 2-\lim_{(\alpha,\gamma)\to (\frac{\pi}{2},0)}\frac{\sin^2\gamma}{\cos^2\alpha}<2.$$

Altogether we see that these flow lines in Region 1 define coassociatives diffeomorphic to $\mathcal{O}_{\C\P^1}(-1)$, and that they come in a 3-parameter family, determined by $v$, $\beta$, and a constant $u<2$ (which is given by~\eqref{eq:u.c=0} when $c=0$).

\paragraph{Region 2: left of $\Gamma$.} Here, since $\frac{\d\gamma}{\d\alpha}>0$ for $\alpha>0$, we see that flow lines can only meet $\gamma=\frac{\pi}{2}$ at the critical point $(\tan^{-1}(\sqrt{2}),\frac{\pi}{2})$, so all flow lines pass through this point.

The flow lines must either cross $\Gamma$ or stay to the left of $\Gamma$. If the former occurs then we are back in Region 1 above, so we assume that our flow line stays to the left of $\Gamma$. Since $\frac{\d\gamma}{\d\alpha}\to 0$ as $(\alpha,\gamma)\to (0,\gamma_0)$ for $\gamma_0\in(0,\frac{\pi}{2})$, we see that we have flow lines emanating from $(0,\gamma_0)$ for all $\gamma_0\in(0,\frac{\pi}{2})$, which terminate in $(\tan^{-1}(\sqrt{2}),\frac{\pi}{2})$. The point $(0,\gamma_0)$ gives a $\C\P^1$, by Table~\ref{table:SU2.orbits} and the fact that $r$ has a minimum positive value there. Thus we obtain coassociative fibres diffeomorphic to $\mathcal{O}_{\C\P^1}(-1)$ from these flow lines.

Combining these observations with our analysis of Region 1, we therefore see that by varying the angle in $(0,\pi)$ at which the flow line enters (or leaves) the critical point $(\tan^{-1}(\sqrt{2}),\frac{\pi}{2})$, we have an open interval (with endpoint $0$) of angles giving flow lines which flow out of $(0,\gamma_0)$ for all $\gamma_0\in(0,\frac{\pi}{2})$ and another open interval with endpoint $\pi$ which flow into $(\frac{\pi}{2},0)$.

We deduce that, when $c>0$, there must be a closed connected set of flow lines which flow into the unique critical point that has $\alpha=0$, as given in~\eqref{eq:fixed-points-c} and~\eqref{eq:fixed-points-0}. Further, we see from~\eqref{eq:dgda.smooth} that $\frac{\d\gamma}{\d\alpha}\to 0$ as any flow line tends to the critical point where $\alpha=0$ (because $\gamma>0$ there). Hence, there must in fact be a unique flow line passing through that critical point. Recall that by the discussion on fixed points at the start of this section, this critical point where $\alpha = 0$ also has $r=0$. Hence, we obtain a coassociative fibre which is $\R^+\times\mathcal{S}^3$ (because we exclude the point where $r=0$).

When $c=0$, we cannot have any flow line emanating from $(0,0)$ because $\frac{\d\gamma}{\d\alpha}\to 0$ there, and the unique solution is $\gamma\equiv 0$. Therefore, by~\eqref{eq:fixed-points-0}, we instead have a closed connected set of flow lines emanating from the critical point $(\frac{\pi}{4},0)$. As $(\alpha,\gamma)\to (\frac{\pi}{4},0)$, because $v>0$ is constant, by~\eqref{eq:v} we must have that $r\to 0$. Furthermore we observe in this case from~\eqref{eq:u.c=0} that as $(\alpha,\gamma)\to (\frac{\pi}{4},0)$ we have
$$u=\frac{2}{\cos \gamma}-\frac{\cos^2\alpha\sin^2\gamma}{\sin^2\alpha\cos\gamma}\to 2.$$ 
Thus $u=2$, and this flow line again defines a coassociative diffeomorphic to $\R^+\times\mathcal{S}^3$.

In conclusion, we may define a parameter $u$ (which is given precisely by~\eqref{eq:u.c=0} in the case when $c=0$) so that the flow lines in Region 2 define a 3-parameter family of coassociatives diffeomorphic to $\mathcal{O}_{\C\P^1}(-1)$ for any $v$ and $\beta$ and any $u>2$, but we get a 2-parameter family of coassociatives fibres diffeomorphic to $\R^+\times\mathcal{S}^3$ when $u=2$, in which case the free parameters are $v$ and $\beta$.

\subsubsection{Summary and discussion}\label{subs:summary}

We summarise our findings in the smooth case $M$ in Table~\ref{table:fibration-M}. In some cases we do not know the parameters explicitly because we have been unable to integrate equations, but the argument gives natural parameters which we identify with perturbations of the parameters $u,v,w$ we found in the cone case. Here, $\alpha_0(w)$, $\alpha_0(u,v)$, $\gamma_0(u,v)$, and $r_0(u,v)$ are positive constants simply used to denote the minimum value $\alpha$, $\gamma$, and $r$ can attain in those cases (which is determined either by $w$, or by $u$ and $v$, respectively), and we let $\gamma_0(v)=\tan^{-1}(2c^\frac{1}{4}v^{-1})$.

\begin{table}[H]
\begin{center}
{\setlength{\extrarowheight}{4pt}
\begin{tabular}{|c|c|c|c|c|}
\hline
$r$ & $\gamma$ & $\alpha$ & Topology & Parameters\\[2pt]
\hline
$0$ & N/A & $\in[0,\frac{\pi}{2}]$ & $\C\P^2$ & N/A \\[2pt]
\hline
$\geq w>0$ & $0,\pi$ & $\in[0,\frac{\pi}{4})$ & $\mathcal{O}_{\C\P^1}(-1)$ & $w>0$ \\[2pt]
 \hline
$\geq -w> 0$ & $0,\pi$ & $\in[\alpha_0(w),\frac{\pi}{2}]$ & $\mathcal{O}_{\C\P^1}(-1)$ & $w<0$ \\[2pt]
\hline
$\geq w>0$ & $\frac{\pi}{2}$ & $<\frac{1}{2}\cos^{-1}(-\frac{1}{3})$ & $\mathcal{O}_{\C\P^1}(-1)$ & $w>0$, $\beta\in[0,2\pi)$\\[2pt]
\hline 
$\geq 0$ & $\frac{\pi}{2}$ & $\in [\alpha_0(w),\frac{\pi}{2}]$ & $\mathcal{O}_{\C\P^1}(-1)$ & $w<0$, $\beta\in[0,2\pi)$\\[2pt]
 \hline
$\geq w>0$ & $\in [0,\frac{\pi}{2})$ & $\frac{\pi}{2}$ & $\mathcal{O}_{\C\P^1}(-1)$ & $w>0$, $\beta\in [0,2\pi)$\\[2pt]
\hline 
$\geq 0$ & $\frac{\pi}{2}$ & $\frac{\pi}{2}$ & $\mathcal{O}_{\C\P^1}(-1)$ & $w=0$, $\beta\in [0,2\pi)$\\[2pt]
\hline 
$\geq -w>0$ & $\in (\frac{\pi}{2},\pi]$ & $\frac{\pi}{2}$ & $\mathcal{O}_{\C\P^1}(-1)$ & $w<0$, $\beta\in [0,2\pi)$ \\[2pt]
\hline
$\geq r_0(u,v)>0$ & $\in [\gamma_0(u,v),\frac{\pi}{2})$ & $\in [0,\tan^{-1}(\sqrt{2}))$ & $\mathcal{O}_{\C\P^1}(-1)$ & $u>2$, $v>0$, $\beta\in [0,2\pi)$\\[2pt]
\hline
$> 0$ & $\in (\gamma_0(v),\frac{\pi}{2})$ & $\in [0,\tan^{-1}(\sqrt{2}))$ & $\R^+\times\mathcal{S}^3$ & $u=2$, $v>0$, $\beta\in [0,2\pi)$\\[2pt]
\hline
$\geq 0$ & $\in [0,\frac{\pi}{2})$ & $\in [\alpha_0(u,v),\frac{\pi}{2}]$ & $\mathcal{O}_{\C\P^1}(-1)$ & $u<2$, $v>0$, $\beta\in [0,2\pi)$\\[2pt]
\hline
$\geq r_0(u,v)>0$ & $\in (\frac{\pi}{2},\pi-\gamma_0(u,v))$ & $\in [0,\tan^{-1}(\sqrt{2}))$ & $\mathcal{O}_{\C\P^1}(-1)$ & $u<-2$, $v>0$, $\beta\in [0,2\pi)$\\[2pt]
\hline
$> 0$ & $\in (\frac{\pi}{2},\pi-\gamma_0(v))$ & $\in [0,\tan^{-1}(\sqrt{2}))$ & $\R^+\times\mathcal{S}^3$ & $u=-2$, $v>0$, $\beta\in [0,2\pi)$\\[2pt]
\hline
$\geq 0$ & $\in (\frac{\pi}{2},\pi]$ & $\in [\alpha_0(u,v),\frac{\pi}{2}]$ & $\mathcal{O}_{\C\P^1}(-1)$ & $u>-2$, $v>0$, $\beta\in [0,2\pi)$\\[2pt]
\hline
\end{tabular}}
\end{center}
\caption{Fibration for $M=\Lambda^2_-(T^*\C\P^2)$} \label{table:fibration-M}
\end{table}

We summarize our findings in the cone case $M_0$ in Table~\ref{table:fibration-M0}. Here we do not give the values $\alpha_0(u,v)>\frac{\pi}{4}$, $\gamma_0(u,v)>0$, and $r_0(u,v)>0$ explicitly, but one can determine them from the parameters $u$ and $v$.

\begin{table}[H]
\begin{center}
{\setlength{\extrarowheight}{4pt}
\begin{tabular}{|c|c|c|c|c|}
\hline
$r$ & $\gamma$ & $\alpha$ & Topology & Parameters\\[2pt]
\hline
$\geq w>0$ & $0,\pi$ & $\in[0,\frac{\pi}{4})$ & $\mathcal{O}_{\C\P^1}(-1)$ & $w>0$ \\[2pt]
\hline
$>0$ & $0,\pi$ & $\frac{\pi}{4}$ & $\R^+\times\mathcal{S}^3$ & $w=0$ \\[2pt]
 \hline
$\geq -w>0$ & $0,\pi$ & $\in(\frac{\pi}{4},\frac{\pi}{2}]$ & $\mathcal{O}_{\C\P^1}(-1)$ & $w<0$\\[2pt]
\hline 
$\geq w>0$ & $\frac{\pi}{2}$ & $<\frac{1}{2}\cos^{-1}(-\frac{1}{3})$ & $\mathcal{O}_{\C\P^1}(-1)$ & $w>0$, $\beta\in [0,2\pi)$\\[2pt]
 \hline
$>0$ & $\frac{\pi}{2}$ & $\frac{1}{2}\cos^{-1}(-\frac{1}{3})$ & $\R^+\times\mathcal{S}^3$ & $w=0$, $\beta\in [0,2\pi)$\\[2pt]
\hline
$>0$ & $\frac{\pi}{2}$ & $>\frac{1}{2}\cos^{-1}(-\frac{1}{3})$ & $\R^+\times\mathcal{S}^3$ & $w<0$, $\beta\in [0,2\pi)$\\[2pt]
\hline 
$\geq w>0$ & $\in [0,\frac{\pi}{2})$ & $\frac{\pi}{2}$ & $\mathcal{O}_{\C\P^1}(-1)$ & $w>0$, $\beta\in [0,2\pi)$\\[2pt]
\hline 
$>0$ & $\frac{\pi}{2}$ & $\frac{\pi}{2}$ & $\R^+\times\mathcal{S}^3$ & $w=0$, $\beta\in [0,2\pi)$\\[2pt]
\hline 
$\geq -w>0$ & $\in (\frac{\pi}{2},\pi]$ & $\frac{\pi}{2}$ & $\mathcal{O}_{\C\P^1}(-1)$ & $w<0$, $\beta\in [0,2\pi)$ \\[2pt]
\hline
$\geq r_0(u,v)$ & $\in [\gamma_0(u,v),\frac{\pi}{2})$ & $\in [0,\tan^{-1}(\sqrt{2}))$ & $\mathcal{O}_{\C\P^1}(-1)$ & $u>2$, $v>0$, $\beta\in [0,2\pi)$\\[2pt]
\hline
$>0$ & $\in (0,\frac{\pi}{2})$ & $\in (\frac{\pi}{4},\tan^{-1}(\sqrt{2}))$ & $\R^+\times\mathcal{S}^3$ & $u=2$, $v>0$, $\beta\in [0,2\pi)$\\[2pt]
\hline
$\geq r_0(u,v)$ & $\in (0,\frac{\pi}{2})$ & $\in [\alpha_0(u,v),\frac{\pi}{2})$ & $\mathcal{O}_{\C\P^1}(-1)$ & $u<2$, $v>0$, $\beta\in [0,2\pi)$\\[2pt]
\hline
$\geq r_0(u,v)$ & $\in (\frac{\pi}{2},\pi-\gamma_0(u,v)]$ & $\in [0,\tan^{-1}(\sqrt{2}))$ & $\mathcal{O}_{\C\P^1}(-1)$ & $u<-2$, $v<0$, $\beta\in [0,2\pi)$\\[2pt]
\hline
$>0$ & $\in (\frac{\pi}{2},\pi)$ & $\in (\frac{\pi}{4},\tan^{-1}(\sqrt{2}))$ & $\R^+\times\mathcal{S}^3$ & $u=-2$, $v<0$, $\beta\in [0,2\pi)$\\[2pt]
\hline
$\geq r_0(u,v)$ & $\in (\frac{\pi}{2},\pi)$ & $\in [\alpha_0(u,v),\frac{\pi}{2})$ & $\mathcal{O}_{\C\P^1}(-1)$ & $u>-2$, $v<0$, $\beta\in [0,2\pi)$\\[2pt]
\hline
\end{tabular}}
\end{center}
\caption{Fibration for $M_0=\R^+\times(\SU(3)/T^2$)} \label{table:fibration-M0}
\end{table}

We note the following important observations about these two coassociative fibrations.
\begin{itemize}
\item In both $M$ and $M_0$, the \emph{generic} coassociative fibre in the fibration is diffeomorphic to $\mathcal{O}_{\C\P^1}(-1)$, but there is a \emph{codimension 1 subfamily} of the fibration consisting of coassociatives which are diffeomorphic to $\R^+\times\mathcal{S}^3$. (See also Remark~\ref{rmk:CP2.fib}.)
\item Comparing the two tables above with Table~\ref{table:SU2.orbits} on the $\SU(2)$ orbits, we deduce the following. Given a fixed $\C\P^1$ orbit of $\SU(2)$ in the $\GG_2$ manifold, there is generically a 2-parameter family of smooth $\mathcal{O}_{\C\P^1}(-1)$ fibres all of whose zero sections are the same fixed $\C\P^1$. (That is, each smooth fibre intersects in a $\C\P^1$ with a 2-parameter family of smooth fibres.) The two parameters are determined by $\beta \in [0, 2\pi)$ and the condition that the minimum value $r_0(u,v)$ of $r$ be the same for all smooth fibres in this family, as this minimum value corresponds to the bolt size.
\item The coassociative fibres only ever intersect in the bolts of smooth $\mathcal{O}_{\C\P^1}(-1)$ fibres. From Table~\ref{table:SU2.orbits} we see that there is a 1-parameter family of $\C\P^1$ orbits (the bolts). Hence the subset $M \setminus M'$ or $M_0 \setminus M'_0$ containing the points that lie in multiple coassociative ``fibres'' is a 1-parameter family of $\mathcal{S}^2$'s, and thus is 3-dimensional. It would be interesting to study the geometric structure of this set. For example, is it the case that at the smooth points this set is an associative submanifold?
\end{itemize}

\subsection{Relation to multi-moment maps} \label{subs:multimoment.CP2}

Let $X_1$, $X_2$, $X_3$ denote the vector fields which are dual to the 1-forms $\sigma_1$, $\sigma_2$, $\sigma_3$ in~\eqref{eq:sigmas-CP2}, that is such that $\sigma_i(X_j)=\delta_{ij}$. Then $X_1$, $X_2$, $X_3$ generate the $\SU(2)$ action and hence preserve both $\varphi_c$ and $*_{\varphi_c} \varphi_c$. In this section we find the multi-moment map $\nu$ for this $\SU(2)$ action for the $4$-form $*_{\varphi_c} \varphi_c$ as in~\eqref{eq:mmm4}.

Using the expression~\eqref{eq:psic.CP2} for the 4-form, the expressions~\eqref{eq:volCP2.CP2} and~\eqref{eq:zizjOk.CP2}, and the fact that $\sigma_1 \w \sigma_2 \w \sigma_3 = p_1 \w p_2 \w p_3$, a computation gives
\begin{align*}
*_{\varphi_c} \varphi_c (X_1,X_2,X_3,\cdot) & = \tfrac{1}{2} r (4 \cos^2 \alpha + \sin^2 \gamma \sin^2 \alpha) \sin^2 \alpha \d r + \tfrac{1}{2} r^2 \sin \gamma \cos \gamma \sin^4 \alpha \d \gamma \\
& \qquad{} -\sin \alpha \cos \alpha (2 c \sin^2 \alpha + 2 r^2 \sin^2 \alpha - 2 r^2 \cos^2 \alpha - r^2 \sin^2 \alpha \sin^2 \gamma) \d \alpha \\
& = \d \big( \tfrac{1}{4} r^2 \sin^2 \alpha (4 \cos^2 \alpha + \sin^2 \gamma \sin^2 \alpha) - \tfrac{1}{2} c \sin^4 \alpha \big).
\end{align*}
The above calculation motivates the following definition.

\begin{dfn}
We define the function $\rho$ by
\begin{equation}\label{eq:rho.CP2}
\rho= \tfrac{1}{4} r^2 \sin^2 \alpha (4 \cos^2 \alpha + \sin^2 \gamma \sin^2 \alpha) + \tfrac{1}{2} c (1 - \sin^4 \alpha).
\end{equation}
so that $*_{\varphi_c} \varphi_c (X_1,X_2,X_3,\cdot) = \d \rho$. We have added the constant $\frac{1}{2} c$ to ensure that $\rho \geq 0$ always. (Note that this $\rho$ is the analogue of what we called $\rho^2$ in the $\mathcal{S}^4$ case.)
\end{dfn}

\begin{remark}
The relationship between the function $\rho$ and the bolts of the smooth fibres is less satisfactory in this $\C\P^2$  case, as compared to the $\mathcal{S}^4$ case.  In the cone case, where $c=0$, we have that $\rho=0$ precisely on the bolts.  If $c>0$ and $\rho = 0$, then we must have $\alpha = \frac{\pi}{2}$ and $\sin \gamma = 0$, and thus by Table~\ref{table:SU2.orbits} necessarily be on a bolt. However, there are other bolts, corresponding to $\alpha = 0$ in Table~\ref{table:SU2.orbits}, that are not detected by $\rho = 0$ in the smooth case $(c>0)$. 
\end{remark}

We have thus established the following proposition.

\begin{prop} The multi-moment map for the $\SU(2)$ action on $*_{\varphi_c} \varphi_c$ is $\rho$, which maps onto $[0,\infty)$.
\end{prop}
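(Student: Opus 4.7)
The plan is to verify the two claims in the proposition directly: first, that $d\rho = \ast_{\varphi_c}\varphi_c(X_1,X_2,X_3,\cdot)$, and second, that the range of $\rho$ is precisely $[0,\infty)$. For the first, I would start from the expression~\eqref{eq:psic.CP2} for $\ast_{\varphi_c}\varphi_c$ and substitute the identities~\eqref{eq:volCP2.CP2} and~\eqref{eq:zizjOk.CP2}, which write both the volume form and $\zeta_2\w\zeta_3\w\Omega_1+\zeta_3\w\zeta_1\w\Omega_2+\zeta_1\w\zeta_2\w\Omega_3$ in the coframe $\{\d r,\d\gamma,\d\alpha,\d\beta,p_1,p_2,p_3\}$, where the $\beta$-rotated 1-forms $p_i$ come from~\eqref{eq:beta-rotated}.

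The key observation is that the vector fields $X_1,X_2,X_3$ are dual to $\sigma_1,\sigma_2,\sigma_3$, hence annihilate each of the coordinate differentials $\d r,\d\gamma,\d\alpha,\d\beta$. Because the $\beta$-rotation is an orthogonal transformation of $\Span\{\sigma_2,\sigma_3\}$, we have $p_1\w p_2\w p_3=\sigma_1\w\sigma_2\w\sigma_3$ and the $3\times 3$ matrix $(p_i(X_j))$ has determinant $1$. It follows that only the terms of $\ast_{\varphi_c}\varphi_c$ proportional to $p_1\w p_2\w p_3$ contribute after contracting with $X_1,X_2,X_3$, and the resulting 1-form is exactly $(-1)$ times the sum of the remaining $\d r$, $\d\gamma$, $\d\alpha$, $\d\beta$ coefficients. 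Extracting these coefficients from~\eqref{eq:volCP2.CP2} and~\eqref{eq:zizjOk.CP2} produces the displayed identity just before the statement, namely
\begin{equation*}
\ast_{\varphi_c}\varphi_c(X_1,X_2,X_3,\cdot)=\tfrac{1}{2}r(4\cos^2\alpha+\sin^2\gamma\sin^2\alpha)\sin^2\alpha\,\d r+\tfrac{1}{2}r^2\sin\gamma\cos\gamma\sin^4\alpha\,\d\gamma+(\cdots)\,\d\alpha,
\end{equation*}
with \emph{no} $\d\beta$ term, which is a consistency check since the candidate $\rho$ in~\eqref{eq:rho.CP2} is manifestly $\beta$-independent. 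Differentiating the explicit formula for $\rho$ directly and comparing the three coefficients in $\d r$, $\d\gamma$, $\d\alpha$ completes the equality $d\rho = \ast_{\varphi_c}\varphi_c(X_1,X_2,X_3,\cdot)$.

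For the surjectivity claim, I would argue directly from~\eqref{eq:rho.CP2}: the first summand is a product of non-negative factors, while $\tfrac{1}{2}c(1-\sin^4\alpha)\geq 0$, so $\rho\geq 0$ pointwise. The value $\rho=0$ is attained somewhere in $M$ (e.g.\ on the zero section when $c=0$, or at $\alpha=\tfrac{\pi}{2},\sin\gamma=0$ when $c>0$), and $\rho\to\infty$ as $r\to\infty$ along any fibre with $\alpha,\gamma$ away from the degenerate loci. Since $M$ is connected, continuity of $\rho$ together with these two extremes gives the whole interval $[0,\infty)$ in its image.

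The main obstacle is purely bookkeeping: the intermediate formulas~\eqref{eq:zizjOk.CP2} are long, and care must be taken with the sign produced by contracting a 4-form $\eta\w p_1\w p_2\w p_3$ with the three vertical vector fields ($\eta\w p_1\w p_2\w p_3(X_1,X_2,X_3,\cdot)=-\eta$ for any 1-form $\eta$ horizontal to the $X_i$), since this sign must cancel the factor of $-2$ in front of the $\zeta_i\w\zeta_j\w\Omega_k$ sum in~\eqref{eq:psic.CP2} to reproduce the correct $\d\rho$. Once the signs and the three $d$-coefficient computations are aligned, the identification with $\d\rho$ is immediate.
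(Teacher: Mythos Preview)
Your proposal is correct and follows essentially the same approach as the paper: the paper performs exactly this contraction using~\eqref{eq:psic.CP2},~\eqref{eq:volCP2.CP2}, and~\eqref{eq:zizjOk.CP2}, extracts the $p_1\wedge p_2\wedge p_3$ terms, and identifies the result as $\d\rho$. One small slip: for $c=0$ you cite the zero section as a point where $\rho=0$, but the zero section is excluded from $M_0$; your other example $\alpha=\tfrac{\pi}{2}$, $\sin\gamma=0$ (with any $r>0$) already works uniformly for all $c\geq 0$.
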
 

Recall from Proposition~\ref{prop:smooth.coass.CP2} that in the smooth case ($c>0)$ we were unable to integrate the final ordinary differential equation to obtain the third independent constant on the coassociative fibres. However, in the cone case ($c=0$) we were able in Proposition~\ref{prop:cone.coass.CP2} to obtain three linearly independent \emph{exact} horizontal 1-forms $\d u$, $\d v$, and $\d \beta$ on the generic coassociative fibres.

Consider the following rotated frame
\begin{equation*}
Y_1 = X_1, \qquad Y_2 = \cos \beta X_2 + \sin \beta X_3, \qquad Y_3 = - \sin \beta X_2 + \cos \beta X_3,
\end{equation*}
which is dual to the rotated coframe $p_1, p_2, p_3$ introduced in~\eqref{eq:beta-rotated}. In the $c=0$ case, one can compute using the expression~\eqref{eq:phic.CP2} for $\varphi_c$ and the expressions~\eqref{eq:z1z2z3.CP2} and~\eqref{eq:zkOk.CP2} that
\begin{align*}
\varphi_c(Y_2,Y_3,\cdot)&= \frac{r^{\frac{3}{2}} \cos^2 \alpha \cos^2 \gamma}{\sin^2 \gamma} \d u - \frac{r(1-2\cos^2 \alpha)\sin \gamma}{\cos \alpha} \d v, \\
\varphi_c(Y_3,Y_1,\cdot)&=  \frac{r^{\frac{3}{2}} \cos^3 \alpha \cos \gamma}{\sin \gamma} \d u - \frac{r(1-3\cos^2 \alpha)\sin^2 \gamma}{2 \cos \gamma} \d v, \\
\varphi_c(Y_1,Y_2,\cdot)&=- r^{\frac{3}{2}} \sin^2 \alpha \cos \alpha \sin \gamma \d\beta.
\end{align*}
As expected by Remark~\ref{rmk:mmm}, these are not closed forms. Remark~\ref{rmk:mmmS4} also applies here.

\subsection{Rewriting the package of the \texorpdfstring{G\textsubscript{2}}{G2}-structure} \label{sec:package.SU2}

We can now construct a $\GG_2$ adapted coframe that is compatible with the coassociative fibration structure as in Lemma~\ref{lemma:adapted-frame}. In this section, due to the complexity of the intermediate formulae, we do not give the step-by-step computations, but we present enough details that the reader will know how to reproduce the computation if desired.

From~\S\ref{sub:SU2-invariant} we know that generically $\d v$ and $\d \beta$ are well-defined horizontal 1-forms pulled back from the base of the fibration. Moreover, when $c=0$, we have a third independent horizontal 1-form which is $\d u$. As we did in the $\mathcal{S}^4$ case in~\S\ref{sec:package.SO3}, we can apply the fundamental relation~\eqref{eq:metric-from-form}, yielding an explicit, albeit extremely complicated, formula for $g_c$ in terms of the local coordinates $r, \gamma, \alpha, \beta, \gamma, \theta, \phi$. As before, we omit the particular expression here. Again, one can use this expression to obtain the inverse metric $g_c^{-1}$ on 1-forms. In this case it is still true that $\d v$, $\d \beta$, and $\d \rho$ are mutually orthogonal. However, even when we can define $\d u$ in the $c=0$ case, it is \emph{not} orthogonal to $\d v$. Nevertheless, in either case we have enough data to apply Lemma~\ref{lemma:adapted-frame} to obtain our $\GG_2$ adapted oriented orthonormal coframe
\[ \{ \hat h_1, \hat h_2, \hat h_3, \hat \varpi_0,  \hat \varpi_1,  \hat \varpi_2,  \hat \varpi_3 \} \]
where
\begin{equation*}
h_2 = \d v, \quad h_3 = \d \beta, \quad \varpi_0 = \d \rho.
\end{equation*}
We can express the three 1-forms $\hat \varpi_1, \hat \varpi_2, \hat \varpi_3$ in terms of $\d v$, $\d \beta$, $\d \rho$, the rotated coframe $p_1, p_2, p_3$ of~\eqref{eq:beta-rotated}, and $\hat h_1 = \hat h_3^{\sharp} \lrcorner ( \hat h_2^{\sharp} \lrcorner \varphi)$. We do not make use of the expressions for $\varphi_c$ and $\ast_{\varphi_c} \varphi_c$, so we present only the one for $g_c$. In fact, all we   need is the restriction $g_c|_N$ of the formula for $g_c$ to a coassociative fibre $N$. For convenience, we define the following four non-negative quantities:
\begin{align*}
F^2 & = c + r^2, \\
J^2 & = 4 F^2 \cos^2 \alpha + r^2 \sin^2 \alpha \sin^2 \gamma \cos^2 \gamma, \\
K^2 & = 4 c \sin^2 \alpha \cos^2 \alpha + 4 r^2 \cos^2 \alpha + 3 r^2 \sin^2 \alpha \cos^2 \alpha \sin^2 \gamma + r^2 \sin^2 \alpha \sin^2 \gamma, \\
L^2 & = r^2 + c \sin^2 \alpha.
\end{align*}
The result one obtains is
\begin{align}
g_c|_N & = \frac{4 (F^2 K^2 + r^4 \sin^2 \alpha \cos^2 \alpha \sin^2 \gamma \cos^2 \gamma)}{F K^4 L^2 \sin^2 \alpha} \d \rho^2 + \frac{K^2 - 4 r^2 \cos^4 \alpha \cos^2 \gamma}{4 F} p_1^2 \nonumber \\
& \qquad {} + \frac{L^2 - r^2 \cos^2 \alpha \sin^2 \gamma}{F} p_2^2 + \frac{L^2}{F} p_3^2 \nonumber \\
& \qquad {} + \frac{4 r^2 \cos \alpha \sin \gamma \cos^2 \gamma}{J K L} \d \rho \, p_3 - \frac{r^2 \cos \alpha (2 - \sin^2 \alpha) \sin \gamma \cos \gamma}{F} p_1 p_2. \label{eq:gc.N.SU2}
\end{align}

\begin{remark}
When $c>0$, because we are unable to integrate~\eqref{eq:flow.smooth} to obtain a third independent exact horizontal 1-form, we cannot use Corollary~\ref{cor:hypersymplectic} to obtain the induced hypersymplectic structure on $N$. However, in the cone case $c=0$ we \emph{were} able to find the third conserved quantity $u$ in~\eqref{eq:u.c=0}. However, unlike in the $\Lambda^2_-(T^* \mathcal{S}^4)$ case, this time $\d u$ is \emph{not} orthogonal to $\d v$. Nevertheless, one can explicitly compute $\hat h_1$ in terms of $\d u$ and $\d v$. The result is
\begin{equation*}
\hat h_1 = \frac{r^{\frac{1}{2}} J \cos^2 \alpha \cos \gamma}{K \sin^2 \gamma} \d u + \frac{r^2 \sin \gamma \big(4 \cos^4 \alpha - \sin^4 \alpha \sin^2 \gamma \cos^2 \gamma -2 \sin^2 \alpha \cos^2 \alpha (1 + \cos^2 \gamma) \big)}{J K \cos \alpha \cos \gamma} \d v.
\end{equation*}
From this, we can  rewrite $\varphi_c$ so that we can invoke Corollary~\ref{cor:hypersymplectic} to determine the induced hypersymplectic triple $( \omega_1, \omega_2, \omega_3 )$ on $N$. In this case the matrix $Q_{ij}$ from Definition~\ref{dfn:hypersymplectic} is \emph{not} diagonal, as there is a non-zero $Q_{12} = Q_{21}$ term. The precise expressions are extremely complicated so we omit them.
\end{remark}

\subsection{Riemannian geometry on the fibres} \label{sec:hypersymplectic.CP2}

We now discuss the induced Riemannian geometric structure on the coassociative fibres coming from the torsion-free $\GG_2$-structure. As mentioned in the remark at the end of~\S\ref{sec:package.SU2}, the induced hypersymplectic structure is only tractable when $c=0$ and even then, it is so complicated that we choose to omit it.

Recall the expression~\eqref{eq:gc.N.SU2} for the metric $g_c|_N$ on a coassociative fibre $N$. According to the classification in Tables~\ref{table:fibration-M} and~\ref{table:fibration-M0} the coassociative fibres (except the zero section $r=0$ in the $c>0$ case) are noncompact so we can study   the asymptotics of the metric as $r \to \infty$. In this section we show that in all cases these fibres are \emph{asymptotically conical} (AC). We describe the explicit metric on the link of the cone at infinity, and in almost all cases we can also give the rate of convergence to the asymptotic cone. For some special fibres, which are topologically $\R \times \mathcal{S}^3$, we can also consider the limit $r \to 0$ and in those cases, we find that near the vertex, they are \emph{conically singular} (CS). We describe the explicit metric on the link of the cone at the vertex.

We first introduce some notation. Recall that $\SU(2) \cong \mathcal{S}^3$. With respect to the $\beta$-rotated coframe $p_1, p_2, p_3$ of~\eqref{eq:beta-rotated}, define metric cones $g_A$ and $g_B$ on $\R^+ \times \mathcal{S}^3$ by
\begin{equation} \label{eq:gAgB}
\begin{aligned}
g_A & = \d R^2 + \frac{R^2}{6}(p_1^2+p_2^2)+\frac{R^2}{4}p_3^2, \\
g_B & = \d R^2 + \frac{R^2}{16}p_1^2+\frac{R^2}{4}(p_2^2 + p_3^2).
\end{aligned}
\end{equation}
Note that since $p_2^2 + p_3^2 = \sigma_2^2 + \sigma_3^2$, the metric $g_B$ can be expressed nicely in terms of $\sigma_1, \sigma_2, \sigma_3$ and is independent of $\beta$, but this is not true for $g_A$. However, here $\beta$ is constant so the metrics $g_A$ for different values of $\beta$ are all isometric to each other.

In this section we establish the following result.

\begin{prop}\label{prop:AC.CS-CP2} Let $g_A$, $g_B$ be the cone metrics on $\R^+ \times \mathcal{S}^3$ given in~\eqref{eq:gAgB}. Recall from Tables~\ref{table:fibration-M} and~\ref{table:fibration-M0} that all the noncompact fibres are topologically either $\R^+ \times \mathcal{S}^3$ or $\mathcal{O}_{\C\P^1}(-1)$. The geometry of the induced Riemannian metric on the noncompact fibres is as given in Tables~\ref{table:CP2.ACCS.cone} and~\ref{table:CP2.ACCS.smooth}. (Recall that AC denotes asymptotically conical and CS denotes conically singular.)
\end{prop}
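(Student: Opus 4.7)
The plan is to establish Proposition~\ref{prop:AC.CS-CP2} by a case-by-case analysis following the classification of fibres in Tables~\ref{table:fibration-M} and~\ref{table:fibration-M0}. The starting point in each case is the formula~\eqref{eq:gc.N.SU2} for the restriction $g_c|_N$ of the Bryant--Salamon metric to a coassociative fibre $N$, expressed in terms of $r, \alpha, \gamma$ and the $\beta$-rotated left-invariant coframe $p_1, p_2, p_3$. The strategy is to substitute the defining constraint of each fibre type (the conserved quantities of Propositions~\ref{prop:smooth.coass.CP2} and~\ref{prop:cone.coass.CP2}, or the ODEs~\eqref{eq:CP2.coass1}--\eqref{eq:CP2.coass2} when no explicit constant is available) to eliminate $\alpha$ and $\gamma$ in favour of $r$, and then to introduce a suitable radial coordinate $R = R(r)$ so that the rescaled metric converges to one of the cone metrics $g_A$ or $g_B$ in~\eqref{eq:gAgB}. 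Which of $g_A$ or $g_B$ arises is controlled by which combination of $\alpha,\gamma$ degenerates in the limit: informally, $g_A$ arises when the relevant limit picks out the $p_3$-direction through the vanishing of $\sin\gamma$, while $g_B$ arises when $p_1$ is singled out through the degeneration of $\cos\alpha$.

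I would handle the \emph{special} families first. For Case~2 with $\sin\gamma\equiv 0$ of \S\ref{sec:fibration.CP2}, the implicit relation~\eqref{eq:sin.gamma.zero.int} gives the leading expansion of $\cos 2\alpha$ as a negative power of $r$ as $r\to\infty$; inserted into~\eqref{eq:gc.N.SU2} this kills both the $\d\rho\,p_3$ and $p_1 p_2$ cross-terms and the cone structure can be read off directly, with a change of variable of the form $R \sim r^{a}$ for an appropriate rational $a$. Case~3 with $\cos\gamma\equiv 0$ is treated analogously using~\eqref{eq:cos.gamma.zero}, with $\alpha\to\tfrac12\cos^{-1}(-\tfrac13)$, and an expansion of this ODE to subleading order gives both the asymptotic cone and its rate of convergence. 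Case~4 ($\alpha\equiv\tfrac{\pi}{2}$) uses the explicit conserved quantity $w=r\cos\gamma$ and is essentially a direct substitution, with $\gamma\to\tfrac{\pi}{2}$ (or $0$) controlling the limit. For the generic Case~5, the constancy of $v$ in~\eqref{eq:v} together with the flow analysis of \S\ref{sec:fibration-generic} pins down the asymptotic values of $(\alpha,\gamma)$ at the critical points of the flow (namely $(\tan^{-1}\sqrt{2},\tfrac{\pi}{2})$, $(\tfrac{\pi}{2},0)$, and the analogues in the cone case), and linearizing~\eqref{eq:flow.smooth} near each critical point yields both the limiting cone and the AC rate.

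For the fibres which are topologically $\R^+\times\mathcal{S}^3$, which form the codimension~$1$ subfamily identified in \S\ref{sec:fibration.CP2}, I would perform the analogous expansion as $r\to 0$ (or, more precisely, near the parameter values along the flow where the bolt \emph{would} be, but is absent), using the same constraints, to identify the CS asymptotic cone at the vertex. Because these fibres have no bolt, the cone at the vertex is necessarily of type $g_A$ or $g_B$; the matching proceeds by the same degeneration analysis as in the AC setting.

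The main obstacle I expect is Case~5 in the smooth setting $c>0$, since the governing ODE~\eqref{eq:flow.smooth} has no closed-form solution, so the asymptotic behaviour near each critical point must be extracted by linearization and by tracking higher-order corrections directly from the ODE. The rate of convergence in Case~3 for $c>0$ is singled out in the acknowledgements as having required input from the referee, indicating that obtaining the precise AC rate (rather than merely the cone itself) involves a subtle expansion of either the implicit integral~\eqref{eq:sin.gamma.zero.int} or the $\cos\gamma=0$ analogue derived from~\eqref{eq:cos.gamma.zero}. Once the rate in these two smooth cases is in hand, all remaining cases reduce to routine, though lengthy, substitutions into~\eqref{eq:gc.N.SU2}.
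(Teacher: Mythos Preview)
Your approach is essentially the same as the paper's: a case-by-case substitution of the conserved quantities (or, where unavailable, the governing ODEs) into the fibre metric~\eqref{eq:gc.N.SU2}, followed by the radial change of variable $r=\tfrac14 R^2$ and an asymptotic expansion. The paper proceeds exactly this way, with the $c=0$ generic case handled symbolically (via Maple) after solving for $\cos\gamma$ and $\cos\alpha$ in terms of $r,u,v$.

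Two small corrections. First, your informal heuristic for which cone arises is inverted: the $\sin\gamma=0$ and $\alpha=\tfrac{\pi}{2}$ families yield $g_B$, while $\cos\gamma=0$ and the generic fibres yield $g_A$. Second, be careful about what Table~\ref{table:CP2.ACCS.smooth} actually claims in the generic $c>0$ case: the AC rate is listed as \emph{unknown}, and the $r\to 0$ behaviour for $|u|=2$ is likewise left open. Your plan to linearize~\eqref{eq:flow.smooth} near the critical points is more ambitious than what the proposition asserts; the paper tries a naive substitution of the limiting values of $(\alpha,\gamma)$ and finds (Remark~\ref{remark:inconclusive}) that this discards too much to recover the rate. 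So either your linearization would go beyond what is stated, or it would run into the same obstruction---either way it is not needed to establish the proposition as written.
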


\begin{table}[H]
\begin{center}
{\setlength{\extrarowheight}{4pt}
\begin{tabular}{|c|c|c|c|c|}
\hline
Defining parameters & Topology & Induced Riemannian metric \\[2pt]
\hline
$\sin \gamma = 0$ and $w = 0$ & $\R^+\times\mathcal{S}^3$ & exactly cone $g_B$ \\[2pt]
\hline
$\sin \gamma = 0$ and $|w| > 0$ & $\mathcal{O}_{\C\P^1}(-1)$ & AC to $g_B$ with rate $-4$ \\[2pt]
\hline
$\cos \gamma = 0$ and $w = 0$ & $\R^+\times\mathcal{S}^3$ & exactly cone $g_A$ \\[2pt]
\hline
$\cos \gamma = 0$ and $w > 0$ & $\mathcal{O}_{\C\P^1}(-1)$ & AC to $g_A$ with rate $-1$ \\[2pt]
\hline
$\cos \gamma = 0$ and $w < 0$ & $\R^+\times\mathcal{S}^3$ & AC to $g_A$ with rate $-1$, and CS to $g_B$ \\[2pt]
\hline
$\alpha = \frac{\pi}{2}$ and $w = 0$ & $\R^+\times\mathcal{S}^3$ & exactly cone $g_B$ \\[2pt]
\hline
$\alpha = \frac{\pi}{2}$ and $|w| > 0$ & $\mathcal{O}_{\C\P^1}(-1)$ & AC to $g_B$ with rate $-4$ \\[2pt]
\hline
generic and $|u| \neq 2$ & $\mathcal{O}_{\C\P^1}(-1)$ & AC to $g_A$ with rate $-1$ \\[2pt]
\hline
generic and $|u| = 2$ & $\R^+\times\mathcal{S}^3$ & AC to $g_A$ with rate $-1$, and CS to $g_B$ \\[2pt]
\hline
\end{tabular}}
\end{center}
\caption{Induced Riemannian geometry on fibres: cone case ($c=0$)}\label{table:CP2.ACCS.cone}
\end{table}

\begin{table}[H]
\begin{center}
{\setlength{\extrarowheight}{4pt}
\begin{tabular}{|c|c|c|c|c|}
\hline
Defining parameters & Topology & Induced Riemannian metric \\[2pt]
\hline
$\sin \gamma = 0$ & $\mathcal{O}_{\C\P^1}(-1)$ & AC to $g_B$ with rate $-4$ \\[2pt]
\hline
$\cos \gamma = 0$ & $\mathcal{O}_{\C\P^1}(-1)$ & AC to $g_A$ with rate $-1$ \\[2pt]
\hline
$\alpha = \frac{\pi}{2}$ & $\mathcal{O}_{\C\P^1}(-1)$ & AC to $g_B$ with rate $-4$ \\[2pt]
\hline
generic and $|u| \neq 2$ & $\mathcal{O}_{\C\P^1}(-1)$ & AC to $g_A$ at unknown rate \\[2pt]
\hline
generic and $|u| = 2$ & $\R^+\times\mathcal{S}^3$ & AC to $g_A$ and unknown as $r\to 0$ (but likely CS) 
\\[2pt]
\hline
\end{tabular}}
\end{center}
\caption{Induced Riemannian geometry on fibres: smooth case ($c>0$)}\label{table:CP2.ACCS.smooth}
\end{table}

\begin{remark}
We emphasize that some, but not all, of the \emph{singular} coassociative fibres in the Bryant--Salamon cone $M_0$ are Riemannian cones. The remaining singular fibres are both asymptotically conical and conically singular, but with different cone metrics at infinity and at the singular point. This is also almost certainly true of the singular fibres in $M = \Lambda^2_- (T^* \C\P^2)$.
\end{remark}

In the remainder of this section we give the details of the proof of Proposition~\ref{prop:AC.CS-CP2}.

\subsubsection{The cone case $(c=0)$}

We consider the cone $(c=0)$ first, where the results are more complete, and then consider the smooth case $(c>0)$. We number the cases as in~\S\ref{sec:fibration.CP2}. Note that when $c=0$ then~\eqref{eq:rho.CP2} gives
\begin{equation} \label{eq:rho.cone}
\rho = \tfrac{1}{4} r^2 \sin^2 \alpha (4 \cos^2 \alpha + \sin^2 \gamma \sin^2 \alpha).
\end{equation}

\textbf{Case 2: $\sin \gamma = 0$.} In this case, in addition to $\beta, \gamma$, we have the conserved quantity $w = r \cos 2\alpha$.

Consider first $w = 0$. Then we have $\alpha = \frac{\pi}{4}$, and thus by~\eqref{eq:rho.cone} we have $\rho = \frac{1}{4} r^2$ exactly. The metric~\eqref{eq:gc.N.SU2} in this case is
\begin{equation*}
\frac{4}{r^3} \d\rho^2 + \frac{r}{4} p_1^2 + r p_2^2 + r p_3^2.
\end{equation*}
Writing $\d\rho = \frac{1}{2} r \d r$, this becomes
\begin{equation*}
\frac{1}{r} \d r^2 + \frac{r}{4} p_1^2 + r p_2^2 + r p_3^2.
\end{equation*}
Let $\d R = \frac{1}{r^{\frac{1}{2}}} \d r$, so $R = 2 r^{\frac{1}{2}}$ and $r = \frac{1}{4} R^2$. Then the metric becomes
\begin{equation*}
\d R^2 + \frac{R^2}{16} p_1^2 + \frac{R^2}{4} p_2^2 + \frac{R^2}{4} p_3^2,
\end{equation*}
which is \emph{exactly} $g_B$.

Now consider $|w| > 0$. From $w = r \cos 2\alpha$, we get $\cos 2\alpha = \frac{w}{r}$, so $\cos^2 \alpha = \frac{1}{2} (1 + \cos 2\alpha) = \frac{1}{2} (1 + \frac{w}{r})$ and $\sin^2 \alpha = \frac{1}{2} (1 - \cos 2\alpha) = \frac{1}{2}(1 - \frac{w}{r})$. Substituting these into~\eqref{eq:rho.cone} gives $\rho = \frac{1}{4} (r^2 - w^2)$. The metric~\eqref{eq:gc.N.SU2} in this case becomes
\begin{equation*}
\frac{1}{r^3 \cos^2 \alpha \sin^2 \alpha} \d\rho^2 + r \cos^2 \alpha \sin^2 \alpha p_1^2 + r p_2^2 + r p_3^2.
\end{equation*}
Writing $\d\rho = \frac{1}{2} r \d r$ and substituting the expression $\sin^2 \alpha \cos^2 \alpha = \frac{1}{4} (1 - \frac{w^2}{r^2})$, this becomes
\begin{equation*}
\frac{1}{r(1 - \frac{w^2}{r^2})} \d r^2 + \frac{r(1 - \frac{w^2}{r^2})}{4} p_1^2 + r p_2^2 + r p_3^2.
\end{equation*}
Now define $R = 2 r^{\frac{1}{2}}$ as in the $w=0$ case above, so that $r = \frac{1}{4} R^2$. Then one can compute that for large $R$, the metric becomes
\begin{equation*}
\d R^2 + \frac{R^2}{16} p_1^2 + \frac{R^2}{4} p_2^2 + \frac{R^2}{4} p_3^2 + O(R^{-4}) \qquad \text{as $R \to \infty$}.
\end{equation*}
Thus in this case the metric is asymptotically conical to the cone metric $g_B$ with rate $-4$.

\textbf{Case 3: $\cos \gamma = 0$.} Here, in addition to $\beta, \gamma$, we have the conserved quantity $w^2 = \frac{r(1+3\cos 2\alpha)^2}{8(1+\cos 2\alpha)}$.

Consider first $w = 0$. Then we have $\cos 2\alpha = -\frac{1}{3}$, and hence $\cos^2 \alpha = \frac{1}{3}$ and $\sin^2 \alpha = \frac{2}{3}$. It follows from~\eqref{eq:rho.cone} that $\rho = \frac{1}{3} r^2$ exactly. The metric~\eqref{eq:gc.N.SU2} in this case is
\begin{equation*}
\frac{4}{r^3( 3 \cos^6 \alpha - 9 \cos^4 \alpha + 5 \cos^2 \alpha + 1)} \d\rho^2 + \frac{r(1+ 6 \cos^2 \alpha - 3 \cos^4 \alpha)}{4} p_1^2 + r \sin^2 \alpha p_2^2 + r p_3^2,
\end{equation*}
which simplifies to
\begin{equation*}
\frac{9}{4 r^3} \d\rho^2 + \frac{2r}{3} p_1^2 + \frac{2r}{3} p_2^2 + r p_3^2.
\end{equation*}
Writing $\d\rho = \frac{2}{3} r \d r$, this becomes
\begin{equation*}
\frac{1}{r} \d r^2 + \frac{2r}{3} p_1^2 + \frac{2r}{3} p_2^2 + r p_3^2.
\end{equation*}
Let $\d R = \frac{1}{r^{\frac{1}{2}}} \d r$, so $R = 2 r^{\frac{1}{2}}$ and $r = \frac{1}{4} R^2$. Then the metric becomes
\begin{equation*}
\d R^2 + \frac{R^2}{6} p_1^2 + \frac{R^2}{6} p_2^2 + \frac{R^2}{4} p_3^2,
\end{equation*}
which is \emph{exactly} $g_A$.

Now consider $|w| > 0$. From $w^2 = \frac{r(1+3\cos(2\alpha))^2}{8(1+\cos(2\alpha))}$, we get $\cos 2\alpha = -\frac{1}{3} + \frac{4}{9r} ( w^2 - \sqrt{w^4 + 3 w^2 r} )$, from which it follows that $\cos^2 \alpha = \frac{1}{3} + \frac{2}{9r} ( w^2 - \sqrt{w^4 + 3 w^2 r} )$ and $\sin^2 \alpha = \frac{2}{3} - \frac{2}{9r} ( w^2 - \sqrt{w^4 + 3 w^2 r} )$. Substituting these into~\eqref{eq:rho.cone} and simplifying, we obtain
\begin{equation*}
\rho = \frac{1}{3} r^2 - \frac{w^2}{9} r - \frac{2 w^4}{27} + \frac{2 w^2 \sqrt{w^4 + 3 w^2 r}}{27}.
\end{equation*}
Defining $R$ by $r = \frac{1}{4} R^2$ as in the $w=0$ case above, a computation yields that for large $R$, the metric~\eqref{eq:gc.N.SU2} in this case becomes
\begin{equation*}
\d R^2 + \frac{R^2}{6} p_1^2 + \frac{R^2}{6} p_2^2 + \frac{R^2}{4} p_3^2 + O(R^{-1}) \qquad \text{as $R \to \infty$}.
\end{equation*}
Thus in this case the metric is asymptotically conical to the cone metric $g_A$ with rate $-1$.

When $w < 0$, we can take $r \to 0$. In this case we again let $r = \frac{1}{4} R^2$ and perform a careful asymptotic expansion as $R \to 0$. The result is that the metric can be written as
\begin{equation*}
\d R^2 + \frac{R^2}{16} p_1^2 + \frac{R^2}{4} p_2^2 + \frac{R^2}{4} p_3^2 + O(R) \qquad \text{as $R \to 0$}.
\end{equation*}
Thus in this case the metric is conically singular as $R \to 0$ to the cone metric $g_B$.

\textbf{Case 4: $\alpha = \frac{\pi}{2}$.} In this case, in addition to $\beta, \alpha$, we have the conserved quantity $w = r \cos \gamma$.

Consider first $w = 0$. Then we have $\gamma = \frac{\pi}{2}$, and~\eqref{eq:rho.cone} gives $\rho = \frac{1}{4} r^2$. Now the metric~\eqref{eq:gc.N.SU2} is
\begin{equation*}
\frac{4}{r^3} \d\rho^2 + \frac{r}{4} p_1^2 + r p_2^2 + r p_3^2.
\end{equation*}
This is identical to the $w=0$ subcase of Case 2 above, and thus is \emph{exactly} $g_B$.

Now consider $|w| > 0$. From $w = r \cos\gamma$, we get $\cos\gamma = \frac{w}{r}$, so $\sin\gamma = (1 - \frac{w^2}{r^2})^{\frac{1}{2}}$. Substituting these into~\eqref{eq:rho.cone} gives $\rho = \frac{1}{4} (r^2 - w^2)$. Now the metric~\eqref{eq:gc.N.SU2} is
\begin{equation} \label{eq:gc.N.alpha.pi/2}
\frac{1}{r(1 - \frac{w^2}{r^2})} \d r^2 + \frac{r(1 - \frac{w^2}{r^2})}{4} p_1^2 + r p_2^2 + r p_3^2.
\end{equation}
This is identical to the $|w|>0$ subcase of Case 2 above, and thus this case is asymptotically conical to the cone metric $g_B$ with rate $-4$.

\textbf{Case 5: generic setting.} In this case, in addition to $\beta$, we have the conserved quantities
$$v = 2 r^{\frac{1}{2}} \cos \alpha \cot \gamma \quad \text{ and } \quad u = \frac{2 \cos^2 \alpha - \sin^2 \alpha \sin^2 \gamma}{\cos^2 \alpha \cos \gamma}.$$
We can solve the first equation for $\cos^2 \alpha$, obtaining
\begin{equation*}
\cos^2 \alpha = \frac{v^2 \sin^2 \gamma}{4 r \cos^2 \gamma}.
\end{equation*}
Substituting the above into the expression for $u$, some manipulation gives
\begin{equation*}
u = \frac{3 v^2 - (v^2 + 4r) \cos^2 \gamma}{v^2 \cos \gamma}.
\end{equation*}
This can then be solved for $\cos \gamma$. (Recall that $\gamma \in [0, \pi]$ so $\cos \gamma \geq 0$.) The solution is
\begin{equation*}
\cos \gamma = \frac{v(-uv + \sqrt{u^2 v^2 + 12 v^2 + 48 r})}{2(v^2 + 4r)}.
\end{equation*}
From the above expressions for $\cos \gamma$ and $\cos \alpha$, further computation yields a complicated expression for $\rho$ in terms of   $r$ and the constants $u,v$. We proceed as before, setting $r = \frac{1}{4} R^2$ and studying the asymptotics of the metric~\eqref{eq:gc.N.SU2}. The authors did these computations on Maple, obtaining the following.
\begin{itemize}
\item As $R \to \infty$, the metric $g_c|_N$ converges to the cone metric $g_A$ at rate $-1$, just like the $|w|>0$ subcase of Case 3 above.
\item When $u=2$ or $u=-2$, we can take $r \to 0$ according to Table~\ref{table:fibration-M0}. As $R \to 0$, the metric $g_c|_N$ converges to the cone metric $g_B$, just like the $w<0$ subcase of Case 3 above.
\end{itemize}

\subsubsection{The smooth case ($c>0$)}

In the smooth case, when we can at least \emph{approximately} find the conserved quantities, we can proceed as in the cone case to compute the asymptotic cones \emph{as well as} the rates of convergence to those cones. We can do this when $\sin \gamma = 0$ or $\alpha = \frac{\pi}{2}$. The results are the same as in the cone case (we give explicit statements below.) However, when $\cos \gamma = 0$ or in the generic setting, we cannot even approximately find the third conserved quantity. Thus in these two cases we can only find the asymptotic cones but not the rates of convergence. A sketch of the details follows. Again, we number the cases as in~\S\ref{sec:fibration.CP2}.

\textbf{Case 2: $\sin \gamma = 0$.} In this case, in addition to $\beta, \gamma$, one can show that there is a conserved quantity
\begin{equation*}
w = r^{\frac{1}{2}} (r^2 + c)^{\frac{1}{4}} \cos 2\alpha - \frac{c}{2} \int \frac{1}{r^{\frac{1}{2}} (r^2 + c)^{\frac{3}{4}}} \d r. 
\end{equation*}
This cannot be integrated in closed form. For large $r$, we expand in powers of $\frac{1}{r}$ as before. We get
\begin{equation*}
w \approx r^{\frac{1}{2}} (r^2 + c)^{\frac{1}{4}} \cos 2\alpha + \frac{c}{2r} - \frac{c^2}{8 r^3}.
\end{equation*}
This can be solved (again, approximating for large $r$) for $\cos 2\alpha$, which in turn leads to the formula
\begin{equation*}
\rho = \frac{1}{4} r^2 - \frac{w^2}{4} + \frac{3c}{8} + O(r^{-1}).
\end{equation*}
Proceeding as before, the metric~\eqref{eq:gc.N.SU2} becomes
\begin{equation*}
\d R^2 + \frac{R^2}{16} p_1^2 + \frac{R^2}{4} p_2^2 + \frac{R^2}{4} p_3^2 + O(R^{-4}) \qquad \text{as $R \to \infty$}.
\end{equation*}
Thus in this case the metric is asymptotically conical to the cone metric $g_B$ with rate $-4$, identical to the $|w| > 0$ subcases of Cases 2 and 4 when $c=0$.

\textbf{Case 3: $\cos \gamma = 0$.} In this case, in addition to $\beta$ and $\gamma$, there is a third conserved quantity that we cannot find, even approximately. Nevertheless, because we have an explicit formula~\eqref{eq:cos.gamma.zero.drdalpha} for $\frac{\d r}{\d \alpha}$ in this case, we can still compute the asymptotic geometry. From the analysis in~\S\ref{sec:cos.gamma.zero} and~\eqref{eq:cos.gamma.zero.drdalpha} in particular, one can show that as $r \to \infty$, we have $\cos 2\alpha = -\frac{1}{3} + O(r^{-\frac{1}{2}})$ and hence $\cos^2 \alpha = \frac{1}{3} + O(r^{-\frac{1}{2}})$ and $\sin^2 \alpha = \frac{2}{3} + O(r^{-\frac{1}{2}})$. Now expanding in powers of $\frac{1}{r}$ for large $r$ and proceeding as before, one obtains
\begin{equation*}
\d R^2 + \frac{R^2}{6} p_1^2 + \frac{R^2}{6} p_2^2 + \frac{R^2}{4} p_3^2 + O(R^{-1}) \qquad \text{as $R \to \infty$}.
\end{equation*}
Thus this case is asymptotically conical to the metric cone $g_A$ with rate $-1$, just like the $|w|>0$ subcase of Case 3 and the generic Case 4 when $c=0$.

\textbf{Case 4: $\alpha = \frac{\pi}{2}$.} In this case, in addition to $\beta, \alpha$, we have the conserved quantity $w = r \cos \gamma$. Just as in the $|w|>0$ subcase of Case 4 for the cone, we get $\rho = \frac{1}{4} (r^2 - w^2)$. Here, the metric~\eqref{eq:gc.N.SU2} is
\begin{equation*}
\frac{1}{(r^2 + c)^{\frac{1}{2}} (1 - \frac{w^2}{r^2})} \d r^2 + \frac{r^2(1 - \frac{w^2}{r^2})}{4 (r^2 + c)^{\frac{1}{2}}} p_1^2 + (r^2 + c)^{\frac{1}{2}} p_2^2 + (r^2 + c)^{\frac{1}{2}} p_3^2.
\end{equation*}
(Compare the above expression with~\eqref{eq:gc.N.alpha.pi/2}.) Now expanding in powers of $\frac{1}{r}$ for large $r$ and proceeding as before, one obtains
\begin{equation*}
\d R^2 + \frac{R^2}{16} p_1^2 + \frac{R^2}{4} p_2^2 + \frac{R^2}{4} p_3^2 + O(R^{-4}) \qquad \text{as $R \to \infty$}.
\end{equation*}
Thus in this case the metric is asymptotically conical to the cone metric $g_B$ with rate $-4$, identical to the $|w| > 0$ subcases of Cases 2 and 4 when $c=0$ and also identical with Case 2 above when $c>0$.

\textbf{Case 5: generic setting.} In this case, in addition to $\beta, \gamma$, there is a third conserved quantity that we cannot find, even approximately. So we cannot compute the asymptotic rates of convergence. But we can compute the asymptotic cones.

From the analysis in~\S\ref{sec:fibration-generic} we know that as $r \to \infty$, we have $\gamma \to \frac{\pi}{2}$ and $\tan^2 \alpha \to 2$. This implies that $\cos^2 \alpha \to \frac{1}{3}$ and $\sin^2 \alpha \to \frac{2}{3}$. Now expanding in powers of $\frac{1}{r}$ for large $r$ and proceeding as before, one obtains that
\begin{equation*}
\d R^2 + \frac{R^2}{6} p_1^2 + \frac{R^2}{6} p_2^2 + \frac{R^2}{4} p_3^2 \quad \text{plus smaller terms as $R \to \infty$}.
\end{equation*}
Thus this case is asymptotically conical to the metric cone $g_A$, as in the $|w|>0$ subcase of Case 3 and the generic Case 4 when $c=0$ and also as in Case 3 above when $c>0$.

\begin{remark} \label{remark:inconclusive}
We do not know the rate of convergence. In fact, this naive analysis, where we replace $\cos^2 \alpha$ and $\sin^2 \alpha$ by constants and ignore their next terms in powers of $\frac{1}{r}$, actually results in an expression that equals $g_A$ plus terms of $O(R^{-4})$. This is clearly not correct, because in the cone case we get $O(R^{-1})$. This shows that we have thrown away too much information to be able to get the correct rate of convergence to the asymptotic cone. It is likely  that the rate is again $-1$ in this case, since the rates in Cases 2 and 4 for $c>0$ are the same as they are for $c=0$, but our analysis is inconclusive.
\end{remark}

Finally, from the analysis in~\S\ref{sec:fibration-generic} and Table~\ref{table:fibration-M} we know that for two special values of the unknown third conserved quantity $u$ we can actually have $r \to 0$. However, since we do not have an explicit (nor indeed even an approximate) expression for $u$ when $c>0$, it is not possible to correctly determine the asymptotic behaviour of the induced metric $g_c|_N$ as $r \to 0$. However, in analogy with the generic setting for the cone ($c=0$) case when $u = \pm 2$, we expect that these fibres should be conically singular as $r \to 0$. Moreover, because this fibration ``limits'' to the Harvey--Lawson coassociative fibration (as we discuss in \S\ref{sec:flat.limit.SU2}), we expect the limiting coassociative cone at the vertex to be the Lawson--Osserman cone. (See~\cite[Section 9.1]{LotayDef} for a detailed description of this cone, where it is denoted $M_0^{\pm}$.)

\subsection{Flat limit} \label{sec:flat.limit.SU2}

In this section we describe what happens to the coassociative fibration on $M$ as we take the flat limit as in $\S$\ref{subs:flat}.

The $\SU(2)$ action on $\C\P^2$ we have chosen becomes, in the flat limit, the standard $\SU(2)$ action on $\C^2=\mathbb{R}^4$. The induced action on $\Lambda^2_-(T^*\C^2)=\R^7$ is then the action of $\SU(2)$ on $\R^7=\R^3\oplus\C^2$ which acts as $\SO(3)$ on $\R^3$ and $\SU(2)$ on $\C^2$.

Since we have simply undertaken a rescaling in taking the flat limit, the coassociative fibration and the $\SU(2)$-invariance is preserved along the rescaling, and in the limit we obtain an $\SU(2)$-invariant coassociative fibration of $\R^7=\R^3\oplus\C^2$. By uniqueness this must be the $\SU(2)$-invariant coassociative fibration produced by Harvey--Lawson~\cite[Section IV.3]{HarveyLawson}, which we describe in this section.

\begin{remark}
Note that although we again have a $\U(1)$ action on the total space given by translations of $\beta$ as we did in the $\Lambda^2_-(T^* \mathcal{S}^4)$ case, the angle coordinate $\beta$ plays a much different role in $\Lambda^2_-(T^* \C\P^2)$, and in fact this $\U(1)$ action \emph{does not commute} with the $\SU(2)$ action. It is for this reason that in the $\C\P^2$ case, in the flat limit our coassocative fibration of $\R^7$ does not reduce to a fibration of $\C^3$ by complex surfaces as it did for the $\mathcal{S}^4$ case in~\S\ref{sec:flat.limit.SO3}.
\end{remark}

Identifying $\R^7=\Imm \mathbb{O}=\Imm \H\oplus\H e$ for $e \in \H^{\perp}$ with $|e|=1$, the coassociative fibration is given by
\begin{align*}
N_{\text{HL}}(\varepsilon,\tau)
=\{rq\varepsilon\bar{q}+(s\bar{q})e : q\in\mathcal{S}^3\subseteq\H,\,r\in\R,
\,s\geq 0,
\,r(4r^2-5s^2)^2=\tau\},
\end{align*}
for $\varepsilon \in \Imm \H$, $|\varepsilon|=1$, and $\tau\in\R$. Here the $\SU(2)$-action is given through multiplication by unit quaternions $q\in\mathcal{S}^3\subseteq\H$.

Notice that $\tau$ and $-\tau$ both give the same family of coassociative 4-folds (since we can just change $\varepsilon$ to $-\varepsilon$) and so we can restrict to $\tau\geq 0$. Moreover, when $\tau>0$ then $r>0$, and when $\tau=0$ (again by changing $\varepsilon$ to $-\varepsilon$ if necessary) we can assume $r\geq 0$. In all cases, we can view $r$ as the distance to $\H e=\C^2$ defined by $r=0$. This matches our earlier notation where $r$ was the distance in the fibres from $\C\P^2$, which has become $\C^2$ here in the flat limit. 

We observe that $N_{\text{HL}}(\varepsilon,0)$ is a union of two cones on $\mathcal{S}^3$, one which is the flat $\H e=\C^2$ and one which is a non-flat cone (the Lawson--Osserman cone~\cite{LawsonOsserman}) which has link given by the graph of a Hopf map from $\mathcal{S}^3$ to $\mathcal{S}^2$. 

For fixed $\varepsilon$ and $\tau\neq 0$, $N_{\text{HL}}(\varepsilon,\tau)$ has two components $N_{\text{HL}}^+(\varepsilon,\tau)$ and $N_{\text{HL}}^-(\varepsilon,\tau)$ determined by the sign of $4r^2-5s^2$. The component $N_{\text{HL}}^+(\varepsilon,\tau)$ has a single end asymptotic to the Lawson--Osserman cone and is diffeomorphic to $\mathcal{O}_{\C\P^1}(-1)$. Each of these $N_{\text{HL}}^+(\varepsilon,\tau)$ components intersects an $\mathcal{S}^2$-family of coassociatives which have the same $\tau$ (and varying $\varepsilon$). The other component $N_{\text{HL}}^-(\varepsilon,\tau)$ has two ends, one end asymptotic to the Lawson--Osserman cone and the other end asymptotic to the flat $\C^2$, and is diffeomorphic to $\R\times\mathcal{S}^3$. These $N_{\text{HL}}^-(\varepsilon,\tau)$ components never intersect for distinct values of $\varepsilon$ and $\tau$.

By~\cite[Section 9.1]{LotayDef} the $\mathcal{O}_{\C\P^1}(-1)$ components have asymptotically conical metrics with rate $-\frac{5}{2}$, whereas the (smooth) $\R\times\mathcal{S}^3$ components have asymptotically conical metrics with rate $-\frac{5}{2}$ at the Lawson--Osserman cone end, but have rate $-5$ at the flat $\C^2$ end.

\addcontentsline{toc}{section}{References}

\thebibliography{99}

\bibitem{AcharyaBryantSalamon} B.S.~Acharya, R.L.~Bryant, and S.~Salamon, \emph{A circle quotient of a $\GG_2$ cone}, arXiv:1910.09518.

\bibitem{AcharyaWitten} B.S.~Acharya and E.~Witten, \emph{Chiral fermions from manifolds of $\GG_2$ holonomy}, arXiv:hepth/0109152.

\bibitem{AtiyahWitten} M.~Atiyah and E.~Witten, \emph{M-theory dynamics on a manifold of $\GG_2$ holonomy}, Adv.~Theor.~Math.~Phys.~{\bf 6} (2002), 1--106.

\bibitem{BryantSalamon} R.L.~Bryant and S.M.~Salamon, \emph{On the construction of some complete metrics with exceptional holonomy}, Duke~Math.~J.~{\bf 58} (1989), 829--850.

\bibitem{Donaldson} S.K.~Donaldson, \emph{Two-forms on four-manifolds and elliptic equations}, in {\it Inspired by S. S. Chern}, 153--172, Nankai Tracts Math., 11, World Sci. Publ., Hackensack, NJ.

\bibitem{Donaldson-KovalevLefschetz} S.K.~Donaldson, \emph{Adiabatic limits of co-associative Kovalev--Lefschetz fibrations}, Algebra, geometry, and physics in the 21st century, 1--29, Progr.~Math.~324, Birkh\"auser/Springer, Cham, 2017.

\bibitem{Donaldson-new} S.K.~Donaldson, \emph{Deformations of multivalued harmonic functions}, arXiv:1912.08274.

\bibitem{Fine-Yao} J.~Fine and C.~Yao, \emph{Hypersymplectic 4-manifolds, the $G_2$-Laplacian flow, and extension assuming bounded scalar curvature}, Duke~Math.~J.~{\bf 167} (2018), 3533--3589.

\bibitem{Fox} D.~Fox, \emph{Coassociative cones ruled by 2-planes}, Asian~J.~Math.~{\bf 11} (2007), 535--553.

\bibitem{GukovYauZaslow} S.~Gukov, S.-T.~Yau, and E.~Zaslow, \emph{Duality and fibrations on $\GG_2$ manifolds}, Turkish~J.~Math.~{\bf 27} (2003), 61--97.

\bibitem{Harvey} R.~Harvey, {\it Spinors and calibrations}, Perspectives in Mathematics, 9, Academic Press, Inc., Boston, MA, 1990. MR1045637

\bibitem{HarveyLawson} R.~Harvey and H.B.~Lawson, \emph{Calibrated geometries}, Acta~Math.~{\bf 148} (1982), 47--152.

\bibitem{Herman} J.~Herman, \emph{Existence and uniqueness of weak homotopy moment maps}, J.~Geom.~Phys.~{\bf 131} (2018), 52--65.

\bibitem{Hitchin} N.J.~Hitchin, \emph{K\"{a}hlerian twistor spaces}, Proc.~London~Math.~Soc.~(3)~{\bf 43} (1981), 133--150.

\bibitem{JoyceKarigiannis} D.~Joyce and S.~Karigiannis, \emph{A new construction of compact torsion-free $\GG_2$-manifolds by gluing families of Eguchi--Hanson spaces}, to appear in J.~Differential~Geom.

\bibitem{KarigiannisMinOo} S.~Karigiannis and M.~Min-Oo, \emph{Calibrated subbundles in noncompact manifolds of special holonomy}, Ann.~Global Anal.~Geom.~{\bf 28} (2005), 371--394.

\bibitem{KarigiannisNat} S.~Karigiannis and N.~C.-H.~Leung, \emph{Deformations of calibrated subbundles of Euclidean spaces via twisting by special sections}, Ann.~Global~Anal.~Geom.~{\bf 42} (2012), 371--389. 

\bibitem{KarigiannisLotay} S.~Karigiannis and J.D.~Lotay, \emph{Deformation theory of $\GG_2$ conifolds}, Comm.~Anal.~Geom.~{\bf 28} (2020), 1057--1210. 

\bibitem{K-intro} S. Karigiannis, \emph{Introduction to $\GG_2$ geometry}, in {\it Lectures and Surveys on $\GG_2$-manifolds and related topics}, 3--50, Fields Institute Communications, Springer, 2020.

\bibitem{Kawai} K.~Kawai, \emph{Cohomogeneity one coassociative submanifolds in the bundle of anti-self-dual 2-forms over the 4-sphere}, Comm.~Anal.~Geom.~{\bf 26} (2018), 361--409.

\bibitem{YangLi-MukaiDuality} Y.~Li, \emph{Mukai duality on adiabatic coassociative fibrations}, arXiv:1908.08268.

\bibitem{LawsonOsserman} H.B.~Lawson and R.~Osserman, \emph{Non-existence, non-uniqueness and irregularity of solutions to the minimal surface system}, Acta~Math.~{\bf 139} (1977), 1--17.

\bibitem{Lotay2Ruled} J.D.~Lotay, \emph{2-ruled calibrated 4-folds in ${\R}^7$ and ${\R}^8$}, J.~London~Math.~Soc.~(2)~{\bf 74} (2006), 219--243.

\bibitem{LotaySymm} J.D.~Lotay, \emph{Calibrated submanifolds of $\R^7$ and $\R^8$ with symmetries}, Q.~J.~Math.~{\bf 58} (2007), 53--70.

\bibitem{LotayCS} J.D.~Lotay, \emph{Coassociative 4-folds with conical singularities}, Comm.~Anal.~Geom.~{\bf 15} (2007), 891--946. 

\bibitem{LotayDef} J.D.~Lotay, \emph{Deformation theory of asymptotically conical coassociative 4-folds}, Proc.~Lond.~Math.~Soc.~(3)~{\bf 99} (2009), 386--424.

\bibitem{LotayStab} J.D.~Lotay, \emph{Stability of coassociative conical singularities}, Comm.~Anal.~Geom.~{\bf 20} (2012), 803--867.

\bibitem{MadsenSwann1} T.B.~Madsen and A.~Swann, \emph{Multi-moment maps}, Adv.~Math.~{\bf 229} (2012), 2287--2309.

\bibitem{MadsenSwann2} T.B.~Madsen and A.~Swann, \emph{Closed forms and multi-moment maps}, Geom.~Dedicata {\bf 165} (2013), 25--52. 

\bibitem{StromingerYauZaslow} A.~Strominger, S.-T.~Yau, and E.~Zaslow, \emph{Mirror symmetry is $T$-duality}, Nuclear~Phys.~B {\bf 479} (1996), 243--259.

\end{document}